\newcommand{\Cat}{\mathsf{Cat}}
\newcommand{\cSet}{\mathsf{cSet}}
\newcommand{\mcSet}{\cSet^+}
\newcommand{\Set}{\mathsf{Set}}
\newcommand{\msSet}{\sSet^+}
\newcommand{\Map}{\operatorname{Map}}
\newcommand{\face}[2]{\partial^{#1}_{#2}} 
\newcommand{\degen}[2]{\sigma^{#1}_{#2}} 
  \newcommand{\boxcat}{\mathord{\square}} 
  \newcommand{\conn}[2]{\gamma^{#1}_{#2}} 
  \newcommand{\cube}[1]{\mathord{\square^{#1}}} 
  \newcommand{\obox}[2]{\mathord{\sqcap^{#1}_{#2}}} 
  \newcommand{\dfobox}[1][n]{\mathord{\sqcap^{#1}_{i,\varepsilon}}} 
  \newcommand{\pp}[1][\gprod]{\mathbin{\hat{#1}}} 
  \newcommand{\iobox}[2]{\widehat{\mathord{\sqcap}}^{#1}_{#2}} 
  \newcommand{\icube}[2]{\widehat{\mathord{\square}}^{#1}_{#2}} 
  \newcommand{\minm}[1]{{#1}^{\flat}}
  \newcommand{\maxm}[1]{{#1}^{\sharp}}
  \newcommand{\natm}[1]{{#1}^{\natural}}
\newcommand{\restr}[2]{{#1}\vert{#2}} 
\newcommand{\pbtick}{\lrcorner}
\newcommand{\potick}{\ulcorner}
  \DeclareFontFamily{U}{dmjhira}{}
  \DeclareFontShape{U}{dmjhira}{m}{n}{ <-> dmjhira }{}
  \newcommand{\adj}{\dashv}
  \newcommand{\Ho}{\operatorname{Ho}}
  \newcommand{\op}{{\mathord\mathrm{op}}}
  \DeclareMathOperator*{\push}{\cup}
  \newcommand{\colim}{\operatorname*{colim}}
  \newcommand{\hocolim}{\operatorname*{hocolim}}
  \newcommand{\Lan}{\operatorname{Lan}}
  \newcommand{\bd}{\partial}
  \newcommand{\gprod}{\otimes}
  \newcommand{\id}[1][]{\operatorname{id}_{#1}}
  \newcommand{\simp}[1]{\mathord\Delta^{#1}}
  \newcommand{\uvar}{\mathord{\relbar}}
  \renewcommand{\tilde}{\widetilde}
  \renewcommand{\hat}{\widehat}
  \newcommand{\cat}[1]{\mathscr{#1}}
  \newcommand{\ncat}[1]{\mathsf{#1}}
  \newcommand{\sSet}{\ncat{sSet}}
  \newcommand{\from}{\colon}
  \newcommand{\ito}{\hookrightarrow}
  \declaretheorem[style=definition,within=section]{definition}
  \declaretheorem[style=definition,numberlike=definition]{example}
  \declaretheorem[style=definition,numberlike=definition]{remark}
  \declaretheorem[style=plain,numberlike=definition]{corollary}
  \declaretheorem[style=plain,numberlike=definition]{lemma}
  \declaretheorem[style=plain,numberlike=definition]{proposition}
  \declaretheorem[style=plain,numberlike=definition]{theorem}
  \declaretheorem[style=plain,numbered=no,name=Theorem]{theorem*}
  \declaretheorem[style=plain,numbered=no,name=Conjecture]{conjecture*}
  \Crefname{corollary}{Corollary}{Corollaries}
  \Crefname{definition}{Definition}{Definitions}
  \Crefname{lemma}{Lemma}{Lemmas}
  \Crefname{proposition}{Proposition}{Propositions}
  \Crefname{remark}{Remark}{Remarks}
  \Crefname{theorem}{Theorem}{Theorems}
  \Crefname{conjecture}{Conjecture}{Conjectures}
  \Crefname{notation}{Notation}{Notations}
  \Crefname{convention}{Convention}{Conventions}
  \numberwithin{equation}{section}
  \numberwithin{figure}{section}
  \theoremstyle{plain}
  \theoremstyle{definition}
  \theoremstyle{remark}
  \theoremstyle{plain}
  \theoremstyle{plain}
  \theoremstyle{plain}
  \providecommand{\corollaryname}{Corollary}
  \providecommand{\definitionname}{Definition}
  \providecommand{\lemmaname}{Lemma}
  \providecommand{\propositionname}{Proposition}
  \providecommand{\remarkname}{Remark}
  \providecommand{\theoremname}{Theorem}
  \newlist{axioms}{enumerate}{1}
  \Crefname{axiomsi}{}{}
  \newenvironment{tikzeq*}
  {
    \begingroup
    \begin{equation*}
    \begin{tikzpicture}[baseline=(current bounding box.center)]
  }
  {
    \end{tikzpicture}
    \end{equation*}
    \endgroup
    \ignorespacesafterend
  }
  \tikzset
  {
    diagram/.style=
    {
      matrix of math nodes,
      column sep={4.3em,between origins},
      row sep={4em,between origins},
      text height=1.5ex,
      text depth=.25ex
    },
    over/.style={preaction={draw=white,-,line width=6pt}},
    every to/.style={font=\footnotesize},
    inj/.style={right hook->},
    surj/.style={-{Latex[open]}},
    cof/.style={>->},
    fib/.style={->>},
  }
  \DeclareFontFamily{U}{mathx}{\hyphenchar\font45}
  \DeclareFontShape{U}{mathx}{m}{n}{
    <5> <6> <7> <8> <9> <10>
    <10.95> <12> <14.4> <17.28> <20.74> <24.88>
    mathx10}{}
  \DeclareSymbolFont{mathx}{U}{mathx}{m}{n}
  \DeclareFontFamily{U}{mathb}{\hyphenchar\font45}
  \DeclareFontShape{U}{mathb}{m}{n}{
    <5> <6> <7> <8> <9> <10>
    <10.95> <12> <14.4> <17.28> <20.74> <24.88>
    mathb10}{}
  \DeclareSymbolFont{mathb}{U}{mathb}{m}{n}
  \DeclareMathSymbol{\Rsh}{\mathrel}{mathb}{"E9}
  \DeclareFontFamily{U}{MnSymbolA}{}
  \DeclareFontShape{U}{MnSymbolA}{m}{n}{
    <-6> MnSymbolA5
    <6-7> MnSymbolA6
    <7-8> MnSymbolA7
    <8-9> MnSymbolA8
    <9-10> MnSymbolA9
    <10-12> MnSymbolA10
    <12-> MnSymbolA12}{}
  \DeclareSymbolFont{MnSyA}{U}{MnSymbolA}{m}{n}
  \DeclareMathSymbol{\twoheaddownarrow}{\mathrel}{MnSyA}{27}
  \newcommand{\MSC}[1]{%
    \let\thempfn\relax
    \footnotetext[0]{2020 Mathematics Subject Classification: #1.}
  }
\DeclareMathSymbol{:}{\mathpunct}{operators}{"3A}
\newcommand{\notehelper}[3]{\textcolor{#3}{$\blacksquare$}\marginpar{\ifodd\thepage\raggedright\else\raggedleft\fi\color{#3}\tiny \textbf{#2:} #1}}
\tikzset{curve/.style={settings={#1},to path={(\tikztostart)
    .. controls ($(\tikztostart)!\pv{pos}!(\tikztotarget)!\pv{height}!270:(\tikztotarget)$)
    and ($(\tikztostart)!1-\pv{pos}!(\tikztotarget)!\pv{height}!270:(\tikztotarget)$)
    .. (\tikztotarget)\tikztonodes}},
    settings/.code={\tikzset{quiver/.cd,#1}
        \def\pv##1{\pgfkeysvalueof{/tikz/quiver/##1}}},
    quiver/.cd,pos/.initial=0.35,height/.initial=0}
\tikzset{between/.style n args={2}{/tikz/spath/at end path construction={
    \tikzset{spath/split at keep middle={current}{#1}{#2}}
}}}
\tikzset{tail reversed/.code={\pgfsetarrowsstart{tikzcd to}}}
\tikzset{2tail/.code={\pgfsetarrowsstart{Implies[reversed]}}}
\tikzset{2tail reversed/.code={\pgfsetarrowsstart{Implies}}}
\tikzset{no body/.style={/tikz/dash pattern=on 0 off 1mm}}
\global\long\def\pr#1{\left(#1\right)}%
\global\long\def\CS{\mathsf{cSet}}%
\global\long\def\SS{\mathsf{sSet}}%
\global\long\def\Fun{\operatorname{Fun}}%
\global\long\def\hocolim{\operatorname{hocolim}}%
\global\long\def\msf#1{\mathsf{#1}}%
\global\long\def\mscr#1{\mathscr{{#1}}}%
\global\long\def\mcal#1{\mathcal{#1}}%
\global\long\def\mbb#1{\mathbb{#1}}%
\global\long\def\mbf#1{\mathbf{#1}}%
\global\long\def\mrm#1{\mathrm{#1}}%
\global\long\def\pr#1{\left(#1\right)}%
\global\long\def\hat#1{\widehat{#1}}%
\global\long\def\QC{\mathsf{QCat}}%
\global\long\def\curlyCat{\mathcal{C}\msf{at}}%
\global\long\def\id{\operatorname{id}}%
\global\long\def\Hom{\operatorname{Hom}}%
\global\long\def\ot{\leftarrow}%
\global\long\def\lim{\operatorname{lim}}%
\global\long\def\colim{\operatorname{colim}}%
\global\long\def\hocolim{\operatorname{hocolim}}%
\global\long\def\cc{\operatorname{cc}}%
\global\long\def\Rect{\operatorname{Rec}}%
\global\long\def\Side{\operatorname{Face}}%
\global\long\def\LV{\operatorname{LV}}%
\global\long\def\KQ{\text{Kan-Quillen}}%
\global\long\def\plain{\mathrm{plain}}%
\global\long\def\Joyal{\mathrm{Joyal}}%
\global\long\def\cov{\mathrm{cov}}%
\global\long\def\proj{\mathrm{proj}}%
\global\long\def\ev{\operatorname{ev}}%
\global\long\def\rcone{\triangleright}%
\global\long\def\S{\mathsection}%
\global\long\def\p{\prime}%
\global\long\def\bigadj{\stackrel[\longleftarrow]{\longrightarrow}{\bot}}%
\renewcommand{\colim}{\operatorname*{colim}}
\tikzstyle{vertex}=[circle, fill, minimum size=5pt, inner sep=0pt]
\tikzstyle{openvertex}=[circle, draw, fill=white, line width=0.85pt, opacity=0.6, minimum size=5pt, inner sep=0pt]
\tikzstyle{colorvertex}=[circle, draw, minimum size=6pt, inner sep=0pt]
\definecolor{vertex1}{RGB}{255,0,0}
\definecolor{vertex2}{RGB}{0,0,255}
\definecolor{vertex3}{RGB}{0,255,0}
\definecolor{vertex4}{RGB}{255,255,0}
\definecolor{vertex5}{RGB}{255,0,255}
\newcommand{\eps}{\varepsilon}
\renewcommand{\cat}[1]{\mathcal{#1}}
\newcommand{\ho}{{\mathrm{ho} \,}} 
\newcommand{\catname}[1]{\mathsf{#1}} 
\newcommand{\sRect}{\Rect_{\Delta}} 
\begin{document}

\author{Kensuke Arakawa \and Daniel Carranza \and Krzysztof Kapulkin}
\title{Cubical models of $\infty$-presheaves and the Bousfield--Kan formula}

\maketitle

 \begin{abstract} 
   We construct the covariant and the cocartesian model structures on the slice categories of cubical sets and marked cubical sets, respectively.
   As an application, we derive a version of the Bousfield--Kan formula for arbitrary cofibrantly generated monoidal model categories satisfying Muro's axiom.
 \end{abstract}

\section*{Introduction}

Cubical sets have long played a central role in homotopy theory and higher category theory.
In homotopy theory, they were among the first combinatorial models for spaces, introduced by Kan \cite{kan:abstract-homotopy-i,kan:abstract-homotopy-ii} prior to his development of simplicial sets \cite{kan:css}.
Over the years, cubical sets re-emerged thanks to the ideas of Grothendieck \cite{grothendieck:pursuing-stacks}, the work of Cisinski \cite{cisinski:presheaves,cisinski:elegant}, Jardine \cite{jardine:a-beginning}, and Maltsiniotis \cite{maltsiniotis:test,maltsiniotis:connections} on test categories, and the introduction of a new cubical site with connections by Brown and Higgins \cite{brown-higgins:connections}, which provides a cubical version of the Dold--Kan correspondence, as well as a setting for which cubical groups are Kan complexes \cite{tonks:cubical-groups-kan}.
Most recently, this approach has seen a great deal of interest because of its seemingly endless applications to homotopy type theory, e.g., \cite{bazem-coquand-huber,cchm,cavallo-mortberg-swan,angiuli-et-al,coquand-huber-sattler}.
Cubical sets were also extensively studied as models of strict $\omega$-categories \cite{crans:thesis,al-agl-brown-steiner,brown-higgins,steiner:cubical}, before being used in higher category theory as new models of $(\infty, 1)$-categories \cite{doherty-kapulkin-lindsey-sattler,doherty:symmetry,curien-livernet-saadia}, e.g., cubical quasicategories and marked cubical quasicategories, as well as models of more general $(\infty, n)$-categories \cite{campion-kapulkin-maehara,doherty-kapulkin-maehara}, e.g., comical sets.

The importance of cubical methods lies in the following simple observation: while simplicies are built out of the interval by taking iterated \emph{joins}, cubes are built out of the interval by taking iterated \emph{products}.
Consequently, while simplicial methods excel in situations where one needs to consider joins, cubical sets provide a natural home to all product-like constructions.
The latter abound in higher category theory and include: the homotopy coherent nerve (since the mapping spaces in $\mathfrak{C}[n]$ are cubes) \cite{kapulkin-voevodsky}, the Gray tensor product \cite{campion-kapulkin-maehara}, and the cobar construction \cite{rivera-zeinalian:cubical-rigidification}, to name a few.

In this paper, we take an important step in the development of cubical higher categories by constructing two new model structures: the covariant and the cocartesian model structures on the slice categories of cubical sets and marked cubical sets, respectively.
Their simplicial analogs are of paramount importance in higher category theory, as the former models the $(\infty, 1)$-category of functors from the base $(\infty, 1)$-category $\cat{B}$ to $\infty$-groupoids, while the latter presents the $(\infty, 1)$-category of functors from $\cat{B}$ to $(\infty, 1)$-categories \cite{lurie:htt}.
The essential idea is that the fibers of the map $\cat{X} \to \cat{B}$ encode the values of a functor on a given object via the Grothendieck construction.
Although it is in principle possible to consider other models of these $(\infty, 1)$-categories, e.g., the exponential object in simplicial sets from $\cat{B}$ to the homotopy coherent nerve of Kan complexes/quasicategories, or simplicial functors from the rigidification $\mathfrak{C}\cat{B}$ of $\cat{B}$ to (the simplicial category of) Kan complexes/quasicategories \cite{lurie:htt}, the covariant and cocartesian model structures are much easier to work with instead.
In fact, the contravariant model structure, a closely related analog of the covariant model structure but for functors from $\cat{B}^\op$ to $\infty$-groupoids, appeared already in the seminal notes by Joyal \cite{joyal:theory-of-qcats} that established the early foundations of the subject of higher category theory.

Our first results can be summarized in the following theorem:
\begin{theorem*}[cf.\ \cref{prop:cov,prop:fiberwise_eq_cov,prop:cc,prop:fiberwise_eq_cc,TUcomparison_for_all}]\leavevmode
\begin{enumerate}
	\item The slice category $\cSet_{/B}$ of cubical sets carries the covariant model structure, whose cofibrations are monomorphisms, and whose fibrant objects are left fibrations with codomain $B$.
		Weak equivalences of fibrant objects are the fiberwise homotopy equivalences.
    This model structure is Quillen equivalent to the covariant model structure on simplicial sets via the slice adjunction induced by triangulation.
	\item The slice category $\mcSet_{/\maxm{B}}$ of marked cubical sets carries the cocartesian model structure, whose cofibrations are monomorphisms, and whose fibrant objects are cocartesian fibrations over $B$ with cocartesian edges marked. 
		Weak equivalences of fibrant objects are the fiberwise categorical equivalences.
    This model structure is Quillen equivalent to the cocartesian model structure on marked simplicial sets via the slice adjunction induced by triangulation.
\end{enumerate}
\end{theorem*}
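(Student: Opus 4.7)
The plan is to construct each model structure by a Cisinski-style left Bousfield localization of the monomorphism model structure on the slice category, identify the fibrant objects as (marked) left/cocartesian fibrations, characterize weak equivalences between fibrant objects fiberwise, and finally derive the Quillen equivalences from the triangulation-induced slice adjunctions.

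For part (1), I would start from the cofibrantly generated model structure on $\cSet_{/B}$ with cofibrations the monomorphisms, obtained by slicing the test/Grothendieck model structure on $\cSet$. A left Bousfield localization at the set of left cubical horn inclusions $\obox{n}{i,\varepsilon} \hookrightarrow \cube{n}$ (for the values of $(i,\varepsilon)$ that define left horns, glued over all maps $\cube{n} \to B$) produces the covariant structure, and a standard saturation argument identifies the fibrant objects with left fibrations over $B$. For part (2), the same template applied to $\mcSet_{/\maxm{B}}$ yields the cocartesian structure, but with additional generating anodyne maps included to ensure the marked edges of a fibrant $X \to \maxm{B}$ are precisely the cocartesian lifts of edges in $B$, mirroring Lurie's simplicial treatment.

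The main obstacle is the fiberwise characterization of weak equivalences between fibrant objects. The forward direction is routine, since pullback along a vertex $b \from \cube{0} \to B$ is right Quillen. For the converse, I would reduce, using that every cubical set is a sequential colimit of pushouts of boundary inclusions $\bdcube{n} \hookrightarrow \cube{n}$ and that covariant weak equivalences are stable under filtered colimits and pushouts along monomorphisms, to the case $B = \cube{n}$, and then to $B = \cube{0}$ by showing that $\cube{0} \hookrightarrow \cube{n}$ is covariantly (resp.\ cocartesianly) anodyne. The cubical setting requires extra care compared to the simplicial case because of the richer face structure of cubes, and in the marked case one must also verify that the marking is preserved under the requisite lifts.

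For the Quillen equivalences, the triangulation $\reali{-}$ and its right adjoint $\operatorname{Sing}$ induce slice adjunctions $\cSet_{/B} \rightleftarrows \sSet_{/\reali{B}}$ and $\mcSet_{/\maxm{B}} \rightleftarrows \msSet_{/\maxm{\reali{B}}}$. Triangulation preserves monomorphisms by a cell argument and, via direct computation on the explicit triangulations of cubical horns, sends the generating anodyne maps into the saturation of simplicial left/cocartesian anodynes. Hence the adjunctions are Quillen. To upgrade to Quillen equivalences, I would verify that the derived unit is a weak equivalence on fibrant objects, which via the fiberwise characterization reduces to the known Quillen equivalence between $\cSet$ and $\sSet$ applied to each fiber.
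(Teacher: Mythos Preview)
Your construction of the model structures is fine and essentially matches the paper: the paper uses Nguyen's machinery rather than phrasing it as a Bousfield localization, but the content is the same (an exact cylinder plus a class of anodyne maps closed under pushout-product). Your fiberwise characterization is argued differently from the paper --- the paper builds a homotopy inverse directly by induction on the skeleta of $B$ --- and your reduction ``from $\cube{n}$ to $\cube{0}$'' is not correct as stated: the inclusion $\{0^n\}\hookrightarrow\cube{n}$ being left anodyne only controls the fiber over the initial vertex, and the inclusions of the other vertices are not left anodyne, so you cannot conclude that a fiberwise equivalence over $\cube{n}$ is detected at a single point. This step needs a genuinely different argument.

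The serious gap, however, is in the Quillen equivalence. Your final move --- ``the derived unit is a weak equivalence on fibrant objects, which via the fiberwise characterization reduces to the known Quillen equivalence between $\cSet$ and $\sSet$ applied to each fiber'' --- does not go through. Triangulation $T$ is a left adjoint and does \emph{not} carry left (or cocartesian) fibrations to left (or cocartesian) fibrations, so for a cubical left fibration $X\to B$ you must first fibrantly replace $TX\to TB$ in the simplicial covariant model structure before applying the right adjoint. After that replacement there is no reason the fiber over $b$ should be equivalent to $T(X_b)$; establishing this is tantamount to knowing that the square with $T(X_b)$, $TX$, $\ast$, $TB$ is a homotopy pullback, which is precisely the kind of statement that is not available over a non-fibrant base without further input. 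The paper handles this in two stages: it first proves the Quillen equivalence over a \emph{fibrant} base $\mcal C$ (\cref{prop:TUcomparison}), where $U$ genuinely preserves (co)cartesian fibrations by a point-set comparison of cocartesian edges (\cref{prop:cc_edges_vs_cc_edges}), and then proves that the covariant and cocartesian model structures are invariant under weak categorical equivalences of the base (\cref{thm:cat_inv,thm:cat_inv_cc}). That invariance theorem is the technical core --- it requires a descent result for left/cocartesian fibrations in $\curlyCat_\infty$ (\cref{thm:lfib_descent}) together with an inductive argument on the cellular structure of $B$ --- and only after it is in hand does the Quillen equivalence over arbitrary $B$ follow (\cref{TUcomparison_for_all}). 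Your outline elides exactly this work.
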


We further investigate this model structure, proving several results:
\begin{enumerate}
  \item \emph{Straightening--unstraightening} over the nerve of a $1$-category.
  We construct a cubical version of the straightening functor over a $1$-category $\cat{C}$, establishing Quillen equivalences between $\Fun(\cat{C}, \cSet)$ with the projective model structure and the covariant model structure over $N\cat{C}$ (\cref{thm:str}), as well as between $\Fun(\cat{C},\cSet^{+})$ with the projective model structure and the cocartesian model structure over $N \cat{C}$ (\cref{thm:str_marked}).
  Note that these equivalences exist in addition to (and are distinct from) the Quillen equivalences given by composing triangulation with simplicial straightening.

  \item \emph{Invariance} of the covariant/cocartesian model structure with respect to a weak equivalence in the base, i.e., if $f \colon B \to B'$ is a weak categorical equivalence, then the base change adjunctions $f_! \adj f^*$
yield Quillen equivalences with respect to both the covariant and cocartesian model structures.
This is proven in \cref{thm:cat_inv,thm:cat_inv_cc} and constitutes the main topic of \cref{sec:invariance}.
These result are of crucial importance, especially in the cubical setting, since many important phenomena require working with slices over representables (e.g., the Cisinski--Nguyen construction of the universal left fibration \cite{cisinski-nguyen:universal}), but the representables are not cubical quasicategories.
They are also key ingredients in proving the equivalence with simplicial sets when the base is non-fibrant.
  \item \emph{Pullbacks of cocartesian fibrations} are homotopy pullbacks even if the base is not a cubical quasicategory (\cref{cor:cfib_hpb}).
  This mirrors an analogous result in the simplicial setting, showcasing that (co)cartesian fibrations correct the failure of categorical fibrations to guarantee right properness.
\end{enumerate} 

Moreover, the present paper highlights another feature of cubical methods that was known to and used by experts since the early beginnings of the subject, but was only recently formally observed by Lawson \cite{Law17}.
Namely, it is very easy to construct monoidal left Quillen functors out of the category of cubical sets.
Being a presheaf category, to define such a left adjoint, one only needs to specify its values on the representables.
If this left adjoint is to be monoidal, for cubes this is immediately reduced to the value on a single object (the interval).

The way we use this property in practice is by first deriving the cubical analog of the Bousfield--Kan formula \cite{bousfield-kan} for cofibrantly generated model categories equipped with a cubical action, and then, using the aforementioned property of cubical sets, we construct such an action on every monoidal model category satisfying Muro's axiom \cite{Mur15}.
In other words, we derive a variant of the Bousfield--Kan formulas for monoidal model categories, which is therefore applicable, for example, to the Joyal model structure on simplicial sets \cite{joyal:theory-of-qcats}, a case not covered by the original Bousfield--Kan formula.
This fact depends crucially on our ability to define monoidal left adjoints out of the category of cubical sets, and our methods would not work simplicially.

\begin{theorem*}[\cref{cor:BK_mon}]
Let $\mbf M$ be a combinatorial monoidal model category satisfying Muro's unit axiom, and let $I$ be a cylinder object for the monoidal unit.
Given a small diagram $F \colon \mcal C\to\mbf M$ taking values in the full
subcategory of cofibrant objects, the homotopy colimit of $F$ may be computed by the formula
\[
	\int^{[1]^n\in\square}I^{\otimes n}\otimes B_{n}\pr{\ast,\mcal C,F}\text{,}
\]
where $B_n$ denotes the cubical bar construction of \cref{sec:formula}.
\end{theorem*}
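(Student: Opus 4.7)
The plan is to apply the cubical Bousfield--Kan formula established earlier in the paper (Section \ref{sec:formula}) to a cubical action on $\mathbf{M}$ constructed from the chosen cylinder $I$, and then unwind what the coend becomes. The key observation, emphasized in the introduction, is that $\cSet$ is essentially the free cocomplete symmetric monoidal category on the interval object: a sufficiently structured ``interval object'' in any cocomplete symmetric monoidal target determines a cocontinuous strong symmetric monoidal functor $\Phi \colon \cSet \to \mathbf{M}$ with $\Phi(\cube{n}) = I^{\otimes n}$.

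First I would construct $\Phi$. The cylinder $I$ comes equipped with face maps $\mathbf{1} \rightrightarrows I$ and the degeneracy $I \to \mathbf{1}$, which are exactly the structure corresponding to $\cube{1} \in \cSet$; any further structure of the cubical site (connections, symmetries) is obtained from $I$ via the monoidal unit isomorphisms, as $\square$ is freely generated as a symmetric monoidal category by $\cube{1}$ together with this data. Left Kan-extending along the Yoneda embedding $\square \hookrightarrow \cSet$ then produces the desired cocontinuous monoidal functor $\Phi$, whose right adjoint exists by the adjoint functor theorem.

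Second I would verify that $\Phi$ is left Quillen, so that $X \otimes_{\square} M := \Phi(X) \otimes M$ endows $\mathbf{M}$ with a Quillen cubical action. The generating (trivial) cofibrations of $\cSet$ are built out of pushout-products of the single interval boundary inclusion $\bdcube{1} \hookrightarrow \cube{1}$ (and related endpoint inclusions). Under $\Phi$, these are sent to iterated pushout-products in $\mathbf{M}$ of the cofibration $\mathbf{1} \sqcup \mathbf{1} \to I$ together with the trivial cofibration $\mathbf{1} \to I$. Muro's unit axiom is precisely what guarantees that these pushout-products remain (trivial) cofibrations, making $\Phi$ a monoidal left Quillen functor.

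Finally, applying the cubical Bousfield--Kan formula from \cref{sec:formula} to this action and to the pointwise cofibrant diagram $F$ yields
\[
	\hocolim F \;\simeq\; \int^{[1]^n \in \square} \Phi(\cube{n}) \otimes B_n(\ast, \mathcal{C}, F) \;=\; \int^{[1]^n \in \square} I^{\otimes n} \otimes B_n(\ast, \mathcal{C}, F),
\]
which is the claimed formula. The main obstacle is verifying the left Quillen property of $\Phi$: translating the cubical pushout-product axiom in $\cSet$ to the iterated pushout-products of $\mathbf{1} \sqcup \mathbf{1} \to I$ in $\mathbf{M}$, and checking that Muro's axiom is exactly the hypothesis that makes this work. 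Once that is in hand, the theorem is a formal consequence of the general cubical Bousfield--Kan formula.
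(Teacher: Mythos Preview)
Your construction of $\Phi$ does not go through as stated. The cube category $\boxcat$ used in this paper includes both positive and negative connections $\gamma^n_{i,\varepsilon}$; to define a monoidal functor $\boxcat\to\mbf M$ sending $[1]^n\mapsto I^{\otimes n}$ you must say where the connections go, which amounts to equipping $I$ with ``$\max$'' and ``$\min$'' maps $I\otimes I\to I$ satisfying the cubical identities. A bare cylinder object carries no such structure. (Also, the geometric product on $\cSet$ is explicitly \emph{not} symmetric, so $\cSet$ is not the free cocomplete symmetric monoidal category on anything.) The paper sidesteps this by working over the \emph{plain} cube category $\boxcat_\varnothing$ (faces and degeneracies only), invoking its universal property \cite[Thm.~4.2]{GM03} together with Day convolution to produce $\Phi\colon\cSet_\varnothing\to\mbf M$ from the cylinder data alone; the coend in the actual \cref{cor:BK_mon} is over $\boxcat_\varnothing$, and the Bousfield--Kan formula applied is the connection-free version of \cref{rem:plain}.

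You also misidentify the role of Muro's axiom. It is not what makes the iterated pushout-products of $\mbf 1\sqcup\mbf 1\to I$ into (trivial) cofibrations---that is the ordinary pushout-product axiom, already part of the definition of monoidal model category. The obstruction is the $0$-cube: $\Phi(\partial\cube{0}\to\cube{0})$ is the map $\varnothing\to\mbf 1$, which is a cofibration only when the unit is cofibrant (and cofibrancy of $\mbf 1$ is likewise needed for the endpoint inclusions $\mbf 1\to I$ to be trivial cofibrations). The paper uses Muro's theorem \cite[Thm.~1]{Mur15} to \emph{enlarge} the class of cofibrations, without changing weak equivalences, so that $\mbf 1$ becomes cofibrant; Muro's unit axiom is precisely the hypothesis under which this enlargement exists. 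After that reduction, the left-Quillen verification and the application of \cref{thm:BK} proceed essentially as in your final two paragraphs.
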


Let us now discuss the contents and methods of the paper in more detail.
We begin by reviewing basic facts about cubical sets and cubical quasicategories in \cref{sec:prelims}, largely following \cite{doherty-kapulkin-lindsey-sattler}.
We then start off our development in \cref{sec:ccfib_lfib} by defining left and cocartesian fibrations of cubical sets and establishing their basic properties.
In \cref{sec:model}, we construct model structures for left fibrations (the covariant model structure) and cocartesian fibrations (the cocartesian model structure) over a fixed base, using techniques developed recently by Nguyen \cite{Ngu19}. 
In \cref{sec:triangulation,sec:straightening}, we study these model structures when the base is a cubical quasicategory.
In particular, we show that: left fibrations and cocartesian fibrations of cubical quasicategories are equivalent to their quasicategorical counterparts; and the ordinary Grothendieck construction extends to a straightening--unstraightening
equivalence over the cubical nerve of any $1$-category.
In \cref{sec:invariance}, we show that the covariant and cocartesian model structures are invariant under categorical equivalence in the base, meaning that pullback along a weak categorical equivalence of cubical sets forms a Quillen equivalence of model categories.
In \cref{sec:applications}, we use results from \cref{sec:invariance} to prove generalize various theorems on left/cocartesian fibrations to the case of a non-fibrant base. 
We conclude in \cref{sec:formula} by establishing the cubical Bousfield--Kan formula and applying it to a wide class of monoidal model categories.

\textbf{A note on terminology.}
Throughout the paper, we use the term ``$\infty$-category'' as a synonym of ``quasicategory,''
but we also make use of the term ``quasicategory''.
The reason for this is that we use these terms to evoke different connotations: the former a model-free representation, while the latter a concrete model of $\pr{\infty,1}$-categories.
For example, if we wish to regard the nerve of a (1-)category $\mcal C$ as an $\infty$-category, we often suppress the nerve from its notation and simply write it
as $\mcal C$.
However, when we want to emphasize that the nerve of $\mcal C$ is a quasicategory, we keep the nerve notation.
Accordingly, we write $\Cat_{\infty}$ for the $\infty$-category of $\infty$-categories and $\QC$ for the $\infty$-category of quasicategories, even though
both of them are defined as the $\infty$-categorical localization of (the nerve of) the ordinary category $\msf{QCat}$ of (small) quasicategories at categorical equivalences.

\section{\label{sec:prelims}Preliminaries}
In this section, we recall some basic definitions and constructions of cubical sets, the most important of which is the \emph{cubical Joyal} model structure on cubical sets (and marked cubical sets), which models the homotopy theory of $(\infty, 1)$-categories \cite{doherty-kapulkin-lindsey-sattler}.
Throughout, we assume familiarity with simplicial sets, quasicategories, and the Joyal model structure.

We first define the cube category $\Box$.
The objects of $\Box$ are posets of the form $[1]^n = \{0 \leq 1\}^n$ and the maps are generated (inside the category of posets) under composition by the following four special classes:
\begin{itemize}
  \item \emph{faces} $\partial^n_{i,\varepsilon} \colon [1]^{n-1} \to [1]^n$ for $i = 1, \ldots , n$ and $\varepsilon = 0, 1$ given by:
  \[ \partial^n_{i,\varepsilon} (x_1, x_2, \ldots, x_{n-1}) = (x_1, x_2, \ldots, x_{i-1}, \varepsilon, x_i, \ldots, x_{n-1})\text{;}  \]
  \item \emph{degeneracies} $\sigma^n_i \colon [1]^n \to [1]^{n-1}$ for $i = 1, 2, \ldots, n$ given by:
  \[ \sigma^n_i ( x_1, x_2, \ldots, x_n) = (x_1, x_2, \ldots, x_{i-1}, x_{i+1}, \ldots, x_n)\text{;}  \]
  \item \emph{negative connections} $\gamma^n_{i,0} \colon [1]^n \to [1]^{n-1}$ for $i = 1, 2, \ldots, n-1$ given by:
  \[ \gamma^n_{i,0} (x_1, x_2, \ldots, x_n) = (x_1, x_2, \ldots, x_{i-1}, \max\{ x_i , x_{i+1}\}, x_{i+2}, \ldots, x_n) \text{.} \]
  \item \emph{positive connections} $\gamma^n_{i,1} \colon [1]^n \to [1]^{n-1}$ for $i = 1, 2, \ldots, n-1$ given by:
  \[ \gamma^n_{i,1} (x_1, x_2, \ldots, x_n) = (x_1, x_2, \ldots, x_{i-1}, \min\{ x_i , x_{i+1}\}, x_{i+2}, \ldots, x_n) \text{.} \]
\end{itemize}

These maps obey the following \emph{cubical identities}:

\[ \begin{array}{l l}
    \partial_{j, \varepsilon'} \partial_{i, \varepsilon} = \partial_{i+1, \varepsilon} \partial_{j, \varepsilon'} \quad \text{for } j \leq i; & 
    \sigma_j \partial_{i, \varepsilon} = \begin{cases}
        \partial_{i-1, \varepsilon} \sigma_j & \text{for } j < i; \\
        \id                                                       & \text{for } j = i; \\
        \partial_{i, \varepsilon} \sigma_{j-1} & \text{for } j > i;
    \end{cases} \\
    \sigma_i \sigma_j = \sigma_j \sigma_{i+1} \quad \text{for } j \leq i; &
    \gamma_{j,\varepsilon'} \gamma_{i,\varepsilon} = \begin{cases}
    \gamma_{i,\varepsilon} \gamma_{j+1,\varepsilon'} & \text{for } j > i; \\
    \gamma_{i,\varepsilon}\gamma_{i+1,\varepsilon} & \text{for } j = i, \varepsilon' = \varepsilon;
    \end{cases} \\
    \gamma_{j,\varepsilon'} \partial_{i, \varepsilon} = \begin{cases} 
        \partial_{i-1, \varepsilon} \gamma_{j,\varepsilon'}   & \text{for } j < i-1 \text{;} \\
        \id                                                         & \text{for } j = i-1, \, i, \, \varepsilon = \varepsilon' \text{;} \\
        \partial_{j, \varepsilon} \sigma_j         & \text{for } j = i-1, \, i, \, \varepsilon = 1-\varepsilon' \text{;} \\
        \partial_{i, \varepsilon} \gamma_{j-1,\varepsilon'} & \text{for } j > i;
    \end{cases} &
    \sigma_j \gamma_{i,\varepsilon} = \begin{cases}
        \gamma_{i-1,\varepsilon} \sigma_j  & \text{for } j < i \text{;} \\
        \sigma_i \sigma_i           & \text{for } j = i \text{;} \\
        \gamma_{i,\varepsilon} \sigma_{j+1} & \text{for } j > i \text{.} 
    \end{cases}
\end{array} \]

A \emph{cubical set} is a functor $\boxcat^\op \to \Set$.
The category of cubical sets is denoted by $\cSet$, and we use the term \emph{cubical map} to mean a morphism of cubical sets.
We adopt the standard convention of writing $X_n$ for the value of a cubical set $X$ at an object $[1]^n$.
Moreover, we write cubical operators on the right e.g.~given an $n$-cube $x \in X_n$ of a cubical set $X$, we write $x\face{}{1,0}$ for the $\face{}{1,0}$-face of $x$. 
We typically denote cubical sets using capital letters $X, Y, \dots$.
\begin{definition} \leavevmode
    \begin{enumerate}
        \item For $n \geq 0$, the \emph{combinatorial $n$-cube} $\cube{n}$ is the representable functor $\boxcat(-, [1]^n) \from \boxcat^\op \to \Set$.
        \item The \emph{boundary of the $n$-cube} $\bd \cube{n}$ is the subobject of $\cube{n}$ defined by
        \[ \bd \cube{n} := \bigcup\limits_{\substack{j=1,\dots,n \\ \eta = 0, 1}} \operatorname{Im} \face{}{j,\eta}. \]
\item given $i = 1, \dots, n$ and $\eps = 0, 1$, the $(i, \eps)$-open box  $\dfobox$ is the subobject of $\bd \cube{n}$ defined by
        \[ \dfobox := \bigcup\limits_{(j,\eta) \neq (i,\varepsilon)} \operatorname{Im} \face{}{j,\eta}. \]
    \end{enumerate}
\end{definition}

A \emph{marked cubical set} is a pair $(X, S)$ where $X$ is a cubical set and $S \subseteq X_1$ is a subset of 1-cubes which contains all degenerate 1-cubes.
We refer to elements of $S$ as \emph{marked 1-cubes}, or \emph{marked edges}.
A morphism of marked cubical sets is a cubical map which sends marked edges to marked edges.
We denote the category of marked cubical sets by $\mcSet$, and remark that this category is a reflective subcategory of a presheaf category \cite[§1.4]{doherty-kapulkin-lindsey-sattler}.
We may denote marked cubical sets as tuples $(X, S), (Y, T), \dots$ or using overline notation $\overline{X}, \overline{Y}, \dots$, leaving the set of marked edges implicit.

The forgetful functor $\mcSet \to \cSet$ admits both adjoints.
\[ \begin{tikzcd}[sep = large]
	\mcSet \ar[r, "U" description] & \cSet \ar[l, "{\minm{(-)}}"', bend right] \ar[l, "{\maxm{(-)}}", bend left]
\end{tikzcd} \]
The left adjoint is the \emph{minimal marking} functor, it sends a cubical set $X$ to its marking at degenerate 1-cubes $\minm{X} = (X, \degen{}{1}(X_0))$; the right adjoint is the \emph{maximal marking} functor, it sends a cubical set $X$ to its marking at all 1-cubes $\maxm{X} = (X, X_1)$.

The category of cubical sets admits a monoidal product given by the \emph{geometric product}, which is defined as a Day convolution.
That is, consider the functor $\uvar \gprod \uvar \from \boxcat \times \boxcat \to \boxcat$ on the cube category defined by $[1]^m \gprod [1]^n = [1]^{m+n}$.
The geometric product is defined as the left Kan extension in the diagram
\[ \begin{tikzcd}
    \boxcat \times \boxcat \ar[r, "\gprod"] \ar[d] & \boxcat \ar[r] & \cSet \\
    \cSet \times \cSet \ar[urr, "\gprod"']
\end{tikzcd} \]
This product is \emph{not} symmetric in general, but it is biclosed.
That being said, we only make use of the right closed structure.
Given a cubical set $X$, we write $\Fun\pr{X,-}$ for the right adjoint
of $- \otimes A$. 
More generally, given a commutative square
\[\begin{tikzcd}
	A & X \\
	B & Y
	\arrow[from=1-1, to=1-2]
	\arrow[from=1-1, to=2-1]
	\arrow[from=1-2, to=2-2]
	\arrow[from=2-1, to=2-2]
\end{tikzcd}\]
of cubical sets, we write $\Fun_{A//Y}\pr{B,X}$ for the fiber of
the map
\[
\Fun\pr{B,X}\to\Fun\pr{A,X}\times_{\Fun\pr{A,Y}}\Fun\pr{B,Y}
\]
over the vertex corresponding to the above square. When $A=\emptyset$,
we simply write $\Fun_{Y}\pr{B,X} = \Fun_{\emptyset//Y}\pr{-,-}$.


The following result gives an element-wise description of cubes in the geometric product, which we use implicitly going forward.
\begin{proposition}[{\cite[Prop.~1.24]{doherty-kapulkin-lindsey-sattler}}] \label{thm:gprod_cube}
    Let $X, Y$ be cubical sets.
    \begin{enumerate}
        \item For $k \geq 0$, the $k$-cubes of $X \gprod Y$ consist of all pairs $(x \in X_m, y \in Y_n)$ such that $m + n = k$, subject to the identification $(x\degen{}{m+1}, y) = (x, y\degen{}{1})$.
        \item For $x \in X_m$ and $y \in Y_n$, the faces, degeneracies, and connections of the $(m+n)$-cube $(x, y)$ are computed by
        \begin{align*}
            (x, y)\face{}{i,\varepsilon} &= \begin{cases}
                (x\face{}{i,\varepsilon}, y) & 1 \leq i \leq m \\
                (x, y\face{}{i-m, \varepsilon}) & m+1 \leq i \leq m+n;
            \end{cases} \\
            (x, y)\degen{}{i} &= \begin{cases}
                (x\degen{}{i}, y) & 1 \leq i \leq m+1 \\
                (x, y\degen{}{i-m}) & m+1 \leq i \leq m+n;
            \end{cases} \\
            (x, y)\conn{}{i,\varepsilon} &= \begin{cases}
                (x\conn{}{i,\varepsilon}, y) & 1 \leq i \leq m \\
                (x, y\conn{}{i-m,\varepsilon}) & m+1 \leq i \leq n. \tag*{\qedsymbol}
            \end{cases}
        \end{align*}
    \end{enumerate}
\end{proposition}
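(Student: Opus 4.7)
The plan is to invoke the universal property of Day convolution: $\gprod$ is the unique bifunctor $\cSet \times \cSet \to \cSet$ that is cocontinuous in each variable and restricts to $\cube{a} \gprod \cube{b} = \cube{a+b}$ on representables. I define a candidate bifunctor $\phi$ by the formulas in (1) and (2), then verify (i) well-definedness and functoriality of $\phi$, (ii) cocontinuity in each variable, and (iii) $\phi(\cube{a}, \cube{b}) \iso \cube{a+b}$ naturally. Together these force $\phi \iso \gprod$ by uniqueness of cocontinuous extensions.

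For (i), the compatibility of the stated operators with the relation $(x\degen{}{m+1}, y) = (x, y\degen{}{1})$ is checked case-by-case; the critical point is that applying $\degen{}{m+1}$ to $(x, y)$ yields $(x\degen{}{m+1}, y)$ via the first branch of the formula and $(x, y\degen{}{1})$ via the second, and these are identified by the very relation we quotient by. The cubical identities then split into two cases: when both operators act on the same factor, the identity follows from the corresponding identity in $X$ or $Y$; when the operators act on different factors, they commute by direct inspection of the formulas.

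Cocontinuity in (ii) is immediate, since colimits in $\cSet$ are computed level-wise and both the formula $\coprod_{m+n=k} X_m \times Y_n / {\sim}$ and the generating relation for $\sim$ are level-wise in each variable.

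For (iii), the natural candidate map $\phi(\cube{a}, \cube{b})_k \to (\cube{a+b})_k$ sends $(\alpha \from [1]^m \to [1]^a, \beta \from [1]^n \to [1]^b)$ to $\alpha \gprod \beta$; this is well-defined because $(\alpha\degen{}{m+1}) \gprod \beta$ and $\alpha \gprod (\beta\degen{}{1})$ both compute to $(\alpha \gprod \beta) \circ \sigma_{m+1}$. The main obstacle is bijectivity. Surjectivity requires that every morphism $f \from [1]^k \to [1]^{a+b}$ in $\boxcat$ decomposes as a tensor product for some split $m+n=k$; faces and degeneracies factor trivially, and the subtle case of a connection $\gamma^k_{a,0}$ straddling the interface (whose $a$-th output $\max(x_a, x_{a+1})$ mixes coordinates from both sides) can be re-written as $\id_{[1]^{a-1}} \gprod \gamma^{k-a+1}_{1,0}$ under the shifted source split $(a-1)+(k-a+1)$. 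Handling general morphisms relies on a careful normal-form analysis of $\boxcat$, for instance via the standard epi-mono factorization combined with the cubical identities relating connections and degeneracies. Injectivity then follows by analyzing when two tensor-product cubes coincide and reducing any coincidence to a chain of interface-degeneracy identifications.
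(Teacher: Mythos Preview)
The paper does not prove this proposition; it is quoted verbatim from \cite[Prop.~1.24]{doherty-kapulkin-lindsey-sattler} and marked with a \qedsymbol\ at the end of the statement. So there is no ``paper's own proof'' to compare against here---the result is treated as background.

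That said, your strategy is the standard one and is correct in outline. A couple of remarks. First, step (iii) is where essentially all of the content lives: showing that every morphism $[1]^k \to [1]^{a+b}$ in $\boxcat$ arises as some $\alpha \gprod \beta$ (up to the degeneracy identification) is exactly the statement that the monoidal product on $\boxcat$ has a suitable normal form, and your sketch of the interface-connection case is on the right track but not quite right as written. For $\gamma^{k}_{a,\varepsilon} \colon [1]^k \to [1]^{k-1}$ with target split as $a + b$ (so $k = a+b+1$), the factorization you want is $\gamma^{a+1}_{a,\varepsilon} \gprod \id_{[1]^b}$ under the source split $(a+1) + b$, not $\id_{[1]^{a-1}} \gprod \gamma$ as you wrote---the latter would require $\alpha \colon [1]^{a-1} \to [1]^a$, which $\id$ is not. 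Second, for injectivity you will genuinely need the full list of cubical identities (in particular the relations mixing $\sigma$ and $\gamma$) to show that any two representatives of the same cube in $\cube{a+b}$ differ by a chain of interface identifications; this is routine but not entirely trivial, and is the step that actually uses the specific presentation of $\boxcat$.

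In short: the proposal is a correct plan, but what you have written for (iii) is a promissory note rather than a proof, and the one explicit calculation you give there has a small error.
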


The geometric product naturally extends to a product on the category of marked cubical sets \cite[§1.4, Prop.~1.37]{doherty-kapulkin-lindsey-sattler}: the \emph{marked geometric product} of $(X, S)$ and $(Y, T)$ is the marking on the geometric product $X \gprod Y$ given by
\[ \{ (f, y) \mid f \in S, \ y \in Y_0 \} \cup \{ (x, g) \mid x \in X_0, \ g \in T \}. \]
Each functor in the adjoint triple $\minm{(-)} \adj U \adj \maxm{(-)}$ is strong monoidal with respect to the marked and unmarked geometric products.

Cubical sets are related to simplicial sets via the \emph{triangulation} functor, which is defined as the free cocompletion of the cocubical object $[1]^n \mapsto (\simp{1})^n$ in $\sSet$.
\[ \begin{tikzcd}[column sep = large]
	\boxcat \ar[r, "{[1]^n \mapsto (\simp{1})^n}"] \ar[d] & \sSet \\
	\cSet \ar[ur, "T"']
\end{tikzcd} \]
This functor is strong monoidal with respect to the geometric product of cubical sets and the cartesian product of simplicial sets \cite[Ex.~8.4.24]{cisinski:presheaves}.
The triangulation functor admits a right adjoint $U \from \sSet \to \cSet$, defined by
\[ (UX)_n := \sSet \left( (\simp{1})^n, X \right) . \] 

The triangulation adjunction ascends to an adjunction between marked cubical sets and marked simplicial sets; the \emph{marked triangulation} functor $T^+ \from \mcSet \to \msSet$ sends a marked cubical set $(X, S)$ to $(TX, S)$ (here, we identify 1-cubes of $X$ with 1-simplices of $TX$).
The right adjoint $U^+ \from \msSet \to \mcSet$ is defined by a similar formula $U^+(K, S) := (UK, S)$ (as before, we identify 1-simplices of $K$ with 1-cubes of $UK$).

Just as simplicial sets form a model for the theory of $(\infty, 1)$-categories via the Joyal model structure, cubical sets form a model of $(\infty, 1)$-categories via the \emph{cubical Joyal} model structure.
Before we describe this model structure, we first recall the definitions of \emph{cubical quasicategories} and categorical equivalences of cubical sets.
\begin{definition}[{\cite[Def.~1.21, before Def.~4.4]{doherty-kapulkin-lindsey-sattler}}] \label{def:qcat}
	Let $n \geq 2$, $i \in \{ 1, \dots, n \}$, and $\eps = \{ 0, 1 \}$.
	\begin{enumerate}
		\item The \emph{critical edge} of $\cube{n}$ is the unique 1-cube which intersects both the vertex $(1 - \eps, \dots, 1 - \eps)$ and the $\face{}{i,\eps}$-face.
		\item The \emph{$(i, \eps)$-inner open box} $\iobox{n}{i, \eps}$ is the quotient of the $(i, \eps)$-open box $\obox{n}{i, \eps}$ obtained by quotienting the critical edge to a 0-cube; the \emph{$(i, \eps)$-inner open box inclusion} is the corresponding inclusion map $\iobox{n}{i, \eps} \ito \icube{n}{i, \eps}$ of the $(i, \eps)$-inner open box into the quotient of $\cube{n}$ identifying the critical edge to a 0-cube
		\[ \begin{tikzcd}
			\cube{1} \ar[r, hook] \ar[d] \ar[rd, phantom, "\potick" very near end] & \obox{n}{i, \eps} \ar[r, hook] \ar[d] \ar[rd, phantom, "\potick" very near end] & \cube{n} \ar[d] \\
			\cube{0} \ar[r, hook] & \iobox{n}{i, \eps} \ar[r, hook] & \icube{n}{i, \eps}
		\end{tikzcd} \]
		\item A cubical map is \emph{inner anodyne} if it is in the saturation of the inner open box inclusions; it is an \emph{inner fibration} if it has the right lifting property against all inner open box inclusions (or equivalently, against all inner anodyne morphisms).
		\item A \emph{cubical quasicategory} is a cubical set such that the unique map to the terminal object $\cube{0}$ is an inner fibration.
	\end{enumerate}
\end{definition}
We often denote cubical quasicategories using calligraphic letters, e.g. $\mathcal{X}, \mathcal{Y}, \dots$; we also occasionally use the letters $\mathcal{C}, \mathcal{D}, \dots$, mimicking the standard notation for ordinary categories.

One thinks of the right lifting property against inner open boxes as analogous to ``special horn fillings'' in the simplicial setting, i.e.\ outer horn filling problems where a specific edge is sent to an equivalence (in this case, we use the stronger condition that a certain edge is sent to a \emph{degeneracy}).
This analogy will be reinforced when we define cartesian and cocartesian edges (and fibrations) in the setting of cubical quasicategories.

Recall that ordinary categories embed into (simplicial) quasicategories via the nerve functor $N \from \Cat \to \sSet$.
By post-composing with the right adjoint $U \from \sSet \to \cSet$, we obtain a full and faithful embedding of ordinary categories into \emph{cubical} quasicategories (\cref{cubical-nerve-full-faithful}).
We refer to this functor as the \emph{cubical nerve} of a 1-category.
\begin{definition}
	The \emph{cubical nerve} functor $N \from \Cat \to \cSet$ is the composite
	\[ \Cat \xrightarrow{N} \sSet \xrightarrow{U} \cSet. \]
	Explicitly, it is defined by
	\[ (N \mathcal{C})_n = \Cat ([1]^n, \mathcal{C}). \]
\end{definition}
As a composite of right adjoints, this functor admits a left adjoint, which we denote by $\Ho \from \cSet \to \Cat$. 
Given a cubical set $X$, we refer to $\Ho X$ as the \emph{homotopy category} of $X$.
For a general cubical set $X$, morphisms of its homotopy category $\Ho X$ are formal composites of 1-cubes in $X$, modulo an equivalence relation induced from the 2-cubes of $X$.
\begin{proposition} \label{cubical-nerve-full-faithful}
	The cubical nerve functor is full and faithful, and takes values in cubical quasicategories.
\end{proposition}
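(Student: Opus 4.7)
The plan is to split the proposition into its two claims and reduce each to known facts via the triangulation adjunction $T \dashv U$, which \cite{doherty-kapulkin-lindsey-sattler} establishes as a Quillen equivalence between the cubical Joyal model structure on $\cSet$ and the Joyal model structure on $\sSet$.

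For the cubical quasicategory assertion, I note that $N = U \circ N_\Delta$, so since the simplicial nerve of a $1$-category is a quasicategory (it even has unique inner horn fillers), this reduces to showing that $U$ preserves Joyal fibrant objects. As the right adjoint in a Quillen equivalence, $U$ automatically sends fibrant objects to fibrant objects, and cubical Joyal fibrant objects are precisely cubical quasicategories.

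For fullness and faithfulness, the plan is to exploit the adjunction $\Ho \dashv N$ and equivalently show that the counit $\Ho N \mathcal{C} \to \mathcal{C}$ is an isomorphism for every small $1$-category $\mathcal{C}$. Unpacking the decomposition $\Ho = \tau_1 \circ T \dashv U \circ N_\Delta = N$, this counit factors as
\[ \Ho N\mathcal{C} = \tau_1 T U N_\Delta \mathcal{C} \xrightarrow{\tau_1(\epsilon)} \tau_1 N_\Delta \mathcal{C} \xrightarrow{\cong} \mathcal{C}, \]
where $\epsilon$ is the counit of $T \dashv U$ and the second arrow is an isomorphism by fullness and faithfulness of the simplicial nerve. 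Since $N_\Delta \mathcal{C}$ is Joyal-fibrant, $\epsilon_{N_\Delta \mathcal{C}}$ is a Joyal weak equivalence (a general feature of counits of Quillen equivalences at fibrant objects). Applying $\tau_1$, which is left Quillen from $\sSet_{\mathrm{Joyal}}$ to $\Cat$ with the canonical model structure---so preserves weak equivalences between cofibrant objects, i.e.\ between all objects---yields an equivalence of categories. To upgrade this to an isomorphism, one checks that it is bijective on objects: both $T$ and $U$ are the identity on $0$-cells ($T\cube{0} = \simp{0}$ and $(UK)_0 = K_0$), so the object sets of all three categories in the factorization above are canonically identified with $\ob \mathcal{C}$.

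The main technical point is confirming that $\tau_1$ preserves the relevant Joyal equivalence; this is a general consequence of the Quillen adjunction between Joyal and the canonical model structure on $\Cat$, but it could alternatively be bypassed by a direct computation: morphisms of $\Ho N \mathcal{C}$ are formal composites of $1$-cubes of $N\mathcal{C}$ (i.e.\ morphisms of $\mathcal{C}$) modulo the relations imposed by $2$-cubes (commutative squares in $\mathcal{C}$), and these generators and relations directly present $\mathcal{C}$.
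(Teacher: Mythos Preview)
Your argument is correct, but the full-and-faithful part takes a genuinely different route from the paper's. The paper proceeds by writing down an explicit inverse to the counit $\Ho(N\mathcal{C}) \to \mathcal{C}$: it sends a morphism $f$ of $\mathcal{C}$ to the class of the $1$-cube $\cube{1} \xrightarrow{f} N\mathcal{C}$, and checks functoriality by exhibiting a specific $2$-cube witnessing that the class of $gf$ equals the composite of the classes of $f$ and $g$. Your approach instead leverages the Quillen equivalence $T \dashv U$ to see that the counit $\epsilon_{N_\Delta\mathcal{C}}$ is a Joyal equivalence, pushes this through $\tau_1$, and then upgrades the resulting equivalence of categories to an isomorphism via bijectivity on objects. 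The paper's argument is more elementary and self-contained, avoiding any reliance on the Quillen equivalence (a substantially deeper result); yours is cleaner and requires no explicit combinatorics, but imports heavier machinery that the proposition itself arguably ought to precede. The ``direct computation'' you sketch at the end is essentially the paper's approach. For the quasicategory claim, your reasoning and the paper's coincide: both amount to the fact that $U$, being right Quillen, preserves fibrant objects (the paper cites this as \cite[Lem.~4.30]{doherty-kapulkin-lindsey-sattler}); note that only the Quillen \emph{adjunction}, not the equivalence, is needed here.
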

\begin{proof}
	The fact that $N$ takes values in quasicategories follows from \cite[Lem.~4.30]{doherty-kapulkin-lindsey-sattler}.
	It remains to show it is full and faithful.

	For any category $\mathcal{C}$, the counit $\Ho (N\cat{C}) \to \mathcal{C}$
	admits an inverse which sends a morphism $f \in \cat{C}$ to the morphism represented by $[1] \cong \Ho (\cube{1}) \xrightarrow{\Ho (f)} \Ho (N \mathcal{C})$.
	We verify this assignment respects composition since, given a composable pair of morphisms $g, f \in \cat{C}$, the 2-cube $\cube{2} \to N \cat{C}$ depicted as
	\[ \begin{tikzcd}
		c \ar[d, "gf"'] \ar[r, "f"] & d \ar[d, "g"] \\
		e \ar[r, equal] & e
	\end{tikzcd} \]
	witnesses that $\eps^{-1}(gf) = \eps^{-1}(g) \circ \eps^{-1}(f)$ in $\Ho (N \mathcal{C})$.
\end{proof}
We remark that, for a cubical quasicategory $\mathcal{X}$, every morphism in $\Ho \mathcal{X}$ admits a representative as a single 1-cube of $\mathcal{X}$ \cite[Lem.~2.23]{doherty-kapulkin-lindsey-sattler}.

\begin{definition} 
	A 1-cube $f$ in a cubical set $X$ is an \emph{equivalence} if the induced morphism $[f]$ in $\Ho X$ is an isomorphism.
\end{definition}
Note this definition of equivalence is more general than the notion of equivalence used in \cite[before Def.~4.4]{doherty-kapulkin-lindsey-sattler}, where an equivalence is a 1-cube which admits explicit 2-cubes witnessing the existence of a left and right inverse. 
More precisely, let $K$ denote the cubical set
\[ K = \begin{tikzcd}
	1 \ar[r] \ar[d, equal] & 0 \ar[d] \ar[r, equal] & 0 \ar[d, equal] \\
	1 \ar[r, equal] & 1 \ar[r] & 0
\end{tikzcd} \]
where the edge $0 \to 1$ is the $\face{}{2,1}$-face of the left square and the $\face{}{1,0}$-face of the right square, and $(\eps = 0)$-faces are always drawn on top (these indices are not a typo, see \cref{rem:faces-of-K}). 
We say a 1-cube $f$ in a cubical set $X$ is a \emph{$K$-equivalence} if the map $f \from \cube{1} \to X$ factors through the inclusion of the middle edge $0 \to 1$.
\begin{proposition} \label{ho-equiv-is-K-equiv}
	Let $f$ be a 1-cube in a cubical quasicategory $\mathcal{X}$.
	The following are equivalent:
	\begin{enumerate}
		\item $f$ is an equivalence;
		\item the map $f \from \cube{1} \to \mathcal{X}$ factors through the inclusion of the middle edge $\cube{1} \to K$.
	\end{enumerate}
\end{proposition}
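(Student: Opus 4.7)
The plan is to handle the two directions separately, with (2) $\Rightarrow$ (1) being essentially a matter of unpacking definitions and (1) $\Rightarrow$ (2) constituting the main work.

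For (2) $\Rightarrow$ (1), a factorization $\cube{1} \to K \to \mathcal{X}$ furnishes two 2-cubes in $\mathcal{X}$. Reading off the faces from the definition of $K$ and applying the functor $\Ho$, the left 2-cube becomes a commuting square in $\Ho \mathcal{X}$ witnessing $[\id] = [f] \circ [g]$, where $g$ is the class of the non-degenerate top edge of the left square; symmetrically, the right 2-cube yields $[\id] = [g'] \circ [f]$ where $g'$ is the class of the non-degenerate bottom edge of the right square. Hence $[f]$ admits both a left and a right inverse in $\Ho \mathcal{X}$, so it is an isomorphism.

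For (1) $\Rightarrow$ (2), assume $[f]$ is an isomorphism in $\Ho \mathcal{X}$. By the remark following \cref{cubical-nerve-full-faithful}, there exist 1-cubes $g, g'$ of $\mathcal{X}$ representing respectively a right and a left inverse of $[f]$. I would then construct, one at a time, the two 2-cubes comprising a map $K \to \mathcal{X}$, and glue them along their shared $f$-face. To produce the left 2-cube, I would first fill an inner open box whose three specified faces are $f$, $g$, and a degeneracy, using the inner fibration property of $\mathcal{X}$, obtaining a 2-cube $\tau$ whose remaining face $h$ represents the composite $[f] \circ [g] = [\id]$ in $\Ho \mathcal{X}$. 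I would then modify $\tau$ so that its $h$-face is replaced by an actual degeneracy. The symmetric construction produces the right 2-cube.

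The main obstacle is this modification step: translating the $\Ho$-level equality $[h] = [\id]$ into an on-the-nose 2-cube whose corresponding face is a degeneracy. This is the cubical counterpart of Joyal's lifting lemma for equivalences along outer horn inclusions, and I would address it by iteratively applying inner-box fillings to glue $\tau$ together with the 2-cubes witnessing $[h] = [\id]$, or by invoking directly the outer-box filling results for cubical quasicategories developed in \cite{doherty-kapulkin-lindsey-sattler}.
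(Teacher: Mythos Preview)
Your proposal is correct and lands in the same place as the paper. For $(2)\Rightarrow(1)$ the paper simply says ``immediate,'' and your unpacking is exactly what that means. For $(1)\Rightarrow(2)$ the paper does not carry out any construction at all: it cites \cite[Lem.~2.21]{doherty-kapulkin-lindsey-sattler} and stops. Your more elaborate plan (fill an inner open box to get a $2$-cube with faces $f$, $g$, a degeneracy, and some $h$ with $[h]=[\id]$, then repair $h$ to an actual degeneracy) is a reasonable sketch of what that lemma does internally, and you correctly flag the repair step as the only substantive point and defer it to \cite{doherty-kapulkin-lindsey-sattler}.

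One caution on your proposed resolutions of that step. The ``outer-box filling'' route you mention is presumably the special-horn lemma \cite[Lem.~4.14]{doherty-kapulkin-lindsey-sattler} (cf.~\cref{prop:special-horn-lemma}), but in that reference the hypothesis is that the critical edge is an equivalence \emph{in the $K$-sense}, which is precisely condition~(2). Invoking it here to produce a $K$-factorization of $f$ risks circularity. The safe citation is the one the paper uses, \cite[Lem.~2.21]{doherty-kapulkin-lindsey-sattler}, which works directly from the homotopy-category hypothesis; your first suggested route (iterated inner-box fillings against the $2$-cubes witnessing $[h]=[\id]$) is essentially what that lemma carries out.
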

\begin{proof}
	The implication (2) $\implies$ (1) is immediate.
	To show (1) $\implies$ (2), use \cite[Lem.~2.21]{doherty-kapulkin-lindsey-sattler}.
\end{proof}
We strengthen this result in \cref{all-equivs-agree}.
\begin{remark} \label{rem:faces-of-K}
	Note that, in our depiction of the cubical set $K$, the coordinate directions of the faces of each square disagree; i.e.\ the $i = 1$ faces of the left square are drawn on the top and bottom, whereas the $i = 1$ faces of the right square are drawn on the left and right.
	While it would be more intuitive to define the middle edge $0 \to 1$ to be the $\face{}{1,1}$-face of the left square instead (so that the coordinate directions are consistent in the drawing), this would violate \cite[Lem.~6.14]{doherty-kapulkin-lindsey-sattler} that $K$ is the image of the simplicial interval $J$ under the functor $Q$ (we define this functor before \cref{Q-int-equiv-marked-Joyal}).
	That said, this result is only used in the proof of \cite[Lem.~6.11]{doherty-kapulkin-lindsey-sattler} to show the inclusion $\cube{0} \to K$ is an acyclic cofibration, which is true for any (reasonable) definition of $K$.
\end{remark}


As in the Joyal model structure on simplicial sets, the notion of categorical equivalence between cubical sets is detected by ``homotopy'' classes of maps into a cubical quasicategory.
\begin{definition}[{cf.\ \cite[Def.~4.7]{doherty-kapulkin-lindsey-sattler}}]
	For cubical maps $f, g \from X \to Y$, an (elementary left) \emph{homotopy} from $f$ to $g$ is a map $H \from \cube{1} \otimes X \to Y$ such that $\restr{H}{\{ 0 \} \gprod X} = f$, $\restr{H}{\{ 1 \} \gprod X} = g$, and for any 0-cube $x \in X_0$, the 1-cube $\restr{H}{\cube{1} \gprod \{ x \}} \in Y_1$ is an equivalence.
\end{definition}
When $Y$ is a cubical quasicategory, homotopies between maps $X \to Y$ can be ``composed'', yielding the following:
\begin{proposition}[{\cite[Cor.~2.16]{doherty-kapulkin-lindsey-sattler}}]
	If $\mathcal{Y}$ is a quasicategory then the notion of homotopy is an equivalence relation on the set of maps $X \to \mathcal{Y}$. \qed
\end{proposition}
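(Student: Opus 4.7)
My plan is to verify the three axioms of an equivalence relation. \emph{Reflexivity} is immediate: for $f \from X \to \mathcal{Y}$, the composite $H = f \circ \pi \from \cube{1} \otimes X \to X \to \mathcal{Y}$ with the projection is a homotopy from $f$ to itself, since each fiber $\restr{H}{\cube{1} \otimes \{x\}}$ is the degenerate $1$-cube $f(x)\degen{}{1}$, which is an equivalence.

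For \emph{symmetry} and \emph{transitivity}, the approach I would take is to pass to the internal hom $\Fun(X, \mathcal{Y})$, which is itself a cubical quasicategory because $\mathcal{Y}$ is one and the geometric product is right closed. Under the hom--tensor adjunction, a homotopy $H \from \cube{1} \otimes X \to \mathcal{Y}$ corresponds to a $1$-cube $\widetilde H \from \cube{1} \to \Fun(X, \mathcal{Y})$, and the fiberwise equivalence condition translates to saying that the evaluation of $\widetilde H$ at each $x \in X_0$ is an equivalence in $\mathcal{Y}$. A cubical pointwise equivalence criterion identifies such $\widetilde H$ with the equivalence $1$-cubes of $\Fun(X, \mathcal{Y})$, reducing the claim to showing that ``connected by an equivalence $1$-cube'' is an equivalence relation on the $0$-cubes of any cubical quasicategory.

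For this reduced statement, symmetry follows because equivalence $1$-cubes in a cubical quasicategory admit homotopy inverses, constructed via \cref{ho-equiv-is-K-equiv} together with the inverse data already recorded inside $K$; transitivity follows by assembling composable equivalences together with the degenerate $2$-cube on the intermediate vertex into a map out of an inner open box $\iobox{2}{2,0}$ and applying inner open box filling to extract the composite from the missing face. The main obstacle is establishing the pointwise equivalence criterion for $\Fun(X, \mathcal{Y})$, which requires checking that a $1$-cube whose every fiberwise evaluation is an equivalence is itself an equivalence in the internal hom. An alternative, as taken in \cite{doherty-kapulkin-lindsey-sattler}, is to bypass the internal hom and carry out both the reversal and the composition directly by open-box-filling inside $\mathcal{Y}$, assembling the required data fiberwise over $X$ and propagating the equivalence condition through each filler.
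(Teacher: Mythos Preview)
The paper does not give its own proof of this statement: it is stated with a bare \qed and attributed to \cite[Cor.~2.16]{doherty-kapulkin-lindsey-sattler}. So there is no argument in the present paper to compare against; what you are really proposing is an alternative to the proof in the cited reference.

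Your reduction to the internal hom is sound in outline: since the cubical Joyal model structure is monoidal, $\Fun(X,\mathcal{Y})$ is a cubical quasicategory, and elementary left homotopies correspond bijectively to $1$-cubes in it. Once the pointwise equivalence criterion is in hand, the remaining work is trivial, since ``connected by an equivalence $1$-cube'' is just ``isomorphic in $\Ho(\Fun(X,\mathcal{Y}))$'', and isomorphism in any category is an equivalence relation---you do not even need to build inverses or composites by hand via open-box filling.

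The honest gap, which you flag yourself, is the pointwise criterion: that a $1$-cube in $\Fun(X,\mathcal{Y})$ is an equivalence if and only if each evaluation $\ev_x$ of it is one in $\mathcal{Y}$. This is true, but it is not a triviality, and in the development of \cite{doherty-kapulkin-lindsey-sattler} it is established \emph{after} the present corollary (one needs enough of the homotopy theory of $\Fun(X,\mathcal{Y})$ to compare $\Ho(\Fun(X,\mathcal{Y}))$ with the pointwise data). So your route, while conceptually cleaner, risks circularity if inserted at this point in the exposition. The direct open-box argument you allude to at the end---building the reversal and composite homotopies by filling suitably marked boxes in $\mathcal{Y}$, one cube of $X$ at a time---is precisely what the cited reference does, and it avoids this dependency.
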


We now describe the cubical Joyal model structure on $\cSet$.
For a quasicategory $\mathcal{Y}$, we write $[X, \mathcal{Y}]$ for the set of homotopy classes of maps $X \to \mathcal{Y}$.
\begin{theorem}[{c.f.\ \cite[Thm.~4.2, Thm.~4.16, Prop.~4.22 \& Cor.~4.11]{doherty-kapulkin-lindsey-sattler}}]
	The category of cubical sets $\cSet$ admits a model structure, called the \emph{cubical Joyal model structure}, whose:
	\begin{itemize}
		\item cofibrations are the monomorphisms;
		\item fibrant objects are the quasicategories; 
		\item weak equivalences are the maps $f \from X \to Y$ such that, for any cubical quasicategory $\mathcal{Z}$, the induced map $f^* \from [Y, \mathcal{Z}] \to [X, \mathcal{Z}]$ is a bijection.
	\end{itemize}
	Moreover, this model structure is monoidal with respect to the geometric product. \qed
\end{theorem}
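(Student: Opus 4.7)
The plan is to apply the Cisinski--Olschok machinery for producing model structures on presheaf categories from a specified set of generating trivial cofibrations. First I would fix the generating cofibrations as the boundary inclusions $\bdcube{n}\ito\cube{n}$ (so that cofibrations are exactly monomorphisms, which holds automatically in $\cSet$ because $\boxcat$ is a Cisinski site with this cellular model). For the generating trivial cofibrations I would take the inner open box inclusions $\iobox{n}{i,\varepsilon}\ito\icube{n}{i,\varepsilon}$ together with a small set of ``equivalence extensions'' coming from the inclusion $\cube{1}\ito K$ of the middle edge, pushed out against the boundary inclusions so as to ensure the class is closed enough to admit a cylinder functor. Cisinski's theorem then produces the model structure, with fibrations characterized by the right lifting property against the saturation of the generating trivial cofibrations pushout-producted with all monomorphisms.

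The second step is to identify the fibrant objects. By design, a cubical set is fibrant iff it satisfies the right lifting property against the generating trivial cofibrations. The inner open box inclusions give exactly the cubical quasicategory condition of \cref{def:qcat}. The equivalence extensions are automatically solved in a cubical quasicategory by using \cref{ho-equiv-is-K-equiv} to convert any equivalence (in the homotopy-theoretic sense) into a factorization through $\cube{1}\ito K$, and then filling further inner open boxes inductively to extend along the remaining cells.

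Next, I would characterize weak equivalences. The key input is that the cylinder $\cube{1}$ is compatible with the generating trivial cofibrations in the sense required by Cisinski--Olschok, so that the relation of homotopy with respect to $\cube{1}$ (i.e.\ the notion of elementary left homotopy introduced above) is an equivalence relation on maps into any fibrant object and agrees with the model-theoretic notion. Between fibrant objects, weak equivalences are thus homotopy equivalences, which in turn implies that a map $f\from X\to Y$ is a weak equivalence iff $f^{*}\from[Y,\mathcal{Z}]\to[X,\mathcal{Z}]$ is a bijection for every cubical quasicategory $\mathcal{Z}$ (functorially fibrantly replace and apply 2-out-of-3).

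The last step, and the main obstacle, is to verify the pushout-product axiom for the geometric product. That the pushout-product of two monomorphisms is a monomorphism follows from the fact that $\otimes$ on $\boxcat$ preserves the cellular structure. The real content is that the pushout-product of a generating trivial cofibration with a boundary inclusion $\bdcube{n}\ito\cube{n}$ lies in the saturation of the generating trivial cofibrations. For inner open box inclusions this reduces to a combinatorial analysis of the cubes of $\icube{n}{i,\varepsilon}\otimes\cube{m}$ that exhibits a filtration whose successive attachments are themselves inner open box inclusions (up to degeneracies), and a parallel but more delicate filtration for the equivalence extensions. This combinatorial decomposition is where essentially all the work lies; once it is done, the unit axiom is automatic (the monoidal unit $\cube{0}$ is cofibrant), and the monoidality of the model structure follows.
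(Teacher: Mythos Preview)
The paper does not prove this theorem: it is stated with a terminal \qedsymbol\ and attributed to \cite[Thm.~4.2, Thm.~4.16, Prop.~4.22 \& Cor.~4.11]{doherty-kapulkin-lindsey-sattler}, so there is nothing in the present paper to compare your argument against.

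That said, your outline is broadly in the spirit of how the cited reference proceeds (Cisinski--Olschok theory with cylinder $\cube{1}\otimes-$ and the inner open box inclusions as the specified anodyne class), but two points in your sketch are imprecise enough to flag. First, in the Cisinski framework one does not add $\cube{1}\ito K$ as a generating trivial cofibration and then ``push it out against boundary inclusions''; rather, one specifies the anodyne class and the cylinder, and the machinery generates the class of trivial cofibrations by saturating under pushout-products with the interval. The inclusion $\cube{0}\ito K$ (not $\cube{1}\ito K$) then turns out to be a trivial cofibration a posteriori, and the identification of fibrant objects with cubical quasicategories is a genuine theorem requiring a separate argument (this is \cite[Thm.~4.16]{doherty-kapulkin-lindsey-sattler}), not just unwinding the definition. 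Second, your account of the pushout-product axiom understates the difficulty: for inner open boxes the relevant combinatorics does not reduce to a filtration by inner open box inclusions alone, since one must handle the interaction between the critical-edge quotient and the geometric product; the cited paper carries this out via a careful case analysis that is the technical heart of its Section~4.
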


There is an analog of this model structure on the category of \emph{marked} cubical sets, where weak equivalences are detected using \emph{marked homotopies}.
\begin{definition}[{cf.\ \cite[Def.~2.14]{doherty-kapulkin-lindsey-sattler}}]
	For marked cubical maps $f, g \from \overline{X} \to \overline{Y}$, an (elementary left) \emph{marked homotopy} from $f$ to $g$ is a map $H \from \maxm{(\cube{1})} \otimes \overline{f} \to \overline{g}$ such that $\restr{H}{\{ 0 \} \gprod X} = f$, and $\restr{H}{\{ 1 \} \gprod X} = g$.
\end{definition}
Note the condition that the 1-cube $\restr{H}{\cube{1} \gprod \{ x \}}$ is an equivalence has been replaced with the condition that it be a marked edge.
\begin{proposition}[{\cite[Cor.~2.16]{doherty-kapulkin-lindsey-sattler}}]
	If $\overline{\mathcal{Y}}$ is a quasicategory marked at equivalences then the notion of homotopy is an equivalence relation on the set of maps $\overline{X} \to \overline{\mathcal{Y}}$. \qed
\end{proposition}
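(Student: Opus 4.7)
The plan is to verify reflexivity, symmetry, and transitivity of marked homotopy as a relation on the set of maps $\overline{X} \to \overline{\mathcal{Y}}$. The structure mirrors the analogous proof in the unmarked case (and is the marked counterpart to the statement just above it in the paper), with the marking hypothesis on $\overline{\mathcal{Y}}$ used to close the relevant operations under the marked condition.

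Reflexivity would be immediate: for $f \from \overline{X} \to \overline{\mathcal{Y}}$, the constant homotopy $f \circ p \from \maxm{(\cube{1})} \otimes \overline{X} \to \overline{\mathcal{Y}}$ through the projection $p$ has each fiberwise $1$-cube equal to a degeneracy of $f(x)$, which is automatically marked in any marked cubical set by definition.

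For symmetry and transitivity, I would reduce each to a lifting problem. To compose $H \from f \simeq g$ and $H' \from g \simeq h$, one forms a map out of a suitable subobject of $\maxm{(\cube{2})} \otimes \overline{X}$ assembling $H$, $H'$, and the constant homotopy on $g$ in such a way that the critical $1$-cube in each fiber is a degeneracy, then fills using the cubical quasicategory structure of $\overline{\mathcal{Y}}$ via an inner open box argument (cf.\ \cref{def:qcat}). Restricting the filler to the remaining open face produces the desired marked homotopy $f \simeq h$. Symmetry is handled analogously by pairing $H$ with a constant homotopy on $f$; this uses that each fiberwise $1$-cube of $H$ is marked, hence an equivalence in $\overline{\mathcal{Y}}$, and so invertible in $\Ho \overline{\mathcal{Y}}$ by \cref{ho-equiv-is-K-equiv}.

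The main obstacle will be verifying the marking condition on the output: the fiberwise $1$-cubes of the constructed homotopy must themselves be marked in $\overline{\mathcal{Y}}$. This is precisely where the hypothesis that $\overline{\mathcal{Y}}$ is marked \emph{at equivalences} is indispensable. Since each fiberwise $1$-cube of $H$ and $H'$ is marked and therefore an equivalence, their composite in $\Ho \overline{\mathcal{Y}}$ is again an equivalence; the filler's fiberwise edge represents this composite, and by hypothesis every equivalence is marked. The symmetry case is the same argument applied to an inverse rather than a composite. A more restrictive marking (e.g.\ by degenerate edges only) would fail to be closed under these operations, which explains why the ``equivalences'' hypothesis is the correct one.
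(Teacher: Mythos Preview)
The paper does not prove this proposition: the \qed immediately following the statement indicates that it is imported directly from \cite[Cor.~2.16]{doherty-kapulkin-lindsey-sattler} without argument. There is therefore no proof in the present paper to compare your proposal against.

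Your sketch is the standard route and is correct in outline. One small slip: in the transitivity step the constant homotopy should be at $f$ (or $h$), not at $g$, so that it occupies the critical edge of the open box and renders the extension problem an inner one. More substantively, the phrase ``fills using the cubical quasicategory structure'' hides a nontrivial point: you are not filling a single open box but the tensored inclusion $\obox{2}{i,\eps} \otimes X \hookrightarrow \cube{2} \otimes X$, and this requires knowing that inner anodyne maps (or, for symmetry via \cref{prop:special-horn-lemma}, open-box inclusions whose critical edge is an equivalence) are preserved under $- \otimes X$. That is a consequence of the pushout-product axiom for the cubical Joyal model structure, recorded in the paper just before \cref{marked-cset-quillen-equiv}. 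With that in hand, your verification that the output fiberwise edges are equivalences---hence marked in $\natm{\mathcal{Y}}$---goes through exactly as you describe.
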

Given a cubical quasicategory $\mathcal{Y}$, we will write $\natm{\mathcal{Y}}$ for the marking of $\mathcal{Y}$ at equivalences.
For a quasicategory $\mathcal{Y}$, we write $[\overline{X}, \natm{\mathcal{Y}}]$ for the set of homotopy classes of maps $\overline{X} \to \natm{\mathcal{Y}}$.

\begin{theorem}[{c.f.\ \cite[Thm.~2.44, Prop.~2.36, Prop.~4.3 \& Cor.~2.49]{doherty-kapulkin-lindsey-sattler}}] \label{marked-cset-quillen-equiv}
	The category of marked cubical sets $\mcSet$ admits a model structure whose:
	\begin{itemize}
		\item cofibrations are the monomorphisms;
		\item fibrant objects are the quasicategories marked at equivalences;
		\item weak equivalences are the maps $f \from \overline{X} \to \overline{Y}$ such that, for any cubical quasicategory $Z$, the induced map $f^* \from [\overline{Y}, \natm{Z}] \to [\overline{X}, \natm{Z}]$ is a bijection.
	\end{itemize}
	Moreover, this adjunction is monoidal with respect to the marked geometric product, and the adjunction
	\[ \begin{tikzcd}
		\cSet \ar[r, bend left, "{\minm{(-)}}"{name=Upper}] & \mcSet \ar[l, bend left, "U"{name=Lower}] \ar[from=Lower, to=Upper, phantom, "\perp"]
	\end{tikzcd} \]
	is a Quillen equivalence. \qed
\end{theorem}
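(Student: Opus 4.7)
The plan is to construct this model structure as a left Bousfield localization of an auxiliary combinatorial model structure on $\mcSet$. For the auxiliary structure, I would apply Jeff Smith's recognition theorem with generating cofibrations
\[
\{\minm{\bdcube{n}} \hookrightarrow \minm{\cube{n}}\}_{n \ge 0} \;\cup\; \{\minm{\cube{1}} \hookrightarrow \maxm{\cube{1}}\},
\]
and with generating acyclic cofibrations built from the minimally-marked inner open box inclusions, suitably augmented so that any filler of an inner open box whose critical edge is marked is itself marked. Because $\mcSet$ is a reflective subcategory of a presheaf category (as noted in the preliminaries), local presentability, smallness of generators, and the solution set condition for weak equivalences can be verified in a standard way.

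Next, I would left Bousfield localize by adding generating acyclic cofibrations to force the following two conditions on fibrant objects: (i) the underlying cubical set is a quasicategory; and (ii) every equivalence is marked. For (ii), I would add maps built from the cubical set $K$ witnessing equivalences (cf.\ \cref{rem:faces-of-K}), together with its marked variants, so that by \cref{ho-equiv-is-K-equiv} the fibrant objects of the localization are precisely quasicategories marked at equivalences. The description of weak equivalences as maps inducing bijections $[\overline Y, \natm Z] \to [\overline X, \natm Z]$ then follows from the standard characterization of local equivalences, once one checks that marked homotopy with respect to $\maxm{\cube{1}}$ descends to an equivalence relation on maps into a fibrant marked object. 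Monoidality would be verified by checking the pushout-product axiom on generators; most cases reduce to monoidality of the cubical Joyal model structure via the strong monoidality of each functor in the adjoint triple $\minm{(-)} \dashv U \dashv \maxm{(-)}$, while the cases involving the marking generator $\minm{\cube{1}} \hookrightarrow \maxm{\cube{1}}$ are handled directly.

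For the Quillen equivalence, $\minm{(-)}$ preserves cofibrations by construction and preserves generating acyclic cofibrations because these have been chosen on minimally-marked objects whenever possible. The unit $X \to U\minm{X}$ is always an isomorphism of cubical sets, so it suffices to show the derived counit $\minm{U\natm{\mathcal{Y}}} \to \natm{\mathcal{Y}}$ is a marked weak equivalence for every cubical quasicategory $\mathcal{Y}$; equivalently, that the inclusion of degenerately-marked edges into the marking at all equivalences is a marked weak equivalence, which again follows by inspection using \cref{ho-equiv-is-K-equiv}.

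The main obstacle lies in choosing the generating acyclic cofibrations for the Bousfield localization so that the marked 1-cubes of a fibrant object coincide \emph{exactly} with the equivalences—neither more nor fewer. Sufficiency is controlled by \cref{ho-equiv-is-K-equiv}; but the converse, that fibrant replacement never marks a non-equivalence, requires either a careful tracking of the small object argument or an explicit fibrant replacement construction that only expands markings along provably invertible 1-cubes, and this is typically the most delicate part of the argument.
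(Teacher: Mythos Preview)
The paper does not prove this theorem at all: it is stated with a terminal \qed and attributed wholesale to \cite[Thm.~2.44, Prop.~2.36, Prop.~4.3 \& Cor.~2.49]{doherty-kapulkin-lindsey-sattler}. There is therefore no ``paper's own proof'' to compare your proposal against.

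For what it is worth, the cited source does not proceed via a two-stage Bousfield localization as you outline. It builds the model structure in one pass using Cisinski--Olschok machinery, with an explicit pseudo-generating set of anodyne maps; you can see the relevant list quoted inside the proof of \cref{prop:catfib} in this paper: the marked open-box inclusions $(\sqcap^n_{i,\varepsilon})^! \hookrightarrow (\square^n_{i,\varepsilon})^!$, the saturation map $K^\flat \hookrightarrow K^!$, and the 2-out-of-3 maps on $(\square^2)^\sharp$. This set is engineered so that right lifting against it forces the marking on a fibrant object to coincide \emph{exactly} with the equivalences, handling both inclusions simultaneously rather than as separate localization steps.

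Your localization strategy is not wrong in spirit, but the obstacle you flag at the end is real and is precisely what the one-stage approach avoids. Left Bousfield localization only shrinks the class of fibrant objects, so to end up with ``marked $=$ equivalence'' your auxiliary model structure already needs fibrant objects whose marked edges are \emph{contained in} the equivalences; otherwise no amount of localizing will remove spurious markings. Arranging this in the auxiliary stage is essentially as much work as writing down the correct anodyne maps from the outset, which is why the direct construction is preferred.
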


We use the following result to exhibit fibrations in the marked model structure.
\begin{proposition}
	\label{prop:catfib}Let $p \from \mcal C\to\mcal D$ be an inner fibration
	of cubical quasicategories. The following conditions are equivalent:
	\begin{enumerate}
	\item The map $p$ is a fibration in the cubical Joyal model structure.
	\item The map $p^{\natural} \from \mcal C^{\natural}\to\mcal D^{\natural}$ is
	a fibration in the model structure for marked cubical quasicategories.
	\item The map $p^{\natural} \from \mcal C^{\natural}\to\mcal D^{\natural}$ has
	the right lifting property for the inclusion $\{0\}^{\sharp}\subset\pr{\square^{1}}^{\sharp}$.
	\item The map $p^{\natural} \from \mcal C^{\natural}\to\mcal D^{\natural}$ has
	the right lifting property for the inclusion $\{1\}^{\sharp}\subset\pr{\square^{1}}^{\sharp}$.
	\end{enumerate}
\end{proposition}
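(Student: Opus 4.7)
The plan is to establish the cyclic chain $(2) \Longrightarrow (1) \Longrightarrow (3) \Longrightarrow (2)$, with $(3) \Longleftrightarrow (4)$ following by a direct symmetry argument.

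For $(2) \Longrightarrow (1)$: The forgetful functor $U \colon \mcSet \to \cSet$ is right Quillen with respect to $\minm{(-)} \dashv U$ (see \cref{marked-cset-quillen-equiv}), so it preserves fibrations. Since $U(p^{\natural}) = p$ as cubical maps, (2) immediately gives (1).

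For $(1) \Longrightarrow (3)$: A lifting problem against $\{0\}^{\sharp} \hookrightarrow (\square^{1})^{\sharp}$ amounts to a $0$-cube $x$ of $\mathcal{C}$ and an equivalence $f$ in $\mathcal{D}$ with source $p(x)$, and asks for an equivalence $\tilde{f}$ in $\mathcal{C}$ lifting $f$ with source $x$. By \cref{ho-equiv-is-K-equiv}, $f$ factors through the middle-edge inclusion $\cube{1} \hookrightarrow K$, yielding a map $K \to \mathcal{D}$. The inclusion $\{0\} \hookrightarrow K$ is a trivial cofibration in the cubical Joyal model structure (using that $K$ is a contractible cubical quasicategory, cf.\ \cref{rem:faces-of-K}). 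Hence (1) provides a lift $K \to \mathcal{C}$, whose restriction to the middle edge gives the desired equivalence $\tilde{f}$. The equivalence $(3) \Longleftrightarrow (4)$ then follows from the cubical involution of $\square^{1}$ swapping the two endpoints, which extends to $(\square^{1})^{\sharp}$ since all $1$-cubes are marked.

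The principal step is $(3) \Longrightarrow (2)$. Both $\mathcal{C}^{\natural}$ and $\mathcal{D}^{\natural}$ are fibrant in the marked model structure, so to verify that $p^{\natural}$ is a fibration it suffices to check the right lifting property against a set of pseudo-generating trivial cofibrations. Following the template of Lurie's argument for marked simplicial sets, such a set can be assembled out of three families: (a) inner open box inclusions, against which $p^{\natural}$ lifts because $p$ is an inner fibration; (b) the endpoint extension $\{0\}^{\sharp} \hookrightarrow (\square^{1})^{\sharp}$, covered by (3); and (c) pushout-products of these with monomorphisms, handled by the monoidality of the marked model structure recorded in \cref{marked-cset-quillen-equiv}. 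I expect the main obstacle to lie here: identifying a clean cubical analog of Lurie's pseudo-generating set of marked anodyne morphisms — or, failing that, decomposing a general trivial cofibration with fibrant codomain into pieces of the above form, likely by a marked homotopy extension argument mimicking \cite[Prop.~2.36]{doherty-kapulkin-lindsey-sattler}.
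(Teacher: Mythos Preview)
Your overall cyclic strategy matches the paper's, and your arguments for $(2)\Rightarrow(1)$ and $(1)\Rightarrow(3)$ are essentially those of the paper. However, there are two genuine gaps.

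First, the claimed equivalence $(3)\Leftrightarrow(4)$ via ``the cubical involution of $\square^{1}$ swapping the two endpoints'' does not work: in the cube category $\boxcat$ used here (generated by faces, degeneracies, and connections), there is \emph{no} nontrivial automorphism of $[1]$, so no map of cubical sets swaps the endpoints of $\square^{1}$. The asymmetry between the two endpoints is real; the paper does not prove $(3)\Leftrightarrow(4)$ directly, but instead shows $(3)\Rightarrow(2)$ and remarks that $(4)\Rightarrow(2)$ is similar.

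Second, your sketch of $(3)\Rightarrow(2)$ does not identify a workable pseudo-generating set, and the families (a)--(c) you propose are not the right ones. The paper invokes \cite[Prop.~2.50]{doherty-kapulkin-lindsey-sattler}, which gives an explicit set consisting of: (i) marked open box inclusions $(\sqcap^{n}_{i,\varepsilon})^{!}\subset(\square^{n}_{i,\varepsilon})^{!}$ with the critical edge marked, (ii) the marking inclusion $K^{\flat}\subset K^{!}$, and (iii) certain $2$-cube marking inclusions. Lifting against (ii) and (iii) is automatic for $p^{\natural}$, and for (i) with $n\geq 2$ one uses \cite[Lem.~4.14]{doherty-kapulkin-lindsey-sattler} (special open box filling). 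The substantive step --- and the one your proposal misses entirely --- is that one must also verify the right lifting property against $\{1\}^{\sharp}\subset(\square^{1})^{\sharp}$ using only hypothesis $(3)$. The paper does this by a short but nontrivial argument: given an equivalence $f\colon d\to d'=p(c')$ in $\mcal D$, use $(3)$ to lift an inverse $f^{-1}\colon d'\to d$ to an equivalence $\widetilde{f^{-1}}\colon c'\to c$ in $\mcal C$, and then use special open box filling (again \cite[Lem.~4.14]{doherty-kapulkin-lindsey-sattler}) to produce a $2$-cube in $\mcal C$ whose top edge is the desired lift of $f$. This is the missing idea in your sketch, and it is exactly where the symmetry you hoped for has to be earned rather than assumed.
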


\begin{proof}
The implication (2)$\implies$(1) follows from \cref{marked-cset-quillen-equiv}. 
The implications (2)$\implies$(3), (4) are straightforward. 
Therefore, it
suffices to show that each of the conditions (1), (3), (4) implies
(2).

Suppose first that condition (1) is satisfied. We must show that the
map $p^{\natural}$ is a fibration in the model structure for marked
cubical quasicategories. By \cite[Prop.~2.50]{doherty-kapulkin-lindsey-sattler}, this
is equivalent to the condition that $p^{\natural}$ has the right
lifting property against the following maps:

\begin{enumerate}[label=(\roman*)]

\item The inclusion $\pr{\sqcap_{i,\varepsilon}^{n}}^{!}\subset\pr{\square_{i,\varepsilon}^{n}}^{!}$
for $n\geq2$, $1\leq i\leq n$, and $\varepsilon\in\{0,1\}$, where
the exclamation signs indicate marking the critical edge for the $\pr{i,\varepsilon}$-face.

\item The inclusion $K^{\flat}\subset K^{!}$, where the only nondegenerate
marked edge of $K^{!}$ is the middle edge in the diagram
\[\begin{tikzcd}
	1 & 0 & 0 \\
	1 & 1 & 0.
	\arrow[from=1-1, to=1-2]
	\arrow[equal, from=1-1, to=2-1]
	\arrow[equal, from=1-2, to=1-3]
	\arrow[from=1-2, to=2-2]
	\arrow[equal, from=1-3, to=2-3]
	\arrow[equal, from=2-1, to=2-2]
	\arrow[from=2-2, to=2-3]
\end{tikzcd}\]

\item The inclusions of the form
\[
\pr{\square^{2},\{\text{all edges}\}\setminus\{\text{a single nondegenerate edge}\}}\subset\pr{\square^{2}}^{\sharp}.
\]

\end{enumerate}

Lifting properties against (ii) and (iii) are clear. 
For $n > 1$, the map $p^{\natural}$ has the right lifting property for the maps in (i) by \cite[Lem.~4.14]{doherty-kapulkin-lindsey-sattler}. 
It thus remains to show that $p^{\natural}$
has the right lifting property for the maps $\{0\}^{\sharp}\subset\pr{\square^{1}}^{\sharp}$
and $\{1\}^{\sharp}\subset\pr{\square^{1}}^{\sharp}$, but this is
immediate from the fact that $p$ has the right lifting property for
the endpoint inclusions $\square^{0}\hookrightarrow K$ \cite[Thm.~4.16]{doherty-kapulkin-lindsey-sattler}.
Thus we have proved (1)$\implies$(2).

Next, suppose that condition (3) is satisfied. Again, we must show
that $p^{\natural}$ has the right lifting property for the maps in
(i), (ii), and (iii). The only nontrivial part is to show that $p^{\natural}$
has the right lifting property for the inclusion $\{1\}^{\sharp}\subset\pr{\square^{1}}^{\sharp}$.
So suppose we are given a lifting problem
\[\begin{tikzcd}
	{\{1\}^\sharp} & {\mathcal{C}^\natural} \\
	{(\square ^1)^\sharp} & {\mathcal{D}^\natural.}
	\arrow["{c'}", from=1-1, to=1-2]
	\arrow[from=1-1, to=2-1]
	\arrow[from=1-2, to=2-2]
	\arrow[dotted, from=2-1, to=1-2]
	\arrow["f"', from=2-1, to=2-2]
\end{tikzcd}\]Let us depict $f$ as $d\to d'=p\pr{c'}$. We wish to lift $f$ to
an equivalence $c\to c'$. To this end, use condition (3) to lift
an inverse $f^{-1} \from d'\to d$ of $f$ to an equivalence $\widetilde{f^{-1}} \from c'\to c$
in $\mcal C$. Using \cite[Lem.~4.14]{doherty-kapulkin-lindsey-sattler}, we can fill the
left-hand $2$-cube below in such a way that it lifts the right-hand
$2$-cube: 
\[\begin{tikzcd}
	c & {c'} & d & {d'} \\
	c & c & d & d.
	\arrow[dotted, from=1-1, to=1-2]
	\arrow[equal, from=1-1, to=2-1]
	\arrow["{\widetilde{f^{-1}}}", from=1-2, to=2-2]
	\arrow["f", from=1-3, to=1-4]
	\arrow[equal, from=1-3, to=2-3]
	\arrow["{f^{-1}}", from=1-4, to=2-4]
	\arrow[equal, from=2-1, to=2-2]
	\arrow[equal, from=2-3, to=2-4]
\end{tikzcd}\]The map $c\to c'$ is the desired lift of $f$. Thus we have proved
(3)$\implies$(2). The proof of (4)$\implies$(2) is similar, and
we are done.
\end{proof}

In simplicial sets, quasicategories are defined as those simplicial sets with the right lifting property against inner horn inclusions; if a simplicial set has the right lifting property with respect to \emph{all} horn inclusions then it is a Kan complex.
Joyal's special horn filling lemma \cite[Thm.~1.3]{joyal:qcat-kan} implies that this condition is equivalent to being a quasicategory where all morphisms are equivalences \cite[Cor.~1.4]{joyal:qcat-kan}.

The analogous statements are true for cubical quasicategories as well.
\begin{definition}
	A \emph{Kan fibration} is a map with the right lifting property against all open box inclusions.
	A \emph{Kan complex} is a cubical set such that the unique map $X \to \cube{0}$ is a Kan fibration.
\end{definition}
\begin{proposition}[{\cite[Lem.~4.14]{doherty-kapulkin-lindsey-sattler}}] \label{prop:special-horn-lemma}
	Let $f \from \mathcal{X} \to \mathcal{Y}$ be an inner fibration between cubical quasicategories.
	Given a commutative square
	\[ \begin{tikzcd}
		\dfobox \ar[r] \ar[d,hook] & \mathcal{X} \ar[d, "f"] \\
		\cube{n} \ar[r] & \mathcal{Y}
	\end{tikzcd} \]
	if the top morphism sends the critical edge to an equivalence in $\mathcal{X}$ then this square admits a lift. \qed
\end{proposition}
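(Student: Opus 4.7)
The plan is to reduce the lifting problem against the outer open box inclusion $\dfobox \hookrightarrow \cube{n}$ to a sequence of lifting problems against \emph{inner} open box inclusions, which are solvable because $f$ is an inner fibration. The key tool is \cref{ho-equiv-is-K-equiv}: a $1$-cube is an equivalence in a cubical quasicategory precisely when it factors through the inclusion $\cube{1} \hookrightarrow K$ of the middle edge. Thus the hypothesis on the critical edge gives us extra room---namely, a coherent choice of inverse---which should compensate for the failure of outer open boxes to lift against arbitrary inner fibrations, in direct analogy with Joyal's special outer horn lemma.

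First, I would use the hypothesis to extend the given map $\dfobox \to \mathcal{X}$ along the inclusion $\cube{1} \hookrightarrow K$ attached at the critical edge, producing a map $\dfobox \cup_{\cube{1}} K \to \mathcal{X}$. One needs a matching extension of the base map, which is obtained by first noting that an inner fibration between cubical quasicategories sends equivalences to equivalences (so the image of the critical edge in $\mathcal{Y}$ is again an equivalence, and again factors through $K$), then choosing the extension to agree with the image of the extension built in $\mathcal{X}$. This reduces the original lifting problem to finding a diagonal filler in
\[
\begin{tikzcd}
\dfobox \cup_{\cube{1}} K \ar[r] \ar[d,hook] & \mathcal{X} \ar[d, "f"] \\
\cube{n} \cup_{\cube{1}} K \ar[r] & \mathcal{Y}\text{.}
\end{tikzcd}
\]

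The main obstacle---and the crux of the argument---is to show that the left vertical map is inner anodyne. Concretely, one must construct a filtration of $\cube{n} \cup_{\cube{1}} K$ over $\dfobox \cup_{\cube{1}} K$ whose successive attachments are each pushouts of inner open box inclusions. The intuition is that the $K$-glued structure provides both left and right inverses to the critical edge via explicit $2$-cubes, which, combined with the other faces of $\cube{n}$ present in $\dfobox$, supply enough data to fill the missing $(i, \varepsilon)$-face through a sequence of inner fillings. This requires a careful combinatorial analysis specific to the cubical setting; one plausible route is induction on $n$, with the base case $n=2$ handled by an explicit diagram and the inductive step attaching the missing face through compatible inner open box fillings on its boundary.

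An alternative route is to work in the marked category throughout: mark the critical edge to rephrase the lifting problem as $(\dfobox, \{\text{critical edge}\}) \hookrightarrow (\cube{n}, \{\text{critical edge}\})$, and apply the list of generating marked anodyne maps from \cite[Prop.~2.50]{doherty-kapulkin-lindsey-sattler} (used in the proof of \cref{prop:catfib}) to decompose this cofibration. This approach may yield a cleaner bookkeeping, but the core combinatorial content---that the presence of the marked equivalence suffices to rectify an outer open box---remains the same.
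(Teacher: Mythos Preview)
The paper does not prove this proposition; it is stated with a terminal \qed and attributed entirely to \cite[Lem.~4.14]{doherty-kapulkin-lindsey-sattler}. There is therefore no in-paper argument to compare against.

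Your first route is the standard strategy, directly parallel to the usual proof of Joyal's special outer horn lemma for simplicial sets: glue $K$ along the critical edge and reduce to showing that $\dfobox \cup_{\cube{1}} K \hookrightarrow \cube{n} \cup_{\cube{1}} K$ is inner anodyne. The outline is sound, but you have not actually established the key combinatorial claim---you only assert that it ``requires a careful combinatorial analysis'' and gesture at an induction on $n$. That analysis \emph{is} the proof, so what you have written is a correct plan rather than an argument. One minor point: to build the matching square over $\mathcal{Y}$ you should simply push forward along $f$ the $K$-extension you constructed in $\mathcal{X}$; there is no need to produce a separate factorization in $\mathcal{Y}$ and then reconcile the two.

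Your alternative marked route, as written, is circular. The inclusion $(\dfobox)^{!} \hookrightarrow (\cube{n})^{!}$ with the critical edge marked is precisely one of the generating maps in \cite[Prop.~2.50]{doherty-kapulkin-lindsey-sattler} (class (i) in the proof of \cref{prop:catfib}), so ``decomposing'' it via that list is vacuous: you would then need $f^\natural$ to lift against it, which is exactly the statement to be proved. Indeed, the proof of \cref{prop:catfib} in this paper explicitly invokes \cite[Lem.~4.14]{doherty-kapulkin-lindsey-sattler} to obtain that lift, so the dependency runs the wrong way for your purposes.
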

\begin{corollary} \label{cor:qcat-is-kan}
	Let $\mathcal{X}$ be a cubical quasicategory.
	Then, $\mathcal{X}$ is a Kan complex if and only if every edge in $\mathcal{X}$ is an equivalence. \qed
\end{corollary}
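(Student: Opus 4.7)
The plan is to handle the two implications separately; both reduce to open-box fillings.

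For the direction ``every edge is an equivalence $\Rightarrow$ $\mathcal{X}$ is a Kan complex'', I would verify directly that the map $\mathcal{X} \to \cube{0}$ has the right lifting property against every open box inclusion $\dfobox \hookrightarrow \cube{n}$. This map is trivially an inner fibration between cubical quasicategories, and for $n \geq 2$ the image of the critical edge of $\dfobox$ is a $1$-cube in $\mathcal{X}$, hence an equivalence by hypothesis, so \cref{prop:special-horn-lemma} supplies the required filler. The $n = 1$ case (endpoint inclusions $\cube{0} \hookrightarrow \cube{1}$) is lifted by the relevant degeneracy.

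For the direction ``$\mathcal{X}$ is a Kan complex $\Rightarrow$ every edge is an equivalence'', I would fix a $1$-cube $f \from x \to y$ and aim to produce a two-sided inverse of $[f]$ in $\Ho \mathcal{X}$. Using the Kan property, I would fill the outer open box $\obox{2}{2,1} \to \mathcal{X}$ whose $\face{2}{1,0}$-face is $f$ and whose $\face{2}{2,0}$- and $\face{2}{1,1}$-faces are both the degenerate $1$-cube $\degen{}{1}(x)$; the resulting $2$-cube has remaining $\face{2}{2,1}$-face a $1$-cube $g \from y \to x$, and the $2$-cube itself witnesses the relation $[g][f] = [\id[x]]$ in $\Ho \mathcal{X}$. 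Applying the same construction to $g$ yields $h \from x \to y$ with $[h][g] = [\id[y]]$, and then $[f] = [h][g][f] = [h]$ shows that $[g]$ is a two-sided inverse of $[f]$, so $f$ is an equivalence.

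The main thing to double-check is the cubical combinatorics in the forward direction: that with the face conventions of this paper the three present faces of $\obox{2}{2,1}$ really do assemble into the configuration described above, with the missing face oriented $y \to x$, and that the filled $2$-cube translates into the claimed relation in $\Ho \mathcal{X}$. Both are routine unwinding of the cubical identities and the definition of $\Ho$, so I do not expect this to be a serious obstacle.
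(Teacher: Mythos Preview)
Your proof is correct and matches the paper's intended argument: the paper states the corollary with a \qed{} and no body, treating it as immediate from \cref{prop:special-horn-lemma} for the direction ``every edge is an equivalence $\Rightarrow$ Kan'' and as elementary for the converse. Your open-box combinatorics for $\obox{2}{2,1}$ are fine under the paper's conventions --- with $\face{}{1,0} = f$, $\face{}{2,0} = \face{}{1,1} = \degen{}{1}(x)$, the missing $\face{}{2,1}$-face is indeed an edge $y \to x$, and the resulting $2$-cube encodes $[g][f] = [\id[x]]$ in $\Ho\mathcal{X}$ exactly as you describe.
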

Cubical Kan complexes are the fibrant objects in the \emph{Grothendieck} model structure on cubical sets, which models the homotopy theory of spaces.
\begin{theorem}[{Cisinski, cf.~\cite[Thm.~1.34]{doherty-kapulkin-lindsey-sattler}}]
	The category of cubical sets $\cSet$ admits a model structure, called the \emph{Grothendieck model structure}, whose:
	\begin{itemize}
		\item cofibrations are the monomorphisms;
		\item fibrations are the Kan fibrations;
		\item weak equivalences are the maps $f \from X \to Y$ such that, for any Kan complex $Z$, the induced map $f^* \from [Y, Z] \to [X, Z]$ is a bijection. \qed
	\end{itemize}
\end{theorem}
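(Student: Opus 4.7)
The plan is to apply Cisinski's general machinery from \cite{cisinski:presheaves} for constructing model structures on presheaf categories. The cube category $\boxcat$ with connections is known to be a strict test category in the sense of Grothendieck--Maltsiniotis, so we are in the regime where such a machinery produces a model structure presenting the homotopy theory of spaces. The input data consists of: the interval $\cube{1}$ (with its two endpoint inclusions and the unique map to $\cube{0}$) as a cylinder generator, and a cellular set of generating anodyne morphisms.

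Concretely, I would first verify that product with $\cube{1}$ defines a functorial cylinder on $\cSet$ compatible with monomorphisms (via a pushout-product argument using that $\bd\cube{1} \ito \cube{1}$ is a monomorphism). Next, I would take as generating anodyne morphisms the \emph{all} open box inclusions $\obox{n}{i,\eps} \ito \cube{n}$ for $n \geq 1$, $1 \leq i \leq n$, $\eps \in \{0,1\}$, and check that this class (a) is stable under the pushout-product with all boundary inclusions $\bdcube{m} \ito \cube{m}$, and (b) contains the endpoint inclusions $\{\eps\} \ito \cube{1}$. Point (a) reduces to a combinatorial analysis of how open boxes behave under the geometric product, using \cref{thm:gprod_cube}. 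With these conditions in place, Cisinski's theorem produces a cofibrantly generated model structure on $\cSet$ whose cofibrations are the monomorphisms and whose fibrant objects are exactly the cubical sets with the right lifting property against all generating anodyne maps --- namely, the Kan complexes.

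It then remains to characterize weak equivalences as maps $f \from X \to Y$ inducing bijections $f^* \from [Y, Z] \to [X, Z]$ for every Kan complex $Z$. One direction is formal: weak equivalences between cofibrant objects into fibrant targets are characterized by homotopy, and every cubical set is cofibrant. For the converse, one exhibits fibrant replacement as an anodyne extension $X \acto \hat{X}$ (built by the small object argument), then shows that an $[-,Z]$-bijection into every Kan complex forces the induced map $\hat{X} \to \hat{Y}$ between fibrant replacements to be a homotopy equivalence, hence a weak equivalence. This last step uses the fact that the cylinder $\cube{1} \otimes -$ gives a good cylinder object on fibrant objects, so that homotopy equivalences among Kan complexes are genuinely weak equivalences.

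The main obstacle will be verifying the pushout-product closure in (a), since unlike the simplicial case there are multiple kinds of degeneracies (ordinary degeneracies and connections $\conn{}{i,\eps}$), and one must check that open boxes are preserved under these combinatorial operations. The cleanest route is probably to work directly with the explicit formulas of \cref{thm:gprod_cube}, identifying the nondegenerate cubes of $(\obox{m}{i,\eps} \otimes \cube{n}) \cup (\cube{m} \otimes \obox{n}{j,\delta})$ and attaching them in a filtration whose stages are pushouts of open box inclusions. Once this combinatorial lemma is in place, the remainder of the proof is a mechanical application of the Cisinski framework.
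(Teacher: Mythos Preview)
The paper does not prove this statement: it is stated with a \qed{} and attributed to Cisinski, with a pointer to \cite[Thm.~1.34]{doherty-kapulkin-lindsey-sattler} for the precise formulation in this cube category. So there is no ``paper's own proof'' to compare against beyond the citation.

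Your sketch is a faithful outline of how the cited result is actually established: one feeds the interval $\cube{1}$ and the open box inclusions into Cisinski's machinery from \cite{cisinski:presheaves}, verifies the pushout-product conditions, and reads off the characterization of fibrant objects and weak equivalences. The combinatorial step you flag as the main obstacle --- that the pushout-product of an open box inclusion with a boundary inclusion is again an open box inclusion --- is exactly \cite[Lem.~1.26]{doherty-kapulkin-lindsey-sattler} (used in the present paper in the proof of \cref{prop:lan_pp}), so you need not redo that analysis from scratch. One small point of care: you should be explicit about which product you are using for the cylinder (the geometric product $\otimes$, not the cartesian product), since the two differ on cubical sets and the relevant pushout-product lemmas in the literature are stated for $\otimes$.
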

We use the Grothendieck model structure to give a strengthening of \cref{ho-equiv-is-K-equiv}.
Here, we write $E[1]$ for the (cubical) nerve of the contractible category on two objects.
\begin{proposition} \label{all-equivs-agree}
	Let $f$ be a 1-cube in a cubical quasicategory $\mathcal{X}$.
	The following are equivalent:
	\begin{enumerate}
		\item $f$ is an equivalence;
		\item the map $f \from \cube{1} \to \mathcal{X}$ factors through the inclusion of the middle edge $\cube{1} \to K$;
		\item the map $f \from \cube{1} \to \mathcal{X}$ factors through either inclusion $\cube{1} \to E[1]$.
	\end{enumerate}
\end{proposition}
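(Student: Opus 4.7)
Since $(1)\Leftrightarrow(2)$ has already been proved in \cref{ho-equiv-is-K-equiv}, all that remains is to incorporate condition $(3)$. The easier direction is $(3)\Rightarrow(1)$: if $f$ factors as $\cube{1}\to E[1]\to\mathcal{X}$ through either inclusion, then applying $\Ho$ yields a factorization of $[f]$ through $\Ho E[1]$, which by \cref{cubical-nerve-full-faithful} is the contractible groupoid on two objects. Every morphism in a groupoid is invertible, so $[f]$ is an isomorphism in $\Ho\mathcal{X}$, i.e., $f$ is an equivalence.

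For $(1)\Rightarrow(3)$, I plan to pass to the marked cubical Joyal model structure and construct a factorization through the inclusion picking out the edge $0\to 1$; this suffices for the ``either inclusion'' formulation. Since $E[1]$ is the cubical nerve of a groupoid, every edge is an equivalence, so by \cref{cor:qcat-is-kan}, $E[1]$ is a Kan complex and $\maxm{E[1]}=\natm{E[1]}$ is a fibrant marked cubical set by \cref{marked-cset-quillen-equiv}. Given an equivalence $f$, the map $\maxm{\cube{1}}\to\natm{\mathcal{X}}$ sending the non-degenerate edge to $f$ is well-defined since $f$ is a marked edge of $\natm{\mathcal{X}}$. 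The inclusion $\maxm{\cube{1}}\hookrightarrow\maxm{E[1]}$ is a cofibration (monomorphism on underlying cubical sets); once we know it is a weak equivalence in the marked Joyal model structure, fibrancy of $\natm{\mathcal{X}}$ yields an extension $\maxm{E[1]}\to\natm{\mathcal{X}}$, and forgetting markings produces the desired $E[1]\to\mathcal{X}$ factoring $f$.

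The main obstacle is verifying that $\maxm{\cube{1}}\hookrightarrow\maxm{E[1]}$ is a weak equivalence in the marked Joyal model structure. My approach would be to transfer this to marked simplicial sets via the marked triangulation functor $T^{+}$, which is a Quillen equivalence. Under $T^{+}$, the map becomes $\maxm{\simp{1}}\to\maxm{TE[1]}$. Since $E[1]=UJ$ for $J$ the simplicial nerve of the contractible groupoid, the counit $TUJ\to J$ is a simplicial categorical equivalence (by the Quillen equivalence $T\adj U$ and fibrancy of $J$), so $\maxm{TE[1]}\to\maxm{J}$ is a weak equivalence in the marked simplicial Joyal model structure. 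Combined with the standard fact that $\maxm{\simp{1}}\hookrightarrow\maxm{J}$ is an acyclic cofibration there, two-out-of-three delivers the needed weak equivalence, completing the argument.
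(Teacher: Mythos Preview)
Your argument is correct, but it takes a different and more machinery-heavy route than the paper's. The paper observes that if $f$ is an equivalence then it factors through $\operatorname{Core}(\mathcal{X})$, the maximal sub-cubical-set spanned by edges that are equivalences. This is a cubical quasicategory in which every edge is an equivalence, hence a Kan complex by \cref{cor:qcat-is-kan}. The problem then reduces to showing that $\cube{1}\to E[1]$ is an acyclic cofibration in the \emph{Grothendieck} model structure, which is immediate since both are weakly contractible there. Everything stays within unmarked cubical sets and uses only the Grothendieck model structure; no transfer to simplicial sets is needed.

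Your approach instead works in the marked cubical Joyal model structure and transfers to marked simplicial sets via $T^+$. This is valid, but two points deserve attention. First, the Quillen equivalence for $T^+$ (\cref{prop:T^+U^+}) appears \emph{after} the proposition you are proving; there is no logical circularity (its proof does not use \cref{all-equivs-agree}), but you should verify this. Second, your step ``$TUJ\to J$ is a categorical equivalence, so $\maxm{TUJ}\to\maxm{J}$ is a weak equivalence in $\msSet$'' needs a line of justification: the functor $\maxm{(-)}$ is not Quillen in either direction for the Joyal/marked pair. The cleanest argument is that maps $\maxm{A}\to\natm{\mathcal{C}}$ correspond to maps $A\to\operatorname{Core}(\mathcal{C})$, so a weak equivalence $\maxm{A}\to\maxm{B}$ in $\msSet$ amounts to a Kan--Quillen weak equivalence $A\to B$; categorical equivalences are Kan--Quillen equivalences, so you are done. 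But notice that once you unpack this, you have essentially recovered the Core-and-Grothendieck argument the paper uses directly --- your detour through markings and simplicial sets is doing no essential work.
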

\begin{proof}
	The equivalence of (1) and (2) is \cref{ho-equiv-is-K-equiv}.
	It is clear that (3) implies (1), so it remains to show that (1) implies (3). 

	Let $\operatorname{Core}(\mathcal{X})$ denote the maximal cubical subset of $\mathcal{X}$ spanned by 1-cubes which are equivalences.
	One verifies that $\operatorname{Core}(\mathcal{X})$ is again a quasicategory, and that every 1-cube is an equivalence.
	By \cref{cor:qcat-is-kan}, $\operatorname{Core}(\mathcal{X})$ is a Kan complex, hence fibrant in the Grothendieck model structure on cubical sets.
	If (1) holds then $f$ factors through $\operatorname{Core}(\mathcal{X})$, so it suffices to show that the inclusion $\cube{1}\ito E[1]$ is a trivial cofibration in the Grothendieck model structure. But this is clear, as both $\cube{1}$ and $E[1]$ are weakly equivalent to $\square^0$.  
\end{proof}

Both the triangulation and marked triangulation functors induce Quillen equivalences wih the Joyal model structure on $\sSet$ and the marked model structure on $\msSet$, respectively.
\begin{theorem}[{\cite[Thm.~6.1 \& Thm.~6.26]{doherty-kapulkin-lindsey-sattler}}] \label{triangulation-quillen-equiv}
	The triangulation adjunction
	\[ \begin{tikzcd}
		\cSet \ar[r, bend left, "T"{name=Upper}] & \sSet \ar[l, bend left, "U"{name=Lower}] \ar[from=Lower, to=Upper, phantom, "\perp"]
	\end{tikzcd} \]
	is a Quillen equivalence between both:
	\begin{enumerate}
		\item the cubical Joyal model structure and the Joyal model structure; as well as
		\item the Grothendieck model structure and the Kan--Quillen model structure. \qed
	\end{enumerate}
\end{theorem}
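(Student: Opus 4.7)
The plan is to handle both parts in parallel, since the strategy is the same in each case: show that $T$ is left Quillen, and verify a derived-unit criterion for the Quillen equivalence. The proof exploits two key structural features of triangulation, namely that $T$ is strong monoidal with $T(\cube{1})=\simp{1}$ (and hence $T(\cube{n})=(\simp{1})^n$), and that every object on both sides of each adjunction is cofibrant.

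First I would check that $T$ is left Quillen for both pairs. Preservation of cofibrations reduces to showing that $T$ preserves monomorphisms; by cocontinuity this follows from an explicit identification of $T(\bdcube{n})\subset T(\cube{n})$ as the subcomplex of $(\simp{1})^n$ consisting of its codimension-one faces, together with skeletal induction along $\bdcube{n}\ito\cube{n}$. For the Grothendieck/Kan--Quillen case, the generating trivial cofibrations are the open-box inclusions $\obox{n}{i,\eps}\ito\cube{n}$; using the pushout-product decomposition $\obox{n}{i,\eps}\ito\cube{n}\iso\bigl(\{1-\eps\}\ito\cube{1}\bigr)\pp\bigl(\bdcube{n-1}\ito\cube{n-1}\bigr)$ and the strong monoidality of $T$, their triangulations become pushout-products of the anodyne extension $\{1-\eps\}\ito\simp{1}$ with boundary inclusions, and are therefore anodyne. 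For the cubical-Joyal/Joyal case, one argues analogously that the triangulations of the inner open-box inclusions $\iobox{n}{i,\eps}\ito\icube{n}{i,\eps}$ are inner anodyne: collapsing the critical edge before triangulating is essential because it ensures the problematic outer-horn-like face becomes degenerate in $T(\icube{n}{i,\eps})$.

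Next I would verify that $T\adj U$ is a Quillen equivalence by checking (a) $U$ reflects weak equivalences between fibrant objects, and (b) the derived unit $X\to(UTX)^{\fib}$ is a weak equivalence for every cubical set $X$. For (a), the adjunction isomorphism $\Fun(X,UZ)\iso \operatorname{Hom}_{\sSet}(TX,Z)$ together with the transfer of (marked) homotopies across $T\dashv U$ yields $[X,UZ]\iso[TX,Z]$ when $Z$ is fibrant, from which reflection is immediate by the homotopy-class characterization of weak equivalences on both sides. For (b), I would reduce via a cell-attachment argument — using left properness together with the preservation of pushouts along cofibrations by $T$ — to verifying the claim on the representables $\cube{n}$, where $T(\cube{n})=(\simp{1})^n$ is Kan--Quillen contractible and categorically equivalent to $\simp{0}$ in the Joyal case, and where $\cube{n}$ is likewise contractible in the respective cubical model structure.

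The main obstacle is step (b) in the cubical-Joyal case, where matching the cubical notion of equivalence (characterized via $K$ and inner open boxes in \cref{ho-equiv-is-K-equiv,all-equivs-agree}) against the Joyal one is delicate. A clean route is to first establish the Kan--Quillen statement (2), which reduces to Cisinski's test-category theorem applied to the cube category $\square$ with connections, and then bootstrap to (1) by verifying that $T$ carries the generating ``equivalence data'' on the cubical side — in particular the inclusion $\cube{0}\ito K$ and its higher-dimensional avatars — to Joyal equivalences, so that cubical fibrant replacement and simplicial fibrant replacement can be compared across the adjunction and the derived unit identified with the Kan--Quillen derived unit on cores.
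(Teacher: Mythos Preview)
The paper does not prove this theorem: it is stated with a citation to \cite[Thm.~6.1 \& Thm.~6.26]{doherty-kapulkin-lindsey-sattler} and closed immediately with \qed, so the result is imported wholesale from that reference. There is therefore no proof in the present paper to compare against; your proposal is an attempt to reconstruct the argument from scratch.

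Your outline for part~(2) is broadly reasonable and follows the standard shape of such arguments. The genuine gap is in the Joyal case. Your final paragraph is not a proof strategy: the phrase ``the derived unit identified with the Kan--Quillen derived unit on cores'' does not describe a valid reduction, since taking cores is not a left Quillen functor and categorical equivalences of cubical sets are not detected by their restrictions to cores. The actual argument in \cite{doherty-kapulkin-lindsey-sattler} relies on substantial additional input---in particular the auxiliary adjunction $Q\dashv\int$ discussed just before \cref{Q-int-equiv-marked-Joyal}, together with explicit combinatorial work showing that triangulations of inner open boxes are inner anodyne and that the composites $TQ$ and $\int\! U$ are naturally weakly equivalent to the identity. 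Your remark that ``collapsing the critical edge ensures the problematic outer-horn-like face becomes degenerate'' is the right intuition for one of these steps but suppresses a nontrivial filtration. Finally, your cell-attachment reduction for the derived unit presupposes left properness of the cubical Joyal model structure, which is itself one of the harder results in \cite{doherty-kapulkin-lindsey-sattler} and not something you can invoke without comment.
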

\begin{theorem} \label{prop:T^+U^+}
The adjunction
\[ \begin{tikzcd}
	\mcSet \ar[r, bend left, "T^+"{name=Upper}] & \msSet \ar[l, bend left, "U^+"{name=Lower}] \ar[from=Upper, to=Lower, phantom, "\perp"]
\end{tikzcd} \]
is a Quillen equivalence.
\end{theorem}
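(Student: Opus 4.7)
The plan is to prove this in two steps: first show that $T^+ \dashv U^+$ is a Quillen adjunction, and then use a commutative square of Quillen adjunctions to upgrade to a Quillen equivalence.

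For the Quillen adjunction, I would show that $T^+$ preserves cofibrations and weak equivalences. Cofibrations in both model structures are monomorphisms, so these are obviously preserved. For weak equivalences, it suffices to show that the right adjoint $U^+$ preserves fibrant objects, since $T^+$ is compatible with the cylinder construction---$T$ is strong monoidal and sends $\cube{1}$ to $\simp{1}$, so $T^+(\maxm{\cube{1}}\otimes \overline{X}) \iso \maxm{\simp{1}} \times T^+\overline{X}$---and this compatibility ensures the natural bijection $\Hom_{\msSet}(T^+\overline{X}, \overline{W}) \iso \Hom_{\mcSet}(\overline{X}, U^+\overline{W})$ descends to homotopy classes. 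A fibrant object of $\msSet$ is a quasicategory $\cat{W}$ marked at its equivalences; its image $U^+\natm{\cat{W}}$ has underlying cubical set $U\cat{W}$, which is a cubical quasicategory by right Quillenness of $U$ (\cref{triangulation-quillen-equiv}). It remains to check that the marking also matches---i.e.\ a 1-simplex $f$ of $\cat{W}$ is an equivalence in $\cat{W}$ if and only if the corresponding 1-cube of $U\cat{W}$ (via $T\cube{1} = \simp{1}$) is an equivalence in $U\cat{W}$. The forward direction follows from \cref{all-equivs-agree}: the extension $J \to \cat{W}$ witnessing that $f$ is an equivalence transposes to $E[1] = UJ \to U\cat{W}$. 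The converse uses that the cubical homotopy category functor factors as $\Ho_{\mathrm{simp}} \circ T$ (by uniqueness of adjoints, since the cubical nerve $N$ coincides with $U N_{\mathrm{simp}}$), together with the fact that the counit $TU\cat{W} \to \cat{W}$ is a Joyal weak equivalence.

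For the Quillen equivalence, consider the commutative square of left Quillen functors
\[ \begin{tikzcd}
\cSet \ar[r, "T"] \ar[d, "{\minm{(-)}}"'] & \sSet \ar[d, "{\minm{(-)}}"] \\
\mcSet \ar[r, "T^+"] & \msSet
\end{tikzcd} \]
which commutes because the triangulation of a degenerate 1-cube is a degenerate 1-simplex. By \cref{triangulation-quillen-equiv}, the top is a Quillen equivalence; by \cref{marked-cset-quillen-equiv}, the left is a Quillen equivalence; and the right is the standard marking Quillen equivalence on simplicial sets (cf.\ the cocartesian model structure over $\simp{0}$ in \cite[\S3.1]{lurie:htt}). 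Passing to total left derived functors produces a square commuting up to natural isomorphism, so 2-out-of-3 applied to equivalences of homotopy categories forces $\mathbb{L} T^+$ to be an equivalence as well.

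The main obstacle is the compatibility of equivalences under triangulation: verifying that a 1-simplex of a quasicategory $\cat{W}$ is an equivalence if and only if the corresponding 1-cube of $U\cat{W}$ represents an equivalence. Once this is in hand, everything else is formal.
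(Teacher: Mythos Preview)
Your two-step strategy—Quillen adjunction, then 2-out-of-3 on the square involving $\minm{(-)}$ and $T$—matches the paper exactly, and the square argument is identical (the paper cites \cref{marked-cset-quillen-equiv}, \cref{triangulation-quillen-equiv}, and \cite[Prop.~3.1.5.3]{lurie:htt}).

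There is, however, a gap in your Quillen-adjunction argument. The claimed isomorphism $T^+(\maxm{(\cube{1})}\otimes \overline{X}) \cong \maxm{(\simp{1})} \times T^+\overline{X}$ is false. The underlying simplicial sets agree because $T$ is strong monoidal, but the markings do not: a $1$-simplex of $\simp{1} \times TX$ of the form $(0\!\to\!1,\, g')$, with $g'$ the image of a marked non-degenerate $1$-cube $g$ of $\overline{X}$, is marked on the right-hand side (both coordinates marked) but not on the left, since this diagonal edge does not arise from any $1$-cube of $\cube{1}\otimes X$. Without the isomorphism, your claim that the hom-set bijection descends to homotopy classes becomes circular: you would need $T^+\big((\cube{1})^\sharp \otimes \overline{X}\big)$ to be a cylinder object for $T^+\overline{X}$ in $\msSet$, and that already requires $T^+$ to preserve the weak equivalence $(\cube{1})^\sharp \otimes \overline{X} \to \overline{X}$.

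The paper avoids this by citing \cite[Prop.~7.15]{JT07} together with \cref{prop:catfib}: since $T^+$ preserves monomorphisms, it suffices that $U^+$ preserve fibrations between fibrant objects, which by \cref{prop:catfib} (and its simplicial analogue) reduces to showing $U^+(\natm{W}) = \natm{(UW)}$ for every quasicategory $W$. That is exactly the equivalence-correspondence you flag as the main obstacle, and your sketch for it is fine. So your diagnosis of the crux is correct; only the packaging of the Quillen-adjunction step needs to be replaced.
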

\begin{proof}
\cref{prop:catfib} and \cite[Prop.~7.15]{JT07}
show that the adjunction is a Quillen adjunction. 
Applying 2-out-of-3 on the diagram:
\[ \begin{tikzcd}
	\cSet \ar[r, "T"] \ar[d, "\minm{(-)}"'] & \sSet \ar[d, "\minm{(-)}"] \\
	\mcSet \ar[r, "T^+"] & \msSet
\end{tikzcd} \]
the claim then follows from \cref{marked-cset-quillen-equiv,triangulation-quillen-equiv}, and \cite[Prop.~3.1.5.3]{lurie:htt}.
\end{proof}

In the remaining sections, we will frequently make use of the induced Quillen equivalences on slice categories.
We record these adjunctions here, and reference them implicitly throughout the remainder of the paper.
\begin{corollary} \leavevmode
	\begin{enumerate} 
		\item For a simplicial set $K$, the functor $U \from \sSet \to cSet$ induces a Quillen adjunction 
		\[ \begin{tikzcd}
			\cSet_{/ UK} \ar[r, bend left, ""{name=Upper}] & \sSet_{/ K} \ar[l, bend left, "U"{name=Lower}] \ar[from=Upper, to=Lower, phantom, "\perp"]
		\end{tikzcd} \]
		between both the Grothendieck and Kan--Quillen model structures, as well as the cubical Joyal and Joyal model structures.
		The left adjoint sends $X \to UK$ to the composite $TX \to TUK \xrightarrow{\eps_K} K$.
		Moreover, if $K$ is fibrant then this is a Quillen equivalence.
	\item For a marked simplicial set $\overline{K}$, the functor $U \from \msSet \to \mcSet$ induces a Quillen adjunction  
		\[ \begin{tikzcd}
			\mcSet_{/ U^+ \overline{K}} \ar[r, bend left, ""{name=Upper}] & \sSet_{/ \overline{K}} \ar[l, bend left, "U^+"{name=Lower}] \ar[from=Upper, to=Lower, phantom, "\perp"]
		\end{tikzcd} \]
		whose left adjoint sends $\overline{X} \to U^+ \overline{K}$ to the composite $T^+ \overline{X} \to T^+U^+ \overline{K} \xrightarrow{\eps_{\overline{K}}} \overline{K}$.
		Moreover, if $K$ is fibrant then this is a Quillen equivalence.
	\end{enumerate}
\end{corollary}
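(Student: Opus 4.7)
The plan is to reduce to a general fact about slicing Quillen adjunctions: if $L\dashv R \from \mathcal{M}\rightleftarrows\mathcal{N}$ is a Quillen adjunction and $K\in\mathcal{N}$, then the induced adjunction on slices --- whose right adjoint sends $Y\to K$ to $RY\to RK$ and whose left adjoint sends $X\to RK$ to $LX\to LRK\xrightarrow{\varepsilon_{K}}K$ --- is again a Quillen adjunction, and is moreover a Quillen equivalence when $L\dashv R$ is one and $K$ is fibrant. Granting this principle, both parts of the corollary will follow by applying it to the triangulation adjunction of \cref{triangulation-quillen-equiv} in either of its two variants, and to the marked triangulation adjunction of \cref{prop:T^+U^+}.

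The Quillen adjunction half is essentially formal: the slice model structures inherit their cofibrations, fibrations, and weak equivalences from the underlying category, so the right adjoint on slices preserves fibrations and trivial fibrations because $R$ does. Beyond this I would only need to verify that the induced adjoints have the claimed explicit form, which is a direct unwinding of the formal description of a sliced adjunction.

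For the Quillen equivalence half, the key observation is that if $K$ is fibrant, then any fibration $Y\twoheadrightarrow K$ has fibrant domain $Y$. Thus a factorization $LX\acto LX'\fto K$ of the structure map simultaneously exhibits $LX'$ as a fibrant replacement of $LX\to K$ in $\mathcal{N}_{/K}$ and of $LX$ in $\mathcal{N}$. The derived unit at a cofibrant $X\to RK$ in the slice is then the map $X\to R(LX')$, which is precisely the derived unit of the unsliced adjunction at $X$, hence a weak equivalence by hypothesis. A dual argument handles the derived counit. This fibrant replacement comparison is the one place where fibrancy of $K$ is essential, and is the only step of the proof that is not purely formal.
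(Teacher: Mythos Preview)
Your proof is correct and spells out precisely the standard argument the paper leaves implicit: the corollary is stated without proof in the paper, as it follows formally from the preceding Quillen equivalences (\cref{triangulation-quillen-equiv} and \cref{prop:T^+U^+}) via exactly the slicing principle you describe. Your identification of where fibrancy of $K$ enters---ensuring that fibrant objects of the slice have fibrant underlying objects, so that derived (co)units in the slice coincide with those of the unsliced adjunction---is the right point to emphasize.
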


In addition to the triangulation adjunction between simplicial sets and cubical sets, there exists an adjunction $Q \adj \int$ going ``in the other direction,'' that was first defined in \cite{kapulkin-lindsey-wong}.
More precisely, there is a monad on $\cSet$ \cite[Def.~5.1]{doherty-kapulkin-lindsey-sattler} whose underlying endofunctor $C \from \cSet \to \cSet$ is defined by the pushout
\[ CX \ := \ \colim \left( \begin{tikzcd}
	{X} \ar[r] \ar[d, "{\face{}{1,1} \gprod \id_{X}}", swap] & {\cube{0}} \\
	{\cube{1} \gprod X}
\end{tikzcd} \right) \]
To give some intuition for the construction: the object $\cube{1} \gprod X$ forms a cylinder object on $X$ in the Grothendieck model structure; the pushout $CX$ identifies one end of the cylinder with a point, thus $CX$ models a cone over $X$.

Instantiating at $X = \varnothing$, this monad gives rise to an augmented cosimplicial object $[n] \mapsto C^{n+1} \varnothing$, which we restrict to a cosimplicial object $[n] \mapsto C^{n+1} \varnothing = C^n \cube{0}$.
We denote this cosimplicial object by $Q[-] \from \Delta \to \cSet$ and observe that, since $\cube{1} \gprod -$ sends cubes to cubes, each object $Q[n]$ admits a map $\cube{n} \to Q[n]$.
This map is an epimorphism, hence $Q[n]$ may be regarded as a quotient of $\cube{n}$.
Via this quotient \cite[Prop.~6.3]{doherty-kapulkin-lindsey-sattler}, the simplicial face operator $\face{}{i}$ may be seen as a cubical face inclusion (either $\face{}{n,1}$ when $i = 0$, or $\face{}{n-i+1, 0}$ for all other $i$), and the simplicial degeneracy operator $\degen{}{i}$ arises as either a cubical degeneracy ($\degen{}{n}$ if $i = 0$) or a negative connection ($\conn{}{n-i, 0}$ for all other $i$).

We use this cosimplicial object to define a pair of adjoint functors
\[ \begin{tikzcd}
	\sSet \ar[r, bend left, "Q"{name=Upper}] & \cSet \ar[l, bend left, "\int"{name=Lower}] \ar[from=Lower, to=Upper, phantom, "\perp"]
\end{tikzcd} \]
where $Q \from \sSet \to \cSet$ is the extension by colimits and $\int \from \cSet \to \sSet$ is defined by
\[ ({\textstyle \int \! X})_n \ := \cSet \big( Q[n], X \big) . \]

As $Q[1] \cong \cube{1}$, we may identify 1-simplices of a simplicial set $K$ with 1-cubes of $QK$, and similarly 1-cubes of a cubical set $X$ may be identified with 1-simplices of $\int \! X$ (more precisely, there is a natural isomorphism between their underlying quivers).
We use this fact to ``upgrade'' to an adjunction
\[ \begin{tikzcd}
	\msSet \ar[r, bend left, "Q^+"{name=Upper}] & \mcSet \ar[l, bend left, "\int^+"{name=Lower}] \ar[from=Lower, to=Upper, phantom, "\perp"]
\end{tikzcd} \]
between marked simplicial sets and marked cubical sets.
The left adjoint is defined by $Q^+(K, S) = (QK, S)$ and the right adjoint is defined by $\int^+ \! (X, S) = (\int \! X, S)$.

\begin{proposition} \label{Q-int-equiv-marked-Joyal}
	The adjunctions
	\[ \begin{tikzcd}
		\sSet_{\mathrm{Quillen}} \ar[r, bend left, "Q"{name=UpperT}] & \cSet_{\mathrm{Groth}} \ar[l, bend left, "\int"{name=LowerT}] \ar[from=LowerT, to=UpperT, phantom, "\perp"] &[1ex] \sSet_{\mathrm{Joyal}} \ar[r, bend left, "Q"{name=UpperT}] & \cSet_{\mathrm{Joyal}} \ar[l, bend left, "\int"{name=LowerT}] \ar[from=LowerT, to=UpperT, phantom, "\perp"] &[1ex] \msSet \ar[r, bend left, "Q^+"{name=Upper}] & \mcSet \ar[l, bend left, "\int^+"{name=Lower}] \ar[from=Lower, to=Upper, phantom, "\perp"]
	\end{tikzcd} \]
	are Quillen equivalences.
\end{proposition}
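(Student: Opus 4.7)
The plan is to reduce to the Quillen equivalences for the triangulation adjunctions (Theorems \ref{triangulation-quillen-equiv} and \ref{prop:T^+U^+}) via the 2-out-of-3 property for Quillen equivalences. The key observation is a natural isomorphism $T \circ Q \cong \mathrm{id}_{\sSet}$, which immediately upgrades to $T^+ \circ Q^+ \cong \mathrm{id}_{\msSet}$ since both $T^+$ and $Q^+$ act identically on the underlying sets of markings.

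First I would verify the identification $TQ[n] \cong \simp{n}$ naturally in $[n] \in \Delta$, by induction on $n$. The case $n = 0$ is immediate. For the inductive step, since $T$ is strong monoidal and cocontinuous, the pushout defining $Q[n+1] = \square^0 \cup_{Q[n]} (\square^1 \otimes Q[n])$ is carried to $\simp{0} \cup_{\simp{n}} (\simp{1} \times \simp{n})$, which is canonically isomorphic to $\simp{n+1}$ realized as the join $\simp{n} \star \simp{0}$ (every shuffle of $\simp{1} \times \simp{n}$ except the rightmost one becomes degenerate after collapsing $\{1\} \times \simp{n}$). Naturality in $\Delta$ then follows by matching the explicit face and degeneracy operators of $Q[-]$—expressed as cubical faces, negative connections, and degeneracies via \cite[Prop.~6.3]{doherty-kapulkin-lindsey-sattler}—with the standard simplicial operators on $\simp{n}$ after triangulation.

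Next I would verify that $Q$ and $Q^+$ are left Quillen. Since cofibrations are monomorphisms in all three model structures and $Q$ is cocontinuous, it suffices to show the latching map $Q(\bd \simp{n}) \to Q[n]$ is a monomorphism, i.e.\ that $Q[-]$ is Reedy cofibrant; this is visible from the explicit face-operator description. For preservation of weak equivalences, if $f \from K \to L$ is a weak equivalence then $TQf \cong f$ is again a weak equivalence, and since all objects are cofibrant and $T$ is a Quillen equivalence, $T$ reflects weak equivalences, so $Qf$ is a weak equivalence. In particular, $Q$ preserves trivial cofibrations. The marked variant is identical, using $T^+$.

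Finally, since the identity is trivially a Quillen equivalence and $T$ (resp.\ $T^+$) is a Quillen equivalence, the 2-out-of-3 property for Quillen equivalences applied to the factorization $\mathrm{id} \cong T \circ Q$ (resp.\ $\mathrm{id} \cong T^+ \circ Q^+$) yields that $Q$ (resp.\ $Q^+$) is a Quillen equivalence in each of the three cases. The hard part will be the naturality verification in the first step: one must check that the cubical operators giving the cosimplicial structure on $Q[-]$—a mix of faces, negative connections, and degeneracies—correctly assemble into the standard simplicial operators after triangulation, rather than merely providing an ad hoc levelwise isomorphism.
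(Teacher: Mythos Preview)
Your approach is correct. The paper's own proof is much shorter because it simply cites \cite[Thm.~6.23 \& Prop.~6.25]{doherty-kapulkin-lindsey-sattler} for the two unmarked adjunctions; what you sketch is essentially the content of that citation, namely the natural isomorphism $TQ \cong \id_{\sSet}$ (this is \cite[Prop.~6.19]{doherty-kapulkin-lindsey-sattler}) combined with the 2-out-of-3 property against the triangulation Quillen equivalence. For the marked adjunction the paper takes a slightly different 2-out-of-3, using the commutative square
\[
\begin{tikzcd}
\msSet \ar[r, "Q^+"] \ar[d, "U"'] & \mcSet \ar[d, "U"] \\
\sSet \ar[r, "Q"] & \cSet
\end{tikzcd}
\]
together with the already-established unmarked case and the Quillen equivalences $\minm{(-)} \dashv U$ of \cref{marked-cset-quillen-equiv} and \cite[Prop.~3.1.5.3]{lurie:htt}, whereas you use $T^+ Q^+ \cong \id_{\msSet}$ and \cref{prop:T^+U^+}. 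Both routes work; yours is more uniform across the three cases, while the paper's avoids having to separately argue that $Q^+$ is left Quillen. Your identification of the naturality check for $TQ[n] \cong \simp{n}$ as the one genuinely delicate point is accurate.
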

\begin{proof}\let\qed\relax 
	For the unmarked adjunctions, this is \cite[Thm.~6.23 \& Prop.~6.25]{doherty-kapulkin-lindsey-sattler}.
	For the marked adjunction, this follows from 2-out-of-3 on the diagram
	\[ \begin{tikzcd}
		\msSet \ar[r, "Q^+"] \ar[d, "U"'] & \mcSet \ar[d, "U"] \\
		\sSet \ar[r, "Q"] & \sSet
	\end{tikzcd} \tag*{\qedsymbol} \]
\end{proof}
\begin{proposition}[{cf.\ \cite[Thm.~6.10]{doherty-kapulkin-lindsey-sattler}}] \label{Q-is-full-faithful}
	The functors $Q$ and $Q^+$ are full and faithful. \qed
\end{proposition}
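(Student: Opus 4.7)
The plan is to handle the two functors separately. For the unmarked case, $Q\colon\sSet\to\cSet$ is full and faithful by the cited result \cite[Thm.~6.10]{doherty-kapulkin-lindsey-sattler}, so there is nothing to do. The substance lies entirely in deducing the marked case from the unmarked one, and the main mechanism will be the observation that $Q^+$ is defined fiberwise over the forgetful functors to $\sSet$ and $\cSet$.

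First I would unwind the definitions. A morphism $(K,S)\to(K',S')$ in $\msSet$ is a simplicial map $f\colon K\to K'$ satisfying $f(S)\subseteq S'$, and analogously in $\mcSet$. By construction $Q^+(K,S)=(QK,S)$, where $S\subseteq K_1$ is viewed as a subset of $(QK)_1$ via the natural bijection induced by the isomorphism $Q[1]\cong\cube{1}$. So the claim that $Q^+$ is full and faithful amounts to showing that the bijection
\[
\sSet(K,K')\xrightarrow{\;Q\;}\cSet(QK,QK')
\]
provided by full faithfulness of $Q$ restricts to a bijection between the marking-preserving subsets on each side.

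The key step is therefore to check that for $f\colon K\to K'$, the map $Qf$ carries the marking $S\subseteq(QK)_1$ into $S'\subseteq(QK')_1$ if and only if $f$ carries $S\subseteq K_1$ into $S'\subseteq K'_1$. This is immediate once one verifies the naturality square
\[
\begin{tikzcd}
\sSet(K,K') \ar[r,"Q"] \ar[d,"(-)_1"'] & \cSet(QK,QK') \ar[d,"(-)_1"] \\
\Set(K_1,K'_1) \ar[r,equal] & \Set((QK)_1,(QK')_1)
\end{tikzcd}
\]
commutes, where the bottom identification is the one coming from $Q[1]\cong\cube{1}$. This commutativity is essentially tautological from the construction of $Q$ as the left Kan extension of the cosimplicial object $[n]\mapsto Q[n]$, evaluated at $n=1$.

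I expect no real obstacle; the only mildly delicate point is to be explicit about the identification of $1$-simplices with $1$-cubes so that the claim ``$Qf$ preserves markings iff $f$ does'' is clearly a tautology rather than requiring further argument. Once that square is in hand, full faithfulness of $Q^+$ is immediate: injectivity comes from injectivity of $Q$, and surjectivity onto marking-preserving maps comes from surjectivity of $Q$ combined with the equivalence ``$f$ preserves markings iff $Qf$ does.''
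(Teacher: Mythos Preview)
Your proposal is correct and matches the paper's approach: the paper simply appends a \qed to the statement, treating both assertions as immediate from the cited result, with the marked case following formally from the unmarked one exactly as you describe. Your write-up supplies the routine details the paper omits, and there is nothing missing.
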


\section{\label{sec:ccfib_lfib}Cubical Cocartesian fibrations and left fibrations}

In this section, we introduce cocartesian fibrations of cubical sets
and the closely related notion of left fibrations of cubical sets.

\subsection{Cocartesian fibrations of cubical sets}

In this subsection, we define cocartesian fibrations of cubical sets.
Roughly speaking, they are maps of cubical sets whose fibers are cubical
quasicategories equipped with an action of the base. The goal of this
section is to make this precise, starting from a combinatorial definition
of cocartesian fibrations.
\begin{definition}
\label{def:cc}Let $p \from X\to Y$ be a map of cubical sets. An edge $e \from \square^{1}\to X$
of $X$ is called \emph{$p$-cocartesian} if every solid commutative
diagram of the form
\begin{equation}\label{d:cocart}
\begin{tikzcd}
	{\square^1} & {\sqcap^n_{i,1}} & X \\
	& {\square ^n} & Y
	\arrow["{e_{i,1}}"', from=1-1, to=1-2]
	\arrow["e", curve={height=-18pt}, from=1-1, to=1-3]
	\arrow[from=1-2, to=1-3]
	\arrow[from=1-2, to=2-2]
	\arrow["p", from=1-3, to=2-3]
	\arrow[dotted, from=2-2, to=1-3]
	\arrow[from=2-2, to=2-3]
\end{tikzcd}
\end{equation}admits a filler, where $n\geq2$, $1\leq i\leq n$, and $e_{i,1}$
denotes the critical edge for the $\pr{i,1}$-face. We say that $p$
is a \emph{cocartesian fibration} if it is an inner fibration and, for each vertex $x\in X$ and each edge $f \from p\pr x\to y$ in $Y$, there is a $p$-cocartesian edge $x\to x'$ lying over $f$.

A cocartesian fibration of cubical sets $p \from X\to Y$ is called a \emph{left
fibration} if its fibers are cubical Kan complexes. (By \cref{prop:shadow} below, this is equivalent to the condition that
every edge of $X$ be $p$-cocartesian.)
\end{definition}

Let $p \from X\to Y$ be a cocartesian fibration of cubical quasicategories.
Since $p$ is an inner fibration, its fiber $X_{y}=X\times_{Y}\{y\}$
is a cubical quasicategory for every $y\in Y$. We defined cocartesian
fibrations so that the fibers $\{X_{y}\}_{y\in Y}$ behave ``functorially''
in terms of the edges of $Y$. More precisely, for each edge $f \from y_{0}\to y_{1}$
in $Y$, we have a functor $f_{!} \from X_{y_{0}}\to X_{y_{1}}$ of cubical
quasicategories, and the association $f\mapsto f_{!}$ is coherently
functorial in an appropriate sense. To construct the functor $f_{!}$,
we need the following lemma.
\begin{lemma}
\label{lem:cocart_natural_transformation}Let $p \from X\to Y$ be a cocartesian
fibration of cubical sets, let $i \from A\to B$ and $i' \from A'\to B'$ be monomorphisms
of cubical sets, and suppose we are given a commutative diagram
\[\begin{tikzcd}
	{(A\otimes \square^n\otimes B')\cup (B\otimes \sqcap^{n}_{i,1}\otimes B')\cup (B\otimes \square^n\otimes A')} & X \\
	{B\otimes \square ^n\otimes B'} & Y,
	\arrow["f", from=1-1, to=1-2]
	\arrow[from=1-1, to=2-1]
	\arrow["p", from=1-2, to=2-2]
	\arrow[dotted, from=2-1, to=1-2]
	\arrow[from=2-1, to=2-2]
\end{tikzcd}\]where $n\geq1$ and $1\leq i\leq n$. Assume that for each pair of
vertices $\pr{x,y}\in\pr{A\times B'}\cup\pr{B\times A'}$, the map
$f\vert\{x\}\otimes\square^{n}\otimes\{y\}$ carries the critical
edge $e_{i,1}$ to a $p$-cocartesian edge. Then there is a
filler such that, for each $\pr{b,b'}\in B\times B'$, the restriction
$\{b\}\otimes\square^{n}\otimes\{b'\}$ carries the cocartesian edge
$e_{i,1}$ to a $p$-cocartesian morphism. Moreover, the full cubical
subset of $\Fun_{\pr{A\otimes\square^{1}\otimes B'}\cup\pr{B\otimes\{0\}\otimes B'}\cup\pr{B\otimes\square^{1}\otimes A'}//Y}\pr{B\otimes\square^{1}\otimes B',X}$
spanned by such fillers is a contractible cubical Kan complex.
\end{lemma}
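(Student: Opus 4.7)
The plan is to deduce both the existence of a filler and the contractibility of the space of such fillers from a single extension principle, which I would prove by cellular induction. First, using that the class of monomorphisms satisfying the desired lifting property is saturated, I would reduce to the case where $A \hookrightarrow B$ and $A' \hookrightarrow B'$ are boundary inclusions $\partial\square^m \hookrightarrow \square^m$ and $\partial\square^{m'} \hookrightarrow \square^{m'}$. In this reduced setting, the problem becomes extending a map defined on the sub cubical set
\[
K := \pr{\partial\square^m \otimes \square^n \otimes \square^{m'}} \cup \pr{\square^m \otimes \sqcap^n_{i,1} \otimes \square^{m'}} \cup \pr{\square^m \otimes \square^n \otimes \partial\square^{m'}}
\]
of $\square^{m+n+m'}$ over $Y$, subject to the $p$-cocartesian critical-edge hypotheses.

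For the existence claim, I would exhibit $K \hookrightarrow \square^{m+n+m'}$ as a transfinite composite of cocartesian open-box inclusions $\sqcap^k_{j,1} \hookrightarrow \square^k$, ordered so that the critical edge of each attached cell is, up to degeneracy, the cocartesian edge $e_{i,1}$ sitting in the middle $\square^n$ factor. The hypothesis that such critical edges are $p$-cocartesian, together with the fact that degeneracies of cocartesian edges remain cocartesian (an immediate consequence of \cref{def:cc} via the cubical identities), then lets us solve each successive lifting problem by applying \cref{def:cc} directly.

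For contractibility, I would show that the cubical subset $F$ of fillers has the right lifting property against each boundary inclusion $\partial\square^k \hookrightarrow \square^k$, which implies both the Kan condition and contractibility. Unpacking the adjunction, any such lifting problem for $F$ is a filling problem of exactly the form handled by the existence argument, with the monomorphism $A \hookrightarrow B$ replaced by the pushout-product $\pr{\square^k \otimes A} \cup \pr{\partial\square^k \otimes B} \hookrightarrow \square^k \otimes B$. All critical-edge hypotheses carry over because the ``critical'' direction lies in the unchanged $\square^n$ factor, so the existence principle applies and yields the required extension.

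The main obstacle will be the combinatorial core of the existence step, namely ordering the missing faces of $\square^{m+n+m'}$ so that each attachment presents as a $\sqcap^k_{j,1}$-horn whose critical edge agrees, up to degeneracy, with $e_{i,1}$. The asymmetry between the $\pr{i,1}$-open box factor and the two boundary factors makes this bookkeeping delicate, and it is here that the specific choice of $\pr{i,1}$-direction in the hypotheses becomes essential.
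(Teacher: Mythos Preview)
Your reduction to boundary inclusions and your deduction of contractibility from the existence statement are exactly what the paper does. The difference lies in the existence step after the reduction. You anticipate having to exhibit $K \hookrightarrow \square^{m+n+m'}$ as a carefully ordered transfinite composite of $(j,1)$-open-box inclusions, and you flag the bookkeeping as the main obstacle. But this inclusion is already a \emph{single} open-box inclusion: by the pushout-product computation \cite[Lem.~1.26]{doherty-kapulkin-lindsey-sattler} (invoked in the paper at \cref{prop:lan_pp}),
\[
(\partial\square^m \hookrightarrow \square^m)\ \pp\ (\sqcap^n_{i,1} \hookrightarrow \square^n)\ \pp\ (\partial\square^{m'} \hookrightarrow \square^{m'})
\;\cong\;
\bigl(\sqcap^{m+n+m'}_{m+i,1} \hookrightarrow \square^{m+n+m'}\bigr),
\]
and the critical edge of the right-hand side is precisely $\{0^m\}\otimes e_{i,1}\otimes\{0^{m'}\}$, which is covered by the hypothesis (or, at the bottom of the induction, by the very definition of cocartesian edge/fibration). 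So the combinatorial obstacle you anticipate dissolves: there is no ordering to arrange and no degeneracy argument to make, just one open-box filling. The paper's proof is accordingly two sentences long.
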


\begin{proof}
For the first assertion, working cube by cube, we may assume that
$i$ and $j$ are one of the boundary inclusions of the standard cubes.
In this case, the left vertical arrow has the form $\sqcap_{i,1}^{n}\to\square^{n}$
for some $n\geq1$ and $1\leq i\leq n,$ so the claim is immediate
from the definition of $p$-cocartesian morphisms and cocartesian
fibrations. For the second assertion, we observe that the cubical
subset in question has the extension property for the boundary inclusions,
which follows from the first assertion.
\end{proof}
\begin{definition}
\label{def:cocart_transport}Let $p \from X\to Y$ be a cocartesian fibration
of cubical sets, and let $f \from y_{0}\to y_{1}$ be an edge of $Y$. We
define the \emph{cocartesian transport} functor $f_{!} \from X_{y_{0}}\to X_{y_{1}}$
as follows: By \cref{lem:cocart_natural_transformation}, we
can find a dotted filler
\[\begin{tikzcd}
	{X_{y_0}\otimes \{0\}} && X \\
	{X_{y_0}\otimes \square ^1} & {\square ^1} & Y
	\arrow[hook, from=1-1, to=1-3]
	\arrow[from=1-1, to=2-1]
	\arrow["p", from=1-3, to=2-3]
	\arrow["h"{description}, dotted, from=2-1, to=1-3]
	\arrow[from=2-1, to=2-2]
	\arrow["f"', from=2-2, to=2-3]
\end{tikzcd}\]such that, for each $x\in X_{y_{0}}$, the edge $h\vert\{x\}\otimes\square^{1}$
is $p$-cocartesian. The functor $f_{!}$ is obtained by restricting
the filler to $X_{y_{0}}\otimes\{1\}$. Note that $f_{!}$ is well-defined
up to natural equivalence by \cref{lem:cocart_natural_transformation}.
\end{definition}

Given a cocartesian fibration, every edge in the base induces a functor
between the fibers over its source and the target, via cocartesian
transport. The definition of cocartesian fibration grants us something
more; if $p \from X \to Y$ is a cocartesian fibration of cubical sets, then
every $2$-cube in $Y$ depicted in the left hand cube below gives
rise to a diagram of cubical quasicategories on the right, commuting
up to (canonical) natural equivalence:
\[\begin{tikzcd}
	{y_{00}} & {y_{10}} & {X_{y_{00}}} & {X_{y_{10}}} \\
	{y_{01}} & {y_{11}} & {X_{y_{01}}} & {X_{y_{11}}.}
	\arrow[from=1-1, to=1-2]
	\arrow[from=1-1, to=2-1]
	\arrow[from=1-2, to=2-2]
	\arrow[from=1-3, to=1-4]
	\arrow[from=1-3, to=2-3]
	\arrow[from=1-4, to=2-4]
	\arrow[from=2-1, to=2-2]
	\arrow[from=2-3, to=2-4]
\end{tikzcd}\]

Unraveling the definitions, this fact essentially rests upon the following
(half of) ``three out of four'' property of cocartesian edges. 
\begin{proposition}
\label{prop:four_three}Let $p \from X\to Y$ be a cocartesian fibration
of cubical sets, and consider a $2$-cube $\sigma$ in $X$ depicted
as
\[\begin{tikzcd}
	{x_{00}} & {x_{10}} \\
	{x_{01}} & {x_{11}}
	\arrow["a", from=1-1, to=1-2]
	\arrow["b"', from=1-1, to=2-1]
	\arrow["d", from=1-2, to=2-2]
	\arrow["c"', from=2-1, to=2-2]
\end{tikzcd}\]Suppose that $a$ and $b$ are $p$-cocartesian. Then $c$ is $p$-cocartesian
if and only if $d$ is $p$-cocartesian.
\end{proposition}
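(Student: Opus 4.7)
By the symmetry of $\square^{2}$ that swaps the two coordinate axes (which exchanges $a \leftrightarrow b$ and $c \leftrightarrow d$ while preserving the hypothesis that $a$ and $b$ are $p$-cocartesian), it suffices to prove the implication ``$d$ is $p$-cocartesian $\implies$ $c$ is $p$-cocartesian''; the converse follows by the same argument applied to the transposed $2$-cube. The strategy is to compare $c$ against a freshly chosen $p$-cocartesian lift of $\bar c := p(c)$ sharing the source $x_{01}$.

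Pick a $p$-cocartesian lift $\tilde c \from x_{01}\to x_{11}'$ of $\bar c$ via the definition of cocartesian fibration. Apply \cref{lem:cocart_natural_transformation} with appropriate parameters to construct a $3$-cube $\Sigma \from \square^{3}\to X$ lying over the degeneracy $\bar\sigma\degen{}{3}$ of $\bar\sigma = p(\sigma)$ (i.e.\ $\bar\sigma$ made constant in the third direction) such that the $(i_{3}=0)$-face equals $\sigma$; the edges in the $i_{3}$-direction at $(0,0,-)$, $(1,0,-)$, $(0,1,-)$ are degenerate; the edge at $(-,1,1)$ is $\tilde c$; and the edge at $(1,-,1)$ is a freshly produced $p$-cocartesian lift $\tilde d'$ of $\bar d$ starting at $x_{10}$. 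The remaining edge $g \from x_{11}\to x_{11}'$ at position $(1,1,-)$ lies over the degenerate $1$-cube at $y_{11}$, hence belongs to the fiber $X_{y_{11}}$, which is a cubical quasicategory since $p$ is an inner fibration. The cocartesianness of $a$ and $b$ enters precisely as the hypothesis needed to invoke \cref{lem:cocart_natural_transformation}: it guarantees that the critical edges on the boundary faces of the shape being filled are $p$-cocartesian, so the lemma supplies the required $3$-cube.

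The face $\Sigma|_{i_{1}=1}$ is a $2$-cube over a degenerate $2$-cube on $\bar d$ that exhibits the two $p$-cocartesian lifts $d$ (hypothesis) and $\tilde d'$ (construction) of $\bar d$ with common source $x_{10}$ as comparing via the edge $g$. An essential-uniqueness argument---constructing a candidate inverse $g^{-1}$ via another application of \cref{lem:cocart_natural_transformation} to $\tilde d'$, and witnessing that both composites $g g^{-1}$ and $g^{-1} g$ are identities up to $2$-cubes in the fiber, then invoking \cref{all-equivs-agree} to recognize this data as an equivalence in the cubical quasicategory $X_{y_{11}}$---shows that $g$ is an equivalence. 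The face $\Sigma|_{i_{2}=1}$ then exhibits $c$ and $\tilde c$ as differing only by the equivalence $g$ on the target side and by a degenerate edge on the source side. Given any lifting problem for $c$ against an inner open box, we can therefore use this $2$-cube together with $g^{-1}$ to translate it into a lifting problem for $\tilde c$, solve it by cocartesianness of $\tilde c$, and translate the solution back; this establishes that $c$ is $p$-cocartesian.

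The principal obstacle is the essential uniqueness of cocartesian lifts required in the middle step: establishing that the comparison edge $g$ between two $p$-cocartesian lifts of the same base edge (sharing a source) is an equivalence in the fiber. This is not immediate from the definition of cocartesian edges and requires a careful zig-zag construction---assembling both a candidate inverse $g^{-1}$ and the two auxiliary $2$-cubes witnessing identities up to homotopy---before \cref{all-equivs-agree} can be invoked to upgrade the combinatorial data to a genuine equivalence in the quasicategorical fiber $X_{y_{11}}$. Once this is in hand, the other steps reduce to bookkeeping of cubical face and degeneracy operators together with straightforward applications of \cref{lem:cocart_natural_transformation}.
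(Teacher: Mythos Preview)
Two issues, one minor and one fundamental.

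First, the cube category $\square$ used in this paper (faces, degeneracies, both connections, but no coordinate permutations) has no nontrivial automorphisms of $[1]^2$, so there is no automorphism of $\square^2$ swapping the axes. The reduction to one direction still works, but as the paper phrases it, by a \emph{dual argument}: rerun the construction with the roles of the two coordinate directions interchanged throughout, not by applying an automorphism.

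Second, the construction of $\Sigma$ is circular. You need both the edge $\tilde c$ at $(-,1,1)$ and the edge $\tilde d'$ at $(1,-,1)$ to be $p$-cocartesian: the former so that the final ``transfer'' step has a cocartesian target, the latter so that essential uniqueness of cocartesian lifts (contractibility in \cref{lem:cocart_natural_transformation}) forces $g$ to be an equivalence. But \cref{lem:cocart_natural_transformation} only produces cocartesian edges along its middle $\square^n$ factor. Orienting that factor along $i_2$ (so the boundary hypothesis reads ``$b$ and $d$ cocartesian'', which holds) makes $\tilde d'$ cocartesian but says nothing about the $i_1$-direction edge at $(-,1,1)$; orienting it along $i_1$ would require $c$ cocartesian among the boundary hypotheses---precisely what you want to prove. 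If instead you first fill the $(i_3{=}1)$-face as a $2$-cube with three of $a,b,\tilde c,\tilde d'$ prescribed (using $a$ or $b$ cocartesian), the fourth edge is \emph{produced}, not prescribed, and nothing makes it cocartesian without already knowing the proposition for that very face. In short: you can get $\tilde d'$ cocartesian and hence $g$ an equivalence, but then the edge at $(-,1,1)$ is just some edge and your transfer step has nowhere to land; or you can prescribe a cocartesian $\tilde c$ there, but then you lose control of $\tilde d'$ and with it the argument that $g$ is an equivalence.

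This is exactly why the paper's argument is more elaborate. It first isolates \cref{prop:shadow} (an edge is cocartesian iff its shadow in the fiber is an equivalence) and the $3$-cube \cref{lem:four_three}, and then builds a $4$-dimensional diagram over $\square^1\otimes\square^2\otimes\square^1$ whose restriction yields a $3$-cube living entirely in the fiber $X_{p\sigma(11)}$; \cref{lem:four_three} forces all slanted edges of that fiber $3$-cube to be equivalences, which exhibits the shadow of $d$ as an equivalence, and \cref{prop:shadow} concludes. The extra dimension and the two auxiliary results are precisely what break the circularity you run into.
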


The proof of \cref{prop:four_three} is a bit involved 
and takes up the rest of this subsection.
\begin{definition}
Let $p \from X\to Y$ be a cocartesian fibration of cubical sets, and let
$e \from x_{0}\to x_{1}$ be an edge of $X$. The \emph{shadow} of $e$ is
an edge filling the $2$-cube on the left
\[\begin{tikzcd}
	{x_0} & {x_1'} & {p(x_0)} & {p(x_1)} \\
	{x_1} & {x_1} & {p(x_1)} & {p(x_1)}
	\arrow["{e'}", from=1-1, to=1-2]
	\arrow["e"', from=1-1, to=2-1]
	\arrow[dotted, from=1-2, to=2-2]
	\arrow["{p(e)}", from=1-3, to=1-4]
	\arrow["{p(e)}"', from=1-3, to=2-3]
	\arrow[equal, from=1-4, to=2-4]
	\arrow[equal, from=2-1, to=2-2]
	\arrow[equal, from=2-3, to=2-4]
\end{tikzcd}\]Here the left hand cube lies over the $2$-cube in $Y$ on the right,
obtained from $p(e)$ by applying the negative connection $\conn{2}{1,0}$, and where $e'$
is $p$-cocartesian. By \cref{lem:cocart_natural_transformation},
the shadow of $e$ is well-defined up to equivalence in the cubical
quasicategory $\Fun\pr{\square^{1},X_{p\pr{x_{1}}}}$ 
\end{definition}

\begin{proposition}
\label{prop:shadow}Let $p \from X\to Y$ be a cocartesian fibration of
cubical sets. An edge $e \from x_{0}\to x_{1}$ of $X$ is $p$-cocartesian
if and only if its shadow is an equivalence of the cubical quasicategory
$X_{p\pr{x_{1}}}$.
\end{proposition}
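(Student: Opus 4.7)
The plan is to combine Proposition~\ref{prop:four_three} (the ``three-out-of-four'' property for $2$-cubes) with a fiber-level lemma identifying cocartesian edges in fibers with equivalences.

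\textbf{Auxiliary facts.} I would first establish two preliminary observations. \emph{(a) Every degenerate edge of $X$ is $p$-cocartesian:} a lifting problem \eqref{d:cocart} whose critical edge is degenerate at some $x$ admits a filler constructed from the cubical degeneracy collapsing the critical-edge direction. \emph{(b) Fiber Lemma:} for any vertex $y \in Y$, an edge $g$ of $X_y$ is $p$-cocartesian if and only if it is an equivalence in the cubical quasicategory $X_y$. For the forward direction of~(b), apply cocartesianness of $g$ to open boxes of $\square^2$ lying over the constant $2$-cube on $y$, producing $2$-cubes in $X_y$ that witness a two-sided inverse of $[g]$ in $\ho X_y$. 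For the backward direction, use \cref{ho-equiv-is-K-equiv} to extend $g$ to a map $K \to X_y$, then reduce each cocartesian lifting problem for $g$ to one whose critical edge is degenerate, which is handled by~(a).

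\textbf{Forward implication.} Assume $e$ is $p$-cocartesian. In the shadow $2$-cube $\sigma$, the edges $e'$ (top) and $e$ (left) are $p$-cocartesian, and $\mathrm{id}_{x_1}$ (bottom) is $p$-cocartesian by~(a). Applying Proposition~\ref{prop:four_three} to $\sigma$ with $(a,b,c,d) = (e', e, \mathrm{id}_{x_1}, s)$ yields that $s$ is $p$-cocartesian. Since $s$ lies in the fiber $X_{p(x_1)}$, the Fiber Lemma gives that $s$ is an equivalence of $X_{p(x_1)}$.

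\textbf{Reverse implication.} Assume $s$ is an equivalence of $X_{p(x_1)}$; by the Fiber Lemma, $s$ is $p$-cocartesian. Given any lifting problem \eqref{d:cocart} with critical edge $e$, I would use $\sigma$ to enlarge it to a lifting problem over $\square^n \otimes \square^1$ whose restriction to $\square^n \otimes \{1\}$ is the original data and whose restriction to $\square^n \otimes \{0\}$ records the analogous data with critical edge $e'$, with interpolation supplied by $\sigma$ and suitable degeneracies. Two successive appeals to \cref{lem:cocart_natural_transformation} then produce a solution: the first uses cocartesianness of $e'$ to fill the $\{0\}$-side, and the second uses the cocartesian edges $s$ and $\mathrm{id}_{x_1}$ of $\sigma$ as the prescribed cocartesian sides to extend the filler across $\square^1$. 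Restricting the resulting filler on $\square^n \otimes \square^1$ to $\square^n \otimes \{1\}$ yields the desired filler for the original problem.

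\textbf{Main obstacle.} The reverse direction is the technical heart of the argument. While the forward direction reduces immediately to Proposition~\ref{prop:four_three}, the reverse requires assembling the enlarged boundary data on $\square^n \otimes \square^1$ so that the three cocartesian edges of $\sigma$ appear in the correct positions to invoke \cref{lem:cocart_natural_transformation} twice, and verifying that the final restriction recovers the needed filler. Tracking the various faces of the enlargement and their compatibility conditions is where the combinatorial bookkeeping resides.
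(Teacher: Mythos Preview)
Your proposal contains a circularity: in the paper, Proposition~\ref{prop:four_three} is \emph{proved using} Proposition~\ref{prop:shadow}. The proof of Proposition~\ref{prop:four_three} works by showing that the shadow of the edge $d$ is an equivalence and then invoking Proposition~\ref{prop:shadow} to conclude that $d$ is cocartesian. So you cannot appeal to the three-out-of-four property here. The paper's forward argument is in any case much simpler than yours: since $e$ is already $p$-cocartesian, one may choose $e' = e$ in the shadow construction; the shadow square is then filled by the negative connection on $e$, making the shadow the degenerate edge at $x_1$. As the shadow is well-defined up to equivalence, every shadow of $e$ is an equivalence.

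Your reverse direction is underspecified and does not work in a single $\square^1$-step as described. The shadow $2$-cube $\sigma$ only relates the endpoints of $e$ and $e'$; it gives no recipe for deforming the remaining vertices and faces of the given open box $\sqcap^n_{i,1} \to X$ compatibly over $Y$. The paper resolves this with an $n$-stage deformation: an explicit map $h \colon [0,n] \otimes \square^n \to \square^n$ built from connections that collapses $\square^n$ one coordinate at a time down to the terminal vertex $1^n$. The open box is lifted along each stage using Lemma~\ref{lem:cocart_natural_transformation}; at the final stage the whole open box lives in the fiber over $g(1^n)$ and the critical edge has become a shadow of $e$, hence an equivalence by hypothesis, so the box fills there. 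One then fills backwards from stage $n$ to stage $0$ using only the inner-fibration property, because the relevant critical edges are degenerate by the design of $h$. Finally, the backward direction of your Fiber Lemma~(b) --- that equivalences in fibers are $p$-cocartesian over a possibly non-fibrant base $Y$ --- is itself nontrivial, is not available at this point in the development, and is not used in the paper's argument.
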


\begin{proof}
If $e$ is $p$-cocartesian, then the degenerate edge $x_1\degen{}{1} \in X_1$ is a shadow of $e$, which is an equivalence.

Conversely, suppose the shadow of $f$ is an equivalence. 
For $n\geq2$ and $1\leq i\leq n$, we must show that every lifting problem of the form
\[\begin{tikzcd}
	{\square^1} & {\sqcap^n_{i,1}} & X \\
	& {\square ^n} & Y
	\arrow["{e_{i,1}}"', from=1-1, to=1-2]
	\arrow["e", curve={height=-18pt}, from=1-1, to=1-3]
	\arrow["f"', from=1-2, to=1-3]
	\arrow[from=1-2, to=2-2]
	\arrow["p", from=1-3, to=2-3]
	\arrow[dotted, from=2-2, to=1-3]
	\arrow["g"', from=2-2, to=2-3]
\end{tikzcd}\]
admits a solution. For simplicity, we will assume that $i=0$; we
will indicate where changes are necessary for the general case.

For each integer $k\in\mbb Z$, let $[k,k+1]$ denote a copy of $\square^{1}$,
depicted as $k\to k+1$. We then set $[0,n]=[0,1]\push\limits_{\{1\}}\cdots\push\limits_{\{n-1\}}[n-1,n]$.
We define a map $h \from [0,n]\otimes\square^{n}\to\square^{n}$ as follows:
for each $0\leq i<n$, the restriction $h\vert[k,k+1]\otimes\square^{n}$
is given by
\begin{align*}
[k,k+1]\otimes\square^{n} & \cong[k,k+1]\otimes\square^{k}\otimes\square^{n-k}\\
 & \to[k,k+1]\otimes\{1^{k}\}\otimes\square^{n-k}\\
 & \cong[k,k+1]\otimes\square^{1}\otimes\square^{n-k-1}\\
 & \to\square^{1}\otimes\square^{n-k-1}\\
 & \cong\{1^{k}\}\otimes\square^{n-k}\\
 & \hookrightarrow\square^{k}\otimes\square^{n-k}=\square^{n}.
\end{align*}
where the map $[k,k+1]\otimes\square^{1}\to\square^{n-k-1}$ is given
by the negative connection. We observe that $h_{0}=\id$ and $h_{n}$
is the constant map at $1^{n}\in\square^{n}$, where we wrote $h_{k}=h\vert\{k\}\otimes\square^{n}$.
(If $i>0$, we replace the domain of $[0,n]\otimes\square^{n}$ by
\begin{align*}
\pr{\square^{a}\otimes\pr{[0,1]\otimes\square^{1}}\otimes\square^{b}} & \push\limits_{\square^{a}\otimes\{1\}\otimes\square^{1}\otimes\square^{b}}\pr{\pr{[1,a+1]\otimes\square^{a}}\otimes\square^{1}\otimes\square^{b}}\\
\push\limits_{\{a+1\}\otimes\square^{a}\otimes\square^{1}\otimes\square^{b}} & \pr{\square^{a}\otimes\square^{1}\otimes\pr{[a+1,n]\otimes\square^{b}}},
\end{align*}
where we set $a=i-1$ and $b=n-i$.)

Now let $V$ denote the set of vertices of $\sqcap_{i,1}^{n}$. For
each $v\in V$, we choose a map $\alpha^{\pr v} \from [0,n]\otimes\{v\}\to X$
carrying each edge to a $p$-cocartesian edge, and which lies over
$h\vert[0,n]\otimes\{v\}$. We choose these maps so that $\alpha^{\pr v}\vert[i,i+1]\otimes\{v\}$
is degenerate whenever it lies over a degenerate edge. Using \cref{lem:cocart_natural_transformation} iteratively, we can find
a map $\alpha \from [0,n]\otimes\sqcap_{i,1}^{n}\to X$ rendering the diagram
\[\begin{tikzcd}
	{(\{0\}\otimes \sqcap^n_{i,1})\push\limits_{\{0\}\otimes \sqcap^n_{i,1}}([0,n]\otimes V))} & X \\
	{[0,n]\otimes \sqcap^n_{i,1}} & Y
	\arrow["{(f,\alpha^{(v)})}", from=1-1, to=1-2]
	\arrow[from=1-1, to=2-1]
	\arrow["p", from=1-2, to=2-2]
	\arrow["\alpha"{description}, from=2-1, to=1-2]
	\arrow["h"', from=2-1, to=2-2]
\end{tikzcd}\]commutative. 

By hypothesis, the edge $\alpha_{n}\pr{e_{i,1}}$ is an equivalence
in the cubical quasicategory $X_{g\pr{1^{n}}}$. Thus we can extend
$\alpha_{n}$ to a map $\beta \from \square^{n}\to X$. We then get the
following commutative diagram:
\[\begin{tikzcd}
	{([0,n]\otimes \sqcap^n_{i,1})\push\limits_{\{n\}\otimes \square ^n}(\{n\}\otimes \square ^n)} & X \\
	{[0,n]\otimes \square^n} & Y.
	\arrow["{(\alpha,\beta)}", from=1-1, to=1-2]
	\arrow[from=1-1, to=2-1]
	\arrow["p", from=1-2, to=2-2]
	\arrow[dotted, from=2-1, to=1-2]
	\arrow["h"', from=2-1, to=2-2]
\end{tikzcd}\]The left hand inclusion is a composition of pushouts of inclusions
of the form $\sqcap_{0,0}^{a}\to\square^{n}$, and the images of the
critical edges of $\sqcap_{0,0}^{a}$ are all degenerate in $X$.
(This is because they lie over the degenerate edge $h\pr{1^{n}}\to h\pr{1^{n}}$,
since the final vertex of a nondegenerate cube of $\square^{n}$ that
is missing in $\sqcap_{i,1}^{n}$ is $1^{n}$.). Since $p$ is an
inner fibration, this means that the above diagram has a dotted filler.
Its restriction to $\{0\}\otimes\square^{n}$ gives the solution to
the original lifting problem, and the proof is complete.
\end{proof}
\begin{lemma}
\label{lem:four_three}Let $p \from X\to Y$ be a map of cubical sets, and
suppose we are given a $3$-cube $\sigma$ in $X$ depicted as
\[\begin{tikzcd}
	& {x_{00}} && {x_{10}} \\
	{x_{00}} && {x_{10}} \\
	& {x_{01}} && {x'_{11}.} \\
	{x_{01}} && {x_{11}}
	\arrow[squiggly, from=1-2, to=1-4]
	\arrow[squiggly, from=1-2, to=3-2]
	\arrow[squiggly, from=1-4, to=3-4]
	\arrow[equal, from=2-1, to=1-2]
	\arrow[squiggly, from=2-1, to=2-3]
	\arrow[squiggly, from=2-1, to=4-1]
	\arrow[equal, from=2-3, to=1-4]
	\arrow[from=2-3, to=4-3]
	\arrow[from=3-2, to=3-4]
	\arrow[equal, from=4-1, to=3-2]
	\arrow[squiggly, from=4-1, to=4-3]
	\arrow["f"', from=4-3, to=3-4]
\end{tikzcd}\]Suppose that all the squiggly arrows $\rightsquigarrow$ are $p$-cocartesian,
and that $p\sigma$ factors as $\square^{3}\cong\square^{1}\otimes\square^{2}\to\square^{0}\otimes\square^{2}\cong\square^{2}$
(the first coordinate corresponds to the slanted arrows). Then $f$
is an equivalence in the cubical quasicategory $X_{p\pr{x_{11}}}$. 
\end{lemma}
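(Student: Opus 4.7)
The plan is to deduce that $f$ is $p$-cocartesian; since $f$ lies over a degenerate edge in $Y$ (as the slanted direction of $\sigma$ projects under $p$ to a degeneracy), \cref{prop:shadow} will then imply that $f$ is an equivalence in the fiber $X_{p(x_{11})}$, which is the desired conclusion.

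The key structural input I would use is the $2$-face $\sigma|\partial^{3}_{2,1}$ of $\sigma$, which takes the form
\[\begin{tikzcd}
	{x_{10}} & {x_{10}} \\
	{x_{11}} & {x'_{11}}
	\arrow[equal, from=1-1, to=1-2]
	\arrow["d"', from=1-1, to=2-1]
	\arrow["e", from=1-2, to=2-2]
	\arrow["f"', from=2-1, to=2-2]
\end{tikzcd}\]
and lies over a degenerate $2$-cube in $Y$. This $2$-face exhibits the $p$-cocartesian edge $e$ as a ``composite'' of $d$ followed by $f$. My plan is to use it as a bridge that transports lifting problems for $f$ to lifting problems for $e$: given a lifting problem for $f$ against $\sqcap^{n}_{i,1}\hookrightarrow\square^{n}$, I would thicken it by one dimension, using the above $2$-face to fill in the newly added $\square^{1}$ direction, obtaining a lifting problem against $\square^{1}\otimes\sqcap^{n}_{i,1}\hookrightarrow\square^{1}\otimes\square^{n}$ whose critical edge is now $e$. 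Applying \cref{lem:cocart_natural_transformation} (with $A,B,A',B'$ chosen so that the three remaining critical edges are the degenerate slanted edges at $x_{00}$, $x_{10}$, $x_{01}$, hence automatically $p$-cocartesian) yields a filler; restricting this filler to $\{1\}\otimes\square^{n}$ provides the required lift of the original problem, proving $f$ is $p$-cocartesian.

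The main obstacle will be the combinatorial bookkeeping required to make the thickening construction precise. In particular, one must verify that the hypothesis of \cref{lem:cocart_natural_transformation} is satisfied: every critical edge at a vertex in $(A\times B')\cup(B\times A')$ must be $p$-cocartesian, and the boundary data of the thickened problem must coincide with the original lifting data glued to $\sigma|\partial^{3}_{2,1}$. The three degenerate slanted edges at $x_{00}$, $x_{10}$, $x_{01}$ furnish the trivial cases, while the cocartesian edges $a,b$ on the front face and $e$ on the back face provide the remaining cocartesian data needed to carry out the construction. The ``moreover'' clause of \cref{lem:cocart_natural_transformation} guarantees that the filler can be chosen compatibly with all of this data, so that restriction recovers a genuine solution to the original problem.
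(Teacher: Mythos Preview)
Your strategy has a genuine gap at the thickening step. Showing $f$ is $p$-cocartesian means solving \emph{every} lifting problem $g\colon\sqcap^n_{i,1}\to X$ (over some $\square^n\to Y$) with critical edge $f$; the values of $g$ away from the critical edge are completely arbitrary and need have nothing to do with $\sigma$. To apply \cref{lem:cocart_natural_transformation} in the form you suggest (say with $A=\{0\}$, $B=\square^1$, $A'=\varnothing$, $B'=\square^0$) you must supply a map on $(\{0\}\otimes\square^n)\cup(\square^1\otimes\sqcap^n_{i,1})$ restricting to $g$ on $\{1\}\otimes\sqcap^n_{i,1}$. The $2$-face of $\sigma$ tells you how to connect the two slices along the critical edge itself, but at every other vertex $v$ of $\sqcap^n_{i,1}$ you need an edge in $X$ from your chosen $\{0\}$-value to the unknown point $g(v)$, together with all the higher cubes making this coherent. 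Nothing in $\sigma$ produces this: the edges $a$, $b$, $e$ and the degenerate slants you cite land on the four specific vertices $x_{00},x_{10},x_{01},x'_{11}$, not on the arbitrary vertices $g(v)$. Put differently, you are trying to transport an \emph{arbitrary} $n$-dimensional context for $f$ using a fixed $2$-dimensional datum, and there is no map of cubical sets that does this.

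There is also a hypothesis mismatch. Your appeal to \cref{prop:shadow} requires $p$ to be a cocartesian fibration, and your claim that the degenerate slanted edges are ``automatically $p$-cocartesian'' requires $p$ to be an inner fibration; \cref{lem:four_three} assumes neither. The paper's proof avoids both problems by never attempting to show $f$ is $p$-cocartesian. It instead builds one explicit filling problem from the data of $\sigma$ --- a map on $(\partial\square^2\otimes\square^2)\cup(\square^2\otimes C_{00})$ over $Y$ --- and fills it using only the \emph{given} cocartesian edges; the restriction to $\square^2\otimes\{11\}$ directly exhibits a left inverse of $f$ in the fiber, and a dual argument together with the two-out-of-six property finishes.
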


\begin{proof}
We will show that $f$ has a left inverse, say $g$. A dual argument
will prove that $g$ also admits a left inverse, and then we may conclude
by the two out of six property of equivalences.

Let $C_{00}\subset\square^{2}$ denote the union of the two edges
containing the vertex $00\in\square^{2}$. Using the fact that the
squiggly arrows are $p$-cocartesian, we can construct a diagram $\tau \from \pr{\partial\square^{2}\otimes\square^{2}}\cup\pr{\square^{2}\otimes C_{00}}\to X$
depicted as
\[\begin{tikzcd}[sep = small]
	\bullet & \bullet & {} & \bullet & \bullet \\
	\bullet & \bullet & {} & \bullet & \bullet \\
	{} & {} && {} & {} \\
	\bullet & \bullet & {} & \bullet & \bullet \\
	\bullet & \bullet & {} & \bullet & \bullet
	\arrow[squiggly, from=1-1, to=1-2]
	\arrow[squiggly, from=1-1, to=2-1]
	\arrow[from=1-2, to=2-2]
	\arrow["\rightarrow"{description}, draw=none, from=1-3, to=2-3]
	\arrow[squiggly, from=1-4, to=1-5]
	\arrow[squiggly, from=1-4, to=2-4]
	\arrow[squiggly, from=1-5, to=2-5]
	\arrow[squiggly, from=2-1, to=2-2]
	\arrow[from=2-4, to=2-5]
	\arrow["\parallel"{description}, draw=none, from=3-1, to=3-2]
	\arrow["\downarrow"{description}, draw=none, from=3-4, to=3-5]
	\arrow[squiggly, from=4-1, to=4-2]
	\arrow[squiggly, from=4-1, to=5-1]
	\arrow[from=4-2, to=5-2]
	\arrow["\parallel"{marking, allow upside down}, draw=none, from=4-3, to=5-3]
	\arrow[squiggly, from=4-4, to=4-5]
	\arrow[squiggly, from=4-4, to=5-4]
	\arrow[from=4-5, to=5-5]
	\arrow[squiggly, from=5-1, to=5-2]
	\arrow[squiggly, from=5-4, to=5-5]
\end{tikzcd}\]which makes the diagram 
\[\begin{tikzcd}
	{(\partial\square^{2}\otimes\square^{2})\cup (\square ^2\otimes C_{00})} && X \\
	{\square ^2\otimes \square ^2} & {\square ^0\otimes \square ^2} & Y
	\arrow["\tau", from=1-1, to=1-3]
	\arrow[from=1-1, to=2-1]
	\arrow["p", from=1-3, to=2-3]
	\arrow["{\widetilde{\tau}}"{description}, dotted, from=2-1, to=1-3]
	\arrow[from=2-1, to=2-2]
	\arrow["{p\sigma}"', from=2-2, to=2-3]
\end{tikzcd}\]commutative, and such that $\tau\vert\square^{2}\otimes C_{00}$ factors
through $C_{00}$. We claim that this diagram admits a filler
$\widetilde{\tau}$. The restriction $\widetilde{\tau}\vert\square^{2}\otimes\{11\}$
witnesses the fact that $f$ has a left inverse.

The left hand inclusion is the composite of a pushout of the inclusion
$i \from \pr{\partial\square^{2}\to\square^{2}}\pp\pr{C_{00}\to\sqcap_{1,1}^{2}}$
and the inclusion $j \from \pr{\partial\square^{2}\to\square^{2}}\pp\pr{\sqcap_{1,1}^{2}\to\square^{2}}$.
The map $i$ is a pushout of the inclusion 
\[
i':\pr{\partial\square^{2}\to\square^{2}}\pp\pr{\{0\}\otimes\{1\}\to\square^{1}\otimes\{1\}},
\]
which we can identify with the inclusion $\sqcap_{3,1}^{3}\to\square^{3}$.
The image of the critical edge of $\sqcap_{3,1}^{3}$ is the $p$-cocartesian
edge $x_{01}\to x_{11}$, so we can fill this $3$-cube. The inclusion
$j$ can be identified with $\sqcap_{3,1}^{4}\to\square^{4}$. The
image of the critical edge is the $p$-cocartesian edge $x_{00}\to x_{10}$,
so we can fill this $4$-cube. 
\end{proof}
We now arrive at the proof of \cref{prop:four_three}.
\begin{proof}
[Proof of \cref{prop:four_three}]We will show that $d$
is $p$-cocartesian, assuming that $c$ is $p$-cocartesian. The other
half of the corollary follows by a dual argument.

Define $h \from \square^{1}\otimes\square^{2}\otimes\square^{1}\to\square^{1}\otimes\square^{1}$
as the composite
\[
\square^{1}\otimes\square^{2}\otimes\square^{1}\cong\pr{\square^{1}\otimes\square^{1}}\otimes\pr{\square^{1}\otimes\square^{1}}\xrightarrow{\gamma\otimes\gamma}\square^{1}\otimes\square^{1},
\]
where $\gamma$ denotes the negative connection. Let $C_{1},C_{2}\subset\square^{2}$
denote the union of two $1$-cubes containing the vertices $10$ and
$01$, respectively. (Thus $C_{1}$ looks like $00\to10\to11$, and
$C_{2}$ looks like $00\to01\to11$.) We can then depict $h\vert\square^{1}\otimes C_{i}\otimes\square^{1}$
as follows: 
\[ \quad \begin{tikzcd}[sep = 1.2em, cramped]
	&&&& 11 &&& 11 &&&&&& 11 &&& 11 \\
	&& 10 &&& 10 &&&&&& 01 &&& 11 \\
	00 &&& 10 &&&&&& 00 &&& 10 \\
	&&&& 11 &&& 11 &&&&&& 11 &&& {11.} \\
	&& 11 &&& 11 &&&&&& 01 &&& 11 \\
	01 &&& 11 &&&&&& 01 &&& 11 \\
	{} & {} & {} & {} & {} & {} & {} & {} & {} & {} & {} & {} & {} & {} & {} & {} & {} & {}
	\arrow[equal, from=4-5, to=4-8]
	\arrow[equal, from=4-14, to=4-17]
	\arrow[equal, from=5-3, to=4-5]
	\arrow[equal, from=5-3, to=5-6]
	\arrow[equal, from=5-6, to=4-8]
	\arrow[from=5-12, to=4-14]
	\arrow[from=5-12, to=5-15]
	\arrow[equal, from=1-5, to=1-8]
	\arrow[equal, from=1-5, to=4-5]
	\arrow[equal, from=1-8, to=4-8]
	\arrow[equal, from=1-14, to=1-17]
	\arrow[equal, from=1-14, to=4-14]
	\arrow[equal, from=1-17, to=4-17]
	\arrow[from=2-3, to=1-5]
	\arrow[equal, crossing over, from=2-3, to=2-6]
	\arrow[from=2-3, to=5-3]
	\arrow[from=2-6, to=1-8]
	\arrow[crossing over, from=2-6, to=5-6]
	\arrow[from=2-12, to=1-14]
	\arrow[crossing over, from=2-12, to=2-15]
	\arrow[equal, from=2-12, to=5-12]
	\arrow[equal, from=2-15, to=1-17]
	\arrow[equal, crossing over, from=2-15, to=5-15]
	\arrow[from=3-1, to=2-3]
	\arrow[crossing over, from=3-1, to=3-4]
	\arrow[from=3-1, to=6-1]
	\arrow[equal, crossing over, from=3-4, to=2-6]
	\arrow[crossing over, from=3-4, to=6-4]
	\arrow[from=3-10, to=2-12]
	\arrow[crossing over, from=3-10, to=3-13]
	\arrow[from=3-10, to=6-10]
	\arrow[crossing over, from=3-13, to=2-15]
	\arrow[crossing over, from=3-13, to=6-13]
	\arrow[equal, from=5-15, to=4-17]
	\arrow[from=6-1, to=5-3]
	\arrow[from=6-1, to=6-4]
	\arrow[equal, from=6-4, to=5-6]
	\arrow[equal, from=6-10, to=5-12]
	\arrow[from=6-10, to=6-13]
	\arrow[equal, from=6-13, to=5-15]
	\arrow["{h\vert\square ^1\otimes C_1\otimes \square^1}", phantom, from=7-1, to=7-8]
	\arrow["{h\vert\square ^1\otimes C_2\otimes \square^1}", phantom, from=7-10, to=7-17]
\end{tikzcd}\]
We extend $\sigma \from \square^{1}\otimes\square^{1}\cong\square^{1}\otimes\{00\}\otimes\square^{1}\to X$
to a map $\widetilde{\sigma}_{i} \from \square^{1}\otimes C_{i}\otimes\square^{1}\to X$
over $Y$ depicted as
\[ \text{\hspace{1em}} \begin{tikzcd}[sep = 1.1em, cramped]
	&&&& {x'_{11}} &&& {x'_{11}} &&&&&& {x_{11}} &&& {x'_{11}} \\
	&& {x_{10}} &&& {x_{10}} &&&&&& {x_{01}} &&& {x'_{11}} \\
	{x_{00}} &&& {x_{10}} &&&&&& {x_{00}} &&& {x_{10}} \\
	&&&& {x_{11}} &&& {x_{11}} &&&&&& {x_{11}} &&& {x_{11},} \\
	&& {x_{11}} &&& {x_{11}} &&&&&& {x_{01}} &&& {x_{11}} \\
	{x_{01}} &&& {x_{11}} &&&&&& {x_{01}} &&& {x_{11}} \\
	{} & {} & {} & {} & {} & {} & {} & {} & {} & {} & {} & {} & {} & {} & {} & {} & {} & {}
	\arrow[equal, from=1-14, to=4-14]
	\arrow[equal, from=2-12, to=5-12]
	\arrow[equal, from=4-5, to=4-8]
	\arrow[equal, from=4-14, to=4-17]
	\arrow[equal, from=5-3, to=4-5]
	\arrow[equal, from=5-3, to=5-6]
	\arrow[equal, from=5-6, to=4-8]
	\arrow[equal, from=5-15, to=4-17]
	\arrow[from=5-12, to=4-14]
	\arrow[from=5-12, to=5-15]
	\arrow[equal, from=1-5, to=1-8]
	\arrow["f" near end, dotted, from=1-5, to=4-5]
	\arrow["f", dotted, from=1-8, to=4-8]
	\arrow[from=1-14, to=1-17]
	\arrow["f", dotted, from=1-17, to=4-17]
	\arrow[squiggly, from=2-3, to=1-5]
	\arrow[equal, crossing over, from=2-3, to=2-6]
	\arrow[from=2-3, to=5-3]
	\arrow[squiggly, from=2-6, to=1-8]
	\arrow[crossing over, from=2-6, to=5-6]
	\arrow[from=2-12, to=1-14]
	\arrow[crossing over, dotted, from=2-12, to=2-15]
	\arrow[equal, from=2-15, to=1-17]
	\arrow["f" near start, crossing over, dotted, from=2-15, to=5-15]
	\arrow[from=3-1, to=2-3]
	\arrow[crossing over, from=3-1, to=3-4]
	\arrow[from=3-1, to=6-1]
	\arrow[equal, crossing over, from=3-4, to=2-6]
	\arrow[crossing over, from=3-4, to=6-4]
	\arrow[from=3-10, to=2-12]
	\arrow[crossing over, from=3-10, to=3-13]
	\arrow[from=3-10, to=6-10]
	\arrow[squiggly, crossing over, from=3-13, to=2-15]
	\arrow[crossing over, from=3-13, to=6-13]
	\arrow[from=6-1, to=5-3]
	\arrow[from=6-1, to=6-4]
	\arrow[equal, from=6-4, to=5-6]
	\arrow[equal, from=6-10, to=5-12]
	\arrow[from=6-10, to=6-13]
	\arrow[equal, from=6-13, to=5-15]
	\arrow["{\widetilde{\sigma_1}}", phantom, from=7-1, to=7-8]
	\arrow["{\widetilde{\sigma_2}}", phantom, from=7-10, to=7-17]
\end{tikzcd}\]
which we construct using the following rules:
\begin{enumerate}
\item Start with the front face $\sigma$.
\item The 1-cube $x_{10} \to x'_{11}$ (depicted as the squiggly arrow in the right diagram) is a $p$-cocartesian edge lying over
$p\sigma\pr{10\to11}$. All the other slanted arrows are $p$-cocartesian
edges that are already given (i.e., one of $a,b,c$ or the identity
morphisms).
\item We then fill the front cube by using the defining property of cocartesian
edges. (First fill the top, bottom, and sides, and then fill the back.)
We always use degeneracies and connections when these faces can be
filled by them.
\item Use the same procedure for the back cube. 
\end{enumerate}

We then end up with two $2$-cubes 
\[\begin{tikzcd}
	{x'_{11}} & {x'_{11}} & {x_{11}} & {x'_{11}} \\
	{x_{11}} & {x_{11}} & {x_{11}} & {x_{11}}
	\arrow[equal, from=1-1, to=1-2]
	\arrow[from=1-1, to=2-1]
	\arrow["f", from=1-2, to=2-2]
	\arrow[from=1-3, to=1-4]
	\arrow[equal, from=1-3, to=2-3]
	\arrow["f", from=1-4, to=2-4]
	\arrow[equal, from=2-1, to=2-2]
	\arrow[from=2-3, to=2-4]
\end{tikzcd}\]in the cubical quasicategory $X_{p\sigma\pr{11}}$. We claim that
these $2$-cubes fit into a larger $3$-cube in $X_{p\sigma\pr{11}}$,
depicted as
\[ \text{\hspace{1em}} \begin{tikzcd}
	& {x'_{11}} && {x'_{11}} \\
	{x_{11}} && {x'_{11}} \\
	& {x_{11}} && {x_{11}} \\
	{x_{11}} && {x_{11}}
	\arrow[equal, from=1-2, to=1-4]
	\arrow[from=1-2, to=3-2]
	\arrow["f", from=1-4, to=3-4]
	\arrow["\simeq", from=2-1, to=1-2]
	\arrow[from=2-1, to=2-3]
	\arrow[equal, from=2-1, to=4-1]
	\arrow["\simeq", from=2-3, to=1-4]
	\arrow[equal, from=3-2, to=3-4]
	\arrow["f"{pos=0.2}, from=2-3, to=4-3]
	\arrow["\simeq", from=4-1, to=3-2]
	\arrow[from=4-1, to=4-3]
	\arrow["\simeq", from=4-3, to=3-4]
\end{tikzcd}\]in which all slanted arrows are equivalences. It will then follow
that the map $f$, which is the shadow of $d$, is an equivalence,
and then \cref{prop:shadow} will show that $d$ is $p$-cocartesian.

For each $i\in\{1,2\}$, let $\tau_{i} \from \square^{1}\otimes\sqcap_{i,1}^{2}\otimes\square^{1}\to X$
denote the amalgamation of the restrictions of $\widetilde{\sigma}_{1}$
and $\widetilde{\sigma}_{2}$. Set $C=\sqcap_{1,1}^{2} \mathbin{\cap} \sqcap_{2,1}^{2}$.
Using the maps $\tau_{i}$ and the projection $\square^{1}\otimes C\to C$,
we obtain a map $\tau$ depicted as the top horizontal arrow of the
diagram
\[\begin{tikzcd}[cramped, column sep = 0em, row sep = 5em]
	{\square^1 \otimes \big( (\{0\}\otimes \sqcap^2_{1,1})\cup (\square ^1\otimes C)\cup (\{1\}\otimes \sqcap^{2}_{2,1}) \big) \otimes \square ^1} & {} &[1.4em] {} &[1em] X \\
	{\square ^1\otimes (\square ^1\otimes \square ^2)\otimes \square ^1} & {\square ^1\otimes (\square ^0\otimes \square ^2)\otimes \square ^1} & \cube{1} \otimes \cube{1} & Y
	\arrow["\tau", from=1-1, to=1-4]
	\arrow[from=1-1, to=2-1]
	\arrow["p", from=1-4, to=2-4]
	\arrow["{\widetilde{\tau}}"{description}, dotted, from=2-1, to=1-4]
	\arrow[from=2-1, to=2-2]
	\arrow["h"', from=2-2, to=2-3]
	\ar[from=2-3, to=2-4]
\end{tikzcd}\]The inclusion $\pr{\{0\}\otimes\sqcap_{1,1}^{2}}\cup\pr{\square^{1}\otimes C}\cup\pr{\{1\}\otimes\sqcap_{2,1}^{2}}\to\square^{1}\otimes\square^{2}$
is a composition of pushouts of inclusions of left open boxes. Moreover,
for each $a,b\in\{0,1\}$, the restriction $\tau\vert\{a\}\otimes\pr{\{0\}\otimes\sqcap_{1,1}^{2}}\cup\pr{\square^{1}\otimes C}\cup\pr{\{1\}\otimes\sqcap_{2,1}^{2}}\otimes\{b\}$
carries \textit{every} edge to a $p$-cocartesian edge (the slanted edges in $\tilde{\sigma}_i$ are $p$-cocartesian). Hence, by \cref{lem:cocart_natural_transformation}, we can find a dotted
filler $\widetilde{\tau}$. According to \cref{lem:four_three},
for each $a,b\in\{0,1\}$, the edge $\widetilde{\tau}\vert\{a\}\otimes\square^{1}\otimes\{11\}\otimes\{b\}$
is an equivalence. So the $3$-cube $\widetilde{\tau}\vert\square^{1}\otimes\square^{1}\otimes\{11\}\otimes\square^{1}$
has the desired properties, and the proof is complete.
\end{proof}

\subsection{Stability properties of cocartesian edges}

In the previous subsection, we introduced the notion of cocartesian
fibrations of cubical sets. In this subsection, we record some basic
stability properties of cocartesian edges.
\begin{proposition}
\label{prop:eq_cc}Let $p \from \mcal C\to\mcal D$ be an inner fibration
of cubical quasicategories, and let $f \from x\to y$ be a morphism of $\mcal C$.
The following conditions are equivalent:
\begin{enumerate}
\item The map $f$ is an equivalence of $\mcal C$.
\item The map $f$ is $p$-cocartesian, and the map $p\pr f$ is an equivalence
of $\mcal D$.
\end{enumerate}
\end{proposition}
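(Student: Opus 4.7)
I plan to prove both directions using \cref{prop:special-horn-lemma} together with a two-sided inverse construction. As a preliminary, every degenerate edge $x\degen{}{1}$ is $p$-cocartesian: any lifting problem against $\obox{n}{i,1}\hookrightarrow\cube{n}$ whose critical edge hits $x\degen{}{1}$ has the property that the image of the critical edge in $\mcal D$ is likewise degenerate, so the square descends to a lifting problem against the inner open box $\iobox{n}{i,1}\hookrightarrow\icube{n}{i,1}$, which is solved by inner fibrancy of $p$.

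For $(1)\Rightarrow(2)$, the projection $p(f)$ is automatically an equivalence, and \cref{prop:special-horn-lemma} directly supplies the required lifts in the definition of $p$-cocartesian, since the critical edge $f$ is an equivalence of $\mcal C$.

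For $(2)\Rightarrow(1)$, suppose $f$ is $p$-cocartesian with $p(f)$ an equivalence. Choose an inverse $g'\from p(y)\to p(x)$ of $p(f)$ together with a witnessing $2$-cube for $g'\circ p(f)\simeq\id_{p(x)}$. The $p$-cocartesian lifting property of $f$ applied to $\obox{2}{1,1}\hookrightarrow\cube{2}$ with critical (top) edge $f$ produces a $2$-cube
\[\begin{tikzcd}
x \ar[r, "f"] \ar[d, equal] & y \ar[d, "g"] \\
x \ar[r, equal] & x
\end{tikzcd}\]
yielding $g\from y\to x$ over $g'$ with $[g][f]=[\id_x]$ in $\Ho\mcal C$. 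Next, \cref{prop:four_three} applied to this $2$-cube (with $a=f$, $b=\id_x$, $c=\id_x$ all $p$-cocartesian) implies that $g$ is itself $p$-cocartesian. Since $p(g)=g'$ is again an equivalence, iterating the construction with $g$ in place of $f$ produces $h\from x\to y$ with $[h][g]=[\id_y]$. Then $[g]$ has left inverse $[h]$ and right inverse $[f]$ in $\Ho\mcal C$, which forces $[h]=[f]$ and $[f][g]=[\id_y]$; hence $g$ is a two-sided inverse of $f$ in $\Ho\mcal C$ and $f$ is an equivalence.

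The main subtlety is the applicability of \cref{prop:four_three}, whose statement assumes $p$ is a cocartesian fibration while we only have an inner fibration. Inspecting its proof, however, the only $p$-cocartesian lifts invoked are of specific edges, namely $f$ and identities, all of which are available in our setting by hypothesis and by the preliminary; alternatively, one can extract the needed implication ``$g$ is $p$-cocartesian'' directly from the cocartesian lifting property of $f$ together with the argument of \cref{lem:four_three}.
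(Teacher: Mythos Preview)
Your argument for $(1)\Rightarrow(2)$ matches the paper's: both invoke \cref{prop:special-horn-lemma} (the cubical analogue of Joyal's special horn lemma). Your preliminary on degenerate edges is correct as well.

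The gap is in $(2)\Rightarrow(1)$, precisely at the point you flag. Your claim that the proof of \cref{prop:four_three} only uses cocartesian lifts of $f$ and identities is not correct. In the construction of $\widetilde{\sigma}_2$ there, step~2 requires choosing a $p$-cocartesian edge $x_{10}\to x'_{11}$ lying over $p\sigma(10\to 11)=p(d)$. In your application $d=g$, so you need a $p$-cocartesian lift of $p(g)=g'$ with source $y$. Nothing in the hypotheses supplies such a lift: $p$ is only an inner fibration, and taking $g$ itself would be circular since its cocartesianness is exactly what you are trying to establish. The fallback to \cref{lem:four_three} does not repair this either, as that lemma concludes that an edge is an equivalence in a fiber, not that it is $p$-cocartesian, and its input data again presupposes several cocartesian edges you have not produced.

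The paper's proof sidesteps this entirely. Rather than showing $g$ is cocartesian and iterating, it assembles a single $3$-cube: four faces (bottom, left, front, back) are filled by degeneracies and connections; the top face is first filled in $\mcal D$ using that $p(f)$ is an equivalence (via \cref{prop:special-horn-lemma} for $\mcal D\to\cube{0}$) and then lifted to $\mcal C$ using the $p$-cocartesian property of $f$; finally the remaining face and the interior are filled the same way. The resulting top and right faces are exactly the two $2$-cubes exhibiting $f$ as a $K$-equivalence. The only cocartesian lifts used are those guaranteed by $f$ itself, so no cocartesian-fibration hypothesis on $p$ is needed.
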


\begin{proof}
The implication (1)$\implies$(2) is established in \cite[Lem.~4.14]{doherty-kapulkin-lindsey-sattler}.
For the reverse implication, suppose that condition (2) is satisfied.
We must show that $f$ is an equivalence. To this end, consider the
following union of four $2$-cubes
\[\begin{tikzcd}
	x && y \\
	& x && x \\
	x && y \\
	& x && y
	\arrow["f", from=1-1, to=1-3]
	\arrow[equal, from=1-1, to=2-2]
	\arrow[equal, from=1-1, to=3-1]
	\arrow[dotted, from=1-3, to=2-4]
	\arrow[equal, from=1-3, to=3-3]
	\arrow[equal, from=2-2, to=2-4]
	\arrow[equal, from=2-2, to=4-2]
	\arrow["f", from=2-4, to=4-4]
	\arrow["f"{pos=0.7}, from=3-1, to=3-3]
	\arrow[equal, from=3-1, to=4-2]
	\arrow[equal, from=3-3, to=4-4]
	\arrow["f"', from=4-2, to=4-4]
\end{tikzcd}\]corresponding to the bottom, left, front, and back faces, which are
filled by using either degeneracies or connections. Since $\mcal Y$
is a cubical quasicategory and $p\pr f$ is an equivalence, the top
face can be filled in $\mcal Y$. Since $f$ is $p$-cocartesian,
we can lift this $2$-cube to a $2$-cube in $\mcal X$. The same
argument, using \cite[Lem.~4.14]{doherty-kapulkin-lindsey-sattler}, shows that the entire
$3$-cube can be filled in $\mcal Y$, and then it can be lifted to
$\mcal X$. The top and the right faces witnesses the fact that $f$
is an equivalence, and we are done.
\end{proof}
\begin{corollary}
\label{cor:cc_catfib}Every cocartesian fibration of cubical quasicategories
is a categorical fibration.
\end{corollary}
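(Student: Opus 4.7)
The plan is to apply \cref{prop:catfib} to reduce the categorical fibration condition to a concrete lifting problem, and then invoke \cref{prop:eq_cc} to transport the equivalence from the base to the total space along a cocartesian lift.

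More concretely, since $p \colon X \to Y$ is by hypothesis an inner fibration of cubical quasicategories, \cref{prop:catfib} tells us that $p$ is a categorical fibration if and only if the map $p^{\natural} \colon X^{\natural} \to Y^{\natural}$ has the right lifting property with respect to the inclusion $\{0\}^{\sharp} \subset (\square^{1})^{\sharp}$. Unpacking this, it suffices to show that for every vertex $x \in X$ and every equivalence $f \colon p(x) \to y$ in $Y$, there exists an equivalence $\widetilde{f} \colon x \to \widetilde{y}$ in $X$ with $p(\widetilde{f}) = f$.

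Given such data, I would first use the cocartesian fibration hypothesis to produce a $p$-cocartesian edge $\widetilde{f} \colon x \to \widetilde{y}$ lying over $f$. By construction $\widetilde{f}$ is $p$-cocartesian and its image $p(\widetilde{f}) = f$ is an equivalence of $Y$, so the implication (2)$\implies$(1) of \cref{prop:eq_cc} immediately shows that $\widetilde{f}$ is an equivalence of $X$. This $\widetilde{f}$ is the required lift in the marked cubical set $X^{\natural}$, completing the verification of the right lifting property and hence the proof.

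There is essentially no obstacle here, since both ingredients have already been established: \cref{prop:catfib} reduces the problem to a single edge-lifting problem, and \cref{prop:eq_cc} is precisely the statement that a $p$-cocartesian lift of an equivalence is itself an equivalence. The whole proof should therefore be a short assembly of these two facts.
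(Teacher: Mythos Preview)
Your proof is correct and follows exactly the same approach as the paper, which simply cites \cref{prop:eq_cc,prop:catfib}; you have merely spelled out how these two propositions combine.
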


\begin{proof}
This follows from \cref{prop:eq_cc,prop:catfib}.
\end{proof}
\begin{proposition}
\label{prop:cc_equiv}Consider the following commutative diagram of
cubical quasicategories: 
\[\begin{tikzcd}
	{\mathcal{C}} & {\mathcal{C}'} \\
	{\mathcal{D}} & {\mathcal{D}'.}
	\arrow["f", from=1-1, to=1-2]
	\arrow["\simeq"', from=1-1, to=1-2]
	\arrow["p"', from=1-1, to=2-1]
	\arrow["{p'}", from=1-2, to=2-2]
	\arrow["g"', from=2-1, to=2-2]
	\arrow["\simeq", from=2-1, to=2-2]
\end{tikzcd}\]Suppose that $f$ and $g$ are categorical equivalences and that $p$
and $p'$ are categorical fibrations. 
\begin{enumerate}
\item A morphism of $\mcal C$ is $p$-cocartesian if and only if its image
in $\mcal C'$ is $p'$-cocartesian.
\item The functor $p$ is a cocartesian fibration if and only if $p'$ is
a cocartesian fibration.
\end{enumerate}
\end{proposition}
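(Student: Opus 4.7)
\emph{Plan.} The argument for (1) proceeds by first reducing to the case $\mathcal{D} = \mathcal{D}'$, $g = \id_{\mathcal{D}}$. Form the pullback $\mathcal{C}' \times_{\mathcal{D}'} \mathcal{D}$: since $g$ is a categorical equivalence and $p'$ is a categorical fibration, the projection $\pi \colon \mathcal{C}' \times_{\mathcal{D}'} \mathcal{D} \to \mathcal{C}'$ is again a categorical equivalence, using that categorical equivalences pull back along categorical fibrations in the cubical Joyal model structure (a fact deducible from its simplicial counterpart via the Quillen equivalence of \cref{triangulation-quillen-equiv} together with \cref{prop:catfib}). A direct lifting argument from the definition shows that an edge in $\mathcal{C}' \times_{\mathcal{D}'} \mathcal{D}$ is cocartesian for the projection to $\mathcal{D}$ if and only if its image in $\mathcal{C}'$ is $p'$-cocartesian. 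By 2-out-of-3, the induced map $\mathcal{C} \to \mathcal{C}' \times_{\mathcal{D}'} \mathcal{D}$ is a categorical equivalence over $\mathcal{D}$, so it suffices to prove (1) in this reduced case.

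In the reduced setting, factor $f$ in the slice Joyal model structure on $\cSet_{/\mathcal{D}}$ as $\mathcal{C} \overset{j}{\hookrightarrow} \tilde{\mathcal{C}} \overset{q}{\twoheadrightarrow} \mathcal{C}'$, an acyclic cofibration followed by a trivial fibration, and handle each factor separately. For $q$, the right lifting property against monomorphisms allows direct transport of cocartesian lifting problems in both directions between $\tilde{\mathcal{C}}$ and $\mathcal{C}'$. For $j$, fibrancy of $\mathcal{C}$ over $\mathcal{D}$ produces a retraction $r \colon \tilde{\mathcal{C}} \to \mathcal{C}$ over $\mathcal{D}$; given a cocartesian lifting problem for $e$ in $\mathcal{C}$, we push it up along $j$, solve it in $\tilde{\mathcal{C}}$ using $\tilde{p}$-cocartesianness of $j(e)$, and then apply $r$. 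The resulting map extends the original boundary only up to the homotopy $rj \simeq \id_{\mathcal{C}}$, whose edges are fiberwise equivalences, and the special horn-filling lemma (\cref{prop:special-horn-lemma}) can be used to correct this near-lift into a strict one. The reverse direction proceeds symmetrically, pushing down along $r$ before lifting.

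Part (2) follows from (1) together with essential surjectivity. Assume $p'$ is cocartesian; given an edge $\tilde{f} \colon y_0 \to y_1$ in $\mathcal{D}$ and a vertex $x \in \mathcal{C}$ over $y_0$, take a $p'$-cocartesian lift $e'$ of $g(\tilde{f})$ starting at $f(x)$. Since $f$ induces categorical equivalences on fibers (by the same pullback-stability argument used in the reduction), it is essentially surjective on edges in a fiberwise sense, so there is an edge $\tilde{e}$ in $\mathcal{C}$ with source $x$ whose image $f(\tilde{e})$ is equivalent to $e'$ over $\mathcal{D}'$; part (1), combined with stability of cocartesian edges under fiberwise equivalence (\cref{prop:eq_cc,prop:four_three}), shows $\tilde{e}$ is $p$-cocartesian. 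The converse direction is symmetric. The principal obstacle is the correction step in the acyclic cofibration case of (1): promoting a near-lift that extends the boundary only up to the homotopy $rj \simeq \id_{\mathcal{C}}$ into a strict lift requires careful iterated application of \cref{prop:special-horn-lemma}, leveraging that this homotopy is pointwise an equivalence in the fibers of $p$.
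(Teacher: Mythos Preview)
Your approach for (1) is genuinely different from the paper's and, while workable, is considerably more laborious. The paper bypasses all the factorization and correction work with a one-line model-categorical argument: the condition ``$\alpha$ is $p$-cocartesian'' is an extension problem in the arrow category $\Fun([1],\cSet_{\square^1/})$ with the injective model structure, where $\iota_{i,1} \hookrightarrow \id_{\square^n}$ is a cofibration between cofibrant objects and $p$ is fibrant. By \cite[Prop.~A.2.3.1]{lurie:htt}, such an extension problem depends only on the homotopy class of the target; since $(f,g)$ is a weak equivalence between fibrant objects, $p$ and $p'$ are isomorphic in the homotopy category, and the claim follows immediately. This avoids the reduction to $g=\id$, the factorization of $f$, and the correction step entirely.

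Your argument has a small but real confusion in the acyclic-cofibration case. Since $j$ is an acyclic cofibration and $\mathcal{C}$ is fibrant over $\mathcal{D}$, the retraction satisfies $rj = \id_{\mathcal{C}}$ \emph{strictly}, so the direction ``$j(e)$ $\tilde p$-cocartesian $\Rightarrow$ $e$ $p$-cocartesian'' needs no correction at all: push up, solve, apply $r$, done. It is the \emph{other} direction that requires correction, via the homotopy $jr \simeq \id_{\tilde{\mathcal{C}}}$ (not $rj \simeq \id_{\mathcal{C}}$). The correction itself does go through, but the cleanest justification is not iterated special-horn filling: rather, pass to the marked model structure, observe that $\tilde p^\natural$ is a fibration by \cref{prop:catfib}, and note that the relevant inclusion is the pushout-product of the acyclic cofibration $\{0\}^\sharp \hookrightarrow (\square^1)^\sharp$ with a cofibration, hence itself an acyclic cofibration.

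For (2), your sketch is close to the paper's, but ``essentially surjective on edges in a fiberwise sense'' is too vague to guarantee that the resulting edge $\tilde e$ lies over the prescribed edge in $\mathcal{D}$. The paper handles this by first producing a $p'$-cocartesian edge $\alpha \colon f(c_0) \to f(c_1)$ over $g(e)$ (composing the cocartesian lift with a fiberwise equivalence to land in the image of $f$), and then using that $\mathcal{C} \to \mathcal{D} \times_{\mathcal{D}'} \mathcal{C}'$ is a categorical equivalence of categorical fibrations \emph{over $\mathcal{D}$}, hence admits a homotopy inverse over $\mathcal{D}$; this produces $\tilde e$ in $\mathcal{C}$ lying strictly over $e$ with $f(\tilde e)$ homotopic to $\alpha$.
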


\begin{proof}
We start with (1). Let $\alpha$ be a morphism of $\mcal C$. By definition,
$\alpha$ is $p$-cocartesian if and only if, for any $n\geq2$, $1\leq i\leq n$,
and $\varepsilon\in\{0,1\}$ and any commutative diagram 
\[\begin{tikzcd}
	{\square ^1} & {\sqcap^{n}_{i,1}} & {\mathcal{C}} \\
	& {\square^n} & {\mathcal{D},}
	\arrow["{e_{i,1}}"', from=1-1, to=1-2]
	\arrow["\alpha", curve={height=-12pt}, from=1-1, to=1-3]
	\arrow[from=1-2, to=1-3]
	\arrow["{\iota_{i,1}}"', from=1-2, to=2-2]
	\arrow["p", from=1-3, to=2-3]
	\arrow[dotted, from=2-2, to=1-3]
	\arrow[from=2-2, to=2-3]
\end{tikzcd}\]there is a dotted filler rendering the diagram commutative.
In other words, $\alpha$ is $p$-cocartesian if and only if, for every $n\geq 2$, every extension problem in the category $\Fun([1],\CS_{\square^1/})$ having the form
\[\begin{tikzcd}
	{\iota_{i,1}} & p \\
	{\mathrm{id}_{\square^n}}
	\arrow[from=1-1, to=1-2]
	\arrow[hook, from=1-1, to=2-1]
	\arrow[dotted, from=2-1, to=1-2]
\end{tikzcd}\]
admits a solution.
We now observe that the left vertical arrow is an injective cofibration of injectively cofibrant objects, and that $p$ is injectively fibrant in $\Fun([1],\CS_{\square^1/})$. 
Therefore, by \cite[Prop.~A.2.3.1]{lurie:htt}, the extension problem is equivalent to the one in the homotopy category of $\Fun([1],\CS_{\square^1/})$.
But in the homotopy category, the extension problem is equivalent to the one where $p$ is replaced by $p'$.
Again by \cite[Prop.~A.2.3.1]{lurie:htt}, the latter extension problem is equivalent to asking if the image of $\alpha'$ is $p'$-cocartesian.
This proves (1).

For (2), assume that $p'$ is a cocartesian
fibration. To show that $p$ is a cocartesian fibration, let $c_{0}\in\mcal C$
be an arbitrary object, and let $e \from p\pr{c_{0}}\to d_{1}$ be a morphism
of $\mcal D$. We must find a $p$-cocartesian lift of $e$. For this,
find a $p'$-cocartesian morphism $\widetilde{g\pr e} \from f\pr{c_{0}}\to c'_{1}$
of $\mcal C'$ lying over $g\pr e$. Since the functor $\mcal C_{d_{1}}\to\mcal C'_{g\pr{d_{1}}}$
is an equivalence of cubical quasicategories, we can find an object
$c_{1}\in\mcal C$ and an equivalence $c'_{1}\xrightarrow{\simeq}f\pr{c_{1}}$
in $\mcal C'_{g\pr{d_{1}}}$. Since $p'$ is an inner fibration, we
can find a $2$-cube in $\mcal C'$ depicted on the left, which lies
over the degenerate $2$-cube depicted on the right:
\[\begin{tikzcd}
	{f(c_0)} & {c_1'} & {p'f(c_0)} & {g(d_1)} \\
	{f(c_0)} & {f(c_1)} & {p'f(c_0)} & {g(d_1).}
	\arrow["{\widetilde{g(e)}}", from=1-1, to=1-2]
	\arrow[equal, from=1-1, to=2-1]
	\arrow["\simeq", from=1-2, to=2-2]
	\arrow["{g(e)}", from=1-3, to=1-4]
	\arrow[equal, from=1-3, to=2-3]
	\arrow[equal, from=1-4, to=2-4]
	\arrow["\alpha"', from=2-1, to=2-2]
	\arrow["{g(e)}"', from=2-3, to=2-4]
\end{tikzcd}\]
Since the functor $\mcal C\to\mcal {D}\times_{\mcal{D} '}\mcal{C}'$ is a weak categorical equivalence of categorical fibrations over $\mcal{D}$, it has an inverse equivalence over $\mcal{D}$. This implies that $\alpha$ is homotopic to the image of a morphism
$\widetilde{e} \from c_{0}\to c_{1}$ in $\mcal C$ lying over $e$. By
Proposition \cref{prop:four_three,prop:eq_cc}, the map
$\alpha$ and $f\pr{\widetilde{e}}$ are $p'$-cocartesian. It follows
from part (1) that $\widetilde{e}$ is $p$-cocartesian. We have thus
obtained a $p$-cocartesian edge lying over $e$, hence $p$ is a cocartesian fibration.

This proves the reverse implication. 
For the forward implication, we repeat the same argument using the homotopy inverse of $\pr{f,g} \from p\to p'$,
which exists because it is a weak equivalence of fibrant-cofibrant
objects in $\Fun\pr{[1],\CS}$.
\end{proof}
\begin{proposition}
\label{prop:cc_morph_stable_under_equiv}Let $p \from \mcal C\to\mcal D$
be a categorical fibration. The set of $p$-cocartesian morphisms
is stable under equivalences in $\Fun\pr{\square^{1},\mcal C}$.
\end{proposition}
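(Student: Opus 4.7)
The plan is to realize an equivalence $f \simeq f'$ in $\Fun\pr{\square^1, \mcal C}$ as a $2$-cube in $\mcal C$ and then invoke \cref{prop:four_three}. Three of the four edges of this $2$-cube will automatically be $p$-cocartesian, which will force the remaining edge (namely $f'$) to be $p$-cocartesian as well.

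First, I would transpose a chosen equivalence $h \from \square^1 \to \Fun\pr{\square^1, \mcal C}$ from $f$ to $f'$ via the hom-tensor adjunction $\square^1 \otimes \pr{-} \dashv \Fun\pr{\square^1, -}$, obtaining a $2$-cube $\sigma \from \square^1 \otimes \square^1 \to \mcal C$ of the form
\[\begin{tikzcd}
	x & y \\
	{x'} & {y'}
	\arrow["f", from=1-1, to=1-2]
	\arrow["\alpha"', from=1-1, to=2-1]
	\arrow["\beta", from=1-2, to=2-2]
	\arrow["{f'}"', from=2-1, to=2-2]
\end{tikzcd}\]
where $\alpha$ and $\beta$ are the $1$-cubes obtained by evaluating $h$ at the endpoints $\{0\}, \{1\} \hookrightarrow \square^1$. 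Since each endpoint evaluation $\Fun\pr{\square^1, \mcal C} \to \mcal C$ descends to a functor on homotopy categories and $[h]$ is an isomorphism in $\Ho \Fun\pr{\square^1, \mcal C}$, it follows that $[\alpha]$ and $[\beta]$ are isomorphisms in $\Ho \mcal C$, i.e., $\alpha$ and $\beta$ are equivalences of $\mcal C$.

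Because $p$ is a categorical fibration, it is in particular an inner fibration, so \cref{prop:eq_cc} applies and both $\alpha$ and $\beta$ are $p$-cocartesian. The $2$-cube $\sigma$ therefore has both its top edge $f$ and its left edge $\alpha$ $p$-cocartesian, and \cref{prop:four_three} then yields that $f'$ is $p$-cocartesian if and only if $\beta$ is, which we have just established.

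There is no substantive obstacle in this argument; the main conceptual step is recognizing the equivalence in $\Fun\pr{\square^1, \mcal C}$ as a $2$-cube three of whose edges are $p$-cocartesian by construction, so that \cref{prop:four_three} can be applied.
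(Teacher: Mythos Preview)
Your argument has a genuine gap: \cref{prop:four_three} is stated and proved only for a \emph{cocartesian fibration} $p$, whereas the proposition you are proving assumes merely that $p$ is a categorical fibration. This distinction matters for the paper's applications; for instance, \cref{prop:cc_edges_vs_cc_edges} invokes \cref{prop:cc_morph_stable_under_equiv} for a categorical fibration $p$ that is not known to be cocartesian (the point there is precisely to deduce that certain edges are cocartesian without already knowing that cocartesian lifts exist). The proof of \cref{prop:four_three} relies on the shadow construction and \cref{prop:shadow}, both of which require the existence of cocartesian lifts, so there is no obvious way to weaken its hypotheses to cover your situation.

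The paper instead argues directly with the lifting problem defining $p$-cocartesian edges. Given a lifting problem for $f_1$ against $\sqcap^n_{i,1}\hookrightarrow\square^n$, one tensors with the marked interval $(\square^1)^\sharp$ and uses that $p^\natural\colon\mcal C^\natural\to\mcal D^\natural$ is a fibration of marked cubical quasicategories (\cref{prop:catfib}); then \cite[Prop.~A.2.3.1]{lurie:htt} lets one replace $(\square^1)^\sharp$ by $\{0\}^\sharp$ up to homotopy, reducing to the lifting problem for $f_0$. Your idea of reducing to \cref{prop:four_three} would work cleanly if $p$ were already a cocartesian fibration, and it is conceptually appealing, but it does not cover the generality required here.
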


\begin{proof}
Let $f_{0},f_{1}$ be morphisms of $\mcal C$, and suppose there is
a $2$-cube $\alpha \from \square^{1}\otimes\square^{1}\to\mcal C$ depicted
as
\[\begin{tikzcd}
	\bullet & \bullet \\
	\bullet & {\bullet.}
	\arrow["\simeq", from=1-1, to=1-2]
	\arrow["{f_0}"', from=1-1, to=2-1]
	\arrow["{f_1}", from=1-2, to=2-2]
	\arrow["\simeq"', from=2-1, to=2-2]
\end{tikzcd}\]In other words, $\alpha\vert\{\varepsilon\}\otimes\square^{1}=f_{\varepsilon}$,
and $\alpha\vert\square^{1}\otimes\{\varepsilon\}$ is an equivalence,
for each $\varepsilon\in\{0,1\}$. We will show that if $f_{0}$ is
$p$-cocartesian, so is $f_{1}$. 

Let $n\geq2,1\leq i\leq n$, and consider a solid commutative diagram
\[\begin{tikzcd}
	{\square ^1} & {\sqcap^{n}_{i,1}} & {\mathcal{C}} \\
	& {\square^n} & {\mathcal{D}.}
	\arrow["{e_{i,1}}"', from=1-1, to=1-2]
	\arrow["{f_1}", curve={height=-12pt}, from=1-1, to=1-3]
	\arrow[from=1-2, to=1-3]
	\arrow[from=1-2, to=2-2]
	\arrow["p", from=1-3, to=2-3]
	\arrow[dotted, from=2-2, to=1-3]
	\arrow[from=2-2, to=2-3]
\end{tikzcd}\]We need to show that the diagram admits a filler. The map $p \from \mcal C^{\natural}\to\mcal D^{\natural}$
is a fibration of marked cubical quasicategories (\cref{prop:catfib}),
so we can expand this diagram as
\[\begin{tikzcd}
	{(\square ^1)^\sharp\otimes (\square ^1)^\flat} & {(\square ^1)^\sharp\otimes (\sqcap^{n}_{i,1})^\flat} & {\mathcal{C}^\natural} \\
	& {(\square ^1)^\sharp\otimes (\square^n)^\flat} & {\mathcal{D}^\natural.}
	\arrow["{\id \otimes e_{i,1}}"', from=1-1, to=1-2]
	\arrow["\alpha", curve={height=-12pt}, from=1-1, to=1-3]
	\arrow[from=1-2, to=1-3]
	\arrow["\iota"', from=1-2, to=2-2]
	\arrow["p", from=1-3, to=2-3]
	\arrow[dotted, from=2-2, to=1-3]
	\arrow[from=2-2, to=2-3]
\end{tikzcd}\]
It will suffice to construct a filler of this diagram. 
Using \cite[Prop.~A.2.3.1]{lurie:htt} in the model category $\CS^+_{/\mathcal{D}^\natural}$, we deduce that this is equivalent to finding a filler of the similar diagram in which $\pr{\square^{1}}^{\sharp}$
is replaced by $\{0\}^{\sharp}$ (using the inclusion $\{0\}\hookrightarrow\square^{1}$). 
The existence of such a filler follows from the fact that $f_{0}$ is
$p$-cocartesian.
\end{proof}
\begin{proposition}
\label{prop:cc_cancellation}Let $p \from X\to Y$ and $q \from Y\to Z$ be inner
fibrations of cubical sets, and let $f$ be an edge of $X$. Suppose
that the following conditions are satisfied:
\begin{enumerate}
\item The map $p$ is a categorical fibration.
\item The edge $f$ is $\pr{q\circ p}$-cocartesian.
\item The edge $p\pr f$ is $q$-cocartesian.
\end{enumerate}
Then $f$ is $p$-cocartesian.
\end{proposition}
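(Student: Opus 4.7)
To prove $f$ is $p$-cocartesian, I take an arbitrary lifting problem
\[
\begin{tikzcd}
\sqcap^n_{i,1} \ar[r,"\alpha"] \ar[d] & X \ar[d,"p"] \\
\square^n \ar[r,"\beta"'] & Y
\end{tikzcd}
\]
with $n\geq 2$ and $\alpha$ carrying the critical edge to $f$, and I build the filler in three stages, each using one of the three hypotheses. First, by hypothesis (2), $f$ is cocartesian for $q\circ p$, so the composed lifting problem against $q\beta\from\square^n\to Z$ admits a solution $\gamma\from\square^n\to X$ extending $\alpha$ and satisfying $qp\gamma=q\beta$. In general $p\gamma\ne\beta$, though they agree on $\sqcap^n_{i,1}$ and share the same image in $Z$.

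Next, writing $\square^{n+1}\cong\square^1\otimes\square^n$, I assemble a map $\sqcap^{n+1}_{i+1,1}\to Y$ by placing $\beta$ and $p\gamma$ on the $\{0\}$- and $\{1\}$-faces respectively, and the degenerate extension (constant in the first coordinate) of $p\alpha|_F$ on $\square^1\otimes F$ for each face $F$ of $\sqcap^n_{i,1}$. These data are consistent, the horn lies over the cube in $Z$ that is constant in the first coordinate with value $q\beta$, and the critical edge of $\sqcap^{n+1}_{i+1,1}$ is carried to $p(f)$. Since $p(f)$ is $q$-cocartesian by hypothesis (3), the horn admits a filler $M\from\square^1\otimes\square^n\to Y$.

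Finally, I consider the lifting problem
\[
\begin{tikzcd}
(\{1\}\otimes\square^n)\cup(\square^1\otimes\sqcap^n_{i,1}) \ar[r] \ar[d] & X \ar[d,"p"] \\
\square^1\otimes\square^n \ar[r,"M"'] & Y
\end{tikzcd}
\]
whose top arrow is $\gamma$ on $\{1\}\otimes\square^n$ and the degenerate extension of $\alpha$ on $\square^1\otimes\sqcap^n_{i,1}$, and pass to marked cubical sets. The left-hand inclusion is the pushout-product of the trivial cofibration $\{1\}^\sharp\hookrightarrow(\square^1)^\sharp$ (from \cref{prop:catfib}(4)) with $\sqcap^n_{i,1}\hookrightarrow\square^n$, hence is a trivial cofibration in the monoidal marked model structure of \cref{marked-cset-quillen-equiv}. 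Hypothesis (1) together with \cref{prop:catfib} makes $p^\natural$ a fibration. Marking compatibility is automatic: the marked $1$-cubes of $(\square^1)^\sharp\otimes(\square^n)^\flat$ are exactly the first-coordinate edges $\square^1\otimes\{v\}$ for $v\in(\square^n)_0$, and because $n\geq 2$ every vertex of $\square^n$ lies in $\sqcap^n_{i,1}$, so each such edge is mapped by $M$ to a degenerate (hence marked) $1$-cube of $Y$. A lift $G\from\square^1\otimes\square^n\to X$ therefore exists, and its restriction to $\{0\}\otimes\square^n$ is the required filler.

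The crux of the argument is that one can interpolate between $p\gamma$ and $\beta$ by a $q$-cocartesian fill using only the single $q$-cocartesian edge $p(f)$ (without assuming $q$ itself is a cocartesian fibration), and that the combinatorial observation ``every vertex of $\square^n$ lies in $\sqcap^n_{i,1}$ when $n\geq 2$'' makes the resulting $M$ automatically compatible with the markings required for the monoidal pushout-product lift against $p^\natural$ in the marked category; this is the step where the categorical-fibration hypothesis (1) is essential.
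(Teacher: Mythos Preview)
Your three-stage strategy is sound and genuinely different from the paper's approach: you build the filler by hand (first over $Z$, then interpolate in $Y$, then lift to $X$), whereas the paper argues that the cubical set $\Fun_{\sqcap^n_{i,1}//Y}(\square^n,X)$ of candidate fillers is a fiber of a categorical fibration $\pi$ between contractible Kan complexes, hence itself contractible. Your Steps~1 and~2 are correct.

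There is a gap in Step~3. You invoke \cref{prop:catfib} to conclude that $p^\natural$ is a fibration in the marked model structure, but that proposition is stated only for inner fibrations between cubical \emph{quasicategories}; the present statement makes no fibrancy assumption on $X$ or $Y$. Without this, knowing that the pushout-product is a marked trivial cofibration does not by itself furnish a lift against $p^\natural$.

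The repair is elementary and avoids the marked category. Factor the inclusion
\[
(\{1\}\otimes\square^n)\cup(\square^1\otimes\sqcap^n_{i,1})\ \hookrightarrow\ \square^1\otimes\square^n
\]
as two cell attachments: first adjoin $\square^1\otimes\partial_{i,1}$ (a pushout of $\sqcap^{n}_{1,0}\hookrightarrow\square^{n}$ under the identification $\square^1\otimes\partial_{i,1}\cong\square^n$), then fill $\sqcap^{n+1}_{1,0}\hookrightarrow\square^{n+1}$. In each case the critical edge, viewed inside $\square^1\otimes\square^n$, is $\square^1\otimes\{1^n\}$; since $n\ge 2$ the vertex $1^n$ lies in $\sqcap^n_{i,1}$, so this edge is carried by your top map to a \emph{degenerate} edge of $X$. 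Both attachments are therefore inner open box fillings against the inner fibration $p$. With this fix your argument actually uses only that $p$ is an inner fibration, so it dispenses with hypothesis~(1) --- a mild strengthening that the paper's mapping-space argument, which needs (1) to make $\pi$ a fibration, does not yield.
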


\begin{proof}
Let $n\geq2$ and $1\leq i\leq n$, and suppose we are given a solid
commutative diagram 
\[\begin{tikzcd}
	{\square ^1} & {\sqcap^{n}_{i,1}} & X \\
	& {\square^n} & {Y.}
	\arrow["{e_{i,1}}"', from=1-1, to=1-2]
	\arrow["f", curve={height=-18pt}, from=1-1, to=1-3]
	\arrow[from=1-2, to=1-3]
	\arrow[from=1-2, to=2-2]
	\arrow["p", from=1-3, to=2-3]
	\arrow[dotted, from=2-2, to=1-3]
	\arrow[from=2-2, to=2-3]
\end{tikzcd}\]We wish to show that there is a dotted filler rendering the diagram
commutative. In other words, we wish to show that the cubical set
$\Fun_{\sqcap_{i,1}^{n}//Y}\pr{\square^{n},X}$ is nonempty. We will
prove more strongly that it is contractible. 

The cubical set in question is a fiber of the map
\[
\pi \from \Fun_{\sqcap_{i,1}^{n}//Z}\pr{\square^{n},X}\to\Fun_{\sqcap_{i,1}^{n}//Z}\pr{\square^{n},Y}.
\]
Using condition (1), we find that $\pi$ is a categorical fibration.
Also, by hypotheses (2) and (3), the domain and the codomain of $\pi$
are contractible cubical Kan complexes (\cref{lem:cocart_natural_transformation}).
Thus $\pi$ is an acyclic fibration of contractible cubical Kan complexes,
and hence its fibers are all contractible. The proof is now complete.
\end{proof}

\section{\label{sec:model}Model structures for left fibrations and cocartesian
fibrations}

In this section, we construct model structures for left fibrations
and cocartesian fibrations over a fixed cubical set $B$, called the
\textit{covariant model structure} and the \textit{cocartesian model
structure}. The fibrant-cofibrant objects of the covariant model structure
are the left fibrations $X\to B$, and a map $X\to Y$ of left fibrations
over $B$ is a covariant equivalence if and only if the induced
map $X_{b}\to Y_{b}$ of fibers is a homotopy equivalence of Kan complexes
for each $b\in B$. The cocartesian model structure admits a similar
description, using cocartesian fibrations in place of left fibrations.
We will mainly be concerned with the construction, and carry out the
analysis of this model structure in later sections.

\subsection{\label{subsec:ngu}Review of Nguyen's machinery}

Our construction of the covariant and cocartesian model structures
relies on machinery by Nguyen, which we now recall.
\begin{definition}
\cite[$\S$2]{Ngu19} Let $\msf C$ be a locally presentable category
equipped with a cofibrantly generated weak factorization system $\pr{\mscr L,\mscr R}$. 
\begin{itemize}
\item A \emph{functorial cylinder object} on $\msf C$ is an endofunctor
$I \from \msf C\to\msf C$ equipped with natural transformations $\id_{\msf C}\sqcup\id_{\msf C}\xrightarrow{\pr{\iota_{0},\iota_{1}}}I\xrightarrow{\sigma}\id_{\msf C}$,
such that $\sigma\partial_{0}=\sigma\partial_{1}=\id_{\id_{\msf C}}$
and the components of $\pr{\iota_{0},\iota_{1}} \from \id_{\msf C}\sqcup\id_{\msf C}\to I$
belong to $\mscr L$. We typically denote the action of $I$ on an
object $X\in\msf C$ by 
\[
I\pr X=I\otimes X.
\]
We also write $\partial I\otimes X=X\sqcup X$, $X=\{0\}\otimes X=\{1\}\otimes X$,
and write $\{\varepsilon\}\otimes X\to I\otimes X$ for the component
of $\iota_{\varepsilon}$.
\item A functorial cylinder object $\pr{I,\iota_{0},\iota_{1},\sigma}$
on $\msf C$ is called \emph{exact} if it satisfies the following
conditions:
\begin{itemize}
\item The functor $I$ commutes with small colimits.
\item For every morphism $\pr{i \from K\to L}\in\mscr L$, the map
\[
\partial I \pp  i \from \pr{\partial I\otimes L}\push\limits_{\partial I\otimes K}\pr{I\otimes K}\to I\otimes L
\]
belongs to $\mscr L$.
\item For every morphism $\pr{i \from K\to L}\in\mscr L$ and every $\varepsilon\in\{0,1\}$,
the morphism
\[
\iota_{\varepsilon} \pp  i \from \pr{\{\varepsilon\}\otimes L}\push\limits_{\{\varepsilon\}\otimes K}\pr{I\otimes K}\to I\otimes L
\]
belongs to $\mscr L$.
\end{itemize}
\end{itemize}
\end{definition}
Geometrically, the morphism $\iota_{\eps}  \pp  i$ is the inclusion $\operatorname{Cyl}(i) \to \operatorname{Cyl}(L)$ of the ``mapping cylinder'' on $i$ into the cylinder on $L$. 
The morphism $\bd I  \pp  i$ is the inclusion of the ``double-mapping cylinder'' $\operatorname{Cyl}(i, i) \to \operatorname{Cyl}(L)$ of $i$ with itself into the cylinder on $L$. 

\begin{definition}
\cite[$\S$ 2]{Ngu19}\label{def:lan_i} Let $\msf C$ be a locally
presentable category equipped with a cofibrantly generated weak factorization
system $\pr{\mscr L,\mscr R}$ and an exact cylinder $\pr{I,\iota_{0},\iota_{1},\sigma}$.
A class of morphisms $\mrm{LAn}\subset\mscr L$ is called a class
of \emph{left $I$-anodyne extensions }if the following conditions
are satisfied:
\begin{itemize}
\item There is a small set of morphisms $\Lambda\subset\mscr L$ generating
$\mrm{LAn}$ as a saturated class of morphisms.
\item For any $\pr{i \from K\to L}\in\mscr L$, the map $\iota_{0} \pp i$
belongs to $\mrm{LAn}$.
\item For any $\pr{i \from K\to L}\in\mrm{LAn}$, the map $\partial I \pp i$
belongs to $\mrm{LAn}$.
\end{itemize}

Maps having the right lifting property for left $I$-anodyne extensions
are called \emph{left $I$-fibrations}.
\end{definition}

\begin{theorem}
\cite[Thm.~2.17]{Ngu19}\label{thm:ngu19} Let $\msf C$ be a locally
presentable category equipped with a cofibrantly generated weak factorization
system $\pr{\mscr L,\mscr R}$, an exact cylinder $\pr{I,\iota_{0},\iota_{1},\sigma}$,
and a class $\mrm{LAn}$ of left $I$-anodyne extensions. 
Suppose that, for all $X \in \msf{C}$, the unique map $\varnothing \to X$ from the initial object is in $\mscr{L}$.
Then, there is a unique model structure on $\msf C$ such that:
\begin{enumerate}
\item cofibrations are the maps in $\mscr L$; and
\item if $X,Y\in\msf C$ are fibrant, then a map $f \from X \to Y$ is a fibration
if and only if it is a left $I$-fibration.
\end{enumerate}
\end{theorem}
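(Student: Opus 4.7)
The plan is to apply a Smith-style recognition theorem for combinatorial model categories, producing the candidate class of weak equivalences from the exact cylinder $I$. Uniqueness is immediate: once the cofibrations and the fibrations between fibrant objects are fixed, the weak equivalences are forced, because in any such model structure a map is a weak equivalence iff it induces bijections $[Y, Z] \to [X, Z]$ for all fibrant $Z$, and this characterization depends only on the two classes already pinned down.

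For existence, I would first use the cylinder to define a homotopy relation on parallel maps $f, g \from X \to Y$: they are homotopic if they arise as restrictions of some $H \from I \otimes X \to Y$ along the endpoint inclusions $\iota_0, \iota_1$. The exactness axioms, in particular the fact that each pushout-product $\iota_\varepsilon \pp i$ lies in $\mscr L$, guarantee that when $Y$ has the right lifting property against $\mrm{LAn}$ this is an equivalence relation on $\Hom(X, Y)$; write $[X, Y]$ for the quotient. Declare $f \from X \to Y$ to be a weak equivalence iff $f^* \from [Y, Z] \to [X, Z]$ is a bijection for every fibrant $Z$. Applying the small object argument to (a) the generators of $\mscr L$ yields $(\mscr L, \mscr R)$-factorizations, while (b) the generators of $\mrm{LAn}$ yields factorizations as an $\mrm{LAn}$ map followed by a left $I$-fibration; together with the hypothesis that $\varnothing \to X \in \mscr L$, this provides a functorial fibrant replacement of every object.

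It then remains to verify (i) every $\mrm{LAn}$ map is a weak equivalence, (ii) every $\mscr R$-map is a weak equivalence, and (iii) among fibrant objects, cofibrations that are weak equivalences are retracts of $\mrm{LAn}$ maps, equivalently, have the left lifting property against left $I$-fibrations. The main obstacle is (i): proving that left $I$-anodyne extensions are weak equivalences. This is established by induction on the generators, using the second exactness axiom $\bd I \pp i \in \mscr L$ to construct explicit homotopies witnessing that $f^* \from [Y, Z] \to [X, Z]$ remains bijective after pushing out along a generator. Once (i) is in place, (ii) follows because any $\mscr R$-map splits up to homotopy through the fibrant replacement of its source, and (iii) follows from the standard retract argument applied to an $\mrm{LAn}$/left $I$-fibration factorization of the given cofibration. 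Accessibility of the resulting class of weak equivalences, needed to invoke Smith's theorem, follows from the accessibility of the fibrant replacement functor and of the homotopy-class construction $[-,-]$.
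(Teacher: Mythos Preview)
The paper does not prove this theorem. It is quoted verbatim as \cite[Thm.~2.17]{Ngu19} and treated as a black box from Nguyen's work; no proof or sketch is given, and the paper immediately moves on to applying it (to construct the covariant and cocartesian model structures in \cref{prop:cov,prop:cc}). So there is no ``paper's own proof'' to compare your proposal against.

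For what it is worth, your outline is in the right spirit for this genre of result---it is essentially the Cisinski--Olschok strategy that Nguyen refines: define homotopy via the exact cylinder, take weak equivalences to be the maps inverted by $[-,Z]$ for $Z$ left $I$-fibrant, and then verify the model category axioms, with the crux being that left $I$-anodyne maps are weak equivalences. Whether your sketch matches Nguyen's actual argument in detail (e.g., whether he invokes Smith's theorem or instead verifies the axioms directly, and how exactly accessibility of the weak equivalences is handled) is something you would have to check against \cite{Ngu19} itself, not this paper.
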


We conclude this section with a particularly nice class of weak equivalences
in the model structure of \cref{thm:ngu19}.
\begin{definition}
\cite[Def.~2.25]{Ngu19} Let $\msf C$ be a locally presentable
category equipped with a cofibrantly generated weak factorization
system $\pr{\mscr L,\mscr R}$ and an exact cylinder $\pr{I,\iota_{0},\iota_{1},\sigma}$.
\begin{enumerate}
	\item A map $f \from X\to Y$ in $\msf C$ is called a \emph{left $I$-deformation
	retract} if there are maps $r \from Y\to X$ and $h \from I\otimes Y\to Y$ satisfying equalities
	\[ \begin{array}{r r@{}l}
		\restr{h}{\{0\}\otimes Y} = fr & \restr{h}{\{1\}\otimes Y} = \id_{Y} & {} \\
		rf=\id_{X} & \restr{h}{I\otimes X} = f \sigma_{X}.
	\end{array} \]
\item 
	A map $f \from X\to Y$ in $\msf C$ is called a \emph{right $I$-deformation
	retract} if there are maps $r \from Y\to X$ and $h \from I\otimes Y\to Y$ satisfying equalities
	\[ \begin{array}{r r@{}l}
		\restr{h}{\{ 0 \} \otimes Y} = \id_{Y} & \restr{h}{\{ 1 \} \otimes Y} = fr & {} \\
		rf = \id_{X} & \restr{h}{I\otimes X} = f \sigma_{X} & .
	\end{array} \]
\end{enumerate}
\end{definition}

\begin{proposition}
\cite[Lem.~3.15, Proposition 3.16]{Ngu19}\label{prop:retracts}
In the situation of \cref{thm:ngu19}, left $I$-deformation
retracts are left $I$-anodyne, and right $I$-deformation retracts
are stable under base change along left $I$-fibrations. \qed
\end{proposition}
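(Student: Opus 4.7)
The plan is to prove the two assertions separately, each by a standard technique: a retract argument for the first, and a lifting argument for the second.

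For the first assertion, let $f \from X \to Y$ be a left $I$-deformation retract with witnessing data $(r,h)$. I would exhibit $f$ as a retract, in the arrow category of $\msf{C}$, of the pushout-product map
\[ \iota_{0}\pp f \from (\{0\}\otimes Y)\push\limits_{\{0\}\otimes X}(I\otimes X)\longrightarrow I\otimes Y, \]
which lies in $\mrm{LAn}$ by the second defining property of a class of left $I$-anodyne extensions applied to $f\in\mscr{L}$. The retract diagram I have in mind uses $\iota_{1}^{X} \from X\to I\otimes X$ (composed with the inclusion into the pushout) as the top forward map and $\iota_{1} \from Y\to I\otimes Y$ as the bottom forward map, and the amalgamation of $r \from Y \to X$ with $\sigma_{X} \from I\otimes X \to X$, together with $h \from I\otimes Y \to Y$, as the two backward maps. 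The four defining equalities of a left $I$-deformation retract are precisely what is needed to verify commutativity of both retract squares and the identity-composite conditions $\sigma_{X}\iota_{1}^{X}=\id[X]$ and $h\iota_{1}=\id[Y]$. Since $\mrm{LAn}$ is saturated, hence closed under retracts, we conclude $f\in\mrm{LAn}$.

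For the second assertion, suppose $f \from X\to Y$ is a right $I$-deformation retract with data $(r,h)$, let $p \from Y'\to Y$ be a left $I$-fibration, and form the pullback $f' \from X'\to Y'$ with $X'=X\times_{Y}Y'$ and $q \from X'\to X$ the other projection. I would construct the required data $(r',h')$ for $f'$ by solving a single lifting problem against $p$. The relevant square has top map $(\id[Y'],\, f'\sigma_{X'}) \from (\{0\}\otimes Y')\push\limits_{\{0\}\otimes X'}(I\otimes X') \to Y'$, bottom map $h\pr{I\otimes p} \from I\otimes Y'\to Y$, left map $\iota_{0}\pp f'\in\mrm{LAn}$ (using pullback-stability of $\mscr{L}$ so that $f'\in\mscr{L}$), and right map $p$. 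Its commutativity uses precisely the right $I$-deformation retract axioms $h\iota_{0}=\id[Y]$ and $h|_{I\otimes X}=f\sigma_{X}$ together with $pf'=fq$ and naturality of $\sigma$. The resulting lift $h' \from I\otimes Y'\to Y'$ automatically satisfies $h'\iota_{0}=\id[Y']$ and $h'|_{I\otimes X'}=f'\sigma_{X'}$. Moreover, $p\pr{h'\iota_{1}}=frp$, so the universal property of the pullback produces a unique $r' \from Y'\to X'$ with $f'r'=h'\iota_{1}$ and $qr'=rp$; a short diagram chase using $rf=\id[X]$ and $h'\iota_{1}f'=f'$ then confirms $r'f'=\id[X']$.

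The main design choice, if there is an ``obstacle'' at all, is the setup of the lifting square in the second part: once one picks the piecewise-defined top map $(\id[Y'],\, f'\sigma_{X'})$ and the composite bottom map $h(I\otimes p)$, the remainder is routine bookkeeping with the deformation-retract identities and the universal property of the pullback. A small caveat is that both arguments implicitly require $f\in\mscr{L}$ (and pullback-stability of $\mscr{L}$ for the second part), but since $\mrm{LAn}\subset\mscr{L}$ and in all applications considered $\mscr{L}$ is a class of monomorphisms in a presheaf topos, this is automatic.
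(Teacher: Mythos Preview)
The paper does not supply its own proof of this proposition; it simply cites \cite[Lem.~3.15, Prop.~3.16]{Ngu19} and closes with a \qedsymbol. So there is nothing to compare your argument against on the paper's side.

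Your argument is correct and is the standard one. Two small remarks on your caveat at the end. First, the clause ``since $\mrm{LAn}\subset\mscr{L}$'' is not the right justification for $f\in\mscr{L}$ in the first part: that would be circular, as membership of $f$ in $\mrm{LAn}$ is precisely what you are proving. The operative point is that a left (or right) $I$-deformation retract is in particular a split monomorphism (via $rf=\id[X]$), hence a monomorphism; since in every application of this proposition in the paper $\mscr{L}$ is the class of monomorphisms, this gives $f\in\mscr{L}$ and, by stability of monomorphisms under pullback, also $f'\in\mscr{L}$ in the second part. Second, in full generality (arbitrary $\mscr{L}$) one does need to either assume $f\in\mscr{L}$ or verify it separately; Nguyen's original statements are formulated so that this is not an issue, and it is certainly not an issue for the uses made in the present paper.
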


\subsection{The covariant model structure}

We now apply \cref{thm:ngu19} to construct a model structure
for left fibrations over a fixed base.
\begin{definition}
A map of cubical sets is called a \emph{left anodyne extension}
if it has the left lifting property for left fibrations. Equivalently,
the class of left anodyne extensions is generated as a saturated class
of morphisms by the\emph{ left open boxes} $\{\sqcap_{i,1}^{n}\subset\square^{n}\}_{1\leq i\leq n}$
and inner open boxes.
\end{definition}

\begin{proposition}
\label{prop:lan_pp}Let $i,j$ be monomorphisms of cubical sets. If
$i$ or $j$ is left anodyne, so is their pushout-product $i \pp j$.
\end{proposition}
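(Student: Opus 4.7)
The plan is to follow the standard closure argument for pushout-product stability: reduce the claim to pushout-products of generators, and then verify each case directly. Since the bifunctor $\pp$ preserves colimits in each variable, and both monomorphisms and left anodyne extensions form saturated classes (generated respectively by the boundary inclusions $\bd\cube{m} \hookrightarrow \cube{m}$ and by the left open box inclusions $\obox{n}{k,1} \hookrightarrow \cube{n}$ together with the inner open box inclusions $\iobox{n}{k,\eps} \hookrightarrow \icube{n}{k,\eps}$), the general claim reduces to verifying that the pushout-product of one generator from each side is left anodyne. The case where $i$ is left anodyne and $j$ is a monomorphism is handled by a symmetric argument, so I would focus on the case where $i$ is a boundary inclusion and $j$ is a generator of left anodynes.

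For the subcase $i = (\bd\cube{m} \hookrightarrow \cube{m})$ and $j = (\obox{n}{k,1} \hookrightarrow \cube{n})$, I would compute directly. Using the identification $\cube{m} \otimes \cube{n} = \cube{m+n}$ together with the face formulas of \cref{thm:gprod_cube}, I would check that a non-degenerate $(m+n-1)$-face $\face{m+n}{j,\eta}$ of $\cube{m+n}$ lies in the union $(\bd\cube{m} \otimes \cube{n}) \cup (\cube{m} \otimes \obox{n}{k,1})$ if and only if $(j,\eta) \neq (m+k, 1)$, and that the top cell does not lie in this union. Hence the pushout-product equals the left open box inclusion $\obox{m+n}{m+k,1} \hookrightarrow \cube{m+n}$, which is left anodyne by definition.

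For the subcase $i = (\bd\cube{m} \hookrightarrow \cube{m})$ and $j = (\iobox{n}{k,\eps} \hookrightarrow \icube{n}{k,\eps})$, I would use the pushout description $\iobox{n}{k,\eps} = \obox{n}{k,\eps} \sqcup_{\cube{1}} \cube{0}$ and $\icube{n}{k,\eps} = \cube{n} \sqcup_{\cube{1}} \cube{0}$, both obtained by collapsing the critical edge. Since $\otimes$ preserves colimits in each variable, a diagram chase identifies $i \pp j$ as a pushout of the unquotiented analog $\obox{m+n}{m+k,\eps} \hookrightarrow \cube{m+n}$ (computed analogously to the first subcase) along the collapse of the subcomplex $\cube{m} \otimes (\text{critical edge}) \subseteq \cube{m+n}$ onto $\cube{m}$.

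The main obstacle is that this collapse acts on an $(m+1)$-dimensional subcomplex rather than a single edge, so $i \pp j$ does not literally coincide with a single inner open box inclusion. I would resolve this by filtering along the skeleta of $\cube{m}$: for each non-degenerate $m'$-face $e \colon \cube{m'} \hookrightarrow \cube{m}$, the associated edge $\{e\} \otimes (\text{critical edge})$ is a copy of the critical edge for $(m'+k, \eps)$ inside the subcube $\cube{m'} \otimes \cube{n} \subseteq \cube{m+n}$, and collapsing it is a pushout of the inner open box inclusion $\iobox{m'+n}{m'+k,\eps} \hookrightarrow \icube{m'+n}{m'+k,\eps}$. This filtration expresses $i \pp j$ as a transfinite composition of pushouts of inner open box inclusions and, in the $\eps = 1$ case, a left open box inclusion, hence as a left anodyne extension.
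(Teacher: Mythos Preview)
Your reduction to generators and your treatment of the left open box case are correct and match the paper's approach exactly. The paper's own proof in fact stops there: it only treats the pushout-product of a boundary inclusion with a \emph{left} open box inclusion, citing \cite[Lem.~1.26]{doherty-kapulkin-lindsey-sattler} for the identification with $\obox{m+n}{m+k,1} \hookrightarrow \cube{m+n}$, and implicitly relies on the fact (established in \cite{doherty-kapulkin-lindsey-sattler} as part of the monoidality of the cubical Joyal model structure) that inner anodynes are already closed under pushout-product with monomorphisms. Your attempt to treat the inner open box case explicitly is therefore more thorough than the paper, not less.

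However, the filtration argument you give for that case has a gap. You correctly observe that $i \pp j$ is a cobase change of the ordinary open box inclusion $\obox{m+n}{m+k,\eps} \hookrightarrow \cube{m+n}$ along the collapse of $\cube{m} \otimes (\text{critical edge})$. For $\eps = 1$ this suffices, since the unquotiented map is already a left open box inclusion. But for $\eps = 0$ the map $\obox{m+n}{m+k,0} \hookrightarrow \cube{m+n}$ is \emph{not} left anodyne, so the cobase change alone tells you nothing. Your proposed fix---filtering by skeleta of $\cube{m}$ and asserting that ``collapsing it is a pushout of the inner open box inclusion''---conflates the quotient maps (the collapses) with the inclusion maps you are trying to analyse; it is not clear what composite this filtration is supposed to produce, nor why each stage would be a pushout of an inner open box inclusion.

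The clean fix is a one-step observation. The critical edge of $\cube{m+n}$ for the $(m+k,\eps)$-face is precisely $\{(1-\eps)^{m}\} \otimes (\text{critical edge of } \cube{n})$, which lies inside the larger subcomplex $\cube{m} \otimes (\text{critical edge})$ you are collapsing, and that entire subcomplex lies inside $\obox{m+n}{m+k,\eps}$. Hence the collapse factors as
\[
\bigl(\obox{m+n}{m+k,\eps} \hookrightarrow \cube{m+n}\bigr) \;\longrightarrow\; \bigl(\iobox{m+n}{m+k,\eps} \hookrightarrow \icube{m+n}{m+k,\eps}\bigr) \;\longrightarrow\; (i \pp j),
\]
where each arrow is a cobase change. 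In particular $i \pp j$ is a cobase change of the single inner open box inclusion $\iobox{m+n}{m+k,\eps} \hookrightarrow \icube{m+n}{m+k,\eps}$, hence inner anodyne, hence left anodyne. No filtration is needed.
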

\begin{proof}
	It suffices to consider the case where one of $i, j$ is a boundary inclusion $\bd \cube{m} \ito \cube{m}$ and the other is a left open box inclusion $\obox{n}{i, 1} \ito \cube{n}$.
	In this case, the pushout product is either $\obox{m+n}{i, 1} \ito \cube{m+n}$ or $\obox{m+n}{m+i, 1} \ito \cube{m+n}$ (\cite[Lem.~1.26]{doherty-kapulkin-lindsey-sattler}), both of which are left anodyne by definition.
\end{proof}
\begin{proposition}
\label{prop:cov}Let $B$ be a cubical set. There is a model structure
on $\CS_{/B}$ with the following properties:
\begin{enumerate}
\item cofibrations are the monomorphisms; and
\item a map of fibrant objects is a fibration if and only if it is a left
fibration.
\end{enumerate}
\end{proposition}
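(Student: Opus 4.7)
The plan is to invoke Nguyen's machinery (\cref{thm:ngu19}) on $\cSet_{/B}$. As a slice of a presheaf category, $\cSet_{/B}$ is locally presentable and carries the cofibrantly generated weak factorization system whose left class is the monomorphisms; this is generated by the set of boundary inclusions $\bd\cube{n} \ito \cube{n}$ sliced over $B$ via all maps $\cube{n} \to B$ (for $n \geq 0$). Since any $\varnothing \to X$ is a monomorphism, every object is cofibrant, so the extra cofibrancy hypothesis of \cref{thm:ngu19} is satisfied.

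For the exact cylinder, take $I = \cube{1} \otimes -$, with $\iota_0, \iota_1$ the two endpoint inclusions $\{\eps\} \ito \cube{1}$ and $\sigma$ induced by the projection $\cube{1} \to \cube{0}$; these structure maps are inherited by the slice. This $I$ preserves colimits, since $\cube{1} \otimes -$ is a left adjoint on $\cSet$ and colimits in $\cSet_{/B}$ are created underneath. The two pushout-product exactness conditions reduce to the statement that the geometric pushout-product of two monomorphisms in $\cSet$ is a monomorphism, which holds by standard arguments about the geometric product.

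For the class $\mathrm{LAn}$ of left $I$-anodyne extensions, we take those morphisms of $\cSet_{/B}$ whose underlying cubical map is left anodyne; this is a saturated class generated by the (small) set of slices of the generating left and inner open box inclusions. The first stability condition — that $\iota_0 \pp i$ is left anodyne for any monomorphism $i$ — holds because $\{0\} \ito \cube{1}$ coincides with the left open box inclusion $\obox{1}{1,1} \ito \cube{1}$ and is therefore itself left anodyne, so \cref{prop:lan_pp} applies. The second stability condition — that $\partial I \pp i$ is left anodyne for any left anodyne $i$ — is immediate from \cref{prop:lan_pp}, using that $\bd\cube{1} \ito \cube{1}$ is a monomorphism.

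Nguyen's theorem then produces the required model structure with monomorphisms as cofibrations. To match the characterization of fibrations between fibrant objects, observe that a lifting problem in $\cSet_{/B}$ against a morphism of $\mathrm{LAn}$ is equivalent to a lifting problem in $\cSet$ against a left anodyne extension, since any lift in $\cSet$ automatically commutes with the structure maps to $B$. Consequently, the left $I$-fibrations between objects of $\cSet_{/B}$ are exactly the left fibrations of cubical sets, as desired. The only nontrivial technical input is the pushout-product stability in \cref{prop:lan_pp}; with that already established, the remainder of the argument is a mechanical verification of Nguyen's hypotheses.
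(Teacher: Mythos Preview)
Your proof is correct and follows exactly the same approach as the paper: apply Nguyen's machinery (\cref{thm:ngu19}) with the monomorphism weak factorization system, the cylinder $\cube{1}\otimes-$, and the class of left anodyne extensions, reducing the only nontrivial verification to the pushout-product stability of \cref{prop:lan_pp}. You simply spell out more of the routine checks (local presentability, exactness of the cylinder, the two stability conditions, and the identification of left $I$-fibrations with left fibrations) that the paper leaves implicit.
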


\begin{proof}
We apply \cref{thm:ngu19} to the factorization system whose
left class is the class of monomorphisms, the exact cylinder $\cube{1} \otimes -$,
and the class of left anodyne extensions. The only nontrivial part
is that the class of left anodyne extensions determines a class of
left $\pr{\square^{1}\otimes-}$-anodyne extensions in the sense of
\cref{def:lan_i}, but this follows from \cref{prop:lan_pp}.
\end{proof}
We call the model structure of \cref{prop:cov} the \emph{covariant
model structure} on $\CS_{/B}$, and denote it by $\pr{\CS_{/B}}_{\cov}$.
Its weak equivalences are called \emph{covariant equivalences} over
$B$.

We record some basic facts about the covariant model structure; some of these results will be strengthened later (\cref{thm:cat_inv,thm:cc_Joy}).

\begin{proposition}
\label{rem:JT07_cov}Let $B$ be a cubical set, and let $F \from \pr{\CS_{/B}}_{\cov}\to\mbf M$
be a left adjoint, where $\mbf M$ is a model category and $F$ preserves
cofibrations. Then $F$ is left Quillen if and only if it carries
left anodynes to weak equivalences.
\end{proposition}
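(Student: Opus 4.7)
The plan is to exploit the construction of $\pr{\CS_{/B}}_{\cov}$ via \cref{thm:ngu19}, from which one reads off that the class of acyclic cofibrations coincides with the class of left anodyne extensions. Given this identification, both directions become formal.

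For the forward direction, if $F$ is left Quillen, then $F$ preserves acyclic cofibrations. Left anodynes are acyclic cofibrations in the covariant model structure, so $F$ sends them to acyclic cofibrations, and in particular to weak equivalences.

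For the backward direction, assume $F$ preserves cofibrations and sends left anodynes to weak equivalences. Then the image under $F$ of any left anodyne is a cofibration of $\mbf M$ that is also a weak equivalence, i.e., an acyclic cofibration of $\mbf M$. Being a left adjoint, $F$ preserves pushouts, transfinite compositions, and retracts, and acyclic cofibrations in any model category are stable under these operations. Hence $F$ sends the saturated class of left anodynes --- by the identification above, the whole class of acyclic cofibrations of $\pr{\CS_{/B}}_{\cov}$ --- into the acyclic cofibrations of $\mbf M$. Combined with preservation of cofibrations, this shows $F$ is left Quillen.

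The main point to justify is the identification of the acyclic cofibrations of $\pr{\CS_{/B}}_{\cov}$ with the left anodyne extensions. This is standard in the Cisinski-style framework underlying Nguyen's machinery: the model structure of \cref{thm:ngu19} is cofibrantly generated with generating acyclic cofibrations given by the anodyne generators, so the acyclic cofibrations are precisely the weak saturation of the latter --- which in our case is the class of left anodynes itself. If one prefers to avoid this intrinsic identification, an alternative route is via the Joyal--Tierney criterion \cite[Prop.~7.15]{JT07}: one directly checks, through an adjunction transposition, that $G(p)$ has the right lifting property against every left anodyne whenever $p$ is a fibration between fibrant objects of $\mbf M$, since the transposed lifting problem then has left-hand map $F(j)$, which is a cofibration and a weak equivalence by assumption, hence an acyclic cofibration against which $p$ lifts.
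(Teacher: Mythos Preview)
Your alternative route via \cite[Prop.~7.15]{JT07} is correct and is exactly the paper's one-line proof. The primary argument, however, rests on an unjustified identification: you assert that the acyclic cofibrations of $\pr{\CS_{/B}}_{\cov}$ coincide with the left anodyne extensions, and that this is ``standard'' from the Cisinski--Nguyen machinery. It is not. \cref{thm:ngu19} only tells you that fibrations \emph{between fibrant objects} are the left $I$-fibrations; it says nothing about arbitrary fibrations, and correspondingly the class of acyclic cofibrations may strictly contain $\mrm{LAn}$. The generating acyclic cofibrations produced in such constructions are not simply the anodyne generators. (In the simplicial covariant model structure, the equality of left anodynes with trivial cofibrations is a genuine theorem of Stevenson, not a formal byproduct of the construction; no analogue is proved here for cubical sets.)

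So the first two-thirds of your proposal should be discarded: the forward direction is fine as written, but for the backward direction you must go through the Joyal--Tierney criterion rather than the spurious identification. Concretely, as you sketch at the end: given a fibration $p$ between fibrant objects of $\mbf M$, transpose the lifting problem for $G(p)$ against a left anodyne $j$ to a lifting problem for $p$ against $F(j)$; the latter is a cofibration (since $F$ preserves cofibrations) and a weak equivalence (by hypothesis), hence an acyclic cofibration, against which $p$ lifts. This shows $G$ preserves fibrations between fibrant objects, which is what \cite[Prop.~7.15]{JT07} requires.
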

\begin{proof}
	This follows from \cite[Prop.~7.15]{JT07}.
\end{proof}

\begin{proposition}
\label{rem:cc_adj-1}
Every map $f \from B\to B'$
of marked cubical sets induces a Quillen adjunction
\[
f_{!} \from \pr{\CS_{/B}}_{\cov}\bigadj\pr{\CS_{/B'}}_{\cov} \ : \! f^{*}.
\]
\end{proposition}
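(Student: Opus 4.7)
The plan is to invoke the criterion of \cref{rem:JT07_cov}: since $f_{!}$ is a left adjoint (with right adjoint $f^{*}$), it suffices to verify that $f_{!}$ preserves cofibrations and sends left anodyne extensions to weak equivalences. (Note: despite the statement reading ``marked cubical sets,'' the model structures appearing are the covariant model structures on $\CS_{/B}$ and $\CS_{/B'}$, so we work unmarked.)

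First I would recall the adjunction at the level of slice categories: $f_{!}$ is post-composition with $f$, sending $(X\to B)$ to $(X\to B\to B')$, while $f^{*}$ is pullback along $f$, sending $(Y\to B')$ to $(B\times_{B'}Y\to B)$. The unit and counit are the evident ones coming from the universal property of the pullback, and this is a standard slice adjunction.

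Next, the key observation is that $f_{!}$ does not alter the underlying cubical map of an object or morphism; it only changes the structure map to the base. Since cofibrations in $(\CS_{/B})_{\cov}$ are exactly the monomorphisms of the underlying cubical sets (by \cref{prop:cov}(1)), $f_{!}$ preserves cofibrations trivially.

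It remains to show that $f_{!}$ sends left anodyne extensions to left anodyne extensions (hence in particular to weak equivalences). Every left anodyne extension in $\CS_{/B}$ lies in the saturation of the generators $\obox{n}{i,1}\to\cube{n}$ and $\iobox{n}{i,\eps}\to\icube{n}{i,\eps}$ equipped with arbitrary structure maps to $B$. Applying $f_{!}$ to such a generator returns the \emph{same} underlying inclusion, but now equipped with its composite structure map to $B'$, which is again a generating left anodyne extension in $\CS_{/B'}$. Because $f_{!}$, being a left adjoint, preserves the colimits (pushouts and transfinite compositions) used to build the saturation, it sends all left anodyne extensions to left anodyne extensions. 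Invoking \cref{rem:JT07_cov} concludes the argument. There is no real obstacle here; the whole proof rests on the transparent observation that $f_{!}$ leaves underlying cubical maps unchanged.
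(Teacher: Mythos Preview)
Your proof is correct and follows the same approach as the paper, which simply states ``Follows from \cref{rem:JT07_cov}.'' You have merely unpacked what that entails: $f_!$ preserves monomorphisms (trivially, since it leaves underlying maps unchanged) and sends left anodyne extensions to left anodyne extensions, hence to weak equivalences. Your parenthetical observation about the typo (``marked'' should be absent) is also correct.
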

\begin{proof}
	Follows from \cref{rem:JT07_cov}.
\end{proof}

\begin{proposition}
	For a cubical Kan complex $B$, the covariant model structure on $\CS_{/B}$
	coincides with the slice model structure induced by the Kan--Quillen
	model structure on $\CS$.
\end{proposition}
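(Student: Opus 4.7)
The two model structures on $\CS_{/B}$ share the same class of cofibrations (the monomorphisms), and are both built in Nguyen's framework (\cref{thm:ngu19}) using the cylinder $\cube{1}\gprod(-)$; consequently they induce the same notion of homotopy. Every object of $\CS_{/B}$ is cofibrant, so for cofibrant $A$ and fibrant $Z$ (in either structure) the homotopy classes $[A,Z]_B$ coincide, and by a standard model-categorical fact a map $f\colon X\to Y$ is a weak equivalence iff $f^{*}\colon[Y,Z]_B\to[X,Z]_B$ is a bijection for every fibrant $Z$. Hence the proof reduces to showing that the fibrant objects of the two structures coincide: the slice Kan--Quillen model structure has the Kan fibrations over $B$, while the covariant one has the left fibrations over $B$. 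That every Kan fibration is a left fibration is immediate: it has the right lifting property against all open box inclusions, in particular the left ones, and any lift of a map $\iobox{n}{j,\eta}\to X$ along $\obox{n}{j,\eta}\ito\cube{n}$ automatically sends the critical edge to a degeneracy and hence factors through the quotient $\icube{n}{j,\eta}$.

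For the converse, let $p\colon X\to B$ be a left fibration with $B$ a Kan complex. Since $B$ is Grothendieck-fibrant and $\cube{1}\ito E[1]$ is an acyclic cofibration in that model structure, every edge of $B$ factors through $E[1]$ and is therefore an equivalence by \cref{all-equivs-agree}; moreover $B$ is a quasicategory. By \cref{prop:eq_cc}, every edge of $X$ is an equivalence (being $p$-cocartesian with equivalence image), so by \cref{cor:qcat-is-kan} $X$ itself is a Kan complex. It remains to show that $p$ has the right lifting property against each right open box inclusion $\obox{n}{i,0}\ito\cube{n}$; the plan is to do this in three steps. Given a square with outer data $a\colon\obox{n}{i,0}\to X$ and $b\colon\cube{n}\to B$: (i) use that $X$ is Kan to extend $a$ to some $\alpha\colon\cube{n}\to X$, disregarding compatibility with $b$; (ii) since $p\alpha$ and $b$ agree on $\obox{n}{i,0}$, construct a homotopy $H\colon\cube{1}\gprod\cube{n}\to B$ from $p\alpha$ to $b$ that is constant on $\obox{n}{i,0}$, by using that $B$ is Kan and that the inclusion $(\bd\cube{1}\gprod\cube{n})\cup(\cube{1}\gprod\obox{n}{i,0})\ito\cube{1}\gprod\cube{n}$ identifies, under $\cube{1}\gprod\cube{n}\cong\cube{n+1}$, with the right open box inclusion $\obox{n+1}{i+1,0}\ito\cube{n+1}$ and is hence Kan anodyne; (iii) lift $H$ to $\tilde H\colon\cube{1}\gprod\cube{n}\to X$ with $\tilde H|_{\{0\}\gprod\cube{n}}=\alpha$ and $\tilde H$ constant at $a$ on $\cube{1}\gprod\obox{n}{i,0}$, using that the domain inclusion $(\{0\}\gprod\cube{n})\cup(\cube{1}\gprod\obox{n}{i,0})\ito\cube{1}\gprod\cube{n}$ is left anodyne by \cref{prop:lan_pp}, being the pushout--product of the left open box $\{0\}\ito\cube{1}$ with the monomorphism $\obox{n}{i,0}\ito\cube{n}$. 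The restriction $\alpha':=\tilde H|_{\{1\}\gprod\cube{n}}$ is the desired lift.

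The main obstacle is this converse direction: first establishing that the total space $X$ is itself Kan, then carrying out the three-step pushout--product lifting against right open boxes. Once this is done, the argument in the opening paragraph concludes the equality of the two model structures.
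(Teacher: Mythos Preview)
Your proof is correct, but the route for the hard direction (a left fibration $p\colon X\to B$ over a Kan complex $B$ is a Kan fibration) is longer than the paper's. Both arguments begin identically: $B$ is a cubical quasicategory in which every edge is an equivalence, $p$ is an inner fibration so $X$ is a cubical quasicategory, and every edge of $X$ is $p$-cocartesian with equivalence image, hence an equivalence by \cref{prop:eq_cc}. At this point the paper simply applies the special horn lemma \cref{prop:special-horn-lemma} \emph{relatively} to $p$: since the critical edge of any open box is sent to an equivalence in $X$, the lift over $B$ exists, so $p$ is a Kan fibration. You instead apply the special horn lemma only \emph{absolutely}, via \cref{cor:qcat-is-kan}, to conclude $X$ is a Kan complex, and then run a separate three-step homotopy-lifting argument (extend in $X$, build a homotopy in $B$ filling a right open box, lift it using \cref{prop:lan_pp}). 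Your pushout--product identifications are correct and the argument goes through; it just duplicates work already packaged in the relative form of the lemma. The paper also silently takes for granted both the easy direction (Kan fibration $\Rightarrow$ left fibration) and the fact that two model structures with the same cofibrations and fibrant objects coincide, whereas you spell these out---which is fine, but not where the content lies.
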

\begin{proof}
	By \cite[Lem.~4.14]{doherty-kapulkin-lindsey-sattler}, every left fibration $E\to B$ is a Kan fibration, hence these model structures share the same classes of cofibrations and fibrant objects.
\end{proof}

\begin{proposition} \label{rem:cat_inv}
	Every categorical equivalence
	$\mcal B\to\mcal B'$ of cubical quasicategories induces a left Quillen
	equivalence $\pr{\CS_{/\mcal B}}_{\cov}\xrightarrow{\simeq}\pr{\CS_{/\mcal B'}}_{\cov}$.
\end{proposition}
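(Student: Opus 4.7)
By \cref{rem:cc_adj-1}, the adjunction $f_{!} \dashv f^{*}$ is a Quillen adjunction; it remains to show it is a Quillen equivalence. I would use Hovey's criterion: it suffices to check that (a) for every fibrant $Y \to \mcal{B}'$, i.e.\ every left fibration, the derived counit $f_! f^* Y \to Y$ is a covariant equivalence over $\mcal{B}'$, together with (b) $f^{*}$ reflects covariant equivalences between left fibrations. Since all objects are cofibrant, the derived functors are simply $f_!$ and $f^{*}$ on their respective fibrant objects.

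For (a), observe that $Y \to \mcal{B}'$ is a categorical fibration by \cref{cor:cc_catfib}, so the derived counit unfolds to the projection $Y \times_{\mcal{B}'} \mcal{B} \to Y$, the pullback of the categorical equivalence $f$ along this categorical fibration. By right properness of the cubical Joyal model structure, this projection is itself a categorical equivalence. The slice Joyal and the covariant model structures on $\CS_{/\mcal{B}'}$ share the same cofibrations (monomorphisms), and left fibrations over $\mcal{B}'$ form a subclass of the Joyal-fibrant objects of $\CS_{/\mcal{B}'}$; consequently every categorical equivalence in the slice is tested against strictly fewer objects to be a covariant equivalence, so Joyal equivalences in $\CS_{/\mcal{B}'}$ descend to covariant equivalences. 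This yields (a).

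For (b), I would invoke the fiberwise characterization of covariant equivalences between left fibrations (referenced in the paper's abstract as \cref{prop:fiberwise_eq_cov}): a map $g \from X \to Y$ of left fibrations over $\mcal{B}'$ is a covariant equivalence iff the induced map $g_{b'} \from X_{b'} \to Y_{b'}$ is a weak equivalence of Kan complexes for every $b' \in \mcal{B}'_0$. If $f^{*}g$ is a covariant equivalence over $\mcal{B}$, then $(f^{*}g)_{b} = g_{f(b)}$ is a Kan equivalence for every $b \in \mcal{B}_0$. The map $f$, being a categorical equivalence of cubical quasicategories, is essentially surjective on objects, so every $b' \in \mcal{B}'_0$ is equivalent in $\mcal{B}'$ to some $f(b)$. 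An equivalence in the base of a left fibration induces a Kan equivalence of fibers --- constructed by transport along the equivalence analogously to the cocartesian transport of \cref{def:cocart_transport} --- hence $X_{b'} \simeq X_{f(b)}$ and $Y_{b'} \simeq Y_{f(b)}$, and the two-out-of-three property gives that $g_{b'}$ is a Kan equivalence. Applying the fiberwise characterization in reverse, $g$ is a covariant equivalence over $\mcal{B}'$.

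The principal obstacle to making this argument entirely self-contained at this point in the paper is the forward reference to the fiberwise characterization of covariant equivalences; this is the central technical input for detecting equivalences in the covariant model structure, and the plan relies on it being established (as forthcoming) in the subsequent development. A secondary ingredient is the right properness of the cubical Joyal model structure from DKLS. Both are standard and should pose no obstruction once the relevant machinery is in place.
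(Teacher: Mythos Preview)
Your argument is essentially correct, but it differs from the paper's approach and contains one technical misstep worth flagging.

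First, the misstep: you invoke right properness of the cubical Joyal model structure, but this model structure is \emph{not} right proper (the paper's introduction alludes to this when describing \cref{cor:cfib_hpb} as ``correcting the failure of categorical fibrations to guarantee right properness''). Fortunately your argument does not actually need it: in your pullback square all four corners are fibrant (cubical quasicategories), and in any model category the pullback of a weak equivalence between fibrant objects along a fibration between fibrant objects is again a weak equivalence. So part (a) goes through once you replace the appeal to right properness with this weaker, always-valid fact.

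Now the comparison. The paper's proof is shorter and avoids your forward reference to \cref{prop:fiberwise_eq_cov}. It argues as follows: since left fibrations over a cubical quasicategory are categorical fibrations (\cref{cor:cc_catfib}, via \cref{prop:catfib}), the covariant model structure on $\CS_{/\mcal B}$ is a left Bousfield localization of the slice Joyal model structure, and hence $\ho\pr{\pr{\CS_{/\mcal B}}_{\cov}}$ sits as a full subcategory of $\ho\pr{\pr{\CS_{/\mcal B}}_{\Joyal}}$. Since $f$ is a categorical equivalence between fibrant objects, the slice Joyal adjunction is already a Quillen equivalence; one then invokes \cref{prop:cc_equiv} to see that this equivalence of Joyal homotopy categories matches up the full subcategories of left fibrations on each side. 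Your approach is more hands-on: you verify the derived counit and conservativity directly, at the cost of a forward reference and an explicit argument about transport along equivalences in the base. Both routes work; the paper's is cleaner at this point in the development because \cref{prop:cc_equiv} is already in hand while \cref{prop:fiberwise_eq_cov} is not.
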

\begin{proof}
	By \cref{prop:catfib}, the covariant model structure $\pr{\CS_{/\mcal B}}_{\cov}$
is a left Bousfield localization of the Joyal model structure. Consequently,
the homotopy category of the former embeds fully faithfully into that
of the latter. 
	This then follows from \cref{prop:cc_equiv}.
\end{proof}

\subsection{The cocartesian model structure}

We next construct a model structure for cocartesian fibrations. The
construction is a little more involved than the case of left fibrations.
\begin{definition}
\label{def:mlan}We define the class of \emph{marked left anodyne
extensions} to be the saturation of the following morphisms:

\begin{enumerate}[label=(ML\arabic*)]

\item The inclusion $\pr{\sqcap_{i,1}^{n}}^{!}\subset\pr{\square_{i,1}^{n}}^{!}$
for $n\geq2$ and $1\leq i\leq n$, where $\pr{\sqcap_{i,1}^{n}}^{!}$ and $\pr{\square_{i,1}^{n}}^{!}$
denote the marking at the critical edge for the $\pr{i,1}$-face.

\item The inclusion $\{0\}^{\sharp}\subset\pr{\square^{1}}^{\sharp}$.

\item The inclusion $\pr{\hat{\sqcap}_{i,\varepsilon}^{n}}^{\flat}\subset\pr{\hat{\square}_{i,\varepsilon}^{n}}^{\flat}$
for $n\geq2$, $1\leq i\leq n$, and $\varepsilon\in\{0,1\}$.

\item The inclusion $E[1]^{\flat}\subset E[1]^{\sharp}$, where $E[1]$ denotes
the cubical nerve of the contractible groupoid with two objects $0$ and $1$.

\item The inclusion $\pr{\sqcap_{i,1}^{2}}^{\sharp}\cup\pr{\square^{2}}^{\flat}\subset\pr{\square^{2}}^{\sharp}$
for $i=1,2$.

\end{enumerate}

We call a map of marked cubical sets a \emph{marked left fibration}
if it has the right lifting property for marked left anodyne extensions.
\end{definition}

We have the following recognition result for marked left fibrations.
\begin{proposition}
\label{prop:mlfib}A map $p \from \pr{X,S}\to\pr{Y,T}$ of marked cubical sets
is a marked left fibration if and only if it satisfies the following
conditions:
\begin{enumerate}[label=(\arabic*)]
\item The map $p \from X\to Y$ is an inner fibration.
\item For every vertex $x\in X$ and every edge $e \from p\pr x\to y'$ in $T$,
there is an edge $x\to x'$ in $S$ lying over $e$.
\item An edge of $X$ belongs to $S$ if and only if it is $p$-cocartesian
and lies over an element of $T$.
\end{enumerate}
\end{proposition}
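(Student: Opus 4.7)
The plan is to match conditions (1)--(3) with the right lifting property against the five classes of generators (ML1)--(ML5) listed in \cref{def:mlan}. For the forward direction (marked left fibration implies (1)--(3)), condition (1) comes from the right lifting property against (ML3); condition (2) comes from the right lifting property against (ML2), which produces, for each $e\in T$ and each vertex $x\in X$ over its source, a marked lift $\tilde{e}\from x\to x'$ in $S$; and the forward implication of (3)---that $f\in S$ implies $f$ is $p$-cocartesian with $p(f)\in T$---follows from (ML1) together with the fact that $p$ sends $S$ to $T$ by functoriality. I would note here that the only marked edge of $\pr{\square^n_{i,1}}^!$ is the critical edge, so a $p$-cocartesian lifting problem with critical edge $f\in S$ upgrades to a marked lifting problem whose only extra marking constraint is automatically satisfied.

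The subtle step is the reverse implication of (3): given $f\from x\to y$ that is $p$-cocartesian with $p(f)\in T$, I would show $f\in S$. First use condition (2) to produce $\tilde{f}\from x\to \tilde{y}$ in $S$ lifting $p(f)$; then $\tilde{f}$ is $p$-cocartesian by the already-proven forward direction of (3). Using the cocartesian lifting property of $\tilde{f}$, I would lift the degenerate $2$-cube $\degen{}{2}p(f)$ along the $\sqcap^2_{1,1}$-horn with top $\tilde{f}$, left $\id[x]$, and bottom $f$ to obtain a $2$-cube
\[
\begin{tikzcd}
x \ar[r, "\tilde{f}"] \ar[d, equal] & \tilde{y} \ar[d, "h"] \\
x \ar[r, "f"'] & y
\end{tikzcd}
\]
whose right edge $h$ lies over $\id[p(y)]$, hence lives in the fiber $X_{p(y)}$. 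By \cref{prop:four_three} applied to this $2$-cube, $h$ is $p$-cocartesian, and being over an identity it is an equivalence in the fiber. I would then extend $h$ to a map $E[1]\to X$ whose image in $Y$ is entirely degenerate (hence in $T$), use the right lifting property against (ML4) to conclude $h\in S$, and finally apply the right lifting property against (ML5) with $i=2$ to the $2$-cube above to conclude $f\in S$.

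For the converse direction, assuming (1)--(3) I would verify the right lifting property against each generator: (ML1)--(ML3) are immediate from the corresponding conditions; for (ML4), the edges of an $E[1]$-structure in $X$ are equivalences, hence $p$-cocartesian by an analog of \cref{prop:eq_cc}, and in $S$ by (3); for (ML5), the three edges of $\sqcap^2_{i,1}$ in $X$ are $p$-cocartesian by (3), so \cref{prop:four_three} makes the missing fourth edge $p$-cocartesian as well, and lying over an edge of $T$ (by hypothesis on the base $2$-cube) it is in $S$ by (3).

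The main technical obstacle throughout is that \cref{prop:four_three,prop:eq_cc} are stated assuming $p$ is a cocartesian fibration between cubical quasicategories, whereas here $p$ only admits cocartesian lifts over edges in $T$, and $X,Y$ need not be quasicategories. The key observation that makes the arguments go through is that each invocation involves only cells whose image in $Y$ lies in $T$, so one can restrict attention to the partial cocartesian structure over $T$-edges supplied by (2); the proofs of the cited propositions adapt verbatim in this restricted setting.
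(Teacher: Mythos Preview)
Your proposal is correct and follows essentially the same route as the paper's proof: the same matching of conditions (1)--(3) to the generators (ML1)--(ML5), the same $2$-cube built from a marked cocartesian lift to handle the reverse implication of (3), and the same appeal to (ML4) then (ML5) to finish.

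The one substantive difference lies in how the hypothesis mismatch you flag at the end is handled. Rather than asserting that the proofs of \cref{prop:four_three} and \cref{prop:eq_cc} ``adapt verbatim'' to a partial-cocartesian setting, the paper pulls back $p$ along the relevant cell of $Y$ so as to land in a situation where those propositions apply as stated. Concretely, for the reverse implication of (3) the paper pulls back along $pf\colon\square^1\to Y$: since every edge of $\square^1$ maps into $T$, the pullback $\restr{p}{pf}\colon X_{pf}\to\square^1$ is a genuine cocartesian fibration of cubical quasicategories, and \cref{prop:four_three,prop:eq_cc} apply directly to conclude that your edge $h$ is an equivalence in $X_{pf}$ (hence extends to $E[1]$ inside the fiber, as you need for the (ML4) step). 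For (ML4) in the converse direction the paper similarly base-changes along the given map $E[1]\to Y$ before invoking \cref{prop:eq_cc}. This base-change manoeuvre is cleaner than re-opening the internal proofs of the cited propositions, and it is the concrete content behind your ``restrict attention to the partial cocartesian structure over $T$-edges''; it would strengthen your write-up to make it explicit.
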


\begin{proof}
Suppose first that $p$ is a marked left fibration. We must show that
$p$ satisfies conditions (1) through (3). Conditions (1) and (2)
follow from the right lifting property for the maps in (ML3) and (ML2),
respectively. The ``only if'' part of condition (3) follows from
the right lifting property for the maps in (ML1). For the ``if''
part of (2), let $f \from x_{0}\to x_{1}$ be a $p$-cocartesian edge of
$X$ such that $pf$ belongs to $T$. We must show that $f$ belongs
to $S$. For this, we use lifting against (ML3) to find a marked edge
$f' \from x_{0}\to x'_{1}$ lying over $pf$. Being a marked edge, the
edge $f'$ is $p$-cocartesian. Therefore, we can find a $2$-cube
depicted on the left, lying over the degenerate $2$-cube depicted
on the right: 
\[\begin{tikzcd}
	{x_0} & {x_1'} & {y_0} & {y_1} \\
	{x_0} & {x_1} & {y_0} & {y_1.}
	\arrow["{f'}", from=1-1, to=1-2]
	\arrow[equal, from=1-1, to=2-1]
	\arrow["g", from=1-2, to=2-2, dotted]
	\arrow["{pf}", from=1-3, to=1-4]
	\arrow[equal, from=1-3, to=2-3]
	\arrow[equal, from=1-4, to=2-4]
	\arrow["f"', from=2-1, to=2-2]
	\arrow["{pf}"', from=2-3, to=2-4]
\end{tikzcd}\]
Let $X_{pf}$ denote the pullback of $p$ along the 1-cube $pf \from \cube{1} \to Y$, which is a cubical quasicategory since $\cube{1}$ is. 
Write $\restr{p}{pf} \from X_{pf} \to \cube{1}$ for the pullback projection.
Since the left 2-cube is contained in $X_{pf}$, applying \cref{prop:four_three} yields that $g$ is cocartesian with respect to $\restr{p}{pf}$,
hence it is an equivalence of $X_{pf}$ by \cref{prop:eq_cc}. 
Thus, the lifting property for (ML4) implies that $g$ is marked.
The lifting property for (ML5) now shows that $f$ is marked, as desired.

For the converse, suppose that $p$ satisfies conditions (1) through
(3). We must show that $p$ is a marked left fibration. The lifting
property for the maps in (ML1), (ML2), and (ML3) is immediate from
conditions (1), (2), and (3). For (ML4), we may assume by base change
that $Y$ is equal to $E[1]$ (so that, in particular, it is a cubical quasicategory).
In this case, the claim follows from \cref{prop:eq_cc}.
Likewise, the right lifting property for (ML5) follows from \cref{prop:four_three} 
\end{proof}
We also have the following stability result for marked left anodynes.
\begin{proposition}
\label{prop:mlan_pp}Let $i$ and $j$ be monomorphisms of marked
cubical sets. If $i$ or $j$ is marked left anodyne, then so is $i \pp j$.
\end{proposition}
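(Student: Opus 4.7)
The plan is to reduce the claim to checking it on generators via a saturation argument, and then to verify the finitely many resulting cases one by one. Since the class of marked left anodyne extensions is saturated and the marked geometric pushout-product $-\pp j$ (for fixed $j$) preserves colimits in its first argument, the class of monomorphisms $i$ such that $i \pp j$ is marked left anodyne is itself saturated. Consequently, for fixed generator $j$ of type (ML1)--(ML5), it suffices to verify the claim as $i$ ranges over a generating set of monomorphisms of marked cubical sets, namely the boundary inclusions $\minm{(\bdcube{n})} \ito \minm{\cube{n}}$ for $n \geq 0$ together with the marking inclusion $\minm{\cube{1}} \ito \maxm{\cube{1}}$. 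A symmetric reduction handles the case when $i$ is fixed as a generator of marked left anodynes and $j$ varies over monomorphisms. This reduces the problem to a finite list of pushout-products of generators against generators.

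For the pairs involving (ML1), (ML2), or (ML3) against a boundary inclusion, the underlying cubical pushout-product is left anodyne by \cref{prop:lan_pp}, and one may identify it explicitly with an inclusion of the form $\obox{m+n}{j,1} \ito \cube{m+n}$ or $\iobox{m+n}{j,\varepsilon} \ito \icube{m+n}{j,\varepsilon}$, depending on which face of the boundary is involved. The critical edge of the resulting large open box lies inside the original open-box factor, placed at a vertex of the other factor, so that unwinding the marked geometric product identifies the pushout-product as a pushout of the appropriate generator (ML1)--(ML3), possibly supplemented by additional markings that already agree on both the domain and the codomain. For these same generators paired with the marking inclusion, the underlying cubical map is the identity, so the pushout-product is a pure marking upgrade. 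The newly marked edges arise as equivalences in their fibers over marked edges by \cref{prop:four_three} and \cref{prop:eq_cc}, so the upgrade is a composite of (ML4)- and (ML5)-type inclusions.

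The main obstacles are the cases involving (ML4) and (ML5). For (ML4) against a boundary inclusion, the underlying cubical pushout-product is an isomorphism, and we handle the marking upgrade by decomposing along vertices of the boundary cube, applying (ML4) on each fiber; this relies on $E[1]$ being a cubical Kan complex so that all edges are equivalences. For (ML5), the hardest case, the strategy is to build an explicit cellular filtration of the pushout-product, whose successive attachments are pushouts of generators (ML1) and (ML5); the crux of the argument, which we anticipate being the most delicate point, is to verify at each stage that the critical edges of the attaching open boxes are marked appropriately to invoke the relevant generator. Finally, for the pairs involving (ML4) and (ML5) against the marking inclusion $\minm{\cube{1}} \ito \maxm{\cube{1}}$, the small size of the domains allows a direct combinatorial verification by enumerating the cubes in $\cube{1} \otimes E[1]$ and $\cube{1} \otimes \cube{2}$ and exhibiting a filtration by pushouts of (ML4) and (ML5).
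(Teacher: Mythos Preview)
Your reduction to generators matches the paper's approach, but you have missed the observation that collapses most of the case analysis, and consequently you misjudge which cases require work.

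The key is the explicit description of marked edges in the geometric product. If $j$ is an isomorphism on underlying cubical sets (as are the maps in (ML4) and (ML5)), then for $n \geq 1$ the pushout-product $i_n \pp j$ is an \emph{isomorphism} of marked cubical sets: the underlying map is the identity, and since $\bdcube{n}$ and $\cube{n}$ have the same vertices, the marked edges contributed by each factor agree on domain and codomain. Dually, if $j$ is bijective on vertices (as are the maps in (ML1), (ML3), (ML4), (ML5)), then $u \pp j$ is an isomorphism, by the same reasoning applied to the other factor. Thus what you call ``the hardest case'' --- (ML5) against a boundary inclusion --- is in fact an isomorphism, as are all cases involving (ML4), and the cases (ML1), (ML3), (ML5) against $u$.

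Only three cases remain: a map in (ML1) or (ML2) against $i_n$, which is a pushout of a map in (ML1); a map in (ML3) against $i_n$, a pushout of a map in (ML3); and the map in (ML2) against $u$, which a direct computation identifies as a map in (ML5). No filtrations are needed.

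Separately, your appeal to \cref{prop:four_three} and \cref{prop:eq_cc} for the marking-upgrade cases is misguided: those propositions concern cocartesian edges relative to a given inner fibration and do not establish that a particular map of marked cubical sets lies in the saturated class generated by (ML1)--(ML5). Membership in that class must be exhibited combinatorially by decomposing the map into generators, not argued semantically via properties of fibrations.
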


\begin{proof}
We will assume that $j$ is marked left anodyne; the proof for the
case where $i$ is marked left anodyne can be treated similarly. Since
the class of marked left anodynes is saturated, we may assume that
$i$ is either the boundary inclusion $i_{n} \from \pr{\partial\square^{n}}^{\flat}\to\pr{\square^{n}}^{\flat}$
for $n\geq1$ with minimal marking (note that $i_{0} \pp -$
is isomorphic to the identity), or the map $u \from \pr{\square^{1}}^{\flat}\to\pr{\square^{1}}^{\sharp}$.
We may also assume that $j$ lies in one of the classes (ML1) through
(ML5) of \cref{def:mlan}. Note that if $j$ induces an
isomorphism between the underlying cubical sets, then $i_{n} \pp j$
is an isomorphism; also, if $j$ is bijective on vertices, then $u \pp j$
is an isomorphism. This leaves us with the following cases:
\begin{itemize}
\item The map $j$ is in (ML1) or (ML2), and $i=i_{n}$ for some $n\geq1$.
In this case, $i_{n} \pp j$ is a pushout of a map in (ML1).
\item The map $j$ is in (ML2), and $i=u$. In this case, the map $u \pp j$
is in (ML5).
\item The map $j$ is in (ML3), and $i=i_{n}$ for some $n\ge1$. In this
case, $i_{n} \pp j$ is a pushout of a map in (ML3).
\end{itemize}
The proof is now complete.
\end{proof}
With these preparations, we can now construct the model structure
for cocartesian fibrations. 
\begin{proposition}
\label{prop:cc}Let $\overline{B}$ be a marked cubical set. There is a model
structure on $\CS_{/\overline{B}}^{+}$ with the following properties:
\begin{enumerate}
\item Cofibrations are the monomorphisms.
\item A map of fibrant objects is a fibration if and only if it is a marked
left fibration.
\end{enumerate}
\end{proposition}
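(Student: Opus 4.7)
The plan is to apply \cref{thm:ngu19} to suitable data on $\CS^+_{/\overline B}$, in direct parallel with the proof of \cref{prop:cov}. Specifically, I will use the factorization system whose left class is the class of monomorphisms of marked cubical sets, the functorial cylinder $I = \pr{\square^1}^{\sharp}\otimes-$ (with $\iota_\varepsilon$ the endpoint inclusions and $\sigma$ induced by the projection $\pr{\square^1}^{\sharp}\to\square^0$, made into a cylinder on the slice by post-composing with the structure map to $\overline B$), and the class $\mathrm{LAn}$ of marked left anodyne extensions from \cref{def:mlan}. The monomorphisms in $\CS^+_{/\overline B}$ are cofibrantly generated by the usual generators in $\CS^+$ equipped with maps to $\overline B$, so the left wfs is cofibrantly generated.

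The central structural verification is that $\mathrm{LAn}$ forms a class of left $I$-anodyne extensions in the sense of \cref{def:lan_i}. The set of generators (ML1)--(ML5) is small, so only the two closure conditions remain. For the first, the map $\iota_0\from\{0\}^{\sharp}\to\pr{\square^1}^{\sharp}$ belongs to class (ML2) and is thus marked left anodyne; by \cref{prop:mlan_pp}, the pushout-product $\iota_0\pp i$ is marked left anodyne for every monomorphism $i$, and the same argument applies to $\iota_1$. For the second, the inclusion $\partial I\to I$ is a monomorphism, so another application of \cref{prop:mlan_pp} gives that $\partial I\pp i$ is marked left anodyne whenever $i\in\mathrm{LAn}$. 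Exactness of the cylinder with respect to the monomorphism wfs is routine: it commutes with colimits (as a left adjoint in its second variable), and the pushout-product of two monomorphisms of marked cubical sets is a monomorphism, since the analogous fact holds underlying and the marking of $\pr{\square^1}^{\sharp}\otimes-$ is determined in a product-respecting way.

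With these hypotheses verified, \cref{thm:ngu19} produces a unique model structure on $\CS^+_{/\overline B}$ whose cofibrations are the monomorphisms and whose fibrations between fibrant objects are the left $I$-fibrations, i.e., the maps with the right lifting property against $\mathrm{LAn}$ -- these are exactly the marked left fibrations. I do not expect any substantial obstacle in this argument: the combinatorial content has been isolated in \cref{prop:mlan_pp}, and the remaining steps reduce to the abstract machinery of \cref{thm:ngu19}. The only minor subtlety is the passage from $\CS^+$ to the slice $\CS^+_{/\overline B}$, but this is transparent because monomorphisms, pushout-products, and the right lifting property against marked left anodynes are all detected by the underlying maps in $\CS^+$.
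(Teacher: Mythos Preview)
Your proposal is correct and follows exactly the same approach as the paper's proof: apply \cref{thm:ngu19} with the cylinder $(\square^1)^\sharp\otimes-$ and the class of marked left anodynes, with \cref{prop:mlan_pp} supplying the required closure properties. One small correction: your aside that ``the same argument applies to $\iota_1$'' is both unnecessary (only $\iota_0$ appears in \cref{def:lan_i}) and false ($\{1\}^\sharp\hookrightarrow(\square^1)^\sharp$ is not marked left anodyne), but this does not affect the proof.
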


\begin{proof}
We apply \cref{thm:ngu19} to the class of marked left anodyne
extensions and the cylinder $\pr{\square^{1}}^{\sharp}\otimes-$.
The only nontrivial part is that the class of marked left anodyne
extensions is a class of left $(\pr{\square^{1}}^{\sharp}\otimes-)$-extensions,
but this is proved in \cref{prop:mlan_pp}.
\end{proof}
We call the model structure of \cref{prop:cc} the \emph{cocartesian
model structure} and denote it by $\pr{\CS_{/\overline{B}}^{+}}_{\cc}$.
Its weak equivalences are called \emph{cocartesian equivalences}
over $\overline{B}$. We are primarily interested in the model structure
when $\overline{B}=B^{\sharp}$ is maximally marked. In this case,
\cref{prop:mlfib} tells us that the fibrant-cofibrant
objects are of the form $X^{\natural}\to B^{\sharp}$, where $X\to B$
is a cocartesian fibration and $X^{\natural}$ indicates marking the
cocartesian edges. 

We record some elementary facts about the cocartesian model structure below. 
\begin{proposition}
\label{rem:JT07_cc}
Let $\overline{B}$ be a marked cubical set, and
let 
\[ F \from \pr{\CS_{/\overline{B}}^{+}}_{\cc}\to\mbf M \] 
be a left adjoint, where $\mbf M$ is a model category and $F$ preserves cofibrations.
Then $F$ is is left Quillen if and only if it carries marked left
anodynes to weak equivalences. 
\end{proposition}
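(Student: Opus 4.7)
The plan is to mirror the proof of \cref{rem:JT07_cov} essentially verbatim. Both the covariant and cocartesian model structures are constructed in the same way via Nguyen's machinery (\cref{thm:ngu19}, as applied in \cref{prop:cov} and \cref{prop:cc}), the only difference being the class of generating acyclic cofibrations — left anodynes in one case, marked left anodynes (\cref{def:mlan}) in the other. Since \cref{prop:mlan_pp} verifies that the marked left anodynes together with the cylinder $\pr{\square^{1}}^{\sharp}\otimes-$ fit the hypotheses of Nguyen's framework, the same appeal to \cite[Prop.~7.15]{JT07} that dispatches the covariant case should apply here without change.

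The forward direction is immediate: by the construction of \cref{prop:cc}, every marked left anodyne is an acyclic cofibration in $\pr{\CS_{/\overline{B}}^{+}}_{\cc}$, so any left Quillen functor sends it to a weak equivalence in $\mbf M$.

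For the converse, I would invoke \cite[Prop.~7.15]{JT07} directly. Unpacking, the argument proceeds by noting that Nguyen's small object argument expresses every acyclic cofibration of $\pr{\CS_{/\overline{B}}^{+}}_{\cc}$ as a retract of a transfinite composition of pushouts of marked left anodynes over $\overline{B}$. Since $F$ is a left adjoint that preserves cofibrations and sends marked left anodynes to weak equivalences in $\mbf M$, the image of each such generator is both a cofibration and a weak equivalence in $\mbf M$; since acyclic cofibrations in any model category are closed under pushout, transfinite composition, and retract, $F$ preserves all acyclic cofibrations. Combined with the hypothesis that $F$ already preserves cofibrations, this yields that $F$ is left Quillen.

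No real obstacle is anticipated: the statement is a formal analogue of \cref{rem:JT07_cov}, and no ingredient beyond the already-established \cref{prop:cc} and \cref{prop:mlan_pp} is required.
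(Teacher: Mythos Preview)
Your invocation of \cite[Prop.~7.15]{JT07} is exactly the paper's proof, which consists of the single sentence ``This follows from \cite[Prop.~7.15]{JT07}.''

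One correction to your unpacking, though: it is not true that every acyclic cofibration in $\pr{\CS_{/\overline{B}}^{+}}_{\cc}$ lies in the saturation of the marked left anodynes. Nguyen's \cref{thm:ngu19} only guarantees that the marked left anodynes are \emph{contained} in the trivial cofibrations and that marked left fibrations coincide with fibrations \emph{between fibrant objects}; the containment is typically strict (compare $\{0\}\hookrightarrow J$ in the simplicial Joyal model structure, a trivial cofibration that is not inner anodyne). The actual mechanism behind \cite[Prop.~7.15]{JT07} is different: an adjunction with cofibration-preserving left adjoint is Quillen once the right adjoint $G$ preserves fibrations between fibrant objects. The hypothesis that $F$ sends marked left anodynes to trivial cofibrations is, by adjunction, the statement that $G$ sends fibrations in $\mbf M$ to marked left fibrations; since $G$ preserves the terminal object it therefore preserves fibrant objects, and then \cref{prop:cc} identifies marked left fibrations between fibrant objects with genuine fibrations. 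That is what makes the citation go through, not a description of all trivial cofibrations.
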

\begin{proof}
	This follows from \cite[Prop.~7.15]{JT07}.
\end{proof}

\begin{proposition}
\label{rem:cc_adj}Every map $f \from \overline{B}\to\overline{B}'$ of
marked cubical sets induces a Quillen adjunction
\[
	\begin{tikzcd}
		\mcSet_{/ \overline{B}} \ar[r, bend left, "f_!"{name=Upper}] & \mcSet_{/ \overline{B}'} \ar[l, bend left, "f^*"{name=Lower}] \ar[from=Upper, to=Lower, phantom, "\perp"]
	\end{tikzcd}
\]
between the cocartesian model structures.
\end{proposition}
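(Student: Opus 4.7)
The plan is to apply the criterion from \cref{rem:JT07_cc}, exactly mirroring the proof of the analogous statement \cref{rem:cc_adj-1} for the covariant model structure. The adjunction $f_! \dashv f^*$ is the standard base change adjunction on slice categories: $f_!$ acts by postcomposition, sending a structure map $\overline{X}\to\overline{B}$ to the composite $\overline{X}\to\overline{B}\to\overline{B}'$, and $f^*$ is pullback along $f$.

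First, I would observe that $f_!$ clearly preserves monomorphisms, since it does not alter the underlying map of marked cubical sets. By \cref{rem:JT07_cc}, it therefore suffices to verify that $f_!$ sends marked left anodyne maps in $\CS^+_{/\overline{B}}$ to cocartesian equivalences in $\CS^+_{/\overline{B}'}$. In fact, I would prove the stronger statement that $f_!$ sends marked left anodyne maps to marked left anodyne maps.

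The key observation is that a map $g \from (\overline{X}\to\overline{B})\to(\overline{Y}\to\overline{B})$ in $\CS^+_{/\overline{B}}$ is marked left anodyne (in the sense used to construct the cocartesian model structure on $\CS^+_{/\overline{B}}$) precisely when the underlying map $\overline{X}\to\overline{Y}$ of marked cubical sets is marked left anodyne in the sense of \cref{def:mlan}. Since $f_!g$ has the same underlying map $\overline{X}\to\overline{Y}$ (only the structure map downstairs has changed, from $\overline{B}$ to $\overline{B}'$), we conclude that $f_! g$ is marked left anodyne in $\CS^+_{/\overline{B}'}$. Combined with the fact that $f_!$ preserves colimits (as a left adjoint between slice categories), this handles the whole saturated class. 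Applying \cref{rem:JT07_cc} then yields that $f_!$ is left Quillen, completing the proof.

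There is no real obstacle here: the statement reduces, as in the covariant case, to the trivial observation that the classes of generating (acyclic) cofibrations of $\CS^+_{/\overline{B}}$ are defined purely in terms of the underlying marked cubical sets, so postcomposition with $f$ preserves them on the nose.
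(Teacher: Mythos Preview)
Your proposal is correct and is precisely the paper's proof, just unpacked: the paper simply says ``This follows from \cref{rem:JT07_cc},'' and your argument spells out exactly why that citation suffices (namely, that $f_!$ leaves the underlying map of marked cubical sets unchanged, hence preserves both monomorphisms and marked left anodynes).
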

\begin{proof}
	This follows from \cref{rem:JT07_cc}.
\end{proof}

\begin{proposition}
\label{rem:cc_slice}Let $\mcal C$ be a cubical quasicategory, and
let $\mcal C^{\natural}$ denote the marked cubical set obtained from
$\mcal C$ by marking its equivalences. The cocartesian model structure
on $\CS_{/\mcal C^{\natural}}^{+}$ coincides with the slice model
structure for marked cubical quasicategories. 
\end{proposition}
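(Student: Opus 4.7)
The plan is to invoke the uniqueness part of Nguyen's \cref{thm:ngu19}, which asserts that the cocartesian model structure of \cref{prop:cc} is the unique model structure on $\mcSet_{/\mcal C^{\natural}}$ whose cofibrations are the monomorphisms and whose fibrations between fibrant objects are the marked left fibrations of \cref{def:mlan}. I will show that the slice model structure inherited from the marked cubical Joyal structure satisfies both of these properties, which forces the two to coincide.

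The cofibration condition is automatic, since the marked cubical Joyal structure has the monomorphisms as its cofibrations. For the fibration condition, I first identify the fibrant objects of the slice: these are maps $p \from \overline X \to \mcal C^{\natural}$ which are fibrations in the marked cubical Joyal structure. Since $\mcal C^{\natural}$ is already fibrant, $\overline X$ must be fibrant as well, so $\overline X = \mcal D^{\natural}$ for some cubical quasicategory $\mcal D$, and by \cref{prop:catfib} the map is a fibration precisely when $\mcal D \to \mcal C$ is a categorical fibration. Given two fibrant objects $\mcal D^{\natural}\to\mcal C^{\natural}$ and $\mcal E^{\natural}\to\mcal C^{\natural}$, a morphism $f\from\mcal D^{\natural}\to\mcal E^{\natural}$ between them is a slice fibration iff it is a fibration in marked cubical Joyal, iff (applying \cref{prop:catfib} to $\mcal D\to\mcal E$) $\mcal D\to\mcal E$ is a categorical fibration between cubical quasicategories.

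It thus remains to prove the key equivalence: when $\mcal E$ is a cubical quasicategory, a map $f\from\overline X\to\mcal E^{\natural}$ is a marked left fibration if and only if $\overline X = \mcal D^{\natural}$ for some $\mcal D$ and $\mcal D\to\mcal E$ is a categorical fibration. This will be verified by comparing \cref{prop:mlfib} and \cref{prop:catfib}(3). In the forward direction (marked left fibration implies categorical fibration), condition (1) of \cref{prop:mlfib} together with the fact that $\mcal E$ is a quasicategory implies $X$ is a quasicategory; condition (3) combined with \cref{prop:eq_cc} identifies the marking on $\overline X$ with the equivalences of $X$, so $\overline X = X^{\natural}$; and condition (2) together with \cref{prop:catfib}(3) yields the required lifting of equivalences. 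Conversely, any categorical fibration $\mcal D \to \mcal E$ between cubical quasicategories satisfies each of (1)--(3) of \cref{prop:mlfib} by reapplying \cref{prop:catfib}(3) and \cref{prop:eq_cc}. The main subtlety will be in this equivalence, where \cref{prop:eq_cc} must be used carefully in both directions to translate between ``cocartesian lift of an equivalence'' and ``equivalence,'' which is precisely what fits the marked left fibration characterization to the categorical fibration characterization; everything else is routine bookkeeping using results already established in the preceding development.
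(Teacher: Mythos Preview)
Your proposal is correct and uses the same key inputs as the paper (\cref{prop:catfib}, \cref{prop:eq_cc}, \cref{prop:mlfib}). The only difference is in packaging: the paper invokes directly the standard fact that a model structure is determined by its cofibrations and its fibrant objects, so it only checks that the fibrant objects of the two structures agree, whereas you route through the uniqueness clause of \cref{thm:ngu19} and therefore verify the slightly stronger statement that the fibrations between fibrant objects agree. Your route has the small advantage of citing a result already stated in the paper rather than an external lemma, at the cost of a little extra bookkeeping; substantively the arguments are the same.
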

\begin{proof}
	It suffices to show that these
	model structures have the same class of fibrant objects (and cofibrations,
	which is obvious). This follows from \cref{prop:catfib,prop:eq_cc,prop:mlfib}. 
\end{proof}
In particular, taking
$\mcal C=\square^{0}$, we find that the cocartesian model structure
on $\CS_{/\square^{0}}^{+}\cong\CS^{+}$ recovers the Joyal model structure
for marked cubical sets.

We conclude this subsection with an analog of \cref{rem:cat_inv}
for the cocartesian model structure.
\begin{proposition}
\label{prop:cat_inv_cc}Let $\mcal B$ be a cubical quasicategory.
\begin{enumerate}
\item The forgetful functor
\[
\pr{\CS_{/\mcal B^{\sharp}}^{+}}_{\cc}\to\pr{\CS_{/\mcal B}}_{\Joyal}
\]
is right Quillen.
\item Given fibrant objects $\mcal X^{\natural},\mcal Y^{\natural}\in\pr{\CS_{/\mcal B}^{+}}_{\cc}$,
the map
\[
\ho\pr{\pr{\CS_{/\mcal B^{\sharp}}^{+}}_{\cc}}\pr{\mcal X^{\natural},\mcal Y^{\natural}}\to\ho\pr{\pr{\CS_{/\mcal B}}_{\Joyal}}\pr{\mcal X,\mcal Y}
\]
is injective, and its image consists of those elements represented
by maps $\mcal X\to\mcal Y$ preserving cocartesian edges over $\mcal B$.
\item Any categorical equivalence $\mcal B\to\mcal B'$ induces a left Quillen
equivalence $\pr{\CS_{/\mcal B^{\sharp}}^{+}}_{\cc}\xrightarrow{\simeq}\pr{\CS_{/\mcal B^{\p\sharp}}^{+}}_{\cc}$. 
\end{enumerate}
\end{proposition}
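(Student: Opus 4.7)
My plan is to prove the three parts in sequence, using the interplay between the cocartesian and (marked) Joyal slice model structures.

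For part (1), I would show that the left adjoint $(-)^\flat \from \pr{\CS_{/\mcal{B}}}_{\Joyal} \to \pr{\CS^{+}_{/\mcal{B}^\sharp}}_{\cc}$ (minimal marking) is left Quillen. Preservation of cofibrations is immediate since both classes consist of the monomorphisms and $(-)^\flat$ is the identity on underlying cubical sets. For trivial cofibrations, by \cite[Prop.~7.15]{JT07} it suffices to check on a set of generators. Inner anodynes go to marked left anodynes of class (ML3) in \cref{def:mlan}. For the endpoint inclusions $\{\varepsilon\}\ito K$, I would factor through $K^\sharp$: by \cref{all-equivs-agree} every edge of $K$ is an equivalence, and a direct argument using (ML2), (ML4), and (ML5) produces $\{\varepsilon\}^\flat \to K^\sharp$ and $K^\flat\to K^\sharp$ as marked left anodynes, so that $\{\varepsilon\}^\flat\to K^\flat$ is a cocartesian equivalence by two-out-of-three.

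For part (2), the map on hom-sets is induced by $U$. Injectivity amounts to showing that any Joyal homotopy $H \from \cube{1} \otimes \mcal{X} \to \mcal{Y}$ between the underlying cubical maps of a pair $f, g \from \mcal{X}^\natural \to \mcal{Y}^\natural$ upgrades to a marked homotopy. By definition of Joyal homotopy, the fiberwise restrictions $\restr{H}{\cube{1} \otimes \{x\}}$ are equivalences in $\mcal{Y}$ projecting to degenerate edges of $\mcal{B}$, so by \cref{prop:eq_cc} they are $\pr{\mcal{Y} \to \mcal{B}}$-cocartesian and hence marked in $\mcal{Y}^\natural$. Combined with the fact that $f$ and $g$ preserve markings, this shows $H$ extends to a marked homotopy $\pr{\cube{1}}^\sharp \otimes \mcal{X}^\natural \to \mcal{Y}^\natural$. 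The image description is then formal: a homotopy class lies in the image of $U$ precisely when it has a representative preserving markings, i.e., cocartesian edges over $\mcal{B}$.

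For part (3), the Quillen adjunction is \cref{rem:cc_adj}; I would prove Quillen equivalence by mirroring the argument sketched for \cref{rem:cat_inv}. The cocartesian model structure on $\CS^+_{/\mcal{B}^\sharp}$ is a left Bousfield localization of the slice marked Joyal model structure, since by \cref{prop:mlfib} the cocartesian fibrant objects form a reflective subclass of the marked Joyal fibrants and the cofibrations coincide. Hence the cocartesian homotopy category embeds fully faithfully into the marked Joyal one, which is invariant under categorical equivalence in the base (categorical fibrations being the fibrations on both sides, base change along $f$ is a Quillen equivalence between slice marked Joyal structures). To conclude, I would apply \cref{prop:cc_equiv} and \cref{prop:cc_morph_stable_under_equiv}: the former guarantees that cocartesian fibrations and cocartesian edges are preserved and reflected across the categorical equivalence, so the equivalence of marked Joyal homotopy categories restricts to an equivalence between the respective subcategories identified in part (2). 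The main technical obstacle will be organizing this restriction into a genuine Quillen equivalence, which requires checking that $f^*$ of a cocartesian fibrant object is again cocartesian fibrant with the correct marking, and that the derived unit and counit are cocartesian equivalences; the key ingredient for both is the base-change behavior of cocartesian edges encoded in \cref{prop:cc_equiv}.
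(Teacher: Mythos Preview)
Your approach to parts (2) and (3) is essentially the same as the paper's: for (2), both you and the paper lift a Joyal homotopy to a marked homotopy by checking that the relevant edges are cocartesian via \cref{prop:eq_cc}; for (3), both invoke part (2) together with \cref{prop:cc_equiv}.

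Your approach to part (1) is different from the paper's, and contains a genuine gap. You claim that $K^\flat \to K^\sharp$ is marked left anodyne via (ML2), (ML4), and (ML5). This is false. The map only changes markings, so only (ML4) and (ML5) are relevant. But (ML5) can only mark the $(i,1)$-face of a $2$-cube; in $K$, the top edge $1\to 0$ of the left square is a $(1,0)$-face, so (ML5) can never mark it. And (ML4) requires a map $E[1]\to K$ hitting that edge; no such map exists, since $E[1]$ has nondegenerate $2$-cubes with all four boundary edges nondegenerate, whereas every $2$-cube of $K$ (degenerate or not) has at least one degenerate edge on its boundary. Your reference to \cref{all-equivs-agree} is also misplaced: that result concerns edges in a cubical quasicategory, and $K$ is not one.

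The paper sidesteps this combinatorics entirely. It factors the forgetful functor as
\[
\pr{\CS_{/\mcal B^{\sharp}}^{+}}_{\cc} \xrightarrow{\ \natm{\mcal{B}} \times_{\maxm{\mcal{B}}} -\ } \pr{\CS_{/\mcal B^{\natural}}^{+}}_{\cc} \longrightarrow \pr{\CS_{/\mcal B}}_{\Joyal},
\]
observing that the first functor is right Quillen by \cref{rem:cc_adj}, and the second is right Quillen because the cocartesian model structure over $\mcal{B}^\natural$ coincides with the slice marked model structure (\cref{rem:cc_slice}), for which forgetting markings is right Quillen. Your direct approach is repairable---one can show $\{0\}^\flat \to K^\flat$ is a trivial cofibration by checking the lifting property against marked left fibrations between fibrant objects, using that these have underlying categorical fibrations---but that argument already implicitly relies on the same input the paper's factorization makes explicit.
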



\begin{proof}
For part (1), we factor the functor as 
\[
\pr{\CS_{/\mcal B^{\sharp}}^{+}}_{\cc} \xrightarrow{ \natm{\mcal{B}} \times_{\maxm{\mcal{B}}} - } \pr{\CS_{/\mcal B^{\natural}}^{+}}_{\cc}\to\pr{\CS_{/\mcal B}}_{\Joyal}.
\]
The first functor is right Quillen by \cref{rem:cc_adj}, and
the second one is so by \cref{rem:cc_slice}. Thus we have proved
(1).

For part (2), we may replace $\pr{\CS_{/\mcal B}}_{\Joyal}$ by $\pr{\CS_{/\mcal B^{\natural}}^{+}}_{\cc}$.
By \cref{prop:eq_cc}, the pullback functor $\natm{\mathcal{B}} \times_{\maxm{\mathcal{B}}} -$ sends $\natm{\mathcal{X}}$ and $\natm{\mathcal{Y}}$ to their markings at equivalences (\cref{rem:cc_slice}).
Using the explicit description of marked edges in the marked geometric product, a homotopy of the form $\maxm{(\cube{1})} \otimes (\mathcal{X}, \textrm{equiv's}) \to (\mathcal{Y}, \textrm{equiv's})$ between marked maps $\natm{\mathcal{X}} \to \natm{\mathcal{Y}}$ always ascends to a homotopy $\maxm{(\cube{1})} \otimes \natm{\mathcal{X}} \to \natm{\mathcal{Y}}$.

 Part (3) follows from part (2) and \cref{prop:cc_equiv}.
\end{proof}
We will refine item (3) of \cref{prop:cat_inv_cc} later
as \cref{thm:cat_inv_cc}.

\subsection{Equivalences of fibrant objects}

So far, we have constructed model structures whose fibrant-cofibrant
objects are left fibration and cocartesian fibrations over a fixed
base $B$. At this point, however, it is not very clear what homotopy
theory is modeled by this model structure. We will answer this question
in many different ways, and much of this paper is devoted to addressing
this question (see \cref{prop:TUcomparison,thm:str,thm:str_marked,thm:cat_inv,thm:cat_inv_cc}).
As a first step, we give the following characterization
of weak equivalences of the covariant and cocartesian model structures:
\begin{proposition}
\label{prop:fiberwise_eq_cov}Let $B$ be a cubical set, and let $f \from X\to Y$
be a map of left fibrations over $B$. The following conditions are
equivalent:
\begin{enumerate}
\item The map $f$ is a covariant equivalence.
\item For each $b\in B$, the map $f_{b} \from X_{b}\to Y_{b}$ between the fibers
over $b$ is a homotopy equivalence of cubical Kan complexes.
\end{enumerate}
\end{proposition}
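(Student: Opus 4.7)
The plan for $(1) \Rightarrow (2)$ is a base change argument. For each vertex $b \from \square^{0}\to B$, the pullback functor $b^{*} \from \pr{\CS_{/B}}_{\cov}\to\pr{\CS_{/\square^{0}}}_{\cov}$ is right Quillen by \cref{rem:cc_adj-1}, and therefore preserves weak equivalences between fibrant objects. The covariant model structure on $\CS_{/\square^{0}}\cong\CS$ has the monomorphisms as cofibrations and the cubical Kan complexes as fibrant objects, and so coincides with the Grothendieck model structure by the uniqueness of Cisinski model structures with a given class of cofibrations and fibrant objects. Hence $f_{b}$ is a weak equivalence of cubical Kan complexes, and by Whitehead's theorem a homotopy equivalence.

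For $(2) \Rightarrow (1)$, I would first factor $f$ in $\pr{\CS_{/B}}_{\cov}$ as an acyclic cofibration $i \from X\to Z$ followed by a fibration $p \from Z\to Y$. Since $Y$ is fibrant, so is $Z$, and \cref{prop:cov} then identifies $p$ as a left fibration of cubical sets. The map $i$ is a covariant equivalence, hence a fiberwise homotopy equivalence by the direction established above; applying two-out-of-three fiberwise, $p$ is a fiberwise equivalence as well. It therefore suffices to show that such a $p$ has the right lifting property against all monomorphisms: since the cofibrations of $\pr{\CS_{/B}}_{\cov}$ are the monomorphisms, this will imply that $p$ is a trivial fibration in $\pr{\CS_{/B}}_{\cov}$ and a fortiori a covariant equivalence, so that $f = p\circ i$ is a covariant equivalence.

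The main obstacle is then the following cubical analog of \cite[Lem.~2.1.3.4]{lurie:htt}: \emph{a left fibration of cubical sets whose fiber over every vertex is a contractible Kan complex has the right lifting property against all monomorphisms}. Granted this lemma, the conclusion is immediate: for each vertex $y\in Y$ lying over $b\in B$, the fiber $p^{-1}\pr{y}$ coincides with the fiber of $p_{b} \from Z_{b}\to Y_{b}$ over $y$, and since $p_{b}$ is a Kan fibration of cubical Kan complexes that is a weak equivalence by assumption, its fibers are contractible. To prove the lemma itself, I would proceed by induction on $n$, solving lifting problems against $\partial\square^{n}\to\square^{n}$: a cocartesian lift of a suitable edge in $Z$ reduces the problem to a lifting problem within a fiber over a single vertex of $Y$, where contractibility supplies the desired filler.
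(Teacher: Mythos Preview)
Your argument for $(1)\Rightarrow(2)$ is correct and matches the paper's reasoning. Your strategy for $(2)\Rightarrow(1)$ is sound but genuinely different from the paper's: you reduce to a cubical analog of \cite[Lem.~2.1.3.4]{lurie:htt}, whereas the paper runs a skeletal induction on $B$ to build an explicit left homotopy inverse to $f$ and then invokes two-out-of-six. The paper's route avoids any separate lemma about left fibrations with contractible fibers; its only technical input is \cref{prop:lan_pp}, which makes the relevant extension problem at each skeletal stage solvable. Your route is conceptually cleaner but front-loads the difficulty into the key lemma.

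The gap is in your sketch of that key lemma. The phrase ``a cocartesian lift of a suitable edge in $Z$ reduces the problem to a lifting problem within a fiber'' does not describe a working cubical argument. If you attempt to transport a boundary map $\tau\colon\partial\square^{n}\to Z$ to a single fiber via a connection homotopy $\square^{1}\otimes\square^{n}\to\square^{n}$, you can indeed lift along $\{0\}\otimes\partial\square^{n}\hookrightarrow\square^{1}\otimes\partial\square^{n}$ (left anodyne) and then fill in the contractible fiber at the far end, but the remaining extension is against the \emph{right} open box $\sqcap^{n+1}_{1,0}\hookrightarrow\square^{n+1}$, for which a left fibration has no lifting property; and \cref{prop:special-horn-lemma} is unavailable since $\square^{n}$ is not a cubical quasicategory. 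A correct inductive proof proceeds differently: fill the left open box $\sqcap^{n}_{i,1}$ to obtain a candidate $n$-cube whose $(i,1)$-face may disagree with $\tau$; by the inductive hypothesis the restricted left fibration over that face is already a trivial fibration, so the two $(n{-}1)$-cubes are homotopic rel boundary; then feed this homotopy, together with the candidate and the constant homotopy on the remaining faces, into the left open box $\sqcap^{n+1}_{1,1}\hookrightarrow\square^{1}\otimes\square^{n}$ over the projection to $\square^{n}$, and restrict the resulting filler to $\{1\}\otimes\square^{n}$. This two-step correction works, but it is not a reduction to a fiber and should be written out.
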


\begin{proposition}
\label{prop:fiberwise_eq_cc}Let $B$ be a cubical set, and let $f \from X^{\natural}\to Y^{\natural}$
be a map of fibrant objects of $\CS_{/B^{\sharp}}^{+}$. The following
conditions are equivalent:
\begin{enumerate}
\item The map $f$ is a cocartesian equivalence.
\item For every vertex $b\in B$, the map $f_{b} \from X_{b}\to Y_{b}$
is a categorical equivalence.
\end{enumerate}
\end{proposition}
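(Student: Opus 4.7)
The plan is to prove each direction separately, following the same pattern as \cref{prop:fiberwise_eq_cov} but with adaptations for the markings.

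For $(1) \Rightarrow (2)$: Fix $b \in B_0$. By \cref{rem:cc_adj}, the pullback functor along $b \from \maxm{\cube{0}} \to \maxm{B}$ is right Quillen, so by Ken Brown's lemma it preserves weak equivalences between fibrant objects. The target model category $\pr{\mcSet_{/\maxm{\cube{0}}}}_{\cc}$ coincides with the Joyal structure for marked cubical sets by \cref{rem:cc_slice}. By \cref{prop:eq_cc}, the marking induced on each fiber $X_b$---namely, the cocartesian edges of $X$ lying over the degenerate edge at $b$---consists precisely of the equivalences in $X_b$, so $b^{*}X^{\natural} = \natm{X_b}$. Consequently $f_b \from \natm{X_b} \to \natm{Y_b}$ is a weak equivalence between fibrant marked cubical quasicategories, i.e., a categorical equivalence.

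For $(2) \Rightarrow (1)$: I would first reduce to the case where $f$ is a marked left fibration. Factor $f = p \circ i$ in the cocartesian model structure with $i$ a trivial cofibration and $p$ a fibration. Since $Y^{\natural}$ is fibrant, so is the intermediate object, making $p$ a fibration between fibrant objects and hence a marked left fibration by \cref{prop:cc}. Applying $(1) \Rightarrow (2)$ to $i$ and using 2-out-of-3 for categorical equivalences shows $p$ is itself a fiberwise categorical equivalence. It therefore suffices to prove: \emph{every marked left fibration between fibrant objects of $\pr{\mcSet_{/\maxm{B}}}_{\cc}$ that is a fiberwise categorical equivalence is a trivial fibration.}

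To establish this, one shows $p$ has the right lifting property against each generating cofibration of $\mcSet_{/\maxm{B}}$. For a lifting problem $\bd \cube{n} \to X$ over a cube $\cube{n} \to Y$ with image $\bar\sigma \from \cube{n} \to B$, one pulls back to reduce to the case $B = \cube{n}$, obtaining a marked left fibration $p' \from X' \to Y'$ over $\cube{n}$ that is still fiberwise categorically equivalent. Proceed by induction on $n$: the base case uses that the fiberwise categorical equivalence of fibrant marked cubical quasicategories makes $p'$ restricted to a fiber a trivial fibration, producing the required lift; the inductive step uses cocartesian transport (\cref{def:cocart_transport}) and the coherence lemma (\cref{lem:cocart_natural_transformation}) to glue lifts coherently across the edges of $\cube{n}$. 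The main obstacle is precisely this inductive step: ensuring that lifts produced in different fibers glue coherently through the cocartesian structure, and that every cocartesian edge in $X$ is preserved. The essential inputs for this compatibility are the three-out-of-four property (\cref{prop:four_three}) and the recognition criterion of \cref{prop:mlfib}, which together guarantee that cocartesian edges---and hence the marking---are preserved under the transport procedure.
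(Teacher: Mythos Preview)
Your direction $(1)\Rightarrow(2)$ is correct and matches the paper's one-line justification via \cref{rem:cc_adj} and \cref{rem:cc_slice}; the extra verification that $b^*X^\natural = \natm{X_b}$ using \cref{prop:eq_cc} is a helpful addition.

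For $(2)\Rightarrow(1)$, your reduction to the case of a marked left fibration is fine, but the argument then stops short: you identify the inductive step---gluing fiberwise lifts through the cocartesian structure over $\cube{n}$---as ``the main obstacle'' without resolving it. This is a genuine gap. The tools you cite (\cref{lem:cocart_natural_transformation}, \cref{prop:four_three}, \cref{prop:mlfib}) control how cocartesian edges behave, but none of them directly produces a filler of an arbitrary $\minm{(\partial\cube{n})}\to\minm{(\cube{n})}$ lifting problem from fiberwise data; you would still need to explain how a lift over one vertex propagates to a lift over the whole cube compatibly with a prescribed boundary. You also do not address the other generating cofibration $\minm{(\cube{1})} \to \maxm{(\cube{1})}$, which must be handled before concluding that your $p$ is a trivial fibration in $\mcSet_{/\maxm{B}}$.

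The paper takes a different route that sidesteps this difficulty entirely. Rather than proving that a fiberwise-equivalent marked left fibration is a trivial fibration, it constructs a left homotopy inverse $g$ to $f$ (together with a homotopy $h\from\maxm{(\cube{1})}\otimes X^\natural\to X^\natural$ from $\id$ to $gf$) directly, by induction on the skeletal filtration $B(0)\subset B(1)\subset\cdots$ of $B$. The inductive extension of $(g(n),h(n))$ to $(g(n{+}1),h(n{+}1))$ is a single lifting problem against the fibrant object $X(n{+}1)^\natural$, and the required acyclicity of the relevant inclusion is obtained by comparing it (via cobase changes of $f\vert_{X(n)}$ and $f\vert_{X(n+1)}$) to a pushout-product of $\{0\}^\sharp\hookrightarrow\maxm{(\cube{1})}$ with a monomorphism, which is marked left anodyne by \cref{prop:mlan_pp}. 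One then observes that $g$ is itself a fiberwise categorical equivalence, repeats the construction to give $g$ a left homotopy inverse, and concludes by $2$-out-of-$6$. The point is that this approach never isolates an individual cube-by-cube lifting problem; all the combinatorics is absorbed into the single anodyne pushout-product lemma.
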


The proofs of these propositions are very similar, so we will only
prove \cref{prop:fiberwise_eq_cc}.
\begin{proof}
The implication (1)$\implies$(2) follows from \cref{rem:cc_adj,rem:cc_slice}. 
For the converse, suppose that condition (2) is satisfied. 
We will construct a pair of maps $g \from Y^{\natural}\to X^{\natural}$
and $h \from \pr{\square^{1}}^{\sharp}\otimes X^{\natural}\to X^{\natural}$
over $B$ satisfying $h_{0}=\id_{X^{\natural}}$ and $h_{1}=gf$.
This will show that $f$ has a left homotopy inverse; by repeating
the same argument, we find that $g$ also has a left homotopy inverse.
The two out of six properties of cocartesian equivalences then shows
that $f$ is a cocartesian equivalence.

Let $B\pr n\subset B$ denote the $n$-skeleton of $B$. Set $X\pr n^{\natural}=X^{\natural}\times_{B^{\sharp}}B\pr n^{\sharp}$,
and define $Y\pr n^{\natural}$ similarly. We will construct families
of maps $\{g\pr n \from Y\pr n^{\natural}\to X\pr n^{\natural}\}_{n\geq0}$
and $\{h\pr n \from \pr{\square^{1}}^{\sharp}\otimes X\pr n^{\natural}\to X\pr n^{\natural}\}_{n\geq0}$
such that $h\pr n_{0}=\id$ and $h\pr n_{1}=g\pr n\circ f\vert X\pr n$
for every $n\geq0$, and such that $g\pr n$ and $h\pr n$ extends
$g\pr{n-1}$ and $h\pr{n-1}$ for $n\geq1$. The amalgamation of these
maps will then give us the desired $g$ and $h$.

The construction is inductive. For the base case $n=0$, the cubical
set $B\pr 0$ is discrete, so we can simply use condition (2). For
the inductive step, suppose we have constructed $h\pr n$. We must
find a filler in the diagram
\[\begin{tikzcd}
	{X(n+1)^\natural\push\limits_{\{0\}^\sharp \otimes X(n)^\natural}((\square^1)^\sharp \otimes X(n)^\natural)\push\limits_{\{1\}^\sharp \otimes X(n)^\natural}Y(n)^\natural} && {X(n+1)^\natural} \\
	{((\square^1)^\sharp\otimes X(n+1)^\natural)\push\limits_{\{1\}^\sharp \otimes X(n+1)^\natural}Y(n+1)^\natural}
	\arrow["{(\mathrm{id},h(n),g(n))}", from=1-1, to=1-3]
	\arrow["i"', from=1-1, to=2-1]
	\arrow[dotted, from=2-1, to=1-3]
\end{tikzcd}\]in $\CS_{/B\pr{n+1}^{\sharp}}^{+}$. Since $i$ is a monomorphism and $\natm{X(n+1)}$ is fibrant over $\maxm{B(n+1)}$,
it suffices to show that $i$ is a cocartesian equivalence over $B\pr{n+1}^{\sharp}$.
Since $X\pr n^{\natural}\to Y\pr n^{\natural}$ and $X\pr{n+1}^{\natural}\to Y\pr{n+1}^{\natural}$
are cocartesian equivalences over $B\pr{n+1}^{\sharp}$ (by \cref{rem:cc_adj}), we find that the maps $\phi$ and $\psi$ below
are cocartesian equivalences: 
\[\begin{tikzcd}
	{X(n+1)^\natural\push\limits_{\{0\}^\sharp \otimes X(n)^\natural}((\square^1)^\sharp \otimes X(n)^\natural)} & {X(n+1)^\natural\push\limits_{\{0\}^\sharp \otimes X(n)^\natural}((\square^1)^\sharp \otimes X(n)^\natural)\push\limits_{\{1\}^\sharp \otimes X(n)^\natural}Y(n)^\natural} \\
	{(\square^1)^\sharp\otimes X(n+1)^\natural} & {((\square^1)^\sharp\otimes X(n+1)^\natural)\push\limits_{\{1\}^\sharp \otimes X(n+1)^\natural}Y(n+1)^\natural.}
	\arrow["\phi", from=1-1, to=1-2]
	\arrow["j", from=1-1, to=2-1]
	\arrow["i", from=1-2, to=2-2]
	\arrow["\psi"', from=2-1, to=2-2]
\end{tikzcd}\]Consequently, it suffices to show that $j$ is a cocartesian equivalence.
But this is even marked left anodyne by \cref{prop:mlan_pp},
so we are done.
\end{proof}
\begin{remark}
There is an alternative, perhaps more conceptual, proof of \cref{prop:fiberwise_eq_cov}. 
Since we do not make use of this proof, we will
only give a sketch. 

The hard part is (2)$\implies$(1), so we will focus on this direction.
Suppose that condition (2) is satisfied. For each $A\in\CS_{/B}$,
set
\[
\Map_{B}\pr{A,X}=\Hom_{\CS_{/B}}\pr{\square^{\bullet}\otimes A,X}\in\CS.
\]
We can show that $\Map_{B}\pr{-,X} \from \pr{\CS_{/B}}_{\cov}^{\op}\to\CS_{\KQ}$
is right Quillen (in fact, this is a model of the derived mapping
space \cite{arakawa-carranza-kapulkin:derived-mapping-space}). 
By definition, $\pi_{0}\pr{\Map_{B}\pr{A,X}}$
can be identified with homotopy classes of maps $A\to X$ in the
covariant model structure. Thus, it suffices to show that the map
$\Map_{B}\pr{A,f}$ is a homotopy equivalence. By the usual inductive
argument, we may reduce to the case where $A=\square^{n}$. Since
the inclusion $\square^{0}\cong\{0^{n}\}\to\square^{n}$ is left anodyne,
we can further reduce this to the case where $n=0$, in which case
the claim follows from (2). 

In a sense, this proof is a manifestation of the Yoneda lemma, because
under straightening--unstraightening, the objects $\square^{0}\to B$
correspond to representable presheaves. 
In particular, more care is required to adopt this argument to the cocartesian case.
\end{remark}

\section{\label{sec:triangulation}Comparison with quasicategorical counterparts}

In this section, we compare left fibrations and cocartesian fibrations
of cubical quasicategories with their quasicategorical counterparts under triangulation and its right adjoint.
We will see that they are equivalent at a ``point-set'' level (\cref{prop:cc_edges_vs_cc_edges}) and also at a ``global'' level
(\cref{prop:TUcomparison}).

We begin with a point-set level comparison.
\begin{proposition}
\label{prop:cc_edges_vs_cc_edges}Let $p \from X\to Y$ be an inner fibration
of simplicial sets, and let $f$ be an edge of $X$. 
\begin{enumerate}
\item If $f$ is $p$-cocartesian, then it is $U\pr p$-cocartesian.
\item If $p$ is a categorical fibration of quasicategories and $f$ is
$U\pr p$-cocartesian, then $f$ is $p$-cocartesian.
\end{enumerate}
\end{proposition}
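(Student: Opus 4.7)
The strategy is to transpose cubical lifting problems to simplicial ones via the Quillen adjunction $T\dashv U$, using the identification $T\cube{n}=(\Delta^1)^n$, and then analyze the standard triangulation of the cube by its $n!$ top simplices indexed by permutations of $\{1,\dots,n\}$.

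For part~(1), assume $f$ is $p$-cocartesian. A cubical lifting problem along $Up$ against the open box inclusion $\sqcap^n_{i,1}\hookrightarrow\cube{n}$ with critical edge $f$ transposes under $T\dashv U$ to a simplicial lifting problem along $p$ against $T\sqcap^n_{i,1}\hookrightarrow(\Delta^1)^n$. The plan is to exhibit this inclusion as an iterated pushout of simplicial horn inclusions of two kinds: (a) inner horn inclusions $\Lambda^k_j\hookrightarrow\Delta^k$ with $0<j<k$, resolved by the inner fibration property of $p$; and (b) outer horn inclusions $\Lambda^n_0\hookrightarrow\Delta^n$ whose $\Delta^{\{0,1\}}$-edge is $f$, resolved by the simplicial $p$-cocartesianness of $f$. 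The combinatorial key is that the $n$-simplices of $(\Delta^1)^n$ missing from $T\sqcap^n_{i,1}$ are precisely the $(n-1)!$ top simplices whose vertex chain begins along direction $i$---namely those whose $\Delta^{\{0,1\}}$-edge is the cubical critical edge $0\to e_i$ carrying $f$---while all other missing simplices of $(\Delta^1)^n$ are of strictly lower dimension and can be added via inner horn fillings, provided the order of addition is chosen so that the inner horn steps precede the final outer horn steps.

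For part~(2), assume $p$ is a categorical fibration of quasicategories and $f$ is $Up$-cocartesian. The plan is to mirror the model-categorical argument used in the proof of \cref{prop:cc_equiv}. The condition ``$f$ is $p$-cocartesian'' can be recast as the solvability, for every $n\geq 2$, of an extension problem in $\Fun([1],\sSet_{/\Delta^{\{0,1\}}})$ between the outer horn inclusion $\Lambda^n_0\hookrightarrow\Delta^n$ (with $\Delta^{\{0,1\}}$ carrying $f$) and the arrow $p$; analogously, ``$f$ is $Up$-cocartesian'' is recast as an extension problem in $\Fun([1],\cSet_{/\cube{1}})$ for the cubical open box inclusions with critical edge $f$. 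Under the Quillen equivalence $T\dashv U$ between these arrow categories (valid because $Y$ is a quasicategory, so the appropriate fibrancy holds), Proposition~A.2.3.1 of \cite{lurie:htt} implies these extension problems are equivalent in the corresponding homotopy categories. Combined with the observation, made precise by part~(1), that the cubical open box inclusions are obtained, up to inner-anodyne moves, from simplicial outer horn inclusions under the adjunction, the cubical hypothesis then yields the simplicial $p$-cocartesianness of $f$.

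The principal obstacle in part~(1) is the combinatorial filtration of $T\sqcap^n_{i,1}\hookrightarrow(\Delta^1)^n$ into the correct order of inner and outer horn fillings; in part~(2) the obstacle is verifying that the formal model-categorical argument of Proposition~A.2.3.1 applies so that the extension problems in $\sSet$ and $\cSet$ correspond cleanly under $T\dashv U$.
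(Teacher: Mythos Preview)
Your strategy---transpose and filter by horns---matches the paper's, but your combinatorial claim is wrong. Every maximal chain in $\{0,1\}^n$ runs from $0^n$ to $1^n$, so no coordinate is constant; hence \emph{all} $n!$ top simplices of $(\Delta^1)^n$ are missing from $T\sqcap^n_{i,1}$, not just the $(n-1)!$ whose first step is in direction $i$. Already for $n=2$, $i=1$, both $2$-simplices $(0,0)\to(1,0)\to(1,1)$ and $(0,0)\to(0,1)\to(1,1)$ are missing.

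The paper avoids this bookkeeping by moving direction $i$ to the last slot, identifying $T\sqcap^n_{i,1}$ with $\bigl(\partial(\Delta^1)^{n-1}\times\Delta^1\bigr)\cup\bigl((\Delta^1)^{n-1}\times\{0\}\bigr)$, and filtering $\partial(\Delta^1)^{n-1}\subset(\Delta^1)^{n-1}$ one nondegenerate simplex $\sigma_a$ at a time. The key point is that each such $\sigma_a$ has $\sigma_a(0)=0^{n-1}$ (since every coordinate must be nonconstant), so the prism $\sigma_a\times\Delta^1$ is filled by inner horns plus a single $\Lambda_0$-horn whose initial edge is $(0^{n-1},0)\to(0^{n-1},1)$, i.e., the critical edge. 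This is Lurie's standard prism decomposition, cited from \cite[Prop.~2.1.2.6]{lurie:htt}. Your idea is recoverable, but the filtration has to be organized this way rather than by ``inner first, outer last.''

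\textbf{Part (2).} Here there is a genuine gap. Proposition~A.2.3.1 of \cite{lurie:htt} lets you pass an extension problem to the homotopy category \emph{within a single model category}; it was used in \cref{prop:cc_equiv} because $p$ and $p'$ lived in the same arrow category and were weakly equivalent there. In your situation the two extension problems live in different categories and are posed against \emph{different} families of cofibrations: the simplicial $\{\Lambda^n_0\hookrightarrow\Delta^n\}$ versus the cubical $\{\sqcap^m_{j,1}\hookrightarrow\square^m\}$. These families are not sent to one another by $T\dashv U$; part~(1) only shows that each $T\sqcap^m_{j,1}\hookrightarrow(\Delta^1)^m$ is built from inner horns and $\Lambda_0$-horns, which gives the implication $(p\text{-cocartesian})\Rightarrow(Up\text{-cocartesian})$---the direction you already have. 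Your final sentence invokes part~(1) to close the loop, but it points the wrong way.

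The paper's argument is genuinely different. It shows that the restriction
\[
\pi\colon \Fun'(\Delta^1,X)\longrightarrow \Fun(\{0\},X)\times_{\Fun(\{0\},Y)}\Fun(\Delta^1,Y),
\]
where $\Fun'$ is spanned by $Up$-cocartesian edges, is a trivial fibration: it is a categorical fibration (since $Up$-cocartesian edges are stable under equivalence, \cref{prop:cc_morph_stable_under_equiv}), and $U(\pi)$ is a trivial fibration by \cref{lem:cocart_natural_transformation}, so $\pi$ is one too. This yields a lifting property~$(\ast)$ for maps $(A\times\Delta^1)\cup(B\times\{0\})\to B\times\Delta^1$ whenever the vertical edges over $A$ are $Up$-cocartesian. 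The conclusion then follows from an explicit retraction exhibiting $\Lambda^n_0\hookrightarrow\Delta^n$ as a retract of $(\Lambda^n_0\times\Delta^1)\cup(\Delta^n\times\{0\})\hookrightarrow\Delta^n\times\Delta^1$, with the vertical edges mapping to either degenerate edges or the edge $0\to 1$ (hence $Up$-cocartesian, using that degenerate edges are $p$-cocartesian because $X$ and $Y$ are quasicategories). This retract trick is the missing ingredient; your model-categorical transfer does not supply it.
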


\begin{proof}
We first prove (1). Suppose that $f$ is $p$-cocartesian. We must
show that $f$ is $U\pr p$-cocartesian. In other words, we must solve
a lifting problem of the form
\[\begin{tikzcd}
	{\square^1} & {\sqcap^n_{i,1}} & {U(X)} \\
	& {\square^n} & {U(Y),}
	\arrow["u"', from=1-1, to=1-2]
	\arrow["f", curve={height=-12pt}, from=1-1, to=1-3]
	\arrow[from=1-2, to=1-3]
	\arrow[from=1-2, to=2-2]
	\arrow["p", from=1-3, to=2-3]
	\arrow[dotted, from=2-2, to=1-3]
	\arrow[from=2-2, to=2-3]
\end{tikzcd}\]where $n\geq2$, $1\leq i\leq n$, and $u$ is the critical edge of
the face $\partial_{i,1}$. Using the adjunction $T \adj U$ and commutativity of the categorical product, we
are reduced to solving a lifting problem of the form
\[\begin{tikzcd}
	{} & {} & {} \\[-2em]
	{\Delta^1} & { \big( \bd (\simp{1})^{n-1} \times \Delta^1 \big) \cup \big( (\Delta^1)^{n-1}\times\{0\} \big) } \ar[u, draw=none, "f"{name=Top, description, very near end, yshift=1.4em}] & X \\
	& {(\Delta^1)^n} & Y,
	\arrow["u"', from=2-1, to=2-2]
	\arrow[from=2-1, to=2-3, rounded corners, 
			to path={ 
				|- (Top.south)
				-| (\tikztotarget)
			}
	]
	\arrow["g", from=2-2, to=2-3]
	\arrow["\iota"', from=2-2, to=3-2]
	\arrow["p", from=2-3, to=3-3]
	\arrow[dotted, from=3-2, to=2-3]
	\arrow[from=3-2, to=3-3]
\end{tikzcd}\]
where $\bd (\simp{1})^{n-1} \subset \pr{\Delta^{1}}^{n-1}$ is the simplicial subset consisting
of those simplices $f \from \Delta^{k}\to\pr{\Delta^{1}}^{n-1}$ one of
whose coordinate projections is a constant map (that is, the triangulation $T(\bd \cube{n-1})$ of the boundary of the $(n-1)$-cube),
and $u$ classifies the edge $0^{n-1}0\to0^{n-1}1$.

Choose a filtration 
\[
\bd (\simp{1})^{n-1} = S\pr 0\subset S\pr 1\subset\cdots\subset S\pr N=\pr{\Delta^{1}}^{n-1}
\]
such that each $S\pr a$ is obtained from $S\pr{a-1}$ by adjoining
a nondegenerate simplex $\sigma_{a}$, and such that $\dim\sigma_{a}$
is a non-decreasing function of $a$. We will construct a family of
maps $\{g_{a} \from S\pr a\times\Delta^{1}\to X\}_{0\leq a\leq N}$ over
$Y$, such that each $g_{a}$ agrees with $g$ on $\bd(\simp{1})^{n-1} \times \simp{1}$.
The map $g_{N}$ will then give the desired filler.

The construction is inductive. To begin, we define $g_{0}$
to be the restriction of $g$. 
For the inductive step, let $a\geq1$,
and suppose we have defined $g_{a-1}$. We consider the commutative
diagram
\[\begin{tikzcd}
	{(S(0)\times \Delta^1)\cup (S(a-1)\times \{0\})} & {(S(0)\times \Delta^1)\cup S(a)\times \{0\}} & X \\
	{S(a-1)\times \Delta^1} & {(S(a-1)\times \Delta^1)\cup (S(a)\times \{0\})} \\
	& {S(a)\times \Delta^1} & Y.
	\arrow[from=1-1, to=1-2]
	\arrow[from=1-1, to=2-1]
	\arrow["g", from=1-2, to=1-3]
	\arrow[from=1-2, to=2-2]
	\arrow[from=1-3, to=3-3]
	\arrow["{g_a}"{description}, from=2-1, to=1-3]
	\arrow[from=2-1, to=2-2]
	\arrow["{g'_{a}}"{description}, dotted, from=2-2, to=1-3]
	\arrow["j", from=2-2, to=3-2]
	\arrow[from=3-2, to=3-3]
\end{tikzcd}\]Since the left hand square is a pushout, there is a dotted arrow $g'_{a}$
rendering the diagram commutative. Thus, we are reduced to extending
$g'_{a}$ along $j$. For this, consider the commutative diagram
\[\begin{tikzcd}
	{(\partial\Delta^{\dim\sigma_a}\times \Delta^1)\cup (\Delta^{\dim\sigma_a}\times \{0\})} & {(S(a-1)\times \Delta^1)\cup (S(a)\times \{0\})} \\
	{\Delta^{\dim\sigma_a}\times \Delta^1} & {S(a)\times \Delta^1.}
	\arrow[from=1-1, to=1-2]
	\arrow[from=1-1, to=2-1]
	\arrow["j", from=1-2, to=2-2]
	\arrow["{\sigma_a\times \mathrm{id}}"', from=2-1, to=2-2]
\end{tikzcd}\]Note that $\sigma_{a}\pr 0$ is the initial vertex of $\pr{\Delta^{1}}^{n-1}$;
in particular, the edge $\pr{\sigma_{a}\pr 0,0}\to\pr{\sigma_{a}\pr 0,1}$
is exactly the edge $u$. As explained in the second and the third
paragraph of the proof of \cite[Prop.~2.1.2.6]{lurie:htt}, we can
write the left hand inclusion as a finite composition of pushouts
of inner horn inclusions and a single left horn inclusion, which maps
the initial edge to $\pr{0,0}\to\pr{0,1}$. Therefore, our assumption
on $p$ and $f$ guarantees that $g'_{a}$ extends to a map $g_{a} \from S\pr a\times\Delta^{1}\to X$
over $Y$, completing the induction.

Next, we prove (2). Suppose that $p$ is a categorical fibration of
quasicategories and that $f$ is $U\pr p$-cocartesian. We must show
that $f$ is $p$-cocartesian. To this end, let $\Fun'\pr{\Delta^{1},X}\subset\Fun\pr{\Delta^{1},X}$
denote the full simplicial subset spanned by the maps $\Delta^{1}\to X$
corresponding to $U\pr p$-cocartesian edges. The map
\[
\pi \from \Fun'\pr{\Delta^{1},X}\to\Fun\pr{\{0\},X}\times_{\Fun\pr{\{0\},Y}}\Fun\pr{\Delta^{1},Y}
\]
is a categorical fibration of quasicategories (since $p$ is one and
$U\pr p$-cocartesian edges are stable under equivalences by \cref{prop:cc_morph_stable_under_equiv}), and the map $U\pr{\pi}$
is a trivial fibration by \cref{lem:cocart_natural_transformation}.
It follows that $\pi$ is also a trivial fibration. In concrete terms,
this means the following:
\begin{itemize}
\item [($\ast$)]Given a monomorphism $A\to B$ of simplicial sets and a
commutative diagram 
\[\begin{tikzcd}
	{(B\times \Delta ^1)\cup (A\times \{0\})} & X \\
	{B\times \Delta^1} & Y,
	\arrow["{\phi }", from=1-1, to=1-2]
	\arrow[from=1-1, to=2-1]
	\arrow["p", from=1-2, to=2-2]
	\arrow[dotted, from=2-1, to=1-2]
	\arrow[from=2-1, to=2-2]
\end{tikzcd}\]if $\phi\vert\{b\}\times\Delta^{1}$ is a $U\pr p$-cocartesian
edge for every $b\in B$, then there is a dotted arrow rendering
the diagram commutative.
\end{itemize}

Now we show that $f$ is $p$-cocartesian. We must show that for every
$n\ge2$, every lifting problem of the form
\[\begin{tikzcd}
	{\Delta^{\{0,1\}}} & {\Lambda^n_0} & X \\
	& {\Delta^n} & Y
	\arrow[from=1-1, to=1-2]
	\arrow["f", curve={height=-18pt}, from=1-1, to=1-3]
	\arrow[from=1-2, to=1-3]
	\arrow["i"', from=1-2, to=2-2]
	\arrow["p", from=1-3, to=2-3]
	\arrow[dotted, from=2-2, to=1-3]
	\arrow[from=2-2, to=2-3]
\end{tikzcd}\]admits a solution. For this, we observe that the maps
\[
\Delta^{n}\cong\Delta^{n}\times\{1\}\to\Delta^{n}\times\Delta^{1}\xrightarrow{r}\Delta^{n}
\]
exhibits $i$ as a retract of $\pr{\Lambda_{0}^{n}\times\Delta^{1}}\cup\pr{\Delta^{n}\times\{0\}}\to\Delta^{n}\times\Delta^{1}$,
where $r$ is defined by
\[
r\pr{k,i}=\begin{cases}
0 & \text{if }\pr{k,i}=\pr{1,0},\\
k & \text{otherwise}.
\end{cases}
\]
For each vertex $v\in\Lambda_{0}^{n}$, the map $\{v\}\times\Delta^{1}\xrightarrow{r}\Lambda_{0}^{n}$
is either degenerate or equal to the edge $0\to1$. Consequently,
their images in $X$ are $p$-cocartesian.
So a filler exists by ($\ast$), and we are done.
\end{proof}

\begin{remark}
We should remark that in the proof above we used the fact that degenerate edges of $X$ are $p$-cocartesian because $X$ and $Y$ are quasicategories, which follows from Joyal's lifting theorem \cite[Thm.~1.3]{joyal:qcat-kan}.
In general, a degenerate edge of the total simplicial set of an inner fibration of simplicial sets may fail to be cocartesian.
A counterexample is given in \cite[Rem.~2.1]{rezk_cart} and attributed to Campbell \cite{Cam20}.
\end{remark}

\begin{corollary}
\label{cor:ccfib_qcat_cqcat}Let $p \from \mcal C\to\mcal D$ be a categorical
fibration of quasicategories.
\begin{enumerate}
\item The map $p$ is a left fibration if and only if $U\pr p$ is a left
fibration.
\item The map $p$ is a cocartesian fibration if and only if $U\pr p$ is
a cocartesian fibration. \qed
\end{enumerate}
\end{corollary}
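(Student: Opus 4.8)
The plan is to derive both statements formally from the point-set comparison \cref{prop:cc_edges_vs_cc_edges}, using only that $U$ preserves limits and that $T$ is strong monoidal with $T\cube{1} \cong \simp{1}$. First I would record two preliminaries. Since $\pr{\simp{1}}^{0} = \simp{0}$ and $\pr{\simp{1}}^{1} = \simp{1}$, the vertices and edges of $U\mcal C$ are precisely those of $\mcal C$, and $U\pr p$ agrees with $p$ on vertices and on edges (and similarly for $\mcal D$). Also, since $U$ is right Quillen for the cubical Joyal model structure (\cref{triangulation-quillen-equiv}) and $p$ is a fibration between fibrant objects, $U\pr p \from U\mcal C \to U\mcal D$ is a categorical fibration of cubical quasicategories, hence in particular an inner fibration of cubical quasicategories; this is what lets us apply the cubical results that presuppose such hypotheses.

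For part (2): if $p$ is a cocartesian fibration, then given a vertex $x$ of $U\mcal C$ and an edge $e \from U\pr p\pr x \to y$ of $U\mcal D$ --- which we read as a vertex of $\mcal C$ and an edge of $\mcal D$ --- a $p$-cocartesian lift $\tilde e$ of $e$ is, by part (1) of \cref{prop:cc_edges_vs_cc_edges}, a $U\pr p$-cocartesian lift; so $U\pr p$ is a cocartesian fibration. Conversely, if $U\pr p$ is a cocartesian fibration, then for a vertex $x$ of $\mcal C$ and an edge $e$ of $\mcal D$ out of $p\pr x$, a $U\pr p$-cocartesian lift of $e$ in $U\mcal C$ is an edge of $\mcal C$ lying over $e$ and, by part (2) of \cref{prop:cc_edges_vs_cc_edges} (applicable because $p$ is a categorical fibration of quasicategories), it is $p$-cocartesian; so $p$ is a cocartesian fibration.

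For part (1), by part (2) I may assume that $p$ and $U\pr p$ are both cocartesian fibrations. A cocartesian fibration between (cubical) quasicategories is a left fibration exactly when every edge of its source is cocartesian: cubically this is the parenthetical remark in \cref{def:cc}, which rests on \cref{prop:shadow}, and simplicially it is standard \cite{lurie:htt}. Since $\mcal C$ and $U\mcal C$ have the same edges, and by both parts of \cref{prop:cc_edges_vs_cc_edges} (again using that $p$ is a categorical fibration of quasicategories) such an edge is $p$-cocartesian if and only if it is $U\pr p$-cocartesian, it follows that $p$ is a left fibration if and only if $U\pr p$ is.

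I do not expect any real obstacle here: the mathematical substance is entirely contained in \cref{prop:cc_edges_vs_cc_edges}, and the remaining work is bookkeeping --- the identification of vertices and edges of $\mcal C$ with those of $U\mcal C$ via the adjunction together with $T\cube{1} \cong \simp{1}$, the verification that $U\pr p$ is a categorical (hence inner) fibration of cubical quasicategories, and the invocation of the standard simplicial characterization of left fibrations among cocartesian fibrations.
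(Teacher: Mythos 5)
Your proof is correct and matches the approach the paper intends (the paper leaves the corollary as an immediate consequence of \cref{prop:cc_edges_vs_cc_edges}, signalling with a \qed that the deduction is routine). You fill in exactly the bookkeeping the authors suppress: the identification of vertices and edges of $\mcal C$ with those of $U\mcal C$, the observation that $U$ being right Quillen makes $U(p)$ a categorical fibration of cubical quasicategories, and the reduction of part (1) to part (2) via the standard ``left fibration $=$ cocartesian fibration with all edges cocartesian'' characterization on both sides.
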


We next turn to a global comparison.
\begin{theorem}
\label{prop:TUcomparison}For every (simplicial) quasicategory $\mcal C$, the
functors
\begin{align*}
U & :\pr{\SS_{/\mcal C}}_{\cov}\to\pr{\CS_{/U\pr{\mcal C}}}_{\cov},\\
U^{+} & :\pr{\SS_{/\mcal C^{\sharp}}^{+}}_{\cc}\to\pr{\CS_{/U\pr{\mcal C}^{\sharp}}^{+}}_{\cc},
\end{align*}
are right Quillen equivalences.
\end{theorem}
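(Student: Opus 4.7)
My plan is to prove both statements in parallel, first verifying the Quillen adjunction and then upgrading to a Quillen equivalence using a counit argument together with fiberwise analysis.

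For the Quillen adjunction, both $T$ and $T^+$ preserve monomorphisms, so by the left-adjoint recognition criteria \cref{rem:JT07_cov,rem:JT07_cc} it suffices to show they send each generating (marked) left anodyne extension to a covariant (resp.\ cocartesian) equivalence. The inner-anodyne generators map to categorical equivalences by \cref{triangulation-quillen-equiv,prop:T^+U^+}, hence to weak equivalences in the (localized) target model structures. For the left open box inclusion $\obox{n}{i,1} \ito \cube{n}$, I would use the cubical pushout-product identity from \cite[Lem.~1.26]{doherty-kapulkin-lindsey-sattler} to express it (up to reindexing) as the pushout-product of $\{0\} \ito \cube{1}$ with a boundary inclusion; strong monoidality of $T$ then identifies its image with the pushout-product of $\{0\} \ito \simp{1}$ with a simplicial boundary inclusion, which is simplicially left anodyne. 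The additional marked generators (ML2)--(ML5) of \cref{def:mlan} reduce analogously to standard marked-anodyne generators on the simplicial side.

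For the Quillen equivalence, since all objects are cofibrant, the standard criterion asks that (a) $U$ (resp.\ $U^+$) reflects weak equivalences between fibrant objects, and (b) the counit is a weak equivalence on every fibrant object. For (a), $U$ preserves fibrant objects by \cref{cor:ccfib_qcat_cqcat}, and preserves fibers over vertices since $(U\mcal C)_0 = \mcal C_0$ and right adjoints preserve pullbacks, giving $(UY)_c = U(Y_c)$ for $c \in \mcal C_0$. Combined with \cref{prop:fiberwise_eq_cov,prop:fiberwise_eq_cc} and the Kan--Quillen (resp.\ Joyal) halves of \cref{triangulation-quillen-equiv}, this reduces reflection between fibrant objects to reflection at the fiber level, which is immediate from the Quillen equivalence there. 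For (b) in the covariant case: if $Y \to \mcal C$ is a left fibration then $Y$ is itself a quasicategory, so $TUY \to Y$ is a categorical equivalence by \cref{triangulation-quillen-equiv}; since the covariant model structure on $\SS_{/\mcal C}$ is a left Bousfield localization of the Joyal slice model structure, the counit is then a covariant equivalence.

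The main obstacle is the cocartesian case of (b): the underlying counit $TUY \to Y$ is a categorical equivalence, but the marked counit $T^+ U^+ \overline{Y} \to \overline{Y}$ is not directly handled this way, since $\overline{Y}$ (with cocartesian edges marked) need not be fibrant in the marked Joyal model structure on $\msSet$. I would circumvent this by verifying the cocartesian equivalence directly through \cref{prop:fiberwise_eq_cc}, after first replacing the domain by a fibrant object in the cocartesian model structure over $\mcal C^\sharp$: the fiber of the counit over each vertex $c \in \mcal C_0$ identifies with the simplicial counit $TU(Y_c) \to Y_c$ evaluated at the quasicategorical fiber $Y_c$, which is a categorical equivalence by \cref{triangulation-quillen-equiv}.
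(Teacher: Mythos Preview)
Your approach has two genuine gaps.

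First, the ``standard criterion'' you invoke is not valid: the combination of (a) $U^{+}$ reflecting weak equivalences between fibrant objects and (b) the derived counit being a weak equivalence on fibrant objects does \emph{not} imply Quillen equivalence. Condition (b) alone already makes $\mathbb{R}U^{+}$ fully faithful, whence conservative, so (a) is redundant; what remains missing is essential surjectivity of $\mathbb{R}U^{+}$. The correct criteria (Hovey, Cor.~1.3.16) pair $R$-reflection with the derived \emph{unit}, or $L$-reflection with the derived counit. A counterexample to your formulation is the adjunction between $\sSet_{\KQ}$ and the terminal model category.

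Second, your fiber identification for the cocartesian counit is incorrect. The fiber of $T^{+}U^{+}(Y^{\natural}) \to \mcal C^{\sharp}$ over $c$ is the pullback $\{c\} \times_{\mcal C} TUY$, which is not $TU(Y_{c})$ since $T$ is a left adjoint and does not preserve pullbacks. After fibrant replacement in the cocartesian model structure the fibers change in a way you do not control, so \cref{prop:fiberwise_eq_cc} cannot be applied as proposed. (Your covariant counit argument via the Joyal localization is fine; it is only the cocartesian case that breaks.)

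The paper circumvents both problems by proving full faithfulness and essential surjectivity of $\mathbb{R}U^{+}$ directly. For full faithfulness it uses the commuting square of right Quillen functors relating the cocartesian model structures to the Joyal slice model structures, together with \cref{prop:cat_inv_cc}(2) (and its simplicial analogue) to identify cocartesian homotopy classes between fibrant objects with the subset of Joyal homotopy classes represented by maps preserving cocartesian edges; the comparison then reduces to \cref{prop:cc_edges_vs_cc_edges}. For essential surjectivity, given a cocartesian fibration $\mcal X \to U(\mcal C)$ one uses the Joyal Quillen equivalence to produce a categorical fibration $\mcal D \to \mcal C$ with $U(\mcal D) \simeq \mcal X$ over $U(\mcal C)$, and then invokes \cref{prop:cc_equiv} to see that $\mcal D \to \mcal C$ is cocartesian and that the equivalence lifts to the cocartesian model structure.
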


For the proof of \cref{prop:TUcomparison},
we need the following lemma.
\begin{lemma}
\label{lem:cc_fibrations}Let $\mcal Z$ be a cubical quasicategory,
and let 
\[\begin{tikzcd}
	{\mathcal{X}} && {\mathcal{Y}} \\
	& {\mathcal{Z}}
	\arrow["f", from=1-1, to=1-3]
	\arrow["p"', from=1-1, to=2-2]
	\arrow["q", from=1-3, to=2-2]
\end{tikzcd}\]be a commutative diagram of cubical quasicategories. Suppose that
$p$ and $q$ are cocartesian fibrations, and that $f$ carries $p$-cocartesian
edges to $q$-cocartesian edges. The following conditions are equivalent:
\begin{enumerate}
\item The map $f^{\natural} \from \mcal X^{\natural}\to\mcal Y^{\natural}$ is
a fibration of $\pr{\CS_{/\mcal Z^{\sharp}}^{+}}_{\cc}$.
\item The map $f \from \mcal X\to\mcal Y$ is a categorical fibration.
\end{enumerate}
\end{lemma}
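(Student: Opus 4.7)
My plan is to check both implications against the characterization of marked left fibrations in \cref{prop:mlfib}.

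For (1)$\implies$(2), I would apply the right Quillen forgetful functor $\pr{\CS_{/\mcal Z^\sharp}^+}_{\cc} \to \pr{\CS_{/\mcal Z}}_{\Joyal}$ of \cref{prop:cat_inv_cc}(1). Since $p$ and $q$ are cocartesian fibrations, $\natm{\mcal X}$ and $\natm{\mcal Y}$ are fibrant in the cocartesian model structure, while by \cref{cor:cc_catfib} their underlying maps to $\mcal Z$ are fibrant in the slice Joyal model structure. A right Quillen functor preserves fibrations between fibrant objects, so $f$ is a categorical fibration.

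For (2)$\implies$(1), I would verify the three conditions of \cref{prop:mlfib} for $\natm{f}$. Condition (1) is immediate, as any categorical fibration is an inner fibration. Condition (3)---that the marked edges of $\natm{\mcal X}$ are exactly the edges $e$ which are $f$-cocartesian and whose images $f(e)$ are marked in $\natm{\mcal Y}$---has two halves. The forward direction combines the preservation hypothesis on $f$ (giving $f(e)$ $q$-cocartesian) with \cref{prop:cc_cancellation} applied to $p = q \circ f$ (giving $e$ $f$-cocartesian). The reverse direction is a standard composition-of-lifts argument: an open-box lifting problem witnessing $p$-cocartesianness of $e$ is solved by first filling it in $\mcal Y$ using $q$-cocartesianness of $f(e)$, then lifting the filler to $\mcal X$ using $f$-cocartesianness of $e$.

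The crux is condition (2): given $x \in \mcal X$ and a $q$-cocartesian edge $e \from f(x) \to y$ in $\mcal Y$, I need to produce a $p$-cocartesian $\hat e \from x \to x''$ in $\mcal X$ with $f(\hat e) = e$. My plan is as follows. First, pick a $p$-cocartesian lift $\tilde e \from x \to x'$ of $q(e)$ using that $p$ is a cocartesian fibration; the preservation hypothesis makes $f(\tilde e)$ a $q$-cocartesian lift of $q(e)$ starting at $f(x)$. Using cocartesianness of $f(\tilde e)$, I would fill the left open box $\obox{2}{2,1} \to \mcal Y$ with top $\id_{f(x)}$, left $f(\tilde e)$, right $e$, over the degenerate $2$-cube $q(e)\degen{2}{1}$ in $\mcal Z$, producing the $2$-cube
\[
\begin{tikzcd}
f(x) \ar[r, "\id"] \ar[d, "f(\tilde e)"'] & f(x) \ar[d, "e"] \\
f(x') \ar[r, "\alpha"'] & y
\end{tikzcd}
\]
with $\alpha$ lying in the fiber $\mcal Y_{q(y)}$. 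By \cref{prop:four_three}, $\alpha$ is $q$-cocartesian, and lying over an identity, \cref{prop:eq_cc} makes $\alpha$ an equivalence of $\mcal Y$. Then \cref{prop:catfib}(3) lifts $\alpha$ through the categorical fibration $f$ to an equivalence $\tilde\alpha \from x' \to x''$ with $f(\tilde\alpha) = \alpha$. I would then assemble $\id_x$, $\tilde e$, and $\tilde\alpha$ into a left open box $\obox{2}{1,1} \to \mcal X$ with critical edge $\id_x$ (an equivalence), and invoke \cref{prop:special-horn-lemma} to lift the above $2$-cube in $\mcal Y$ to a $2$-cube
\[
\begin{tikzcd}
x \ar[r, "\id"] \ar[d, "\tilde e"'] & x \ar[d, "\hat e"] \\
x' \ar[r, "\tilde\alpha"'] & x''
\end{tikzcd}
\]
in $\mcal X$. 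Its right face $\hat e$ satisfies $f(\hat e) = e$, and because the horizontal edges are equivalences, \cref{prop:cc_morph_stable_under_equiv} promotes $p$-cocartesianness from $\tilde e$ to $\hat e$, completing the verification.
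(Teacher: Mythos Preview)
Your proof is correct and follows essentially the same strategy as the paper: reduce (2)$\implies$(1) to the conditions of \cref{prop:mlfib}, handle condition~(3) via \cref{prop:cc_cancellation} and composition of lifts, and for condition~(2) lift $q(e)$ to a $p$-cocartesian edge, compare the two $q$-cocartesian lifts via a $2$-cube in $\mcal Y$, lift the resulting equivalence through the categorical fibration $f$, and use special horn filling plus stability of cocartesian edges under equivalence. The only cosmetic differences are that the paper orients its $2$-cube with the identity on the bottom rather than the top and cites \cref{prop:shadow} (rather than \cref{prop:four_three} together with \cref{prop:eq_cc}) to identify the connecting edge as an equivalence; your route through \cref{prop:cc_morph_stable_under_equiv} applied directly to $p$ is arguably a little cleaner than the paper's detour through $f$-cocartesianness.
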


\begin{proof}
The implication (1)$\implies$(2) follows from \cref{prop:cat_inv_cc}.
For the converse, suppose that condition (2) is satisfied. We must
show that $f^{\natural}$ is a fibration in the cocartesian model
structure, i.e., that it is a marked left fibration. According to
\cref{prop:mlfib}, we must prove the following:

\begin{enumerate}[label=(\alph*)]

\item A morphism of $\mcal X$ is $p$-cocartesian if and only if
it is $f$-cocartesian and its image in $\mcal Y$ is $q$-cocartesian.

\item For every object $x_{0}\in\mcal X$ and every $q$-cocartesian
morphism $e \from y_{0}=f\pr{x_{0}}\to y_{1}$ in $\mcal Y$, there is a
$p$-cocartesian morphism $\widetilde{e} \from x_{0}\to x_{1}$ lying over
$e$.

\end{enumerate}

Assertion (a) follows from \cref{prop:cc_cancellation}.
For (b), choose a $p$-cocartesian morphism $\widetilde{e}':x_{0}\to x_{1}'$
lying over $q\pr e$. By part (a), the map $f\pr{\widetilde{e}'}$
is $q$-cocartesian. Thus, by \cref{prop:shadow}, there
is a $2$-cube in $\mcal Y$ depicted on the right 
\[\begin{tikzcd}
	{x_0} & {x'_1} & {y_0} & {y_1} \\
	{x_1} & {x_1} & {y_1} & {y_1,}
	\arrow["{\widetilde{e}'}", from=1-1, to=1-2]
	\arrow["{\widetilde{e}}"', dotted, from=1-1, to=2-1]
	\arrow["{\widetilde{g}}", from=1-2, to=2-2]
	\arrow["\simeq"', from=1-2, to=2-2]
	\arrow["{f(\widetilde{e}')}", from=1-3, to=1-4]
	\arrow["e"', from=1-3, to=2-3]
	\arrow["g", from=1-4, to=2-4]
	\arrow["\simeq"', from=1-4, to=2-4]
	\arrow[equal, from=2-1, to=2-2]
	\arrow[equal, from=2-3, to=2-4]
\end{tikzcd}\]where $g$ is an equivalence. Since $f$ is a categorical fibration,
we can lift $g$ to an equivalence $x_{1}'\to x_{1}$ for some $x_1 \in X_0$. 
We can then
lift the right hand $2$-cube to a $2$-cube depicted on the left,
using \cite[Lem.~4.14]{doherty-kapulkin-lindsey-sattler}. The resulting edge $\widetilde{e} \from x_{0}\to x_{1}$
is $f$-cocartesian by \cref{prop:cc_morph_stable_under_equiv}
and (a), so this proves (b).
\end{proof}

The analogous statement for left fibrations is also true. 
\begin{corollary} \label{lem:left-fibrations}
	Let $\mathcal{Z}$ be a cubical quasicategory, and let 
	\[\begin{tikzcd}
		{\mathcal{X}} && {\mathcal{Y}} \\
		& {\mathcal{Z}}
		\arrow["f", from=1-1, to=1-3]
		\arrow["p"', from=1-1, to=2-2]
		\arrow["q", from=1-3, to=2-2]
	\end{tikzcd}\]
	be a commutative diagram of cubical quasicategories, where $p$ and $q$ are left fibrations.
	Then, $f$ is a left fibration if and only if it is a categorical fibration.
\end{corollary}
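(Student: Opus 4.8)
The plan is to reduce the statement to \cref{lem:cc_fibrations}, its cocartesian analog, exploiting the characterization (\cref{def:cc}, \cref{prop:shadow}) of a left fibration as a cocartesian fibration all of whose edges are cocartesian. The first thing I would observe is that, since $p$ and $q$ are left fibrations, every edge of $\mathcal{X}$ is $p$-cocartesian and every edge of $\mathcal{Y}$ is $q$-cocartesian; hence $\mathcal{X}^{\natural} = \mathcal{X}^{\sharp}$, $\mathcal{Y}^{\natural} = \mathcal{Y}^{\sharp}$, and $f$ trivially carries $p$-cocartesian edges to $q$-cocartesian edges, so the hypotheses of \cref{lem:cc_fibrations} are in force.

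For the forward implication, if $f$ is a left fibration then in particular it is a cocartesian fibration of cubical quasicategories, so $f$ is a categorical fibration by \cref{cor:cc_catfib}; this direction does not use the hypothesis on $p$ and $q$.

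For the reverse implication, suppose $f$ is a categorical fibration. Then \cref{lem:cc_fibrations} gives that $f^{\sharp} \colon \mathcal{X}^{\sharp} \to \mathcal{Y}^{\sharp}$ is a fibration in $(\CS^{+}_{/\mathcal{Z}^{\sharp}})_{\cc}$, and since both objects are fibrant this means $f^{\sharp}$ is a marked left fibration. I would then apply the recognition criterion \cref{prop:mlfib} to $f^{\sharp}$: as every edge of $\mathcal{Y}$ is marked, condition~(3) there forces every edge of $\mathcal{X}$ to be $f$-cocartesian, while conditions~(1) and~(2) say precisely that $f$ is an inner fibration admitting cocartesian lifts of edges, so that $f$ is a cocartesian fibration. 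Since every edge of $\mathcal{X}$ is $f$-cocartesian, \cref{prop:shadow} (equivalently, the parenthetical remark in \cref{def:cc}) then shows that $f$ is a left fibration.

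There is no real obstacle here; the only delicate point is the bookkeeping of markings, namely recognizing that the \emph{natural} markings of $\mathcal{X}$ and $\mathcal{Y}$ coincide with their maximal markings because $p$ and $q$ are left fibrations, so that \cref{lem:cc_fibrations} and \cref{prop:mlfib} apply verbatim. An alternative for the reverse direction would be to verify directly that the fibers of $f$ are cubical Kan complexes --- base-changing $f$ to a vertex of $\mathcal{Z}$ gives a categorical fibration between cubical Kan complexes, which one checks is a Kan fibration --- but routing through the cocartesian case is shorter and reuses work already done.
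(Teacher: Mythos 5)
Your proof is correct, but it takes a genuinely different route than the paper's. The paper dispatches this corollary in two lines: it invokes \cite[Prop.~3.3.16]{Hirschhorn}, which says that in a left Bousfield localization a map between local (fibrant) objects is a local fibration iff it is a fibration in the original model structure, and then notes that the covariant model structure on $\cSet_{/\mathcal{Z}}$ is a left Bousfield localization of the sliced cubical Joyal model structure (which already follows from \cref{cor:cc_catfib}, i.e.\ from the fact that left fibrations over a cubical quasicategory are categorical fibrations). Your argument instead threads everything through the cocartesian machinery: you observe that a left fibration is a cocartesian fibration in which \emph{every} edge is cocartesian, so the natural markings $\mathcal{X}^{\natural}$, $\mathcal{Y}^{\natural}$ coincide with the maximal markings, the hypotheses of \cref{lem:cc_fibrations} hold vacuously, and then unwinding \cref{prop:mlfib} applied to $f^{\sharp}$ with all edges marked forces every edge of $\mathcal{X}$ to be $f$-cocartesian. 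All of the steps check out; in particular, you are right that the forward direction does not use the hypothesis on $p$ and $q$, and you correctly flag the only delicate point (that the natural markings collapse to the maximal ones). The trade-off: the paper's argument is shorter and more abstract, relying only on the formal fact about localizations, whereas yours is a concrete reduction that exercises \cref{lem:cc_fibrations} and \cref{prop:mlfib} and makes the ``left = cocartesian with all edges cocartesian'' characterization do the work. Both are sound; the paper's is leaner, yours is a nice sanity check that the cocartesian framework specializes correctly.
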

\begin{proof}
	By \cite[Prop.~3.3.16]{Hirschhorn}, it suffices to show the covariant model structure is a left Bousfield localization of the cubical Joyal model structure (over $\mathcal{Z}$). 
	This is clear, as left fibrations of cubical quasicategories are categorical fibrations (\cref{cor:cc_catfib}). 
\end{proof}
\begin{remark} \label{rem:fib-between-fibs-simplicial-analog} 
	The proofs given for \cref{lem:cc_fibrations,lem:left-fibrations} directly translate to proofs in the simplicial setting.
\end{remark}
\begin{proof}
[Proof of \cref{prop:TUcomparison}]We will focus on the
case of cocartesian model structures; the covariant case can be proved
similarly.

We first show that $U^{+}$ is right Quillen and that its total right
derived functor is fully faithful. 
Consider the diagram
\[\begin{tikzcd}
	{\mathsf{sSet}^+_{/\mathcal{C}^\sharp}} & {\mathsf{cSet}^+_{/U(\mathcal{C})^\sharp}} \\
	{\mathsf{sSet}_{/\mathcal{C}}} & {\mathsf{cSet}_{/U(\mathcal{C})}}
	\arrow["{U^+}", from=1-1, to=1-2]
	\arrow[from=1-1, to=2-1]
	\arrow[from=1-2, to=2-2]
	\arrow["U"', from=2-1, to=2-2]
\end{tikzcd}\]
where the categories in the top row carry the cocartesian
model structures and those in the bottom row carry the Joyal model
structures.
We claim that every functor in this diagram is right Quillen:
the bottom functor is right Quillen by \cite[Prop.~4.28]{doherty-kapulkin-lindsey-sattler}, and the top functor is right Quillen by \cref{cor:ccfib_qcat_cqcat} and the fact that the left adjoint preserves monomorphisms.
The right functor is right Quillen by \cref{prop:cat_inv_cc}, and the left functor is also right Quillen (this can be seen, for instance, using \cref{rem:fib-between-fibs-simplicial-analog}).  

Thus, given a pair of fibrant objects $\mcal X^{\natural},\mcal Y^{\natural}\in\SS_{/\mcal C^{\sharp}}^{+}$,
we obtain the following commutative diagram of homotopy classes of
maps in the respective model categories:
\[\begin{tikzcd}
	{[\mathcal{X}^\natural,\mathcal{Y}^\natural]} & {[U^+(\mathcal{X}^\natural),U^+(\mathcal{Y}^\natural)]} \\
	{[\mathcal{X},\mathcal{Y}]} & {[U(\mathcal{X}),U(\mathcal{Y})].}
	\arrow["\phi", from=1-1, to=1-2]
	\arrow["\alpha"', from=1-1, to=2-1]
	\arrow["\beta", from=1-2, to=2-2]
	\arrow["\psi"', from=2-1, to=2-2]
\end{tikzcd}\]We wish to show that $\phi$ is a bijection. 
Using \cref{prop:cat_inv_cc} and its simplicial analog, we find that:
\begin{itemize}
\item The set $[\mcal X,\mcal Y]$ is the set of equivalence classes of functors
$\mcal X\to\mcal Y$ over $\mcal C$, with equivalence relation given
by natural transformations over $\mcal C$ whose components are equivalences.
\item The map $\alpha$ is the inclusion of the subset of those elements
represented by functors $\mcal X\to\mcal Y$ preserving cocartesian
edges.
\item We have a similar description for the map $\beta$.
\end{itemize}
Since $\psi$ is bijective, we are therefore reduced to showing that
a map $f \from \mcal X\to\mcal Y$ over $\mcal C$ preserves cocartesian
edges if and only if $U\pr f$ has the same property.
This is a consequence of \cref{prop:cc_edges_vs_cc_edges}.

We complete the proof by showing that the functor $\mbb RU^{+} \from \ho\pr{\SS_{/\mcal C^{\sharp}}^{+}}\to\ho\pr{\CS_{/U\pr{\mcal C}^{\sharp}}^{+}}$
is essentially surjective. Suppose we are given a cocartesian fibration
$\mcal X\to U\pr{\mcal C}$. Using the fact that the adjunction $T\dashv U$
is a Quillen equivalence for the Joyal model structures, we can find
a categorical fibration $p \from \mcal D\to\mcal C$ and a categorical equivalence
$f \from \mcal X\xrightarrow{\simeq}U\pr{\mcal D}$ rendering the diagram
\[\begin{tikzcd}
	{\mathcal{X}} && {U(\mathcal{D})} \\
	& {U(\mathcal{C})}
	\arrow["f", from=1-1, to=1-3]
	\arrow["\simeq"', from=1-1, to=1-3]
	\arrow[from=1-1, to=2-2]
	\arrow[from=1-3, to=2-2]
\end{tikzcd}\]commutative. Using \cref{prop:cc_equiv}, we find that
$U\pr p$ is a cocartesian fibration, and that $f$ preserves cocartesian
morphisms over $U\pr{\mcal C}$. 
Moreover, any homotopy inverse $g$ to $f$ also preserves cocartesian morphisms,
and the homotopies $fg\simeq\id$ and $gf\simeq\id$ lift
to homotopies in the cocartesian model structure over $U\pr{\mcal C}^{\sharp}$
by \cref{prop:cc_morph_stable_under_equiv}. 
Hence $\mcal X^{\natural}$
lies in the essential image of $\mbb RU^{+}$, and we are done.
\end{proof}

The proof that the alternative slice adjunction is a Quillen equivalence will be given in \cref{Q-equiv-cc}, where we are able to prove it without the assumption that the base is a quasicategory.
We also have an analog of \cref{prop:TUcomparison} for the $(Q^+ \adj \int^+)$ adjunction.
\begin{theorem} \label{thm:int-equiv-cc-qcat}
	For a cubical quasicategory $\mathcal{B}$, the functors
	\begin{align*}
		{\textstyle \int} &\from \Big( \cSet_{ / \mathcal{B}} \Big)_{\mathrm{cov}} \to \Big( \sSet_{ / {(\int \! \mathcal{B})}} \Big)_{\mathrm{cov}} \\
		{\textstyle \int^+} &\from \Big( \mcSet_{ / \maxm{\mathcal{B}}} \Big)_{\mathrm{cc}} \to \Big( \msSet_{ / \maxm{(\int \! \mathcal{B} )}} \Big)_{\mathrm{cc}}
	\end{align*}
	are right Quillen equivalences.
\end{theorem}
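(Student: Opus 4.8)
The plan is to follow the proof of \cref{prop:TUcomparison} almost verbatim, with the triangulation adjunction replaced by the adjunctions $Q\dashv\int$ and $Q^{+}\dashv\int^{+}$, whose relevant properties are recorded in \cref{Q-int-equiv-marked-Joyal,Q-is-full-faithful}. I will treat the cocartesian case; the covariant case is entirely analogous and slightly simpler. Since $\int$ is right Quillen for the Joyal model structures (\cref{Q-int-equiv-marked-Joyal}) and $\mathcal{B}$ is a cubical quasicategory, $\int\mathcal{B}$ is a quasicategory, so all of the results of this section concerning a quasicategorical base apply with base $\int\mathcal{B}$; I use them freely. In particular, since the slice of a Quillen equivalence over a fibrant object of its target is again a Quillen equivalence, $\int\colon(\cSet_{/\mathcal{B}})_{\Joyal}\to(\sSet_{/\int\mathcal{B}})_{\Joyal}$ is already a right Quillen equivalence.

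The essential new ingredient, and the step I expect to be the main obstacle, is the point-set comparison: the analogue of \cref{prop:cc_edges_vs_cc_edges} and \cref{cor:ccfib_qcat_cqcat} for the functor $\int$. Concretely, for a categorical fibration $p\colon\mathcal{X}\to\mathcal{Y}$ of cubical quasicategories, an edge $f$ of $\mathcal{X}$ should be $p$-cocartesian if and only if $\int f$ is $\int p$-cocartesian; consequently $p$ is a cocartesian (resp.\ left) fibration if and only if $\int p$ is, and $\int$ carries the cocartesian marking of $\mathcal{X}^{\natural}$ to the cocartesian marking of $(\int\mathcal{X})^{\natural}$. I would prove this in one of two ways: either directly, by a filtration argument in the spirit of the proof of \cref{prop:cc_edges_vs_cc_edges}, now analysing $Q$ applied to inner and left horn inclusions and exploiting the explicit description of $Q[n]$ as a quotient of $\cube{n}$; or by comparison with triangulation, using \cref{prop:cc_edges_vs_cc_edges} to reduce the statement for $\int$ to the corresponding statement for the endofunctor $U\int$ of $\cSet$, and then invoking that $U\int$, restricted to cubical quasicategories, is naturally categorically equivalent to the identity (equivalently, that $\int$ and $T$ induce the same equivalence of homotopy categories), together with \cref{prop:cc_morph_stable_under_equiv,prop:cc_equiv}.

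Granting the point-set comparison, the remainder transcribes \cref{prop:TUcomparison}. To see $\int^{+}$ is right Quillen, I would use the evident square whose horizontal functors are $\int^{+}$ and $\int$ and whose vertical functors are the forgetful functors of part~(1) of \cref{prop:cat_inv_cc} and its simplicial analogue: the verticals and the lower horizontal are right Quillen (the latter by the previous paragraph), and one then checks that the left adjoint of $\int^{+}$ preserves cofibrations and carries the generating cocartesian-trivial cofibrations over $\maxm{(\int\mathcal{B})}$ to cocartesian equivalences over $\maxm{\mathcal{B}}$, reducing once more to the behaviour of $Q$ on horn inclusions. For full faithfulness of $\mbb R\int^{+}$, I would form the commutative square of homotopy-classes sets $[\mathcal{X}^{\natural},\mathcal{Y}^{\natural}]$, $[\int^{+}\mathcal{X}^{\natural},\int^{+}\mathcal{Y}^{\natural}]$, $[\mathcal{X},\mathcal{Y}]$, $[\int\mathcal{X},\int\mathcal{Y}]$ exactly as in \cref{prop:TUcomparison}; by part~(2) of \cref{prop:cat_inv_cc} and its simplicial analogue the two vertical maps are injective with image the classes represented by functors preserving cocartesian edges, the lower horizontal map is bijective since $\int$ is a Joyal slice Quillen equivalence, and the point-set comparison shows that a functor $\mathcal{X}\to\mathcal{Y}$ over $\mathcal{B}$ preserves cocartesian edges if and only if $\int$ of it does, so the upper horizontal map is bijective. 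Finally, for essential surjectivity of $\mbb R\int^{+}$, given a cocartesian fibration $\mathcal{X}\to\int\mathcal{B}$ of quasicategories I would use the Joyal Quillen equivalence $Q\dashv\int$ to produce a categorical fibration $p\colon\mathcal{D}\to\mathcal{B}$ of cubical quasicategories together with a categorical equivalence $\mathcal{X}\xrightarrow{\simeq}\int\mathcal{D}$ over $\int\mathcal{B}$; by \cref{prop:cc_equiv} and the point-set comparison, $p$ is a cocartesian fibration, and by \cref{prop:cc_morph_stable_under_equiv} this comparison and its homotopy inverse lift to the cocartesian model structure over $\maxm{(\int\mathcal{B})}$, exhibiting $\mathcal{X}^{\natural}$ in the essential image of $\mbb R\int^{+}$.
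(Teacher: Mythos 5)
Your proposal is structured quite differently from the paper's proof. You plan to establish a point-set comparison lemma for $\int$ — that for a categorical fibration $p$ of cubical quasicategories, an edge is $p$-cocartesian iff its image under $\int$ is $\int p$-cocartesian — and then transcribe the proof of \cref{prop:TUcomparison}. The paper does not prove such a lemma. For the right-Quillen part it instead invokes \cref{lem:cc_fibrations}, reducing to showing $\int$ preserves categorical and cocartesian fibrations over $\mathcal{B}$; the latter is handled by first reducing (via the $T\dashv U$ Quillen equivalence and the simplicial analogue of \cref{prop:cc_equiv}) to $p = U(p')$, and then invoking the natural transformation $\Psi\colon\id_{\sSet}\Rightarrow\int\circ U$ of \cite[Prop.~6.20]{doherty-kapulkin-lindsey-sattler}, whose components at quasicategories are equivalences. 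For the Quillen-equivalence part the paper reduces to $\mathcal{B}=U\mathcal{C}$ and closes with a short 2-out-of-3 argument against $U^+$ and $\Psi^*$, which is markedly more economical than rerunning \cref{prop:TUcomparison}.

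The crux of your plan is therefore the point-set lemma, and neither of your two routes to it is complete. Route (a) would require genuine combinatorial work: the forward implication does translate via $Q\dashv\int$ into filling $Q\Lambda^n_0\hookrightarrow Q\Delta^n$, which you would have to decompose explicitly; but the converse (the analogue of part (2) of \cref{prop:cc_edges_vs_cc_edges}) needs a retract-style device of its own, and the adjunction does not transport the cubical lifting problem into a simplicial one in the direction you need. Route (b) has a gap as stated: that $\mathbb{R}U$ and $\mathbb{R}\int$ induce the same equivalence of homotopy categories does not by itself produce a natural zigzag of categorical equivalences relating $\id_{\cSet}$ to $U\int$ on cubical quasicategories, which is what an application of \cref{prop:cc_equiv} requires. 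The natural transformation that is actually available is $\Psi\colon\id_{\sSet}\Rightarrow\int U$, not one for $U\int$; to exploit it you must first replace $p$ by some $U(p')$ via the $T\dashv U$ equivalence — at which point you are reproducing precisely the paper's reduction rather than avoiding it. If you carry out route (a) carefully, your approach would yield a bonus not recorded in the paper (edge-level preservation of cocartesianness under $\int$), but as written the key step is not supplied.
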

\begin{proof}
	We prove the statement for cocartesian model structures, the proof for covariant model structures is analogous.
	
	Firstly, we show $\int^+$ is a right Quillen functor.
	Its left adjoint preserves cofibrations (monomorphisms), so it remains to show $\int^+$ preserves fibrations between fibrant objects.
	By \cref{lem:cc_fibrations} and its simplicial analog (cf.\ \cref{rem:fib-between-fibs-simplicial-analog}), we need only show $\int$ preserves categorical fibrations and cocartesian fibrations over $\mathcal{B}$. 
	Preservation of categorical fibrations holds since $\int$ is right Quillen with respect to the cubical and simplicial Joyal model structures (\cite[Prop.~6.11]{doherty-kapulkin-lindsey-sattler}).

	For the claim on cocartesian fibrations, suppose we are given a cocartesian fibration $p\from \mathcal{E}\to \mathcal{B}.$  
	We must show that $\int \! p$ is a cocartesian fibration.
	Since triangulation is a Quillen equivalence (\cref{triangulation-quillen-equiv}), by \cref{cor:ccfib_qcat_cqcat} and the simplicial analog of \cref{prop:cc_equiv} (cf. \cite[Tag 023N]{kerodon}), we may assume that $p=U(p')$ for some cocartesian fibration $p'\from\mathcal{E}'\to \mathcal{B}'$ of simplicial quasicategories.
	Thus, we are reduced to showing that $\int \circ \ U$ preserves cocartesian fibrations between quasicategories.
	According to \cite[Prop.~6.20]{doherty-kapulkin-lindsey-sattler} (and \cite[Cor.~1.4.4.(b)]{hovey}), there is a natural transformation between right adjoints $\Psi \from \id_{\sSet} \Rightarrow \int \circ \ U$ whose components at quasicategories are categorical equivalences. 
	So the claim follows from another application of the simplicial analog of \cref{prop:cc_equiv}.
	To prove this adjunction is a Quillen equivalence, we may assume that $\mathcal{B}=U\mathcal{C}$ for some quasicategory $\mathcal{C}$ (cf. part (3) of \cref{prop:cat_inv_cc} and \cite[Prop. 3.3.1.1]{lurie:htt}). 
	Now $\Psi$ ascends to a natural transformation $\id_{\msSet} \Rightarrow \int^+ \circ \ U^+$ on marked simplicial sets.
	With this, the diagram
	\[ \begin{tikzcd}
		\msSet_{/ \maxm{\mathcal{C}}} \ar[r, equal] \ar[d, "U^+"'] & \msSet_{/ \maxm{\mathcal{C}}} \\
		\mcSet_{ / \maxm{U \mathcal{C}}} \ar[r, "\int^+"] & \msSet_{/ \maxm{(\int \! U \mathcal{C})}} \ar[u, "\Psi^*"']
	\end{tikzcd} \]
	commutes up to a natural transformation, which is defined on an object $\overline{X} \to \maxm{\mathcal{C}}$ as the map into the pullback
	\[ \begin{tikzcd}
		\overline{X} \ar[rr, "\Psi_X", "\sim"'] \ar[dd] \ar[rd, dotted] &[-1.4em] {} & \int^+ \! U^+ \overline{X} \ar[dd, ""] \\[-1.4em]
		{} & \bullet \ar[ur] \ar[dl] \ar[rd, phantom, "\pbtick" very near start] & {} \\
		\maxm{\mathcal{C}} \ar[rr, "\Psi_{\mathcal{C}}", "\sim"'] & {} & \maxm{(\int \! U \mathcal{C})}
	\end{tikzcd} \]
	induced by the naturality square for $\Psi$.
	The functors $U$ and $\int$ preserve categorical fibrations, hence if the underlying map $X \to B$ is a categorical fibration (which implies both $X$ and $\mathcal{B}$ are quasicategories) then the map into the pullback is a categorical equivalence by 2-out-of-3.
	It follows that if $\overline{X} \to \maxm{B}$ is a cocartesian fibration then the map into the pullback is a cocartesian equivalence over $\maxm{\mathcal{C}}$ \cite[Cor.~3.3.1.5]{lurie:htt}.
	
	As the above diagram of functors commutes up to a cocartesian equivalence and both $U^+, \Psi^*$ are right Quillen equivalences (by \cref{prop:TUcomparison} and \cite[Prop.~3.3.1.1]{lurie:htt}, respectively), it follows that $\int^+$ is a right Quillen equivalence by 2-out-of-3.
\end{proof}
\begin{remark} \label{rem:int-is-not-quillen-equiv-always}
	\Cref{thm:int-equiv-cc-qcat} generally fails if the base $\mathcal{B}$ is not a cubical quasicategory.
	For an example, consider the case $\mathcal{B} = \cube{2}$.
	By \cref{prop:TUcomparison}, the covariant (or cocartesian) model structure on $\cSet_{/ \maxm{(\cube{2})}}$ models the $\infty$-category of functors from $T \cube{2} = [1]^2$ to the $\infty$-category of spaces (respectively, quasicategories), i.e.\ the category of commutative squares in $\catname{Gpd}_{\infty}$ (respectively, $\Cat_{\infty}$).
	On the other hand, $\cube{2}$ admits no non-degenerate maps from $Q[n]$ for $n \geq 2$, from which we deduce that $\int \! \cube{2}$ is the boundary of a square.
	Thus, the cocartesian model structure on $\sSet_{/ \maxm{(\int \! \cube{2})}}$ models \emph{non-commutative} squares in $\catname{Gpd}_{\infty}$ (respectively, $\Cat_{\infty}$).
	
	From this example, we see the problem stems from the fact that $\int \from \cSet \to \sSet$ fails to preserve the Joyal weak equivalence type of non-fibrant objects (in fact, it even fails to preserve the homotopy type).
\end{remark}

\section{\label{sec:straightening}Straightening--unstraightening over ordinary
categories}

In \cite[Thm.~2.2.1.2]{lurie:htt}, Lurie established the \emph{straightening--unstraightening
equivalence}, which asserts the following: Given a quasicategory $\mcal Q$,
the underlying $\infty$-category of $\pr{\SS_{/\mcal Q}}_{\cov}$
is equivalent to the $\infty$-category $\Fun\pr{\mcal Q,\mcal S}$,
where $\mcal S\subset\curlyCat_{\infty}$ denotes the full subcategory
of $\infty$-groupoids. Since every cubical quasicategory $\mcal D$
is equivalent to one of the form $U\pr{\mcal D_{\Delta}}$ for some
quasicategory $\mcal D_{\Delta}$, together with \cref{prop:TUcomparison}
(and \cref{rem:cat_inv}), we obtain a description of the underlying
$\infty$-category of $\pr{\CS_{/\mcal D}}_{\cov}$ in terms of pre(co)sheaves
over $\mcal D_{\Delta}$.

While this description is useful for abstract reasoning, it is of
limited use if we want to do explicit calculations. Indeed, there
is no canonical choice of the quasicategory $\mcal D_{\Delta}$ (although
it exists as a fibrant replacement of $T\pr{\mcal D}$). Moreover,
the straightening--unstraightening equivalence over quasicategories
is also a complicated black box. (See \cite[$\S$ 2.2.1]{lurie:htt}.) In
combination, these gives us an uncontrollable description of straightening
of left fibrations over $\mcal D$. 

The goal of this section is to offer a partial solution to this situation.
More specifically, we will construct an explicit Quillen equivalence
between the covariant model structure over $\mcal D$ and a model
category of cubical presheaves, assuming that $\mcal D$ is the nerve
of an ordinary category (\cref{thm:str}). We will also cover
the case of cocartesian fibrations (\cref{thm:str_marked}). 

In the setting of simplicial sets, the contents of this section are
covered in \cite[$\S$3.2.5]{lurie:htt}.
Readers familiar with straightening--unstraightening in this context might expect a cubical version of this over an arbitrary base, as given in \cite[$\S$2.2.1]{lurie:htt}.
At this level of generality, one encounters rather unruly combinatorics in trying to define the straightening and unstraightening constructions.
This is to be expected, as straightening is inherently a cubical--simplicial construction \cite{kapulkin-voevodsky}.
While it is possible to give a purely simplicial construction by triangulating all the occurrences of cubes, the same cannot be said for a purely cubical construction.
It is for this reason that we only treat the case where the base cubical set is the nerve of a 1-category.
On a more practical level, the version of straightening for $\cat{B} = N \cat{C}$ is perfectly sufficient for our intended application to the cubical Bousfield--Kan formula.

\subsection{\label{subsec:str}Unmarked case}

In this subsection, we consider straightening of left fibrations over
ordinary categories.
\begin{definition}
For each $n\geq0$, we write $\Side\pr{\square^{n}}$ for the poset
of faces of $\square^{n}$ (i.e., maps $\square^{k}\to\square^{n}$
that can be written as an iterated composition of face maps), ordered
by inclusion. Equivalently, it is the full subcategory of $\square_{/\square^{n}}$
spanned by the nondegenerate cubes of $\square^{n}$. 

We will identify each element of $\Side\pr{\square^{n}}$ with its set $S$ of vertices, writing $\square_{S}$ for the corresponding face of $S$.
We additionally write $\cube{S}$ for the image of $S$ under the functor $\Side (\cube{n}) \to \cSet$ which projects a non-degenerate cube $\cube{k} \to \cube{n}$ to its domain $\cube{k} \in \cSet$ (note the difference between writing $S$ in the subscript vs.\ in the superscript).

Every map $u \from \square^{n}\to\square^{m}$
in $\square$ determines a map $\Side\pr{\square^{n}}\to\Side\pr{\square^{m}}$
given by $S\mapsto u\pr S$, where we identify $u$ with a set map
$[1]^{n}\to[1]^{m}$. These maps make $\Side\pr{\square^{\bullet}}$
into a cocubical set.
\end{definition}

\begin{definition}
We define the \emph{last vertex} functor $\LV \from \square_{/\square^{n}}\to[1]^{n}$
by assigning, to each object $\square^{k}\to\square^{n}$, the image
of the vertex $1^{k}\in\square^{k}$.
\end{definition}

\begin{definition}
\label{def:int}Let $\mcal C$ be a small category. We define a functor
\[
\int = \int_{\mcal C} \from \Fun\pr{\mcal C,\CS}\to\CS_{/N\pr{\mcal C}}
\] 
as follows: An $n$-cube of $\int F$ is a pair of an $n$-cube $\sigma \from [1]^{n}\to\mcal C$
of $N\pr{\mcal C}$ and a natural transformation
\[
\{\square^{S}\to F\circ\sigma\pr{\LV\pr{\square_{S}}}\}_{S\in\Side\pr{\square^{n}}}.
\]
If $u \from \square^{n}\to\square^{m}$ is a morphism in $\square$, then
the cubical structure map $u^{*} \from \pr{\int F}_{n}\to\pr{\int F}_{m}$
is induced by the restriction $\square^{S}\to\square^{u\pr S}$ of
$u$.

We also define the \emph{rectification functor }
\[
\Rect \from \CS_{/N\pr{\mcal C}}\to\Fun\pr{\mcal C,\CS}
\]
by the formula $\Rect\pr X=X\times_{N\pr{\mcal C}}N\pr{\mcal C_{/\bullet}}$.
\end{definition}

\begin{remark}
In the situation of \cref{def:int}, we can describe the
cubes of $\int F$ of lower dimensions as follows: 
\begin{enumerate}
\item A vertex of $\int F$ consists of the pairs $\pr{c,x}$, where $c \in\mcal C$
and $x\in F\pr C$.
\item An edge $\pr{c,x}\to\pr{d,y}$ is a pair of a morphism $f \from c\to d$
in $\mcal C$ and an edge $Ff (x) \to y$ in $F\pr d$.
\item A $2$-cube of $\int F$ whose boundary is depicted on the left below
is equivalent to a $2$-cube in $F\pr{C_{11}}$ depicted on the right:
\[\begin{tikzcd}[column sep = 3.7em, row sep = 3.5em]
	{(c_{00},x_{00})} & {(c_{10},x_{10})} & {F(kf)(x_{00})} & {F(k)(x_{10})} \\
	{(c_{01},x_{01})} & {(c_{11},x_{11})} & {F(h)(x_{01})} & {x_{11}.}
	\arrow["{(f, \alpha)}", from=1-1, to=1-2]
	\arrow["{(g, \beta)}"', from=1-1, to=2-1]
	\arrow["{(h, \gamma)}"', from=2-1, to=2-2]
	\arrow["{(k, \delta)}", from=1-2, to=2-2]
	\arrow["{F(k)(\alpha)}", from=1-3, to=1-4]
	\arrow["{F(h)(\beta)}"', from=1-3, to=2-3]
	\arrow["\delta", from=1-4, to=2-4]
	\arrow["\gamma"', from=2-3, to=2-4]
\end{tikzcd}\]
\end{enumerate}
This should convince the reader that $\int$ is a generalization of
the ordinary Grothendieck construction. In fact, given a small category
$\mcal C$ and a functor $G \from \mcal C\to\msf{Cat}$, there is a natural
isomorphism of cubical sets
\[
N\pr{\int G}\cong\int\pr{N\circ G},
\]
where $\int G\to\mcal C$ denotes the Grothendieck construction of
$G$.
\end{remark}

\begin{proposition}
\label{prop:radj_L}Let $\mcal C$ be a small category. The functor
$\Rect \from \CS_{/N\pr{\mcal C}}\to\Fun\pr{\mcal C,\CS}$ is left adjoint
to the functor $\int \from \Fun\pr{\mcal C,\CS}\to\CS_{/N\pr{\mcal C}}$.
\end{proposition}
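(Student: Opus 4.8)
The goal is to establish an adjunction $\Rect \adj \int$ between $\CS_{/N(\mcal C)}$ and $\Fun(\mcal C, \CS)$. The plan is to exhibit a natural bijection
\[
	\Fun(\mcal C, \CS)\bigl(\Rect X, F\bigr) \;\cong\; \CS_{/N(\mcal C)}\bigl(X, {\textstyle\int} F\bigr)
\]
for every $X \in \CS_{/N(\mcal C)}$ and every $F \in \Fun(\mcal C, \CS)$, and then check naturality in both variables (which will be a routine diagram chase once the bijection is pinned down). Since $\CS = \PSh(\square)$ is a presheaf category and all the constructions in sight are built out of colimits, there are two natural routes: (i) a direct ``unwinding the definitions'' argument using the explicit cube-by-cube description of $\int F$ in \cref{def:int}, or (ii) a more structural argument presenting $\int$ as a right Kan extension / nerve-type functor and identifying $\Rect$ as the induced left adjoint. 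I would lead with route (i) as the main line, since the explicit formula for $\int F$ is already on the table, and mention route (ii) as the conceptual backbone.

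First I would reduce to the representables. A map $X \to \int F$ over $N(\mcal C)$ is determined by its values on cubes of $X$, i.e.\ by a compatible family, over all $x \from \cube{n} \to X$, of $n$-cubes of $\int F$ lying above the composite $\cube{n} \to X \to N(\mcal C)$. By \cref{def:int}, such an $n$-cube above a fixed $\sigma \from [1]^n \to \mcal C$ is exactly a natural family $\{\cube{S} \to F \circ \sigma(\LV(\cube{S}))\}_{S \in \Side(\cube{n})}$. On the other side, $\Rect X = X \times_{N(\mcal C)} N(\mcal C_{/\bullet})$, so a natural transformation $\Rect X \To F$ assigns, to each object $c \in \mcal C$ and each vertex of $X$ lying over $c$ together with the requisite slice data, an element of $F(c)$, naturally in $c$; by Yoneda/density this is again controlled by what happens on the cubes $\cube{n} \to X$. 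The heart of the matter is therefore the single identity that, for the representable $\cube{n}$ equipped with a map $\sigma \from \cube{n} \to N(\mcal C)$ (equivalently a functor $[1]^n \to \mcal C$), the set of natural families $\{\cube{S} \to F\circ\sigma(\LV(\cube{S}))\}_{S}$ is in natural bijection with $\Fun(\mcal C, \CS)\bigl(\Rect(\cube{n},\sigma), F\bigr)$. I would verify this by analyzing $\Rect(\cube{n}, \sigma) = \cube{n} \times_{N(\mcal C)} N(\mcal C_{/\bullet})$ directly: its value at $c \in \mcal C$ is the disjoint union, over faces $S \in \Side(\cube{n})$ with $\sigma(\LV(\cube{S})) \to c$, of $\cube{S}$, so a map out of it into $F$ is precisely the required family. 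The last-vertex functor $\LV$ and the cocubical structure on $\Side(\cube{\bullet})$ are exactly what make the slice-category gluing match the face poset, so the naturality in $u \from \cube{n} \to \cube{m}$ is automatic from the definitions.

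The step I expect to be the main obstacle is the bookkeeping around $\Rect X$ for general (non-representable) $X$: one must check that the slice-pullback $X \times_{N(\mcal C)} N(\mcal C_{/\bullet})$, as a functor $\mcal C \to \CS$, is computed as the appropriate colimit over the cubes of $X$, so that a natural transformation out of it is the same as a compatible family of the representable-case data. This is a standard ``every presheaf is a colimit of representables, and $\Rect$ preserves colimits'' argument — $\Rect$ is a composite of a pullback and a forgetful functor, both of which are colimit-preserving as functors of $X$ — but it requires care to phrase the compatibility conditions on faces so that they line up on the nose with the naturality conditions in the definition of $\int F$. Once that identification is in place, naturality of the bijection in $X$ and in $F$ follows by inspection, completing the proof. (Alternatively, one can sidestep some of this by observing that $\int F$ is manifestly the ``$F$-weighted nerve'' $c \mapsto (\text{cubes of } N(\mcal C) \text{ at } c \text{ with compatible lifts})$, hence by the general theory of nerve/realization adjunctions it has a left adjoint given by left Kan extension of the cocubical object $[1]^n \mapsto \Rect(\cube{n}, \id)$ along the Yoneda embedding; one then checks this left adjoint agrees with $\Rect$ on representables, which is the displayed identity above.)
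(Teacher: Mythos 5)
Your high-level plan matches the paper's: reduce to representables $\cube{n}\to N\pr{\mcal C}$ via cocontinuity of $\Rect$ and density of representables, then verify a natural bijection between $\Hom_{\Fun\pr{\mcal C,\CS}}\bigl(\Rect\pr{\cube{n}},F\bigr)$ and $\Hom_{\CS_{/N\pr{\mcal C}}}\bigl(\cube{n},\int F\bigr)$. However, the crucial technical step is both misstated and misdiagnosed. You assert that $\Rect\pr{\cube{n}}\pr c = \cube{n}\times_{N\pr{\mcal C}}N\pr{\mcal C_{/c}}$ \emph{is the disjoint union}, over pairs $(S,\phi\colon\sigma\pr{\LV\pr{\square_S}}\to c)$, of the cubes $\cube{S}$. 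This is false already for $n=1$: the faces $\cube{S}$ share vertices, and distinct pairs $(S,\phi)$ can map to the same cube of the pullback, so what one actually has is a colimit over the category $\Side\pr{\cube{n}}\times_{\mcal C}\mcal C_{/c}$, not a coproduct over its set of objects.

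Even replacing ``disjoint union'' by ``colimit,'' a genuine gap remains: one must still explain why the colimit indexed by \emph{non-degenerate faces} computes the pullback, so that a natural transformation out of $\Rect\pr{\cube{n}}$ is determined by a family indexed by $\Side\pr{\cube{n}}$ alone. By Yoneda density, $\Rect\pr{\cube{n}}\pr c$ is a priori the colimit over the larger category $\square_{/\cube{n}}\times_{\mcal C}\mcal C_{/c}$, which includes degenerate cubes, whereas the data defining an $n$-cube of $\int F$ is indexed only by $\Side\pr{\cube{n}}$. Reconciling these is the essential content of the proof: the paper handles it via an orthogonal factorization system argument (\cref{lem:OFS} and \cref{exa:sides}), showing that the inclusion $\Side\pr{\cube{n}}\times_{\mcal C}\mcal C_{/c}\hookrightarrow\square_{/\cube{n}}\times_{\mcal C}\mcal C_{/c}$ is a right adjoint and hence final, so the two colimits coincide. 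Your proposal asserts that ``the slice-category gluing match[es] the face poset'' and that the naturality is ``automatic from the definitions,'' without supplying the argument; meanwhile, you flag the non-representable bookkeeping as the main obstacle, but that part is routine (pullback along a fixed map preserves colimits in a presheaf category). The real work is precisely the face-versus-all-cubes comparison on representables, which your proposal elides.
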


For the proof of \cref{prop:radj_L}, we need the following
elementary observation.
Recall from \cite[Def.~2.1]{bousfield_factorizationsystem} that an \emph{orthogonal factorization system} $(\mathcal{X}_-, \mathcal{X}_+)$ on a category $\mathcal{X}$ consists of two subcategories, a \emph{left class} $\mathcal{X}_-$ and a \emph{right class} $\mathcal{X}_+$, such that:
\begin{enumerate}
	\item Every morphism $f \in \mathcal{X}$ factors as $f = f_+ f_-$, where $f_- \in \mathcal{X}_-$ and $f_+ \in \mathcal{X}_+$, and
	\item A morphism $f \from A\to B$ of $\mathcal{X}$ belongs to $\mathcal{X}_-$ if and only if it has the unique left lifting property for the maps in $\mathcal{X}_+$; that is, for every morphism $g \from X\to Y$ in $\mathcal{X}_+$ and every commutative diagram 
		\[\begin{tikzcd}
	A & X \\
	B & Y,
	\arrow[from=1-1, to=1-2]
	\arrow["f"', from=1-1, to=2-1]
	\arrow["g", from=1-2, to=2-2]
	\arrow[dotted, from=2-1, to=1-2]
	\arrow[from=2-1, to=2-2]
		\end{tikzcd}\]
	there is a unique dotted arrow rendering the diagram commutative. 
\item	A morphism of $\mathcal{X}$ belongs to $\mathcal{X}_+$ if and only if it has the unique right lifting property for the maps in $\mathcal{X}_-$. 
\end{enumerate}
\begin{lemma}
\label{lem:OFS}Let $\mcal X$ be a category equipped with an orthogonal
factorization system $\pr{\mcal X_{-},\mcal X_{+}}$. For every object
$x \in\mcal X$, the inclusion
\[
\pr{\mcal X_{+}}_{/x }\to\mcal X_{/x}
\]
is a right adjoint. 
\end{lemma}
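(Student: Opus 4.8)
The plan is to produce the right adjoint explicitly by sending an object $(g\colon y\to x)$ of $\mcal X_{/x}$ to its factorization. Concretely, I would define $R\colon \mcal X_{/x}\to (\mcal X_+)_{/x}$ by factoring $g = g_+ g_-$ with $g_-\colon y\to z_g$ in $\mcal X_-$ and $g_+\colon z_g\to x$ in $\mcal X_+$, and setting $R(g) = (g_+\colon z_g\to x)$. The claim is then that $R$ is right adjoint to the inclusion $\iota\colon (\mcal X_+)_{/x}\to \mcal X_{/x}$, with the unit $g\to \iota R(g)$ given by $g_-$ (viewed as a morphism in $\mcal X_{/x}$ from $g$ to $g_+$).

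The key steps are as follows. First I would check that $R$ is well-defined on morphisms: given a morphism $h\colon g\to g'$ in $\mcal X_{/x}$ (so $g = g' h$), the two factorizations fit into a square with $g_- $ and $g'_- h$ on the top, $z_g\to z_{g'}$ forced on the bottom; the unique lifting property of $(\mcal X_-,\mcal X_+)$ against the commutative square with $g_-$ (left, in $\mcal X_-$) and $g'_+$ (right, in $\mcal X_+$) produces a unique arrow $z_g\to z_{g'}$ over $x$, which is $R(h)$, and functoriality of $R$ follows from uniqueness. Second, for the adjunction it suffices to verify the universal property of the unit: given any $(f\colon w\to x)$ in $(\mcal X_+)_{/x}$ and any morphism $k\colon g\to \iota f$ in $\mcal X_{/x}$, I must show there is a unique $\bar k\colon R(g)\to f$ in $(\mcal X_+)_{/x}$ with $\iota(\bar k)\circ g_- = k$. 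But $k$ together with $f$ and $g_-$ gives exactly a lifting problem for $g_-\in \mcal X_-$ against $f\in \mcal X_+$, and orthogonality yields a unique diagonal filler $\bar k\colon z_g\to w$; compatibility over $x$ is automatic since $f\bar k$ and $g$ agree after precomposing with $g_-$ and $g_-$ is epi with respect to the relevant lifting (more precisely, $f\bar k g_- = f k = g = g_+ g_-$ forces $f\bar k = g_+$ by the uniqueness clause of the factorization system applied to $g_-$ against $f$). This gives both existence and uniqueness of $\bar k$, establishing the natural bijection $(\mcal X_+)_{/x}(R(g), f)\cong \mcal X_{/x}(g,\iota f)$.

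The main obstacle, such as it is, is purely bookkeeping: one must be careful that all the lifting problems invoked are genuinely squares in $\mcal X$ (not merely in $\mcal X_{/x}$) and that the uniqueness assertions of the orthogonal factorization system are applied to the correct pair of (left-class, right-class) morphisms. Once the unit is identified with the left-factor map $g_-$, naturality of the bijection in both variables is a formal consequence of the uniqueness of diagonal fillers, so no separate computation is needed. I do not anticipate any genuine difficulty beyond this.
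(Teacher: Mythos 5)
Your argument is correct in substance and matches the paper's (which is essentially the one-liner: factor $f = f_+ f_-$ and send $f \mapsto f_+$), with the additional bookkeeping on functoriality and the universal property spelled out explicitly. One slip: you describe $R$ as a \emph{right} adjoint of the inclusion $\iota$, but the unit $g \to \iota R(g) = g_+$ given by $g_-$ and the hom-set bijection $(\mcal X_+)_{/x}(R(g), f)\cong \mcal X_{/x}(g,\iota f)$ you verify establish $R\dashv\iota$, i.e.\ $R$ is the \emph{left} adjoint of $\iota$ — which is exactly what the lemma needs, since it asserts the inclusion $\iota$ is a right adjoint.
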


\begin{proof}
The left adjoint is given by factoring each map $f$ as $f=f_{+}f_{-}$ and then mapping it to $f_{+}$.
\end{proof}

\begin{example}
\label{exa:sides}
Fix a $n$-cube $[1]^n \to \mathcal{C}$ in a category $\mathcal{C}$, and an object $c \in \mathcal{C}$. 
The subcategory $\Side\pr{\square^{n}} \times_{\mathcal{C}} \pr{\mcal C_{/c}}\subset\square_{/\square^{n}}\times_{\mcal C}\pr{\mcal C_{/c}}$
determines the right class of an orthogonal factorization system. 
The left class consists of those maps whose images in $\square$ are a
composite of degeneracies and connections, and whose images in $\mcal C$
are isomorphisms.
Consequently, the inclusion $\Side\pr{\square^{n}}\times_{\mcal C}\mcal C_{/c}\subset\square_{/\square^{n}}\times_{\mcal C}\pr{\mcal C_{/c}}$
is a right adjoint, and hence is homotopy final.
\end{example}

\begin{proof}
[Proof of \cref{prop:radj_L}]Since $\CS_{/N\pr{\mcal C}}\simeq\Set^{\int_{\square}N\pr{\mcal C}}$
is a presheaf category, it suffices to show that there is a bijection
\[
\Hom_{\Fun\pr{\mcal C,\CS}}\pr{\Rect\pr{\square^{n}},F}\cong\Hom_{\CS_{/N\pr{\mcal C}}}\pr{\square^{n},\int F}
\]
which is natural in $\pr{\sigma \from \square^{n}\to N\pr{\mcal C}}\in\CS_{/N\pr{\mcal C}}$
and $F\in\Fun\pr{\mcal C,\CS}$. 

By definition, the right hand side can be identified with the set
of natural transformations depicted as
\[\begin{tikzcd}[row sep = 1.7em, column sep = 4.2em]
	{\operatorname{Side}(\square^n)} && {\mathsf{cSet}} \\
	\boxcat_{/ \cube{n}} & {} & {} \\
	{[1]^n} \\
	{\mathcal{C}}
	\arrow[from=1-1, to=2-1, hook, "i"']
	\arrow[from=2-1, to=3-1, "\LV"']
	\arrow[""{name=0, anchor=center, inner sep=0}, "{\square^\bullet}", from=1-1, to=1-3]
	\arrow["\sigma"', from=3-1, to=4-1]
	\arrow[""{name=1, anchor=center, inner sep=0}, "F"', from=4-1, to=1-3]
	\arrow[Rightarrow, from=0, to=1, shorten=1.5em]
\end{tikzcd}\]
Therefore, it suffices
to show that the left Kan extension $\Lan_{\sigma\circ\LV' \circ i}\square^{\bullet}$
is (naturally) isomorphic to $\Rect\pr{\square^{n}}$.

The relevant Kan extension is given by the formula
\[
\Lan_{\sigma\circ\LV \circ i}\square^{\bullet}(c) =\colim\limits_{\pr{\square_{S}\to\square^{n},\,\sigma\pr{\LV\pr S}\to c}\in\Side\pr{\square^{n}}\times_{\mcal C}\pr{\mcal C_{/c}}}\square_{S}.
\]
By \cref{exa:sides}, the colimit can be taken over the larger
category $\square_{/\square^{n}}\times_{\mcal C}\mcal C_{/c}\cong\square_{/ \left( \square^{n}\times_{N\pr{\mcal C}}N\pr{\mcal C_{/c}} \right)}$.
It follows from the Yoneda lemma that
\[
	\colim\limits_{\cube{k} \in \square_{/ \left( \square^{n}\times_{N\pr{\mcal C}}N\pr{\mcal C_{/c}} \right)}} \cube{k}
	\cong \square^{n}\times_{N\pr{\mcal C}}N\pr{\mcal C_{/c}}
	= \Rect\pr{\square^{n}}\pr c,
\]
as required.
\end{proof}
We give an explicit description the unit and counit of the $(\Rect \adj \int)$ adjunction for later use.
Starting with $\eta$, fix $X\in\CS_{/N\pr{\mcal C}}$, and let $x\in X_{n}$
be an $n$-cube lying over $\sigma \in N \mathcal{C}_n$.
For each $S\in\Side\pr{\square^{n}}$, 
the restriction
\[ \restr{x}{S} \from \cube{S} \to X \to N \mathcal{C}  \]
sends the last vertex of $\cube{S}$ to $\sigma(\LV(S))$ by assumption, hence it induces a map into the pullback
\[ x_S \from \cube{S} \to X \times_{N \mathcal{C}} N (\mathcal{C}_{/ \sigma(\LV (S))}). \]
These maps assemble into a natural transformation $\{x_{S}\}_{S\in\Side\pr{\square^{n}}}$, which determines a cube $\eta\pr x\in\pr{\int\Rect\pr F}_{n}$ lying over
$\sigma$. 

To define $\varepsilon$, let $F\in\Fun\pr{\mcal C,\CS}$ and $c \in\mcal C$.
We must define a map
\[
\varepsilon_{F,C} \from \pr{\int F}\times_{N\pr{\mcal C}}N\pr{\mcal C_{/c}}\to F\pr c.
\]
A typical $n$-cube on the left hand side consists of the following
data:
\begin{itemize}
\item A functor $\sigma \from [1]^{n}\to\mcal C$.
\item A morphism $\sigma\pr{1,\dots,1}\to c$ in $\mathcal{C}$.
\item A natural transformation $\{\square_{S}\to F\circ\sigma\pr{\LV\pr S}\}_{S\in\Side\pr{\square^{n}}.}$
\end{itemize}
We declare that the image of such an $n$-cube is given by the composite
\[
\square^{n}\to F \left( \sigma\pr{1,\dots,1} \right) \to F\pr c.
\]

We now come to the main result of this subsection.
\begin{theorem} \label{thm:str}
Let $\mcal C$ be a small category. The adjunction
\[ \begin{tikzcd}
	\big( \cSet_{/ N(\cat{C})} \big)_{\mathrm{cov}} \ar[r, bend left, "\Rect"{name=Upper}] & \Fun(\cat{C}, \cSet)_{\mathrm{proj}} \ar[l, bend left, "\int"{name=Lower}] \ar[from=Upper, to=Lower, phantom, "\perp"]
\end{tikzcd} \]
is a Quillen equivalence.
\end{theorem}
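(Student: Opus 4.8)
The plan is to verify this via Nguyen's recognition criterion for Quillen adjunctions into a covariant model structure (\cref{rem:JT07_cov}), combined with a fiberwise analysis of the derived functors. Since $\Rect$ preserves cofibrations (it is defined by pullback, which preserves monomorphisms), by \cref{rem:JT07_cov} it suffices to show that $\Rect$ carries left anodyne maps in $\cSet_{/N\cat{C}}$ to weak equivalences in the projective model structure on $\Fun(\cat{C},\cSet)$. For the generating left anodynes $\obox{n}{i,1} \ito \cube{n}$ (with some structure map to $N\cat{C}$), one computes $\Rect$ of both sides using the formula $\Rect(X) = X \times_{N\cat{C}} N(\cat{C}_{/\bullet})$; since pullbacks are computed objectwise in $\Fun(\cat{C},\cSet)$ and a projective weak equivalence is an objectwise weak equivalence, this reduces to showing that each $\obox{n}{i,1} \times_{N\cat{C}} N(\cat{C}_{/c}) \to \cube{n} \times_{N\cat{C}} N(\cat{C}_{/c})$ is a covariant (indeed left anodyne, or at least Kan--Quillen/categorical) equivalence in $\cSet$. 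The analogous statement for inner open boxes handles the other generators. A clean way to organize this: observe that $N(\cat{C}_{/c}) \to N\cat{C}$ is a left fibration (it is the nerve of the Grothendieck construction of the representable $\cat{C}(-,c)^{\op}$... or rather one checks directly it has the left lifting property), so pullback along it preserves left anodynes.

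Next I would establish the Quillen equivalence itself. The cleanest route is to check: (i) $\mathbb{L}\Rect$ is essentially surjective, and (ii) the derived unit $X \to \mathbb{R}\!\int \mathbb{L}\Rect(X)$ is an equivalence for cofibrant $X$, or dually that the derived counit is an equivalence. I expect the derived-counit approach to be more tractable: for a projectively fibrant $F \colon \cat{C} \to \cSet$ (i.e., each $F(c)$ a cubical Kan complex — noting $\int F$ will then be a left fibration over $N\cat{C}$, which should follow from the explicit description of its edges in the Remark after \cref{def:int} and the fact that the fibers are the $F(c)$), one analyzes the counit $\varepsilon_F \colon \Rect(\int F) = (\int F)\times_{N\cat{C}} N(\cat{C}_{/\bullet}) \to F$ and shows it is a projective weak equivalence, i.e., an objectwise weak equivalence of cubical Kan complexes. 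Evaluated at $c$, the domain is a homotopy colimit–type object: using \cref{exa:sides}, the pullback $(\int F)\times_{N\cat{C}} N(\cat{C}_{/c})$ retracts onto (or is computed by) a colimit over $\Side(\cube{n})\times_{\cat{C}}\cat{C}_{/c}$, and the comparison to $F(c)$ is the statement that this "thickened fiber over $c$" is equivalent to the actual value $F(c)$. This is essentially a cofinality argument: the subcategory of $\cat{C}_{/c}$ on the identity (or the slice over $\mathrm{id}_c$) is homotopy final, so the homotopy colimit collapses to $F(c)$.

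For essential surjectivity of $\mathbb{L}\Rect$, I would argue that every left fibration $E \to N\cat{C}$ is covariantly equivalent to one of the form $\int F$. One approach: take a fibrant replacement in $\Fun(\cat{C},\cSet)_{\mathrm{proj}}$ of $\Rect(E)$, call it $F$; then since $\Rect$ is left Quillen, $\mathbb{L}\Rect(E) \simeq F$, and it remains to show the derived counit $\mathbb{L}\Rect\,\mathbb{R}\!\int F \to F$ being an equivalence plus the triangle identities forces $E \simeq \mathbb{R}\!\int F$ — i.e., this reduces back to the derived-unit/counit computation. Alternatively, and perhaps more robustly, one can cite that both model structures present the same $\infty$-category by a compatibility-with-triangulation argument, but since the point of \cref{thm:str} is to give an \emph{explicit} equivalence distinct from the triangulation one, I would avoid that and instead prove the two derived triangle identities directly from the explicit formulas for $\eta$ and $\varepsilon$ given in the excerpt.

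The main obstacle I anticipate is the fiberwise cofinality computation for the counit: showing that $(\int F)\times_{N\cat{C}} N(\cat{C}_{/c}) \to F(c)$ is a weak equivalence. The difficulty is that $\int F$ is built out of \emph{cubical} data glued along the poset of faces $\Side(\cube{n})$ with last-vertex projections, so the colimit presentation of the pullback is a somewhat intricate cubical bar-type construction, and one must carefully match it with a known homotopy-colimit/cofinality statement — \cref{exa:sides} is doing the crucial work here by identifying $\Side(\cube{n})\times_{\cat{C}}\cat{C}_{/c} \ito \square_{/\square^n}\times_{\cat{C}}\cat{C}_{/c}$ as homotopy final, but one still needs to check that the resulting homotopy colimit over the slice is computed correctly and that the map to $F(c)$ induced by "evaluate at the terminal vertex, then push forward along the chosen morphism to $c$" is the expected equivalence. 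A secondary technical point is confirming $\int F$ is actually a left fibration when $F$ is projectively fibrant (so that $\int F$ is genuinely a fibrant object whose counit we are allowed to analyze as a map of fibrant objects), which should follow by checking the lifting conditions of \cref{def:cc}/\cref{prop:mlfib} against the explicit cube description, using that the fibers are the cubical Kan complexes $F(c)$ and that the transport functors are induced by the $F(f)$.
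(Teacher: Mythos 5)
Your proposal diverges from the paper at both stages, and the first divergence contains a genuine error.

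For the Quillen adjunction, the paper does not run the argument through $\Rect$ and \cref{rem:JT07_cov} at all; it proves directly that $\int$ is right Quillen, by observing that a lifting problem for $\int p$ against $\bdsimp{} \cube{n} \to \cube{n}$ is literally the same data as a lifting problem for $p_c: F(c) \to G(c)$ against $\bdsimp{}\cube{n} \to \cube{n}$, where $c = \sigma(1^n)$ is the value at the terminal vertex. Your route via the pullback formula for $\Rect$ runs into a real problem: the projection $N(\cat{C}_{/c}) \to N(\cat{C})$ is the nerve of the \emph{discrete fibration} $\cat{C}_{/c} \to \cat{C}$, hence a \emph{right} fibration, not a left fibration. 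The fact you invoke --- ``pullback along a left fibration preserves left anodynes'' --- is false as stated (e.g.\ pulling back $\{0\} \ito \cube{1}$ along a left fibration over $\cube{1}$ gives the inclusion of the initial fiber, which need not even be a weak homotopy equivalence, since the total space retracts onto the \emph{terminal} fiber). What you actually need is the smooth/proper base-change fact that pullback along a \emph{right} fibration preserves left anodynes (or at least preserves weak homotopy equivalences of left anodynes). That statement is plausible, but it is not established for cubical sets anywhere in the paper, and proving it is nontrivial; the paper's direct lifting computation sidesteps it entirely.

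For the Quillen equivalence, the paper does not compute the derived unit or counit. Instead it places the cubical adjunction in a square against Lurie's relative nerve adjunction on the simplicial side, constructs a natural transformation $\theta$ between the two composites of left adjoints $\Fun(\cat{C},\cSet) \to \Fun(\cat{C},\sSet)$, and shows $\theta$ is a levelwise weak equivalence by a cube-by-cube reduction to the case $\cube{0} \to N\cat{C}$ (using that $\{0^n\} \to \cube{n}$ is left anodyne). Since the three other legs of the square are known Quillen equivalences (\cref{prop:TUcomparison} and \cite[Prop.~3.2.5.18]{lurie:htt}), the top leg follows by two-out-of-three. Your direct unit/counit plan is a legitimate third route (the paper's own Remark after the proof sketches yet another one, via the free-cocompletion universal property), and you correctly pinpoint that the hard part is the fiberwise cofinality computation with \cref{exa:sides}. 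However, your sketch of essential surjectivity of $\mathbb{L}\Rect$ is circular as written: the argument ``take a fibrant replacement $F$ of $\Rect(E)$, then use the triangle identities and the counit being an iso to conclude $E \simeq \mathbb{R}\!\int F$'' only gives you that the derived unit is an iso on objects already in the image of $\mathbb{R}\!\int$, i.e.\ fully-faithfulness of $\mathbb{R}\!\int$. To close the argument you need either the derived unit to be an iso on all objects, or conservativity of one of the derived functors; neither is supplied. I would expect this is fixable via a fiberwise argument through \cref{prop:fiberwise_eq_cov}, but it is a gap as stated, and the overall approach is considerably more computational than what the paper does.
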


\begin{proof}
We first check that the adjunction is a Quillen adjunction. To show
that $\int$ preserves trivial fibrations, suppose we are given a
projective trivial fibration $p \from F\to G$ and a solid commutative diagram
\[\begin{tikzcd}
	{\partial\square ^n} & {\int F} \\
	{\square ^n} & {\int G}
	\arrow[from=1-1, to=1-2]
	\arrow[from=1-1, to=2-1]
	\arrow["{\int p}", from=1-2, to=2-2]
	\arrow[dotted, from=2-1, to=1-2]
	\arrow[from=2-1, to=2-2]
\end{tikzcd}\]in $\CS_{/N\pr{\mcal C}}$. We wish to show that there is a dotted
filler rendering the diagram commutative. Unwinding the definitions,
this lifting problem is equivalent to 
\[\begin{tikzcd}
	{\partial\square ^n} & {F(c)} \\
	{\square ^n} & {G(c),}
	\arrow[from=1-1, to=1-2]
	\arrow[from=1-1, to=2-1]
	\arrow["{p_c}", from=1-2, to=2-2]
	\arrow[dotted, from=2-1, to=1-2]
	\arrow[from=2-1, to=2-2]
\end{tikzcd}\]where $c\in\mcal C$ denotes the image of the terminal vertex of $\square^{n}$.
Since $p_{c}$ is a trivial fibration, there is a dotted filler rendering
diagram commutative, as desired. A similar argument shows that $\int$
preserves fibrations of fibrant objects (since Kan fibrations are
left fibrations), so by \cite[Prop.~7.15]{JT07}, the functor
$\int$ is right Quillen.

Next, we show that the adjunction $\Rect\dashv\int$ is a Quillen
equivalence. Let $N_{\Delta}\pr{\mcal C}$ be the nerve of $\mcal C$,
which is a simplicial set. We will exploit Lurie's \emph{relative
nerve} functor
\[
\int:\Fun\pr{\mcal C,\SS}_{\proj}\to\pr{\SS_{/N_{\Delta}\pr{\mcal C}}}_{\cov}
\]
whose left adjoint, denoted by $\sRect$, is given by $\sRect\pr K=K\times_{N_{\Delta}\pr{\mcal C}}N_{\Delta}\pr{\mcal C_{/\bullet}}$.
Recall that the relative nerve functor is a right Quillen equivalence
\cite[Prop.~3.2.5.18]{lurie:htt}. 

Now consider the functors
\[\begin{tikzcd}
	{\operatorname{Fun}(\mathcal{C},\mathsf{sSet})} & {\mathsf{sSet}_{/N_{\Delta}(\mathcal{C})}} \\
	{\operatorname{Fun}(\mathcal{C},\mathsf{cSet})} & {\mathsf{cSet}_{/N(\mathcal{C})}.}
	\arrow["\int", from=1-1, to=1-2]
	\arrow["{\operatorname{Fun}(\mathcal{C},U)}"', from=1-1, to=2-1]
	\arrow["U"', from=1-2, to=2-2]
	\arrow["\int"', from=2-1, to=2-2]
\end{tikzcd}\]
This square does \emph{not} commute on the nose, but there
is a preferred natural transformation $U\circ\int\to\int\circ\Fun\pr{\mcal C,U}$,
induced by the natural transformation 
\[
\theta \from T\pr{-\times_{N\pr{\mcal C}}N ({\mcal C_{/\bullet}}) }\to T\pr -\times_{N_{\Delta}\pr{\mcal C}}N_{\Delta} ({\mcal C_{/\bullet}})
\]
of left adjoints. Since all but the top arrow are known to be Quillen
equivalences (by \cref{prop:TUcomparison}), it suffices
to show that $\theta$ is a natural weak equivalence. By the usual
cube-by-cube argument, we are reduced to showing that the components
of $\theta$ at the objects of the form $\pr{\square^{n}\to N\pr{\mcal C}}\in\CS_{/N\pr{\mcal C}}$
are weak equivalences. Since the inclusion $\square^{0}\to\square^{n}$
is left anodyne (because it is a finite composite of left deformation
retracts $\square^{k-1}\cong\square^{k-1}\otimes\{0\}\to\square^{k}$),
we can further reduce this to the case where $n=0$. 
In this case, the relevant map is an isomorphism.
\end{proof}
\begin{remark}
It is possible to prove \cref{thm:str} without resorting to
the relative nerve functor. Since we do not need this fact, we will
only give a sketch. 

Consider the functor
\[ \begin{array}{r@{}c@{\ }c@{\ }l}
 j & \from \mcal C^{\op} & \to & \pr{\CS_{/N\pr{\mcal C}}}_{\cov,\infty} \\
 {} & C & \mapsto & \quad \! N\pr{\mcal C_{C/}}
\end{array} \]
where $\pr{\CS_{/N\pr{\mcal C}}}_{\cov,\infty}$ denotes the underlying
$\infty$-category of $\CS_{/N\pr{\mcal C}}$. Since the inclusion
$\{C\}\to N\pr{\mcal C_{C/}}$ is a left deformation retract, it is
left anodyne. 
The derived mapping space lemma \cite[Thm.~2.2]{arakawa-carranza-kapulkin:derived-mapping-space} asserts that the natural cubical enrichment of $\CS_{/ N\pr{\mcal C}}$ correctly computes the mapping space when the codomain is fibrant (cf.\ \cite[Thm.~3.8]{arakawa-carranza-kapulkin:derived-mapping-space}), and using this, we find that $j$ is fully faithful. 
Moreover, objects of the form $\{C\}\to N\pr{\mcal C}$
generate $\pr{\CS_{/N\pr{\mcal C}}}_{\cov,\infty}$ under small colimits,
and these objects are completely compact in the sense of \cite[Def.~5.1.6.2]{lurie:htt}.
It follows from \cite[Cor.~5.1.6.11]{lurie:htt} that $j$ exhibits
the target as a free cocompletion of $\mcal C^{\op}$. By a similar
reasoning, we can show that the composite
\[
\mcal C^{\op}\to\pr{\CS_{/N\pr{\mcal C}}}_{\cov,\infty}\xrightarrow{\Rect_{\infty}}\Fun\pr{\mcal C,\CS}_{\cov,\infty}
\]
also exhibits the target as a free cocompletion of $\mcal C^{\op}$, hence the functor $\Rect_{\infty}$ is an equivalence.
\end{remark}

\subsection{Marked case}

In this subsection, we develop the marked version of the contents
in \cref{thm:str}.
\begin{definition}
Let $\mcal C$ be a small category. We define a functor
\[
\int^{+} \from \Fun\pr{\mcal C,\CS^{+}}\to\CS_{/N\pr{\mcal C}^{\sharp}}^{+}
\]
as follows: Given a functor $F \from \mcal C\to\CS^{+}$, we set $\int^{+}F=\pr{\int F_{\flat},M_{F}}$,
where $F_{\flat} \from \mcal C\to\CS$ denotes the functor obtained from
$F$ by forgetting the markings. The set $M_{F}$ consists of the
edges $\pr{C,x}\to\pr{D,y}$ such that the edge $Ff\pr x\to y$ is
marked in $F\pr D$.

We also define the \emph{rectification functor }
\[
\Rect^{+} \from \CS_{/N\pr{\mcal C}^{\sharp}}^{+}\to\Fun\pr{\mcal C,\CS^{+}}
\]
by the formula $\Rect^{+} ({\overline{X}}) = \overline{X}\times_{N\pr{\mcal C}^{\sharp}}N ({\mcal C_{/\bullet}})^{\sharp}$.
Just as in the unmarked case (\cref{prop:radj_L}), the
functors $\int^{+}$ and $\Rect^{+}$ form an adjoint pair.
\end{definition}

Here is the main result of this section.
\begin{theorem} \label{thm:str_marked}
Let $\mcal C$ be a small category. The adjunction
\[ \begin{tikzcd}
	\Big( \mcSet_{/ \maxm{N(\cat{C})}} \Big)_{\mathrm{cc}} \ar[r, yshift=1.8ex, "\Rect^+"{name=Upper}] & \Fun(\cat{C}, \mcSet)_{\mathrm{proj}} \ar[l, yshift=-1.8ex, "\int^+"{name=Lower}] \ar[from=Upper, to=Lower, phantom, "\perp"]
\end{tikzcd} \]
is a Quillen equivalence for the cocartesian model structure on the
left and the projective model structure on the right.
\end{theorem}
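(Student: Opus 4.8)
The plan is to mirror the proof of the unmarked case (\cref{thm:str}) as closely as possible, upgrading each step to incorporate markings. First I would check that the adjunction $\Rect^{+}\adj\int^{+}$ is a Quillen adjunction. To see that $\int^{+}$ preserves trivial fibrations, I would note that a trivial fibration $\overline X\to\maxm{N(\cat C)}$ in $\pr{\CS_{/N\pr{\mcal C}^{\sharp}}^{+}}_{\cc}$ is detected by lifting against $\pr{\partial\square^{n}}^{\flat}\subset\pr{\square^{n}}^{\flat}$ and against $\pr{\square^{1}}^{\flat}\subset\pr{\square^{1}}^{\sharp}$; the first reduces, exactly as in \cref{thm:str} via the description of cubes of $\int F$ in \cref{def:int}, to lifting against $\partial\square^{n}\subset\square^{n}$ in $F(c)_{\flat}$ where $c$ is the image of the last vertex, while the second reduces to the statement that $F(c)\to G(c)$ detects marked edges — both of which hold since $p$ is a projective trivial fibration and hence each $p_{c}$ is a trivial fibration in $\CS^{+}$. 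To see that $\int^{+}$ preserves fibrations between fibrant objects, I would combine \cref{prop:mlfib} (the recognition result for marked left fibrations) with the analysis of cocartesian edges of $\int F$ built from cocartesian edges in the fibers; here the key input is that a marked edge in $\int^{+}F$, i.e.\ one whose component in $F(c)$ is marked, is automatically $\pr{\int^{+}p}$-cocartesian, using \cref{prop:four_three}. Then \cite[Prop.~7.15]{JT07} (via \cref{rem:JT07_cc}) gives that $\int^{+}$ is right Quillen.

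Next I would prove that $\Rect^{+}\adj\int^{+}$ is a Quillen equivalence by the same transport-of-structure argument used in \cref{thm:str}. Lurie's marked relative nerve
\[
{\textstyle\int}\from\Fun\pr{\mcal C,\msSet}_{\proj}\to\pr{\msSet_{/N_{\Delta}\pr{\mcal C}^{\sharp}}}_{\cc},
\]
with left adjoint $\sRect^{+}(\overline K)=\overline K\times_{N_{\Delta}\pr{\mcal C}^{\sharp}}N_{\Delta}\pr{\mcal C_{/\bullet}}^{\sharp}$, is a right Quillen equivalence by \cite[Prop.~3.2.5.18]{lurie:htt} (cf.\ \cite[Rem.~3.2.5.17]{lurie:htt}). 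I would then consider the square
\[\begin{tikzcd}
	{\operatorname{Fun}(\mathcal{C},\msSet)} & {\msSet_{/N_{\Delta}(\mathcal{C})^\sharp}} \\
	{\operatorname{Fun}(\mathcal{C},\mcSet)} & {\mcSet_{/N(\mathcal{C})^\sharp},}
	\arrow["\int", from=1-1, to=1-2]
	\arrow["{\operatorname{Fun}(\mathcal{C},U)}"', from=1-1, to=2-1]
	\arrow["{U^+}", from=1-2, to=2-2]
	\arrow["\int^+"', from=2-1, to=2-2]
\end{tikzcd}\]
which commutes up to a canonical natural transformation $U^{+}\circ\int\to\int^{+}\circ\Fun\pr{\mcal C,U}$, induced on left adjoints by the marked version of the comparison map
\[
\theta^{+}\from T^{+}\pr{-\times_{N\pr{\mcal C}^{\sharp}}N\pr{\mcal C_{/\bullet}}^{\sharp}}\to T^{+}\pr{-}\times_{N_{\Delta}\pr{\mcal C}^{\sharp}}N_{\Delta}\pr{\mcal C_{/\bullet}}^{\sharp}.
\]
The left, right, and bottom functors are Quillen equivalences — the left one by the simplicial analog of \cref{lem:cc_fibrations} (cf.\ \cref{rem:fib-between-fibs-simplicial-analog}), the right one by \cref{prop:TUcomparison}, and the bottom one by \cref{prop:TUcomparison} again — so by 2-out-of-3 it suffices to show $\theta^{+}$ is a natural weak equivalence. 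By the usual cube-by-cube reduction and the fact that the inclusion $\square^{0}\to\square^{n}$ is marked left anodyne (it is a finite composite of right $\pr{\square^{1}}^{\sharp}\otimes-$-deformation retracts, or more simply of inclusions of class (ML2)-type pushouts of $\{0\}^{\flat}\otimes\square^{k-1}\hookrightarrow\pr{\square^{1}}^{\flat}\otimes\square^{k-1}$), this reduces to $n=0$, where $\theta^{+}$ is an isomorphism since both sides are just $F\mapsto F(c)$ evaluated at the relevant object. The underlying-cubical-set computation is already done in \cref{thm:str}, so the only new content is that the markings match, which is immediate from the formulas defining $\int^{+}$ and $\sRect^{+}$.

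The main obstacle I anticipate is verifying that $\int^{+}$ preserves fibrations between fibrant objects — more precisely, checking condition (3) of \cref{prop:mlfib} for $\int^{+}p$, namely that an edge of $\int^{+}F$ is marked if and only if it is $\pr{\int^{+}p}$-cocartesian and lies over a (necessarily degenerate) edge of $\maxm{N\cat C}$. The ``only if'' half requires identifying $\pr{\int^{+}p}$-cocartesian edges; this amounts to an analysis of lifting problems $\square^{1}\cup_{\sqcap^{n}_{i,1}}\square^{n}\to\int^{+}F$ translated, via the explicit cube description of $\int F$, into a statement about cocartesian edges and the functoriality of the fibers $F(c)$, where the cubical identities governing the last-vertex functor $\LV$ make the bookkeeping delicate. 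The ``if'' half is the converse translation. An alternative, perhaps cleaner route — which I would fall back on if the direct verification becomes unwieldy — is to observe that the forgetful functor $\CS^{+}_{/\maxm{N\cat C}}\to\CS_{/N\cat C}$ and $\Fun(\cat C,\CS^{+})\to\Fun(\cat C,\CS)$ intertwine $\int^{+}$ with $\int$ on underlying cubical sets (by construction $\int^{+}F$ has underlying cubical set $\int F_{\flat}$), so that fibrancy of $\int^{+}p$ follows from \cref{thm:str} together with the marking-theoretic conditions, which are then exactly conditions (2) and (3) of \cref{prop:mlfib} and can be checked pointwise in each $F(c)$ using \cref{prop:eq_cc} and \cref{prop:four_three}. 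Either way, the genuinely new work is localized entirely in this fibrancy-preservation step; everything else is the unmarked argument with markings carried along.
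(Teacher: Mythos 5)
Your overall architecture matches the paper's: establish the Quillen adjunction, form the square of functors relating the cubical and simplicial rectification/relative-nerve constructions, and use $2$-out-of-$3$ to reduce to showing the comparison map $\theta^{+}$ is a natural weak equivalence. The gap is in your final reduction. You claim that ``the usual cube-by-cube reduction'' together with the assertion that $\square^{0}\to\square^{n}$ is marked left anodyne lets you reduce to $n=0$. But the relevant inclusion is $\{0\}\hookrightarrow\pr{\square^{n}}^{\flat}$ (the cube cells carry the \emph{minimal} marking), and this is \emph{not} marked left anodyne, nor even a cocartesian equivalence over $\maxm{N\mcal C}$. Concretely, take $\mcal C=[1]$ and $\overline X=\pr{\square^{1}}^{\flat}\to\maxm{N[1]}$ the identity on underlying cubical sets: $\Rect^{+}\pr{\square^{1}}^{\flat}$ evaluates at $1$ to $\pr{\square^{1}}^{\flat}$ (the walking arrow in the Joyal model structure on $\CS^{+}$), whereas $\Rect^{+}\{0\}$ evaluates at $1$ to $\square^{0}$, and $\square^{0}\to\pr{\square^{1}}^{\flat}$ is not a weak equivalence. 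Your parenthetical is also internally inconsistent: you assert that $\{0\}^{\flat}\otimes\square^{k-1}\hookrightarrow\pr{\square^{1}}^{\flat}\otimes\square^{k-1}$ is a pushout of \axm{ML}2, but \axm{ML}2 is $\{0\}^{\sharp}\subset\pr{\square^{1}}^{\sharp}$ with the \emph{sharp} marking; the flat version is not a marked left anodyne, and no deformation-retract argument can make it one (any such retraction $h$ would have to carry $\pr{\square^{1}}^{\sharp}\otimes\{1\}$ to a marked, hence degenerate, edge from $1$ to $0$). This is precisely the point where the covariant and cocartesian cases diverge: the fibers in the covariant setting are spaces and $\square^{n}$ is contractible, whereas in the cocartesian setting the fibers are categories and $\pr{\square^{n}}^{\flat}$ models the poset $[1]^{n}$, not a point.

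The paper repairs this by a genuinely different reduction, \cref{lem:enr_presheaves}: the objects $\overline X\to\maxm{N\mcal C}$ whose \emph{underlying} map is constant at a vertex generate $\pr{\CS_{/\maxm{N\mcal C}}^{+}}_{\cc}$ under homotopy colimits. The proof of that lemma does not proceed cell by cell; it pushes the problem through the composite left Quillen equivalence $\CS_{/\maxm{N\mcal C}}^{+}\xrightarrow{T^{+}}\SS_{/\maxm{N_{\Delta}\mcal C}}^{+}\xrightarrow{\sRect^{+}}\Fun\pr{\mcal C,\SS^{+}}$, observes that constant objects land on $\mcal C\pr{c,-}^{\sharp}\times\maxm{T^{+}\overline Y}$, and uses that objects of this form generate the projective model structure under homotopy colimits. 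On constant objects, $\theta^{+}$ is an isomorphism (both sides become evaluation at the relevant vertex), and that closes the argument. So the new content of the marked theorem lives entirely in this generation lemma, not in the fibrancy step you flagged as the main difficulty; you would need to replace your $n=0$ reduction with \cref{lem:enr_presheaves} or an equivalent device.
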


The proof of \cref{thm:str_marked} relies on a lemma. Given
a collection $S$ of objects of $\mbf M$, let $\mbf M_{S}\subset\mbf M$
denote the smallest full subcategory satisfying the following conditions:
\begin{itemize}
\item Every object of $S$ belongs to $\mbf M_{S}$.
\item If an object $X\in\mbf M$ belongs to $\mbf M_{S}$, then every object
weakly equivalent to $X$ belongs to $\mbf M_{S}$.
\item If $F$ is a small diagram in $\mbf M$ that factors through $\mbf M_{S}$,
then $\hocolim F\in\mbf M_{S}$.
\end{itemize}
We say that $S$ \emph{generates $\mbf M$ under homotopy colimits}
if $\mbf M=\mbf M_{S}$.
\begin{lemma}
\label{lem:enr_presheaves}Let $\mcal C$ be a small category. The
maps of marked cubical sets $\overline{X}\to N\pr{\mcal C}^{\sharp}$ whose
underlying map $X\to N\pr{\mcal C}$ are constant generate $\pr{\CS_{/N\pr{\mcal C}^{\sharp}}^{+}}_{\cc}$
under homotopy colimits.
\end{lemma}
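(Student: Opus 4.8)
The plan is to prove that every object $\overline{X} \to N(\mcal C)^{\sharp}$ of $\pr{\CS^{+}_{/N(\mcal C)^{\sharp}}}_{\cc}$ can be built up, as a homotopy colimit, out of the ``constant'' objects $\overline{K} \to N(\mcal C)^{\sharp}$ whose underlying map factors through a vertex of $N(\mcal C)$. The starting observation is that any $\overline{X}$ is the colimit of its cubes, i.e. of the diagram $(\boxcat \times \boxcat)_{/\overline{X}} \to \mcSet$ whose value on a marked cube $\icube{m}{} \to \overline{X}$ (writing $\square^m$ with a marking that is either minimal or at all edges) is that marked cube itself, equipped with its structure map to $N(\mcal C)^{\sharp}$. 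Since every object is cofibrant in the cocartesian model structure, and since this colimit can be arranged (by the usual skeletal filtration, as in the proof of \cref{prop:fiberwise_eq_cc}) to be a homotopy colimit — the successive inclusions along the skeleta are pushouts of monomorphisms between cofibrant objects — it suffices to show that each marked cube $\overline{\square^m} \to N(\mcal C)^{\sharp}$, viewed as an object of $\CS^{+}_{/N(\mcal C)^{\sharp}}$, lies in the subcategory generated under homotopy colimits by the constant objects.

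First I would reduce to the unmarked cube. The marked cube $\pr{\square^m}^{\sharp} \to N(\mcal C)^{\sharp}$ with all edges marked can itself be written as a homotopy colimit built from $\pr{\square^m}^{\flat} \to N(\mcal C)^{\sharp}$ by freely marking edges one at a time; each such elementary marking is a pushout along a map of the form $\pr{\square^1}^{\flat} \to \pr{\square^1}^{\sharp}$, which is marked left anodyne by (ML4) composed with the localization, so in particular a cocartesian equivalence, and these pushouts are homotopy pushouts since everything is cofibrant. Hence it is enough to treat $\pr{\square^m}^{\flat} \to N(\mcal C)^{\sharp}$, i.e. a minimally-marked cube over the base.

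Next, for the minimally-marked cube $\pr{\square^m}^{\flat}$ equipped with a map $\sigma \from \square^m \to N(\mcal C)$, I would run an induction on $m$. For $m = 0$ the object is already constant, so there is nothing to prove. For the inductive step, the key move is to use that the inclusion $\{0^m\} \hookrightarrow \square^m$ of the initial vertex is left anodyne — indeed it is a finite composite of left deformation retracts $\square^{k-1} \cong \square^{k-1} \otimes \{0\} \to \square^k$ (this is exactly the fact used at the end of the proof of \cref{thm:str}), hence marked left anodyne after minimal marking, hence a cocartesian equivalence over $N(\mcal C)^{\sharp}$. Therefore $\pr{\square^m}^{\flat} \to N(\mcal C)^{\sharp}$ is cocartesian-equivalent to the constant object $\pr{\square^0}^{\flat} \to N(\mcal C)^{\sharp}$ sitting over the vertex $\sigma(0^m)$, and the second closure condition in the definition of ``generates under homotopy colimits'' finishes the argument. (In fact this observation makes the induction unnecessary: every minimally-marked cube over the base is directly weakly equivalent to a constant object.)

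The main obstacle — really the only delicate point — is verifying that the colimit of cubes presentation of $\overline{X}$ is genuinely a \emph{homotopy} colimit in the cocartesian model structure, not just a colimit. The clean way to handle this is to replace the (possibly badly-behaved) diagram over $(\boxcat \times \boxcat)_{/\overline{X}}$ by the skeletal filtration $\overline{X}(0) \hookrightarrow \overline{X}(1) \hookrightarrow \cdots$, where $\overline{X}(n) = \overline{X} \times_{N(\mcal C)^{\sharp}} \Sk_n N(\mcal C)^{\sharp}$ (or the skeleton of $\overline{X}$ itself), observe that each stage is obtained from the previous by a pushout attaching copies of $\pr{\bd\square^n}^{\flat} \hookrightarrow \pr{\square^n}^{\sharp}$ along boundaries, that these attaching maps are cofibrations between cofibrant objects so the pushouts are homotopy pushouts, and that a sequential (homotopy) colimit along cofibrations is a homotopy colimit. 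Then $\overline{X} = \hocolim_n \overline{X}(n)$, and inductively each $\overline{X}(n)$ is a homotopy pushout of objects already known to lie in the generated subcategory (using the previous paragraphs to handle the attached cells $\pr{\square^n}^{\sharp}$ and $\pr{\bd\square^n}^{\flat}$, the latter by a further cube-by-cube induction on its own cells). This bookkeeping is entirely parallel to the inductive constructions already carried out in \cref{prop:fiberwise_eq_cc}, so I would present it compactly by citing that pattern rather than repeating it in full.
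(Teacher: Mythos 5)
The reduction of minimally-marked cubes to constants in paragraphs~2 and~3 is wrong, and this is a genuine gap the rest of the argument cannot absorb. Neither $(\square^1)^\flat \to (\square^1)^\sharp$ nor $\{0^m\}^\flat \hookrightarrow (\square^m)^\flat$ is marked left anodyne, nor a cocartesian equivalence over $N(\mathcal{C})^\sharp$ in general. Recall that (ML2) is $\{0\}^\sharp \subset (\square^1)^\sharp$ with the \emph{sharp} $1$-cube as target; the flat variant is not a generator and is not in the saturation, and (ML4) is about $E[1]$, not $\square^1$ -- no composition with, or base change along, a ``localization'' map produces $(\square^1)^\flat \to (\square^1)^\sharp$. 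Concretely, take $\mathcal{C} = [1]$ and $\sigma = \id \colon \square^1 \to N([1])$, and apply the composite left Quillen equivalence $\Rect^+ \circ T^+$ from the paper's proof. The constant $\{0\}^\sharp$ over $0$ goes to $\mathcal{C}(0,-)^\sharp$, whose value at $1$ is $(\Delta^0)^\sharp$, while $(\square^1)^\flat$ over $\sigma$ goes to a functor whose value at $1$ is $(\Delta^1)^\flat$ (the pullback $\Delta^1 \times_{N_{\Delta}([1])} N_{\Delta}([1]_{/1})$ with minimal marking, since the first factor is flat). The induced map $(\Delta^0)^\sharp \to (\Delta^1)^\flat$ is not a weak equivalence in the marked Joyal model structure -- $\Delta^0 \hookrightarrow \Delta^1$ is not a Joyal equivalence -- so $\{0\}^\sharp \to (\square^1)^\flat$ is not a cocartesian equivalence over $N([1])^\sharp$. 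Intuitively, $(\square^1)^\flat$ straightens to the \emph{free} cocartesian fibration on an arrow, whose fiber over the target is an arrow and not a point.

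The covariant-case fact you cite from the proof of \cref{thm:str}, that $\{0^m\}\to\square^m$ is left anodyne, is correct, but it is special to the unmarked setting: in the cocartesian model structure the marking records which edges are cocartesian, and that is exactly the data your reduction erases. Without it, the flat cells $(\square^m)^\flat$ over a nondegenerate cube of $N(\mathcal{C})$ are not equivalent to constants, and the cellular induction does not close (one would have to exhibit each $(\square^m)^\flat$ as an explicit homotopy colimit of constants, which is combinatorially nontrivial and not supplied). The paper avoids this entirely by transporting the statement along $\Rect^+ \circ T^+$ to $\Fun(\mathcal{C}, \msSet)_{\mathrm{proj}}$, where the image of every constant object has the form $\mathcal{C}(c,-)^\sharp \times \overline{Y}$ and the generating cofibrations of the projective model structure are visibly of this shape, so no marking subtlety arises in the cell-by-cell decomposition.
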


\begin{proof}
Let $\Rect^{+} \from \SS_{/N_{\Delta}\pr{\mcal C}^{\sharp}}^{+}\to\Fun\pr{\mcal C,\SS^{+}}$
denote the left adjoint of the relative nerve functor, defined in
\cite[$\S$ 3.2.1]{lurie:htt}. It is a left Quillen equivalence with respect
to the cocartesian model structure and the projective model structure
\cite[Prop.~3.2.5.18]{lurie:htt}. We also write $T^{+} \from \CS_{/N\pr{\mcal C}^{\sharp}}^{+}\to\SS_{/N_{\Delta}\pr{\mcal C}^{\sharp}}^{+}$
for the left adjoint of the right Quillen equivalence of \cref{prop:TUcomparison}. 
We then have the composite left Quillen equivalence
\[
\Phi:\CS_{/N\pr{\mcal C}^{\sharp}}^{+}\xrightarrow{T^{+}}\SS_{/N_{\Delta}\pr{\mcal C}^{\sharp}}^{+}\xrightarrow{\Rect^{+}}\Fun\pr{\mcal C,\SS^{+}}.
\]

If $\overline{X}\to N\pr{\mcal C}^{\sharp}$ is a map whose underlying
map is constant at an object $c \in\mcal C$, then its image
under $\Phi$ is given by $\mcal C\pr{c ,-}^{\sharp}\times T^{+} ({\overline{X}})$.
Consequently, it suffices to show that these objects generate $\Fun\pr{\mcal C,\SS^{+}}$
under homotopy colimits. 
Since the adjunction $(T^{+} \adj U^{+})$
is a Quillen equivalence, every marked simplicial set is weakly equivalent
to an object in the image of $T^{+}$. So we only need to show that
the objects $\{\mcal C\pr{c ,-}^{\sharp}\times\overline{Y}\}_{c \in\mcal C,\,\overline{Y}\in\SS^{+}}$
generate $\Fun\pr{\mcal C,\SS^{+}}$ under homotopy colimits.
This follows from the description of generating cofibrations
of the projective model structure \cite[Thm.~11.6.1]{Hirschhorn}.
\end{proof}
\begin{proof}
[Proof of \cref{thm:str_marked}]
Just as in the proof of \cref{thm:str}, we are reduced to showing that the map

\[
	\theta_{\overline{X}} \from T^+ \big( -\times_{N(\mathcal{C})^\sharp}N(\mathcal{C}_{/\bullet})^\sharp \big) \to T^+(-)\times_{N_{\Delta}(\mathcal{C})^\sharp}N_{\Delta}(\mathcal{C}_{/\bullet})^\sharp
\]

is a weak equivalence in $\Fun(\mathcal{C},\mathsf{sSet}^+)$, for each $\overline{X}\in \CS^+_{/N(\mathcal{C})^\sharp}$. By \cref{lem:enr_presheaves}, we only need to consider the case where the map $X\to N(\mathcal{C})$ factors through some map $\square ^0\to N(\mathcal{C})$. But in this case, the map $\theta_{\overline{X}}$ is an isomorphism, so we are done.
\end{proof}

\section{\label{sec:invariance}Invariance}

So far, our discussion has centered on left fibrations and cocartesian
fibrations over cubical quasicategories. However, we have yet to understand the homotopy theory presented by the covariant and cocartesian model structures over a \emph{non-fibrant} base.
Borrowing intuitions from the simplicial setting, we would expect that these model structures are equivalent if we vary the base by a weak categorical equivalence (\cite[Thm.~5.2.14]{cisinski:higher-categories} and \cite[Prop.~3.3.1.1]{lurie:htt}).
This section is devoted to proving that this indeed the case.
More precisely, we prove the following two theorems:
\begin{theorem}
\label{thm:cat_inv}For every weak categorical equivalence $f \from B\to B'$
of cubical sets, the induced adjunction
\[
f_{!} \from \pr{\CS_{/B}}_{\cov}\bigadj\pr{\CS_{/B'}}_{\cov} \from f^{*}
\]
is a Quillen equivalence.
\end{theorem}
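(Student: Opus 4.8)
The plan is to reduce the statement to the case of a weak categorical equivalence between cubical quasicategories, which is already handled by \cref{rem:cat_inv}. First I would observe that, by \cite[Prop.~7.15]{JT07} (see \cref{rem:JT07_cov}) and \cref{rem:cc_adj-1}, every base change $f_!$ is a left Quillen functor, so the only content is the statement that $f_!$ is a Quillen \emph{equivalence}. The standard strategy is a factorization argument: given an arbitrary weak categorical equivalence $f\colon B\to B'$, choose fibrant replacements in the cubical Joyal model structure, i.e.\ acyclic cofibrations $j\colon B\acto\mcal B$ and $j'\colon B'\acto\mcal B'$ with $\mcal B,\mcal B'$ cubical quasicategories, together with a map $g\colon\mcal B\to\mcal B'$ filling in the square $g j = j' f$; since $f$, $j$, $j'$ are weak categorical equivalences, so is $g$, and by \cref{rem:cat_inv} the adjunction $g_!\adj g^*$ is a Quillen equivalence. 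By two-out-of-three for Quillen equivalences applied to the composite $g_! j_! \cong j'_! f_!$, it therefore suffices to prove the theorem in the special case where $f$ is an \emph{acyclic cofibration} whose codomain is a cubical quasicategory.

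So the reduced claim is: if $j\colon B\acto\mcal B$ is an acyclic cofibration into a cubical quasicategory, then $j_!\colon(\CS_{/B})_{\cov}\to(\CS_{/\mcal B})_{\cov}$ is a Quillen equivalence. To handle this, I would first treat the case where $j$ is a \emph{left anodyne} extension. Here $j^* = $ pullback along $j$, and one shows directly that the unit $\id\to j^* j_!$ and the counit $j_! j^*\to\id$ are covariant equivalences. Concretely: for $X\to B$, the map $X\to j^*j_!X = (X\cup_B \mcal B)\times_{\mcal B} B$ is the identity since $j$ is a monomorphism, so the unit is an isomorphism; and for $Y\to\mcal B$, the counit $j_!j^*Y = (j^*Y)\cup_B\mcal B\to Y$ is a pushout of $j$ along the left fibration (if $Y$ is fibrant) $j^*Y\to B$, hence — using that left anodynes are stable under pushout and that pushouts of left anodynes along left fibrations remain left anodyne, which follows from the lifting characterization — it is a trivial cofibration, so the counit is a covariant equivalence; this gives a Quillen equivalence. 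Then, to pass from left anodyne extensions to arbitrary acyclic cofibrations $j$, I would use that every such $j$ admits, by a small object / retract argument against the generating left anodynes, a factorization $B\xrightarrow{\text{left anodyne}} B''\to\mcal B$ in which the second map, together with $\mcal B$, can be taken to be a left fibration over $\mcal B$, hence a categorical fibration onto $\mcal B$; since $j$ and the left anodyne map are weak categorical equivalences, so is $B''\to\mcal B$, and being a trivial fibration in the cubical Joyal structure between cubical quasicategories it is split by a section which is again a categorical equivalence. One then reduces to composing the left anodyne case with \cref{rem:cat_inv} (applied to $B''\to\mcal B$, both being cubical quasicategories, noting $B''$ is a cubical quasicategory as a left fibration over one). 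Alternatively, and more cleanly, one proves: any trivial cofibration between cubical quasicategories induces a Quillen equivalence on covariant model structures (immediate from \cref{rem:cat_inv}), and any left anodyne induces one; every acyclic cofibration into a quasicategory factors into a left anodyne followed by a trivial cofibration between quasicategories, so two-out-of-three finishes it.

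The main obstacle I anticipate is the gluing/invariance step for the counit in the left-anodyne case — specifically, verifying that the pushout of a left anodyne extension along a left fibration is again a covariant equivalence over the (non-fibrant) base $B$. Unlike the simplicial setting, where one leans on established left-properness-type facts, here one must argue with Nguyen's machinery: one uses that left anodynes are part of a class of left $(\cube{1}\otimes-)$-anodyne extensions (\cref{prop:cov}), that these are closed under pushout and transfinite composition as a saturated class, and that \cref{prop:fiberwise_eq_cov} lets one detect covariant equivalences of fibrant objects fiberwise — so after fibrant replacement the fiberwise criterion reduces the claim to the already-known fact that left anodynes restrict to weak homotopy equivalences on fibers, or equivalently to \cref{prop:retracts} on stability of deformation retracts. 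Care is also needed to ensure all the fibrant replacements and lifts chosen are compatible over the relevant bases, so that the two-out-of-three bookkeeping for Quillen equivalences is legitimate; this is routine but must be spelled out. The cocartesian analogue (\cref{thm:cat_inv_cc}) will then follow by the same template, substituting marked left anodynes, \cref{prop:cat_inv_cc}, and \cref{prop:fiberwise_eq_cc} for their unmarked counterparts.
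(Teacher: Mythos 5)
Your reduction via two-out-of-three to the case of a left anodyne extension $j \colon B \acto \mathcal{B}$ into a cubical quasicategory is sound, and it is a genuinely different strategy from the one the paper uses (the paper defines a notion of ``good'' cubical set, shows representables $\cube{n}$ are good via straightening over $[1]^n$, and closes under colimits using an $\infty$-categorical descent theorem, \cref{thm:lfib_descent}). However, there is a concrete error and a real gap in your treatment of the reduced case.

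The error: for the slice adjunction along $j$, the left adjoint $j_!$ is \emph{postcomposition}, not a pushout. So $j_! j^* Y \to Y$ is the inclusion of the pullback $B \times_{\mathcal{B}} Y \hookrightarrow Y$ over $\mathcal{B}$, not ``$(j^*Y) \cup_B \mathcal{B} \to Y$''; the latter expression does not describe the counit.

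The gap: with the counit correctly identified, the claim becomes that the pullback of the left anodyne $j$ along the left fibration $Y \to \mathcal{B}$ is a covariant equivalence over $\mathcal{B}$. This is a right-properness-type statement about the covariant model structure that is not established anywhere in the paper, and it cannot be taken for free from Nguyen's machinery: \cref{prop:retracts} only gives stability under base change for \emph{right $I$-deformation retracts}, a much smaller class than left anodynes. You also cannot appeal to \cref{prop:fiberwise_eq_cov} directly, since $B \times_{\mathcal{B}} Y \to \mathcal{B}$ is a left fibration over $B$ but \emph{not} over $\mathcal{B}$, so the fiberwise criterion is not applicable to the counit map as a map in $\cSet_{/\mathcal{B}}$. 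Precisely this obstacle is what the paper's descent-theoretic argument is built to route around: rather than proving properness directly, it expresses the problem over a general base as a homotopy colimit of the problem over representables, where everything is controlled by the explicit straightening equivalence \cref{thm:str}. If you wish to pursue your route, the minimal missing ingredient you would need to prove is that left anodyne maps (or at least the generating ones) pull back to covariant equivalences along left fibrations with quasicategory codomain; that statement is true, but it is a substantial lemma requiring its own argument (e.g.\ via the cubical analog of Joyal's ``pivot'' technique or through the marked model structure), and it should not be asserted without proof.
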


\begin{theorem}
\label{thm:cat_inv_cc}For every weak categorical equivalence $f \from B\to B'$
of cubical sets, the induced adjunction
\[
f_{!} \from \pr{\CS_{/B^{\sharp}}^{+}}_{\cc}\bigadj\pr{\CS_{/B^{\p\sharp}}^{+}}_{\cc} \from f^{*}
\]
is a Quillen equivalence.
\end{theorem}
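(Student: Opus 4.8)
The plan is to bootstrap from the case of a cubical quasicategory base, which we have already handled in part (3) of \cref{prop:cat_inv_cc}, using a factorization argument and left properness. First I would recall that the class of weak categorical equivalences and the monomorphisms generate the cubical Joyal model structure, so by the standard reduction it suffices to prove the claim when $f \from B \to B'$ is either a monomorphism which is a weak categorical equivalence, or a weak equivalence between cubical quasicategories; the latter case is \cref{prop:cat_inv_cc}(3). Moreover, using that both $\pr{\CS_{/B^{\sharp}}^{+}}_{\cc} \to \pr{\CS_{/B^{\p\sharp}}^{+}}_{\cc}$ is always a Quillen adjunction (\cref{rem:cc_adj}) and that Quillen equivalences satisfy two-out-of-three along composites, I can factor an arbitrary $f$ through a fibrant replacement $B \acto \mcal{B}$, $B' \acto \mcal{B}'$ in the cubical Joyal model structure, reducing the entire theorem to the single statement: \emph{for every acyclic cofibration $j \from B \acto \mcal{B}$ with $\mcal{B}$ a cubical quasicategory, the adjunction $j_{!} \adj j^{*}$ is a Quillen equivalence on cocartesian model structures.}

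To prove this reduced statement, the first step is to show $j_{!}$ is \emph{homotopically fully faithful}, i.e.\ the derived unit $\overline{X} \to j^{*} R (j_{!} \overline{X})$ is a cocartesian equivalence for every cofibrant $\overline{X} \in \CS^{+}_{/B^{\sharp}}$. Since $j$ is a monomorphism, $j_{!}$ is just postcomposition, so $j_{!}\overline{X} = \overline{X}$ viewed over $\maxm{\mcal B}$, and $j^{*}$ is pullback along $j$; the derived unit is then the map $\overline{X} \to \mcal{E}\times_{\maxm{\mcal B}}\maxm{B}$ where $\overline{X} \cto \mcal{E} \afto \maxm{\mcal B}$ is a fibrant replacement in $\pr{\CS_{/\mcal B^{\sharp}}^{+}}_{\cc}$. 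I would check this is a cocartesian equivalence over $\maxm{B}$ by the skeletal induction already used in the proof of \cref{prop:fiberwise_eq_cc}: filtering $B$ by its skeleta $B\pr n$ and pulling back $\mcal{E}$ over each $\maxm{B\pr n}$, the inductive step becomes a pushout-product of a boundary inclusion with the acyclic cofibration defining the skeletal filtration of $j$, which is marked left anodyne by \cref{prop:mlan_pp}; right properness of the cocartesian model structure (itself a consequence of \cref{prop:retracts}, since fibrant objects are built from right $I$-deformation retracts stable under base change) ensures the pullbacks behave correctly. The second step is essentiality of surjectivity of $\mbb{L}j_{!}$ on homotopy categories: given a cocartesian fibration $\mcal Y \afto \maxm{\mcal B}$, I would pull it back to $\maxm{B}$, take a cofibrant--fibrant replacement there, and use the homotopically-fully-faithful statement together with right properness to identify it (up to cocartesian equivalence over $\maxm{\mcal B}$) with something in the image of $j_{!}$; this is formally the same argument that a Quillen adjunction whose derived unit is always an equivalence and whose right adjoint reflects weak equivalences between fibrant objects is a Quillen equivalence.

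The main obstacle I expect is the skeletal induction in the first step, specifically verifying that the relevant map at stage $n$ is genuinely a pushout of a pushout-product of a boundary inclusion with a marked left anodyne map, and that right properness applies so that the homotopy-colimit decomposition of $B$ is preserved after pulling back the fibration $\mcal{E}$. This requires knowing that the cocartesian model structure is right proper --- which does not follow formally from \cref{thm:ngu19} but should be extractable from \cref{prop:retracts}, since every fibrant object is obtained by base change from a path object and right $I$-deformation retracts are stable under left $I$-fibrations --- and it requires the compatibility of the marked geometric product with skeleta, which is exactly the content bundled into \cref{prop:mlan_pp}. Once right properness and the pushout-product computation are in hand, the rest is the formal machinery of Quillen equivalences together with the already-established base case \cref{prop:cat_inv_cc}(3); the covariant analog \cref{thm:cat_inv} is proved by the identical argument, replacing marked left anodynes with left anodynes and \cref{prop:fiberwise_eq_cc} with \cref{prop:fiberwise_eq_cov}.
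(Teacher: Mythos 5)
Your reduction to acyclic cofibrations $j \from B \acto \mathcal{B}$ with $\mathcal{B}$ a cubical quasicategory is sound and matches the paper's strategy (this is the content of the paper's notion of a ``good'' cubical set, and the passage to that case uses \cref{prop:cat_inv_cc}(3) as you say). However, the central step --- proving the derived unit $\overline{X} \to j^*\mathcal{E}$ is a cocartesian equivalence by skeletal induction on $B$ --- has a genuine gap. Since $\overline{X}$ is not fibrant over $\maxm{\mathcal{B}}$, you have no a priori control over the fibers of its fibrant replacement $\mathcal{E}$ in $\pr{\CS^{+}_{/\maxm{\mathcal{B}}}}_{\cc}$. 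Even the base case of your induction (over $B(0)$, a disjoint union of vertices) already asks whether $\overline{X}_b \to \mathcal{E}_{j(b)}$ is a categorical equivalence for each $b \in B_0$, and this is precisely what the theorem asserts, not a starting point. The skeletal filtration does not reduce the difficulty; it merely restates it at every stage, and the inductive hypothesis does not constrain the fibers at the next stage. Moreover, the phrase ``the acyclic cofibration defining the skeletal filtration of $j$'' does not describe anything well-defined: $B(n) \to \mathcal{B}$ is not an acyclic cofibration for $n < \infty$, so the pushout-product step you envision is not available.

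The appeal to right properness is also unjustified. The Cisinski--Nguyen machinery gives left properness for free (all objects are cofibrant), but right properness of the covariant and cocartesian model structures does \emph{not} follow from \cref{prop:retracts}, which only gives stability of right $I$-deformation retracts under base change along left $I$-fibrations --- a much weaker statement. In the simplicial setting, right properness of these model structures is established \emph{after} (and essentially as a consequence of) invariance under base change, so invoking it here is circular. Finally, your sketch makes no contact with the two substantive inputs the paper's proof actually requires: the straightening--unstraightening equivalence over $N([1]^n)$ (\cref{thm:str_marked}), which is what establishes that each representable $\cube{n}$ is good (\cref{lem:square_good}) by giving explicit control over fibrant replacements; and the $\infty$-categorical descent theorem for cocartesian fibrations (\cref{thm:lfib_descent}), which propagates goodness along the colimits used to assemble an arbitrary cubical set (\cref{lem:good_colim_lfib}). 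These are the tools that let one understand what happens to a fibrant replacement when the base is enlarged, and neither is implied by \cref{prop:mlan_pp} or the formal calculus of Quillen equivalences.
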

These results have a number of interesting corollaries that do not follow from the formal aspects of $\infty$-categories; we develop these in \cref{sec:applications}.

The main difficulty in proving these theorems is how little control we have over maps into non-fibrant objects in general. 
We will prove \cref{thm:cat_inv,thm:cat_inv_cc}
by ``induction'' on the base. To perform the inductive step, we
will need a good understanding of how left fibrations and cocartesian
fibrations behave with respect to glueing, which will be addressed
in \cref{subsec:descent}. After this preliminary, we will
prove \cref{thm:cat_inv} in \cref{subsec:cat_inv_unm,subsec:cat_inv_m}.

\subsection{\label{subsec:descent}Descent for left fibrations}

The goal of this subsection is to prove that left fibrations and cocartesian
fibrations enjoy a descent property. To state the main result of this
section, we introduce some terminology.
\begin{definition}
A functor of $\infty$-categories is \emph{essentially a
left fibration} (resp. \emph{essentially a cocartesian fibration})
if it is equivalent to the image of a left fibration (resp. cocartesian
fibration) of quasicategories under the localization functor $\msf{QCat}\to\curlyCat_{\infty}$. 
\end{definition}

\begin{remark}
By \cref{prop:cc_equiv,cor:ccfib_qcat_cqcat},
a functor of $\infty$-categories is essentially a left fibration
if and only if it is the image of a left fibration of cubical quasicategories
under the localization functor $\msf{QCat}_{\square}\to\curlyCat_{\infty}$.
Here, $\msf{QCat}_{\square}$ denotes the $\infty$-categorical localization of the ordinary category $\msf{QCat}_{\square}$ of (small) cubical quasicategories
at categorical equivalences.
\end{remark}

Here is the main result of this section.
\begin{theorem}
[Descent for left fibrations]\label{thm:lfib_descent}Let $K$ be
a small simplicial set, let $\overline{\alpha} \from \overline{p}\to\overline{q}$
be a natural transformation of diagrams $K^{\rcone}\to\curlyCat_{\infty}$.
Suppose that the following conditions are satisfied:
\begin{itemize}
\item The map $\overline{q}$ is a colimit diagram.
\item The restriction $\alpha=\restr{\overline{\alpha}}{K\times\Delta^{1}}$
is cartesian.
\item For each $v\in K$, the map $\alpha_{v} \from \overline{p}\pr v\to\overline{q}\pr v$
is essentially a left fibration (or essentially a cocartesian fibration, respectively).
\end{itemize}
Then the following conditions are equivalent:
\begin{enumerate}
\item The diagram $\overline{p}$ is a colimit diagram. 
\item The natural transformation $\overline{\alpha}$ is cartesian, and
the map $\overline{\alpha}_{\infty} \from \overline{p}\pr{\infty}\to\overline{q}\pr{\infty}$
is essentially a left fibration (respectively, essentially a cocartesian fibration).
\end{enumerate}
\end{theorem}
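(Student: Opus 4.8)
The plan is to reduce the statement to known properties of colimit diagrams in $\curlyCat_\infty$ and a local-to-global argument using the van Kampen colimits. First I would recall that $\curlyCat_\infty$ is an $\infty$-topos-like setting only partially — but the key fact we need is that $\curlyCat_\infty$ satisfies descent for \emph{left fibrations}, which is essentially the content of the $\infty$-categorical Grothendieck construction: the functor classifying left fibrations, namely $\mathcal{B} \mapsto \Fun(\mathcal{B}, \curlyCat_\infty)^{\mathrm{lfib}}$ or rather the slice $(\curlyCat_\infty)_{/\mathcal{B}}^{\mathrm{lfib}} \simeq \Fun(\mathcal{B}, \mathcal{S})$, sends colimits in $\mathcal{B}$ to limits. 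Here we are working with cubical models, so I would first invoke \cref{prop:TUcomparison} together with \cref{cor:ccfib_qcat_cqcat} and \cref{prop:cc_equiv} to transport everything to the simplicial/quasicategorical side, where the straightening--unstraightening equivalence of Lurie \cite[Thm.~2.2.1.2, Thm.~3.2.0.1]{lurie:htt} is available. The point of phrasing the theorem in terms of ``essentially a left fibration'' is precisely to make it model-independent, so the proof can be carried out in whichever model is most convenient.

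Next I would set up the straightening. Let $\overline{q}\colon K^{\rcone} \to \curlyCat_\infty$ be the colimit diagram, with colimit $\mathcal{Q} = \overline{q}(\infty)$, and let $\alpha = \restr{\overline{\alpha}}{K \times \Delta^1}$ be the cartesian natural transformation whose components $\alpha_v$ are left fibrations. Cartesianness of $\alpha$ means that for each edge $v \to w$ in $K$, the square relating $\overline{p}(v), \overline{p}(w), \overline{q}(v), \overline{q}(w)$ is a pullback. Straightening each $\alpha_v$ gives a functor $F_v\colon \overline{q}(v) \to \mathcal{S}$, and by \cite[Prop.~3.3.1.3]{lurie:htt} (naturality of straightening), cartesianness of $\alpha$ says exactly that these $F_v$ are compatible, i.e.\ assemble into a diagram $K \to \Fun(-, \mathcal{S})$ lying over $\overline{q}$, equivalently into a single functor on the ``total space'' — but more to the point, into a cocone $K^{\rcone} \to$ (the total category of the Grothendieck fibration $\int_K \Fun(\overline{q}(-), \mathcal{S}) \to K$). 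The equivalence of (1) and (2) is then the assertion that the colimit of this diagram in the total $\infty$-category is computed ``fiberwise over $\mathcal{Q}$'' — precisely, that $\overline{p}$ is a colimit diagram iff $\overline{\alpha}_\infty$ is a left fibration classified by a functor $F_\infty\colon \mathcal{Q} \to \mathcal{S}$ whose restriction along each $\overline{q}(v) \to \mathcal{Q}$ is $F_v$, and $\overline{\alpha}$ cartesian.

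The engine for this is the fact that $\Fun(-, \mathcal{S})\colon \curlyCat_\infty^{\op} \to \curlyCat_\infty$ (or rather its unstraightened version, the cartesian fibration classifying $\mathcal{S}$-valued presheaves) sends colimits of $\infty$-categories to limits, and dually the \emph{left} fibration version: the assignment $\mathcal{B} \mapsto (\mathcal{B}\text{-indexed left fibrations})$ is a limit-preserving functor of $\mathcal{B}$. So given $\mathcal{Q} = \colim_{v \in K} \overline{q}(v)$, a left fibration over $\mathcal{Q}$ is the same data as a compatible family of left fibrations over the $\overline{q}(v)$, which is exactly a cartesian $\overline{\alpha}$ with left-fibration components. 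Concretely I would: (a) prove that if $\overline{p}$ is a colimit diagram then $\overline{\alpha}_\infty$ is a left fibration, by straightening and using that the colimit of the $F_v$'s over $K$ exists and defines $F_\infty$ on $\mathcal{Q}$ (this uses that $\mathcal{S}$ has all small colimits and $\Fun(-, \mathcal{S})$ preserves limits, plus cartesianness to see the restriction maps agree); (b) conversely, if $\overline{\alpha}$ is cartesian with $\overline{\alpha}_\infty$ a left fibration classified by $F_\infty$, show $\overline{p}(\infty) = \int_{\mathcal{Q}} F_\infty$ is the colimit of $\overline{p}\vert K = \{\int_{\overline{q}(v)} F_v\}$, using that $\int_{(-)}(-)$ commutes with the relevant colimit — this is where I would cite the compatibility of Grothendieck construction with colimits, essentially \cite[Cor.~3.3.4.3]{lurie:htt} or the observation that unstraightening is a left adjoint hence preserves colimits.

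The main obstacle I expect is the bookkeeping in (b): showing that the Grothendieck/unstraightening construction, applied to the colimit functor $F_\infty$ on $\mathcal{Q}$, really reproduces the colimit of the individual unstraightenings, rather than something larger. The subtlety is that $\mathcal{Q} = \colim_v \overline{q}(v)$ is a colimit of \emph{$\infty$-categories}, not of spaces, so the maps $\overline{q}(v) \to \mathcal{Q}$ need not be ``cofibration-like,'' and one must argue carefully that no new cells are created — this is exactly the content of van Kampen-type descent for $\curlyCat_\infty$, and the cleanest route may be to first treat the case $K = \Delta^0 \sqcup \Delta^0$ (pushout along the empty category, i.e.\ coproducts) and $K$ giving genuine pushouts, then bootstrap to general $K$ by writing an arbitrary colimit as an iterated/filtered combination of these, using that both conditions (1) and (2) are closed under the relevant operations. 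I would also need to confirm that the hypothesis ``$\alpha_v$ essentially a left (resp.\ cocartesian) fibration for each $v \in K$'' together with cartesianness of $\alpha$ suffices to conclude $\overline{\alpha}_\infty$ is \emph{essentially} one and not merely some inner fibration — this follows from stability of (co)cartesian fibrations under the limit that computes the ``generic fiber,'' again by straightening.
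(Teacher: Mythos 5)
Your proposal takes essentially the same route as the paper's proof — transport to the quasicategorical side, straighten, and invoke descent for $\mathcal{S}$- (or $\curlyCat_\infty$-) valued functors — but the step you flag as the ``main obstacle'' does indeed contain the gap, and the justification you offer there is wrong. You argue that $\int_{(-)}(-)$ commutes with the relevant colimit because ``unstraightening is a left adjoint hence preserves colimits.'' Unstraightening is a \emph{right} Quillen functor (straightening is the left adjoint), so this is backwards; and more to the point, even an equivalence of $\infty$-categories preserves colimits only in the presheaf/slice variable, not in the base $\mathcal{Q}$. The claim that the Grothendieck construction over $\mathcal{Q} = \colim_v \overline{q}(v)$ is the colimit of the constructions over the $\overline{q}(v)$ is precisely the descent statement being proved — citing it formally is circular.

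The missing ingredient is that (co)cartesian fibrations are \emph{flat} categorical fibrations, so that base change along them preserves colimits of $\infty$-categories. This is exactly why descent holds for the restricted class $CC$ of essentially (co)cartesian fibrations even though $\curlyCat_\infty$ is not an $\infty$-topos and does not satisfy unconditional descent. The paper cites \cite[Ex.~B.3.11, Cor.~B.3.15]{lurie:higher-algebra} to establish that small colimits are $CC$-universal (assertion (a) in the paper's proof); combined with the limit-preservation of the straightened classifying functor (your assertion, their (b)), this closes the argument via the abstract criterion in \cref{prop:descent}/\cref{cor:descent}. Your worry about van Kampen-type behaviour is well-placed, but the resolution is flatness, not a case-by-case reduction to coproducts, pushouts, and filtered colimits — that reduction would also need universality at every stage, so it does not sidestep the gap, it just distributes it.
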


\begin{remark}
\cref{thm:lfib_descent} is certainly well-known to experts.
In fact, the version of \cref{thm:lfib_descent} for cocartesian
fibrations was already proved by a combinatorial argument in \cite[Prop.~1.4.7]{GC},
and we expect the techniques applied there to be applicable for left
fibrations. The proof we present below relies on a somewhat different
argument, which is perhaps more conceptual.
\end{remark}

We will deduce \cref{thm:lfib_descent} as a corollary of a
more general descent result. For this, the following terminology will
be useful:
\begin{definition}
Let $K$ be a simplicial set, let $\mcal C$ be an $\infty$-category
with pullbacks and $K$-indexed colimits, and let $S$ be a class
of morphisms of $\mcal C$ which is stable under pullback. 
We say that \emph{$K$-indexed colimits are $S$-universal} in $\mcal C$
if for every morphism $f \from X\to Y$ in $S$, the functor $f^{*} \from \mcal C_{/Y}\to\mcal C_{/X}$
preserves $K$-indexed colimits.
\end{definition}
Since $S$ is stable under pullback, the full subcategory $\Fun\pr{\Delta^{1},\mcal C}^{\pr S}\subset\Fun\pr{\Delta^{1},\mcal C}$
spanned by the elements of $S$ determines a cartesian fibration $\Fun\pr{\Delta^{1},\mcal C}^{\pr S}\to\Fun\pr{\{1\},\mcal C}\cong\mcal C$ (cf.\ the discussion in \cite[Notation~6.1.3.4]{lurie:htt}).
We write $\mcal C_{/\bullet}^{\pr S} \from \mcal C^{\op}\to\curlyCat_{\infty}$
for the associated functor.

\begin{proposition}
\label{prop:descent}Let $K$ be a simplicial set, let $\mcal C$
be an $\infty$-category with pullbacks and $K$-indexed colimits,
let $S$ be a class of morphisms of $\mcal C$ which is stable under
pullback, and let $\overline{q} \from K^{\rcone}\to\mcal C$ be a colimit
diagram. Suppose that $K$-indexed colimits are $S$-universal in
$\mcal C$. The following conditions are equivalent:
\begin{enumerate}
\item The composite $\pr{K^{\rcone}}^{\op}\xrightarrow{\overline{q}}\mcal C^{\op}\xrightarrow{\mcal C_{/\bullet}^{\pr S}}\curlyCat_{\infty}$
is a limit diagram.
\item The composite $\pr{K^{\rcone}}^{\op}\xrightarrow{\overline{q}}\mcal C^{\op}\xrightarrow{\mcal C_{/\bullet}^{\pr S}}\curlyCat_{\infty}\xrightarrow{\operatorname{Core}}\mathcal{S}$
is a limit diagram, where $\operatorname{Core}$ denotes the right adjoint to the inclusion.
\item For every natural transformation $\overline{\alpha} \from \overline{p}\to\overline{q}$
of colimit diagrams, if the restriction $\alpha=\restr{\overline{\alpha}}{K\times\Delta^{1}}$
is a cartesian natural transformation and $\overline{\alpha}_{v} \from \overline{p}\pr v\to\overline{q}\pr v$
belongs to $S$, then $\overline{\alpha}$ is a cartesian natural transformation and $\overline{\alpha}_{\infty} \from \overline{p}\pr{\infty}\to\overline{q}\pr{\infty}$
belongs to $S$.
\end{enumerate}
\end{proposition}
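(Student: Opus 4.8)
The plan is to deduce \cref{prop:descent} from standard descent results in the $\infty$-categorical literature, most notably Lurie's treatment of universality of colimits \cite{lurie:htt} and the theory of local classes of morphisms. First I would record the easy implications. The equivalence (1)$\iff$(2) is essentially formal: since each $\overline{\alpha}_v$ lies in $S$ and $S$ is stable under pullback, the functor $\mcal C^{\pr S}_{/\bullet}$ factors through $\curlyCat_\infty$, but the limit of a diagram of $\infty$-categories is detected on cores together with the pointwise structure; more precisely, a cone over $\overline q|_{(K^\rcone)^\op}$ is a limit cone in $\curlyCat_\infty$ if and only if it is so after applying $\operatorname{Core}$ and, pointwise, the comparison functors are fully faithful — and here each transition functor in $\mcal C^{\pr S}_{/\bullet}$ is a pullback functor along a map of $S$, which is automatically conservative on the relevant subcategories. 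The cleanest route is to invoke \cite[Cor.~5.1.6.11]{lurie:htt}-style reasoning or, more directly, the fact that a square of $\infty$-categories is a pullback iff it is a pullback on cores and on mapping spaces; iterating over the cone gives (1)$\iff$(2).

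Next I would prove (1)$\implies$(3). Assume the composite in (1) is a limit diagram, and let $\overline\alpha\colon \overline p\to\overline q$ be as in the hypothesis. Since $\overline q\colon K^\rcone\to\mcal C$ is a colimit diagram and $K$-indexed colimits are $S$-universal, the slice $\infty$-categories $\mcal C_{/\overline q(v)}$ together with the pullback functors assemble, via the straightening of the cartesian fibration $\Fun(\Delta^1,\mcal C)^{\pr S}\to\mcal C$, into exactly the diagram $\mcal C^{\pr S}_{/\bullet}\circ\overline q$. The data of the cartesian natural transformation $\alpha=\overline\alpha|_{K\times\Delta^1}$ with each $\overline\alpha_v\in S$ is precisely a cone over this diagram with vertex described by $\overline p|_K$; the limit condition (1) then produces a canonical object of $\mcal C^{\pr S}_{/\overline q(\infty)}$, i.e.\ a morphism $E\to\overline q(\infty)$ in $S$, whose pullback along each $\overline q(v)\to\overline q(\infty)$ recovers $\overline\alpha_v$. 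Using universality of $K$-indexed colimits once more, $\colim_{v\in K}\overline\alpha_v$ computes $E$, so $E\equiv\overline p(\infty)$, the square $\overline\alpha$ is cartesian at the cone point, and $\overline\alpha_\infty\in S$. The implication (3)$\implies$(1) is the reverse bookkeeping: given the limit-detection property (3), one checks the universal property of the limit directly by testing against cones, each of which is a natural transformation of the required form after passing to the associated colimit diagram (extending a cone over $K$ to a colimit diagram over $K^\rcone$, which exists since $\mcal C$ has $K$-indexed colimits).

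The main obstacle I anticipate is the careful identification of ``a natural transformation $\overline\alpha\colon\overline p\to\overline q$ of colimit diagrams with $\alpha$ cartesian and each $\overline\alpha_v\in S$'' with ``a cone over $\mcal C^{\pr S}_{/\bullet}\circ\overline q|_{(K^\rcone)^\op}$'' — this is the straightening–unstraightening dictionary applied to the cartesian fibration $\Fun(\Delta^1,\mcal C)^{\pr S}\to\mcal C$, and getting the variances right (contravariance in the base, the role of $S$-universality in ensuring the restricted fibration is again cartesian over $K^\rcone$) requires genuine care. A secondary subtlety is that condition (3) as stated quantifies over \emph{all} such $\overline\alpha$, whereas to run (3)$\implies$(1) one only needs the ``universal'' such transformation; producing it amounts to choosing a colimit diagram extending the tautological cone, which is where $\mcal C$ having $K$-indexed colimits is used. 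Once this correspondence is set up, both remaining implications are essentially unwinding definitions, so I would allocate most of the written proof to that step and treat the rest briskly.
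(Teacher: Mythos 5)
Your treatment of (1)$\iff$(3) is in the right spirit — it is the straightening--unstraightening argument that the paper carries out precisely via \cite[Prop.~3.3.3.1]{lurie:htt} and \cite[Thm.~4.3.2.15]{lurie:htt}, showing that (1) is equivalent to essential surjectivity of a restriction map on cartesian sections of $\Fun(\Delta^1,\mcal C)^{(S)}\to\mcal C$, which is exactly (3). That part would need tightening (in particular, the variance and the role of $S$-universality in restricting the fibration, which you flag as a subtlety), but the idea is sound.

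The genuine gap is in your claimed direct equivalence (1)$\iff$(2). You assert that a cone in $\curlyCat_\infty$ is a limit cone if and only if it is one after $\operatorname{Core}$ and the ``pointwise comparison functors'' are fully faithful, and that the latter is automatic because ``pullback functors along a map of $S$ are conservative.'' This does not work. First, conservativity of a functor has no bearing on full faithfulness of the comparison functor into the limit; the implication (2)$\implies$(1) genuinely requires that mapping spaces in $\mcal C^{(S)}_{/\overline q(\infty)}$ agree with the limit of the mapping spaces in the fibers, and this is not a formal consequence of (2). Second, the transition functors in the diagram $\mcal C^{(S)}_{/\bullet}\circ\overline q$ are pullback along the structure maps of $\overline q$, which are arbitrary morphisms of $\mcal C$ and need not lie in $S$, so even the conservativity claim is misdirected. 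The paper sidesteps this by never attempting (1)$\iff$(2) directly: it proves (1)$\iff$(3) using the cartesian fibration $\Fun(\Delta^1,\mcal C)^{(S)}\to\mcal C$, and then proves (2)$\iff$(3) by running the identical argument with the subfibration spanned by cartesian squares (whose fibers are the cores), so both are reduced to the same analysis of cartesian sections. You should restructure your argument the same way, since as written the (1)$\iff$(2) step is unsupported.
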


\begin{proof}
We will prove (1)$\iff$(3); this is essentially \cite[Lem.~6.1.3.5]{lurie:htt}, but we record the proof for the reader's convenience. 
The equivalence (2)$\iff$(3) can be proved similarly by considering the subcategory of $\Fun(\Delta^1,\mathcal{C})^{(S)}$ spanned by the cartesian squares of $\mathcal{C}$ instead of $\Fun(\Delta^1,\mathcal{C})^{(S)}$.

According to \cite[Prop.~3.3.3.1]{lurie:htt},
condition (1) is equivalent to the condition that the map
\[
\theta \from \Fun_{\mcal C}^{\mrm{cart}}\pr{K^{\rcone},\Fun\pr{\Delta^{1},\mcal C}^{\pr S}}\to\Fun_{\mcal C}^{\mrm{cart}}\pr{K,\Fun\pr{\Delta^{1},\mcal C}^{\pr S}}
\]
be a categorical equivalence. Here the left hand side denotes the
full subcategory of $\Fun_{\mcal C}\pr{K^{\rcone},\Fun\pr{\Delta^{1},\mcal C}^{\pr S}}$
spanned by the objects corresponding to cartesian natural transformations
$K^{\rcone}\times\Delta^{1}\to\mcal C$, and the right hand side is
defined similarly. 

Now set $q=\restr{\overline{q}}{K}$, and write $\Fun\pr{K^{\rcone},\mcal C}_{\mrm{cart},S}^{/\overline{q}}\subset\Fun\pr{K^{\rcone},\mcal C}^{/\overline{q}}$
for the full subcategory spanned by the cartesian natural transformations
$\overline{p}\to\overline{q}$ whose components belong to $S$.
Analogously, let $\Fun\pr{K,\mcal C}_{\mrm{cart},S}^{/q}\subset\Fun\pr{K,\mcal C}^{/q}$
be the full subcategory spanned by the cartesian natural transformations
$p\to q$ whose components belong to $S$. We can then identify $\theta$
with the restriction map
\[
\theta' \from \Fun\pr{K^{\rcone},\mcal C}_{\mrm{cart},S}^{/\overline{q}}\to\Fun\pr{K,\mcal C}_{\mrm{cart},S}^{/q}.
\]

Let $\mcal X\subset\Fun\pr{K^{\rcone},\mcal C}^{/\overline{q}}$
denote the full subcategory spanned by the natural transformations $\overline{\alpha} \from \overline{p}\to\overline{q}$
satisfying the following conditions:
\begin{itemize}
\item The diagram $\overline{p}$ is a colimit diagram.
\item The restriction $\alpha=\restr{\overline{\alpha}}{K\times\Delta^{1}}$
is a cartesian natural transformation whose components belong to $S$.
\end{itemize}
Since $K$-indexed colimits are $S$-universal, the map $\theta'$
factors as
\[
\Fun\pr{K^{\rcone},\mcal C}_{\mrm{cart},S}^{/\overline{q}}\xrightarrow{\iota}\mcal X\xrightarrow{\pi}\Fun\pr{K,\mcal C}_{\mrm{cart},S}^{/q}.
\]
Since the map $\pi$ is an equivalence \cite[Thm.~4.3.2.15]{lurie:htt},
condition (1) is satisfied if and only if $\iota$ is essentially
surjective. But this is equivalent to saying that condition (3) holds,
and we are done.
\end{proof}
\begin{corollary}
\label{cor:descent}Let $K$ be a simplicial set, let $\mcal C$ be
an $\infty$-category with pullbacks and $K$-indexed colimits, and
let $S$ be a class of morphisms of $\mcal C$ which is stable under
pullback. The following conditions are equivalent:
\begin{enumerate}
\item $K$-indexed colimits are $S$-universal in $\mcal C$, and the functor
$\mcal C_{/\bullet}^{\pr S}\from \mcal C^{\op}\to\curlyCat_{\infty}$
preserves $K$-indexed limits.
\item $K$-indexed colimits are $S$-universal in $\mcal C$, and the functor $\operatorname{Core}(\mcal C_{/\bullet}^{\pr S})\from \mcal C^{\op}\to\mathcal{S}$ preserves $K$-indexed limits.
\item Given a colimit diagram $\overline{q} \from K^{\rcone} \to \mcal C$ and a natural transformation $\overline{\alpha} \from \overline{p}\to\overline{q}$ whose restriction $\alpha=\restr{\overline{\alpha}}{K\times\Delta^{1}}$ is cartesian and whose components $\overline{\alpha}_{v} \from \overline{p}(v) \to \overline{q}(v)$ belong to $S$, 
the diagram $\overline{p}$ is a colimit if and only if $\overline{\alpha}$
is a cartesian natural transformation and $\overline{\alpha}_{\infty} \from \overline{p}(\infty) \to \overline{q}(\infty)$ belong to $S$. \qed
\end{enumerate}
\end{corollary}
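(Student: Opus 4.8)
The plan is to deduce \cref{cor:descent} from \cref{prop:descent} by quantifying the latter over all $K$-indexed colimit diagrams. First I would record the translation: by the construction of $\mcal C_{/\bullet}^{\pr S}$, the functor $\mcal C_{/\bullet}^{\pr S}\from\mcal C^{\op}\to\curlyCat_{\infty}$ preserves $K$-indexed limits precisely when condition (1) of \cref{prop:descent} holds for every colimit diagram $\overline{q}\from K^{\rcone}\to\mcal C$, and likewise $\operatorname{Core}(\mcal C_{/\bullet}^{\pr S})$ preserves $K$-indexed limits precisely when condition (2) of \cref{prop:descent} holds for every such $\overline{q}$. Granting this dictionary, the equivalence (1)$\iff$(2) of \cref{cor:descent} is immediate: both conditions include $S$-universality of $K$-indexed colimits as a conjunct, and under that hypothesis the equivalence (1)$\iff$(2) of \cref{prop:descent}, applied to each $\overline{q}$, supplies the rest.

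For (1)$\implies$(3) I would assume $S$-universality together with condition (1) of \cref{prop:descent} for all $\overline{q}$, then fix a colimit diagram $\overline{q}$ and a transformation $\overline{\alpha}\from\overline{p}\to\overline{q}$ with $\alpha=\restr{\overline{\alpha}}{K\times\Delta^{1}}$ cartesian and all $\overline{\alpha}_{v}\in S$. If $\overline{p}$ is a colimit diagram, then the implication (1)$\implies$(3) of \cref{prop:descent} gives that $\overline{\alpha}$ is cartesian and $\overline{\alpha}_{\infty}\in S$. Conversely, if $\overline{\alpha}$ is cartesian and $\overline{\alpha}_{\infty}\in S$, then every naturality square of $\overline{\alpha}$ is a pullback, so $\overline{p}$ is literally the image of $\overline{q}$ — viewed in $\mcal C_{/\overline{q}(\infty)}$ through its colimit cone — under $\overline{\alpha}_{\infty}^{*}\from\mcal C_{/\overline{q}(\infty)}\to\mcal C_{/\overline{p}(\infty)}$; since $\overline{\alpha}_{\infty}\in S$ and $K$-indexed colimits are $S$-universal, this pullback is again a colimit diagram, as required.

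For (3)$\implies$(1) I would first extract $S$-universality from condition (3): given $f\from X\to Y$ in $S$ and a $K$-indexed colimit cone in $\mcal C_{/Y}$ — that is, a colimit diagram $\overline{q}\from K^{\rcone}\to\mcal C$ equipped with a cone to $Y$ — pulling everything back along $f$ yields $\overline{\alpha}\from\overline{p}\to\overline{q}$ all of whose naturality squares are pullbacks and all of whose components are pullbacks of $f$, hence lie in $S$ by pullback-stability; the ``if'' half of condition (3) then forces $\overline{p}$, which is exactly $f^{*}$ of the given colimit cone, to be a colimit diagram, so $f^{*}$ preserves $K$-indexed colimits. Since moreover the ``only if'' half of condition (3) of \cref{cor:descent} is verbatim condition (3) of \cref{prop:descent} for each $\overline{q}$, the implication (3)$\implies$(1) of \cref{prop:descent} gives condition (1) of \cref{prop:descent} for all $\overline{q}$, i.e.\ $\mcal C_{/\bullet}^{\pr S}$ preserves $K$-indexed limits. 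Together with the $S$-universality just obtained, this is condition (1) of \cref{cor:descent}.

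The step requiring the most care — though it is bookkeeping rather than a genuine obstacle — is the identification, used in both of the last two paragraphs, of a cartesian transformation $\overline{\alpha}\from\overline{p}\to\overline{q}$ with $\overline{\alpha}_{\infty}\in S$ with the pullback of $\overline{q}$ along $\overline{\alpha}_{\infty}$ in the appropriate slice $\infty$-category, together with the routine but fiddly matching of ``colimit in a slice'' with ``colimit diagram in $\mcal C$ equipped with a cone''. This is precisely the hinge that lets $S$-universality interface with \cref{prop:descent}, and it is where the pullback-stability of $S$ is used; everything else is formal unwinding of definitions.
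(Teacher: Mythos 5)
Your proof is correct and is the intended unwinding of the corollary out of \cref{prop:descent}; the paper itself takes the deduction as immediate and does not write out an argument. The key bookkeeping point you flag—that the ``if'' half of condition (3) is logically interchangeable with $S$-universality once one identifies a cartesian $\overline{\alpha}$ with $\overline{\alpha}_\infty \in S$ as the pullback of $\overline{q}$ along $\overline{\alpha}_\infty$ in the slice over $\overline{q}(\infty)$—is indeed where the pullback-stability of $S$ and the creation of colimits by the forgetful functor from a slice are needed, and you handle it correctly in both directions.
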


We can now prove \cref{thm:lfib_descent}: 
\begin{proof}[Proof of \cref{thm:lfib_descent}]
We will prove the assertion for cocartesian fibrations; the proof for left fibrations is similar (and is easier, as we do not have to pass to the core in this case). 

Let $CC$ denote the class of functors of small quasicategories that
are essentially cocartesian fibrations. We will prove the theorem by showing
that $CC$ satisfies condition (1) of \cref{cor:descent}
(with $\curlyCat_{\infty}$ and $\mathcal{S}$ replaced by the $\infty$-categories $\hat{\curlyCat}_{\infty}$ and $\hat{\mathcal{S}}$
of large $\infty$-categories and large $\infty$-groupoids). We must check the following pair of
assertions:

\begin{enumerate}[label=(\alph*)]

\item Small colimits are $CC$-universal in $\curlyCat_{\infty}$.

\item The functor $\operatorname{Core}(\curlyCat_{/\bullet}^{(CC)}):\curlyCat_{\infty}^{\op}\to \hat{\mathcal{S}}$ preserves small limits.
\end{enumerate}

Part (a) follows from \cite[Cor.~B.3.15]{lurie:higher-algebra}, since cocartesian fibrations
are flat categorical fibrations \cite[Ex.~B.3.11]{lurie:higher-algebra}. For (b),
we observe that the functor $\operatorname{Core}( \curlyCat_{/\bullet}^{(CC)} )$  is equivalent
to the functor $\mcal C\mapsto\operatorname{Core} \big(\Fun ({\mcal C, \curlyCat_{\infty}}) \big)$ by straightening
(see \cite[Appendix A]{gepner-haugseng-nikolaus} for a detailed account of this). 
The latter functor manifestly preserves small limits, so we are done.
\end{proof}

\subsection{\label{subsec:cat_inv_unm}Proof of \cref{thm:cat_inv}}

We now turn to the proof of \cref{thm:cat_inv}.
\begin{definition}
\label{def:good_lfib}We will say that a cubical set $X$ is\emph{
good} if it satisfies the following equivalent (by \cref{rem:cat_inv})
conditions:
\begin{enumerate}
\item There is \textit{some} weak categorical equivalence $X\to\mcal X$,
where $\mcal X$ is a cubical quasicategory, such that the adjunction
\[
\CS_{/X}\bigadj\CS_{/\mcal X}
\]
is a Quillen equivalence.
\item For \textit{every} weak categorical equivalence $X\to\mcal X$, where
$\mcal X$ is a cubical quasicategory, the adjunction
\[
\CS_{/X}\bigadj\CS_{/\mcal X}
\]
is a Quillen equivalence.
\end{enumerate}
\end{definition}

\begin{lemma}
\label{lem:square_good}For every $n\geq0$, the cubical set $\square^{n}$
is good.
\end{lemma}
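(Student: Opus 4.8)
The plan is to prove this by induction on $n$, using the descent machinery of \cref{thm:lfib_descent} to reduce the claim for $\square^n$ to claims for lower-dimensional cells. The base case $n=0$ is immediate since $\square^0$ is itself a cubical quasicategory, so it is trivially good. For the inductive step, I would like to express $\square^n$ as a homotopy colimit (in the cubical Joyal model structure) of a diagram whose values are all of the form $\square^k$ for $k < n$; the natural candidate is the pushout $\square^n = \partial\square^n \cup_{\obox{n}{i,1}} \obox{n}{i,1} \to \ldots$ — more precisely, write $\square^n$ as the pushout of $\square^{n-1} \leftarrow \partial\square^{n-1} \to \partial\square^n$ (attaching the top cell along its boundary), and note $\partial\square^n$ is itself built iteratively from lower cubes. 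So the real content is: (i) goodness is stable under homotopy pushouts along cofibrations, and (ii) $\partial\square^n$ is good whenever all $\square^k$, $k<n$, are good.

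The heart of the matter is a glueing statement: if $B = B_1 \cup_{B_0} B_2$ is a pushout of cubical sets along a monomorphism, and $B_0, B_1, B_2$ are all good, then $B$ is good. I would prove this by choosing a weak categorical equivalence from the pushout diagram to a diagram of cubical quasicategories that is a homotopy pushout, and then invoking \cref{thm:lfib_descent} on both sides: the left fibrations over $B$ are assembled by descent from left fibrations over $B_0, B_1, B_2$, and similarly for the quasicategory replacement; since the comparison is an equivalence on each piece (by goodness of $B_0, B_1, B_2$) and the gluing is the same homotopy colimit on both sides, the comparison over $B$ is an equivalence. Concretely, one takes the underlying $\infty$-categories of the covariant model structures, identifies them via \cref{prop:TUcomparison} and \cref{rem:cat_inv} with the relevant functor categories over the quasicategory replacements, and checks that the base-change adjunction $f_! \dashv f^*$ induces an equivalence of homotopy categories by showing it is an equivalence after restricting to each generating piece; \cref{thm:lfib_descent} guarantees the derived functor of $f^*$ over $B$ is the limit of the derived functors over the $B_i$, which are equivalences by hypothesis.

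To run the induction: filter $\partial\square^n$ by a sequence of pushouts attaching the nondegenerate $k$-faces for $k = 0, 1, \ldots, n-1$ (each attachment is a pushout of $\partial\square^k \hookrightarrow \square^k$ along a map into the partial union, so the gluing lemma applies at each stage, all pieces being of the form $\square^k$ or unions thereof which are good by induction and the gluing lemma). This shows $\partial\square^n$ is good. Then $\square^n = \square^{n-1} \cup_{\partial\square^{n-1}} \partial\square^n$ — or, even more directly, $\square^n$ is obtained from $\partial\square^n$ by attaching the single top cell along $\partial\square^n \hookrightarrow \square^n$, which is itself a pushout presentation with good pieces — hence $\square^n$ is good. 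Alternatively, since $\square^n$ already admits the (trivial) weak categorical equivalence to itself if we first note it is not fibrant but is built from cells, one cannot shortcut; the cell-attachment argument is needed.

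\textbf{Main obstacle.} The key difficulty is verifying that the descent statement \cref{thm:lfib_descent} applies cleanly here — specifically, setting up the comparison of the two homotopy-pushout squares (one for $B$, one for its quasicategorical replacement $\mcal B$) so that the base-change adjunctions fit into a commuting (up to natural weak equivalence) diagram of Quillen adjunctions, and then extracting from goodness of the pieces that the total comparison is a Quillen equivalence. This requires care because $\square^n$ and its subcomplexes are non-fibrant, so one must work at the level of underlying $\infty$-categories (or homotopy categories with mapping spaces), where \cref{prop:TUcomparison} and the descent proposition give the needed control. The combinatorial bookkeeping of the cell filtration of $\partial\square^n$ is routine once the gluing lemma is in hand, so the gluing lemma — and the correct formulation of "good pieces glue to a good total space" via descent — is where the real work lies.
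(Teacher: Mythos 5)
Your strategy — reduce to representables via a gluing lemma and then induct on the dimension of the cube — has a circularity precisely in the place you need it most. The cube $\square^n$ cannot be expressed as a (homotopy) colimit of strictly lower-dimensional pieces in the cubical Joyal model structure: any subcomplex of $\square^n$ containing the unique non-degenerate $n$-cube is already all of $\square^n$, so the only cell-attachment presentation of $\square^n$ attaches $\square^n$ itself along $\partial\square^n \hookrightarrow \square^n$ — which presupposes that $\square^n$ is good. Your proposed alternative formula $\square^n = \square^{n-1} \cup_{\partial\square^{n-1}} \partial\square^n$ is also incorrect: gluing a fresh copy of $\square^{n-1}$ to $\partial\square^n$ along a face boundary produces a complex with two $(n-1)$-cells sharing a boundary, not $\square^n$. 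Nor does any inner anodyne map rescue the situation: neither $\partial\square^n \hookrightarrow \square^n$ nor $\sqcap^n_{i,\varepsilon} \hookrightarrow \square^n$ is a Joyal weak equivalence, so $\square^n$ carries strictly more categorical information than any proper subcomplex.

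The gluing lemma you sketch is correct — it is essentially \cref{lem:good_colim_lfib} — but it only propagates goodness from the representables to arbitrary cubical sets; it cannot bootstrap the representables themselves. The paper proves \cref{lem:square_good} directly, without any induction on cells, using the explicit fibrant replacement $i \colon \square^n \to N([1]^n)$ together with the straightening equivalence over the poset $[1]^n$ (\cref{thm:str}). One checks that the derived unit of $i_! \dashv i^*$ is a fiberwise weak homotopy equivalence: by \cref{prop:fiberwise_eq_cov} this reduces to showing that for each vertex $v$ of $\square^n$ and each left fibration $X \to \square^n$, the inclusion $X_v \to X \times_{\square^n} \square^n_{/v}$ is a weak homotopy equivalence, which holds because $\square^n_{/v} \cong \square^k$ and the vertex inclusion $\{v\} \hookrightarrow \square^k$ is a finite composite of right deformation retracts, so \cref{prop:retracts} applies. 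This explicit computation, not a dimension induction, is the essential content of the lemma.
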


\begin{proof}
A fibrant replacement of $\square^{n}$ in the Joyal model structure
is given by the inclusion $i \from \square^{n}\to N\pr{[1]^{n}}$ (because
this is the derived unit of the adjunction $T \from \CS_{\Joyal}\bigadj\SS_{\Joyal} \from U$).
Our goal is to show that the associated adjunction 
\[
i_{!} \from \pr{\CS_{/\square^{n}}}_{\cov}\bigadj\pr{\CS_{/N\pr{[1]^{n}}}}_{\cov} \from i^{*}
\]
is a Quillen equivalence. Since $i$ is bijective on vertices, \cref{prop:fiberwise_eq_cov} shows that $\mbb Ri^{*}$ is conservative.
It will therefore suffice to show that the derived unit is an isomorphism.

Let $X\to\square^{n}$ be a left fibration. Choose a projective fibrant
replacement
\[
\Rect\pr{i_{!}\pr X}=i_{!}\pr X\times_{N\pr{[1]^{n}}}N\pr{[1]_{/\bullet}^{n}}\to F
\]
in $\Fun\pr{[1]^{n},\CS}$. By \cref{thm:str}, its adjoint
map $\alpha \from i_{!}\pr X\to\int F$ under the $(\Rect \adj \int)$ adjunction is a fibrant replacement of $i_{!}\pr X$
in the covariant model structure over $N\pr{[1]^{n}}$. We must show
that the map
\[
\beta:X\to i^{*}\pr{\int F}
\]
adjoint to $\alpha$ (under the $(i_! \adj i^*)$ adjunction) is a covariant equivalence over $\square^{n}$.
By \cref{prop:fiberwise_eq_cov}, it suffices to show that
$\beta$ induces a homotopy equivalence upon passing to the fibers.

The map $\beta$ factors as $X\xrightarrow{\beta'}i^{*}\pr{\int\Rect i_{!}\pr X}\xrightarrow{\beta''}i^{*}\int F$.
The map $\beta''$ is a fiberwise weak homotopy equivalence by construction,
so it suffices to show that $\beta'$ is a fiberwise weak homotopy
equivalence. To this end, let $v\in\square^{n}$ be an arbitrary vertex.
The map $\beta'$, upon passing to the fibers over $v$, gives the
inclusion
\[
\theta \from X_{v}\cong X\times_{N\pr{[1]^{n}}}\{v\}\to X\times_{N\pr{[1]^{n}}}N\pr{[1]_{/v}^{n}}.
\]
Set $\square_{/v}^{n}=N\pr{[1]_{/v}^{n}}\times_{N\pr{[1]^{n}}}\square^{n}$.
We can identify $\theta$ with the map
\[
\theta' \from X\times_{\square^{n}}\{v\}\to X\times_{\square^{n}}\square_{/v}^{n}.
\]
The cubical set $\square_{/v}^{n}$ is isomorphic to $\square^{k}$
for some $k$ (namely, the number of coordinates of $v$ equal to
$1$). It follows that the inclusion $\{v\}\to\square_{/v}^{n}$ is
a finite composite of right deformation retracts. Since $X\to\square^{n}$
is a left fibration, \cref{prop:retracts} shows that $\theta'$
is right anodyne, hence a weak homotopy equivalence. 
\end{proof}
The following lemma addresses the stability of good cubical sets under
colimits.
\begin{lemma}
\label{lem:good_colim_lfib}Let $\mcal I$ be a small category, and
let $B_{\bullet} \from \mcal I^{\rcone}\to\CS$ be a colimit diagram such
that, for each $i\in\mcal I$, the cubical set $B_{i}$ is good. Assume
that one of the following conditions is satisfied:
\begin{enumerate}
\item $\mcal I$ is a discrete category.
\item $\mcal I=\omega$ is the poset of non-negative integers.
\item $\mcal I$ is the free cospan $2\ot0\to1$, and the map $B_{0}\to B_{1}$
is a monomorphism.
\end{enumerate}
Then the colimit cubical set $B_{\infty}$ is good.
\end{lemma}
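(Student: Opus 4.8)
The plan is to pass to underlying $\infty$-categories, reduce the claim to a descent statement for the covariant model structure, and deduce that statement from \cref{thm:lfib_descent}. Throughout, write $\underline{X}\in\curlyCat_{\infty}$ for the image of a cubical set $X$ under the localization $\CS_{\Joyal}\to\curlyCat_{\infty}$. Since a Quillen adjunction is a Quillen equivalence exactly when it induces an equivalence on underlying $\infty$-categories, and since \cref{prop:TUcomparison} together with Lurie's straightening--unstraightening equivalence \cite[Thm.~2.2.1.2]{lurie:htt} give an equivalence $\pr{\CS_{/\mcal X}}_{\cov,\infty}\simeq\Fun\pr{\underline{\mcal X},\mathcal S}$ natural in cubical quasicategories $\mcal X$, the first step is to extend these to a comparison functor $\operatorname{st}_{X}\from\pr{\CS_{/X}}_{\cov,\infty}\to\Fun\pr{\underline X,\mathcal S}$ natural in all cubical sets $X$. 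Chasing the naturality square for a fibrant replacement $X\to\mcal X$ then shows that $X$ is good (\cref{def:good_lfib}) if and only if $\operatorname{st}_{X}$ is an equivalence, so it suffices to prove that $\operatorname{st}_{B_{\infty}}$ is an equivalence.

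Two structural observations about $B_{\infty}=\colim_{\mcal I}B_{i}$ hold under each of the hypotheses (1)--(3). First, the localization $\CS_{\Joyal}\to\curlyCat_{\infty}$ preserves this colimit: coproducts are always homotopy coproducts; a pushout along the monomorphism $B_{0}\to B_{1}$ is a homotopy pushout because monomorphisms are cofibrations; and a sequential colimit of cubical sets is a homotopy colimit because filtered colimits preserve weak categorical equivalences. Hence $\Fun\pr{\underline{B_{\infty}},\mathcal S}\simeq\lim_{\mcal I^{\op}}\Fun\pr{\underline{B_{i}},\mathcal S}$, since $\Fun\pr{-,\mathcal S}$ turns colimits into limits. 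Second, descent in the presheaf topos $\CS$ identifies the slice category over the colimit with the corresponding limit of slice categories, namely $\prod_{i}\CS_{/B_{i}}$, $\CS_{/B_{1}}\times_{\CS_{/B_{0}}}\CS_{/B_{2}}$, or $\lim_{n}\CS_{/B_{n}}$; moreover these identifications are compatible with the covariant model structures --- for weak equivalences this uses \cref{prop:fiberwise_eq_cov}, since covariant equivalences over $B_{\infty}$ are detected on the fibers over vertices and every vertex of $B_{\infty}$ lies in some $B_{i}$.

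Granting the above and the goodness of every $B_{i}$, the proof comes down to showing that the functor
\[
\pr{\CS_{/B_{\infty}}}_{\cov,\infty}\longrightarrow\lim_{\mcal I^{\op}}\pr{\CS_{/B_{i}}}_{\cov,\infty}
\]
induced by restriction along the cocone maps is an equivalence of $\infty$-categories: together with goodness of the $B_{i}$ and the first observation, this forces $\operatorname{st}_{B_{\infty}}$ to be an equivalence. This is where \cref{thm:lfib_descent} is used. A left fibration $X\to B_{\infty}$ restricts along the cocone to a system of left fibrations over the $B_{i}$, which --- after applying the localization and the equivalences supplied by goodness of the $B_{i}$ --- gives a natural transformation of diagrams $\mcal I^{\rcone}\to\curlyCat_{\infty}$ lying over the colimit diagram $\underline{B_{\bullet}}$; its restriction to $\mcal I\times\Delta^{1}$ is cartesian because pullbacks of left fibrations are homotopy pullbacks. \Cref{thm:lfib_descent} then identifies this natural transformation with a colimit cone, so that $\underline X$ is recovered as a left fibration over $\underline{B_{\infty}}=\colim_{\mcal I}\underline{B_{i}}$, while the strict-limit presentations from the second observation furnish the gluing in the other direction.

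The main obstacle will be the precise dictionary between the model-categorical slices and the $\infty$-categorical descent of \cref{thm:lfib_descent} --- in particular verifying its cartesianness hypotheses, i.e.\ that restriction along the cocone maps is homotopy-exact, at a point in the development where \cref{thm:cat_inv} is not yet available. For the pushout shape one must check that restriction along the monomorphism $B_{0}\to B_{1}$ is a homotopy-exact right Quillen functor, so that the strict pullback $\CS_{/B_{1}}\times_{\CS_{/B_{0}}}\CS_{/B_{2}}$ computes the homotopy pullback of covariant model structures; for the sequential shape one argues in the same way with the tower $\cdots\to\CS_{/B_{1}}\to\CS_{/B_{0}}$, using that its transition functors are right Quillen and preserve left fibrations. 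The coproduct shape is immediate, since $\CS_{/\coprod_{i}B_{i}}$ is the product of the $\CS_{/B_{i}}$ as a model category.
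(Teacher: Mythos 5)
Your proposal takes a genuinely different route from the paper. The paper fixes a natural weak categorical equivalence $B_\bullet \to \mathcal{B}_\bullet$ with cubical-quasicategory targets, considers the base change $F \from \CS_{/\mathcal{B}_\infty} \to \CS_{/B_\infty}$, and proves directly that $\mathbb{R}F$ is fully faithful and essentially surjective. Both of these verifications are carried out object by object: given a left fibration over one side, they pull back or replace over the other side, decompose the resulting diagram along the pieces $B_i$ (respectively $\mathcal{B}_i$), and invoke \cref{thm:lfib_descent} together with \cref{prop:fiberwise_eq_cov} and goodness of the $B_i$ to conclude. Your proposal instead reduces goodness of $B_\infty$ to the assertion that the canonical functor $(\CS_{/B_\infty})_{\cov,\infty} \to \lim_{\mathcal{I}^{\op}}(\CS_{/B_i})_{\cov,\infty}$ is an equivalence, and tries to deduce this from the strict presentation of $\CS_{/B_\infty}$ as a limit of slice toposes combined with \cref{thm:lfib_descent}. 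This is a more structural, ``one big diagram'' plan, and if it worked it would be cleaner. The cost is that the central step is not actually established.

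The gap is precisely where you flag ``the main obstacle.'' Knowing that $\CS_{/B_\infty}$ is the strict $1$-categorical limit of the $\CS_{/B_i}$ does not by itself imply that the underlying $\infty$-category of the covariant model structure on $\CS_{/B_\infty}$ is the $\infty$-categorical limit of the $(\CS_{/B_i})_{\cov,\infty}$. To promote a strict pullback of model categories to a homotopy pullback you need a genuine exactness condition on the cospan --- in Bergner's or Barwick's framework, one leg must be a model-categorical (iso)fibration, or the cospan must be Reedy fibrant in a suitable sense --- and it is not clear that ``$(B_0\to B_1)^*$ is right Quillen and preserves left fibrations'' supplies this, nor that the remaining leg does. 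The sequential case has the analogous subtlety for towers. Moreover, \cref{thm:lfib_descent} does not directly give the equivalence you want: it is a statement about diagrams in $\curlyCat_\infty$ and gives a description of $\lim \Fun(\underline{B_i},\mathcal{S})$, but identifying $(\CS_{/B_i})_{\cov,\infty}$ with $\Fun(\underline{B_i},\mathcal{S})$ already uses goodness of $B_i$, so descent tells you about the right-hand side of your comparison map, not about the map itself. You still need to show that a homotopy-coherent compatible family of left fibrations over the $B_i$ strictifies to an honest left fibration over $B_\infty$, and that the mapping spaces are computed correctly --- and if you chase this through, you end up re-proving essentially the fully-faithful/essentially-surjective checks that the paper carries out directly.

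A separate but smaller issue: the natural comparison functor $\operatorname{st}_X$ for arbitrary cubical sets $X$ is asserted but not constructed. The only sensible construction is via a functorial fibrant replacement $X\to\mathcal{X}$, composed with \cref{prop:TUcomparison} and simplicial straightening, and one must then check that the naturality squares commute in $\curlyCat_\infty$. This is doable but not free, and since your whole reduction rests on this naturality (to pass from termwise equivalences to an equivalence of limits), it deserves to be spelled out rather than assumed.

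So: the route is not circular and could in principle be made to work, but the ``strict limit computes $\infty$-limit'' step is a real gap, and closing it without the tools the paper is trying to establish (in particular without \cref{thm:cat_inv}, which is the goal) would require a nontrivial excursion into the theory of homotopy limits of model categories. The paper's argument avoids this entirely by working at the level of individual fibrations and using \cref{thm:lfib_descent} only inside a fixed covariant model structure over the fibrant replacement $\mathcal{B}_\infty$, where everything is under control.
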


\begin{proof}
Choose a natural weak categorical equivalence $B_{\bullet}\to\mcal B_{\bullet}$,
where $\mcal B_{\bullet}$ takes values in the full subcategory of
cubical quasicategories. Let 
\[
F \from \CS_{/\mcal B_{\infty}}\to\CS_{/B_{\infty}}
\]
denote the base change along $B_{\infty}\to\mcal B_{\infty}$. Our
goal is to show that $F$ is a right Quillen equivalence for the covariant
model structure. We will prove this by showing that $\mbb RF$ is
fully faithful and essentially surjective.

First, we show that $\mbb RF$ is fully faithful. Let $\mcal E_{\infty}\to\mcal B_{\infty}$
be an arbitrary left fibration, and set $E_{\infty}=B_{\infty}\times_{\mcal B_{\infty}}\mcal E_{\infty}$.
We wish to show that the map $E_{\infty}\to\mcal E_{\infty}$ is a
covariant equivalence over $\mcal B_{\infty}$. For each $i\in\mcal I$,
set $E_{i}=B_{i}\times_{B_{\infty}}E_{\infty}$ and $\mcal E_{i}=\mcal B_{i}\times_{\mcal B_{\infty}}\mcal E_{\infty}$.
Using that pullbacks in $\cSet$ preserve colimits, any one of the assumptions (1) through (3) implies that the diagram
$E_{\bullet}$ is a homotopy colimit diagram in $\pr{\CS_{/\mcal B_{\infty}}}_{\mrm{cov}}$; i.e., its image in the underlying $\infty$-category is a colimit
diagram
(for case (2), we use the fact that every colimit
diagram indexed by $\omega$ is a homotopy colimit diagram, because
covariant equivalences are stable under filtered colimits \cite[Prop.~4.1]{RR15}).
By \cref{thm:lfib_descent}, the diagram $\mcal E_{\bullet}$
is a homotopy colimit diagram in $\pr{\CS_{/\mcal B_{\infty}}}_{\mrm{cov}}$.
Consequently, we are reduced to showing that for each $i\in\mcal I$,
the map $E_{i}\to\mcal E_{i}$ is a covariant equivalence over $\mcal B_{\infty}$.
But this is clear, because this map is a covariant equivalence over
$\mcal B_{i}$ since each $B_{i}$ is good.

Next, we show that $\mbb RF$ is essentially surjective. Let $E_{\infty}\to B_{\infty}$
be a left fibration. Set $E_{i}=E_{\infty}\times_{B_{\infty}}B_{i}$
for each $i\in\mcal I$, and choose a fibrant replacement 
\[\begin{tikzcd}
	{E_i} & {\mathcal{E}_i} \\
	{B_i} & {\mathcal{B}_i}
	\arrow["{\iota_i}", from=1-1, to=1-2]
	\arrow[from=1-1, to=2-1]
	\arrow["{\pi _i}", from=1-2, to=2-2]
	\arrow["\simeq"', from=2-1, to=2-2]
\end{tikzcd}\]which is natural in $i\in\mcal I$, where $\iota_{i}$ is a covariant
equivalence over $\mcal B_{i}$ and $\pi_{i}$ is a left fibration.
If we are in case (3), we further assume that the map $\mcal E_{0}\to\mcal E_{1}$
is a monomorphism. (This is possible, e.g., by factoring the map $\mcal E_{0}\to\mcal E_{1}$
as a monomorphism $\mcal E_{0}\to\mcal E'_{1}$ followed by a trivial
fibration $\mcal E'_{1}\to\mcal E_{1}$, and then choosing a section
of the latter map in $\CS_{/\mcal B_{0}}$.) 
We then factor the map
$\colim_{i\in\mcal I}\mcal E_{i}\to\mcal B_{\infty}$ as
\[
\colim_{i\in\mcal I}\mcal E_{i}\xrightarrow{j}\mcal E_{\infty}\xrightarrow{\pi_{\infty}}\mcal B_{\infty},
\]
where $j$ is a weak categorical equivalence and $\pi_{\infty}$ is
a categorical fibration. We then complete the proof by verifying the
following pair of assertions:

\begin{enumerate}[label=(\alph*)]

\item The map $\pi_\infty \from \mcal E_{\infty}\to\mcal B_{\infty}$ is a left fibration.

\item The map $\theta \from E_{\infty}\to B_{\infty}\times_{\mcal B_{\infty}}\mcal E_{\infty}$
is a covariant equivalence over $B_{\infty}$.

\end{enumerate}

We start with (a). By construction, the diagram $\mcal E_{\bullet} \from \mcal I^{\rcone}\to\pr{\CS}_{\mrm{Joyal}}$
is a homotopy colimit diagram (for case (2), this again
follows from \cite[Prop.~4.1]{RR15}). 
Therefore, by \cref{thm:lfib_descent}, it suffices to check that for each morphism
$i\to j$ in $\mcal I$, the square 
\[\begin{tikzcd}
	{\mathcal{E}_i} & {\mathcal{E}_j} \\
	{\mathcal{B}_i} & {\mathcal{B}_j}
	\arrow[from=1-1, to=1-2]
	\arrow[from=1-1, to=2-1]
	\arrow[from=1-2, to=2-2]
	\arrow[from=2-1, to=2-2]
\end{tikzcd}\]is homotopy cartesian in the cubical Joyal model structure. Since
left fibrations over cubical quasicategories are categorical fibrations,
this is equivalent to saying that the map
\[
\mcal E_{i}\to\mcal B_{i}\times_{\mcal B_{j}}\mcal E_{j}
\]
be a categorical equivalence. 
The domain and codomain are left fibrations over $\mathcal{B}_i$, hence by \cref{rem:cat_inv,prop:fiberwise_eq_cov}, it suffices to show that,
for each vertex $b\in B_{i}$, the map $\mcal E_{i}\to\mcal E_{j}$
induces a homotopy equivalence
\[
\mcal E_{i,b}\to\mcal E_{j,b},
\]
where the subscript indicates taking fibers over the images of $b$.
Since $B_{i}$ and $B_{j}$ are good, the maps $E_{i,b}\to\mcal E_{i,b}$
and $E_{j,b}\to\mcal E_{j,b}$ are homotopy equivalences. Consequently,
we are reduced to showing that the map $E_{i,b}\to E_{j,b}$ is a
homotopy equivalence.
Since $E_i$ and $E_j$ are defined via pullback ($E_i = B_i \times_{B_{\infty}} E_\infty$ and $E_j = B_j \times_{B_\infty} E_\infty$), the map between their fibers over $b$ is an isomorphism of cubical sets, which suffices.

Next, we prove (b). Again, by \cref{prop:fiberwise_eq_cov},
it suffices to show that for each $i\in\mcal I$ and each vertex $b\in B_{i}$,
the map $E_{\infty,b}\to\mcal E_{\infty,b}$ is a homotopy equivalence.
This map factors as
\[
E_{\infty,b}\cong E_{i,b}\xrightarrow{\phi}\mcal E_{i,b}\xrightarrow{\psi}\mcal E_{\infty,b}.
\]
The map $\phi$ is a homotopy equivalence, so it suffices to show
that $\psi$ is a homotopy equivalence. This follows from an argument
similar to the one in the previous paragraph and \cref{thm:lfib_descent},
the latter of which says that the square 
\[\begin{tikzcd}
	{\mathcal{E}_i} & {\mathcal{E}_\infty} \\
	{\mathcal{B}_i} & {\mathcal{B}_\infty}
	\arrow[from=1-1, to=1-2]
	\arrow[from=1-1, to=2-1]
	\arrow[from=1-2, to=2-2]
	\arrow[from=2-1, to=2-2]
\end{tikzcd}\]is homotopy cartesian in the cubical Joyal model structure. 
\end{proof}
\begin{proof}[Proof of \cref{thm:cat_inv}]
By \cref{rem:cat_inv},
it suffices to show that every cubical set is good in the sense of \cref{def:good_lfib}. By \cref{lem:good_colim_lfib},
we need only prove each representable $\cube{n}$ is
good, which is exactly \cref{lem:square_good}.
\end{proof}

\subsection{\label{subsec:cat_inv_m}Proof of \cref{thm:cat_inv_cc} }

We now take up the proof of \cref{thm:cat_inv_cc}. The proof
is very similar to the case of left fibrations, so we will only indicate
where changes are necessary.
\begin{proof}
[Proof of \cref{thm:cat_inv_cc}]We will say that a cubical
set $X$ is\emph{ good} if it satisfies the following equivalent
(by \cref{prop:cat_inv_cc}) conditions:
\begin{enumerate}
\item There is \textit{some} weak categorical equivalence $X\to\mcal X$,
where $\mcal X$ is a cubical quasicategory, such that the adjunction
\[
\CS_{/X^{\sharp}}^{+}\bigadj\CS_{/\mcal X^{\sharp}}^{+}
\]
is a Quillen equivalence.
\item For \textit{every} weak categorical equivalence $X\to\mcal X$, where
$\mcal X$ is a cubical quasicategory, the adjunction
\[
\CS_{/X^{\sharp}}^{+}\bigadj\CS_{/\mcal X^{\sharp}}^{+}
\]
is a Quillen equivalence.
\end{enumerate}
We want to show that every cubical set is good.

Arguing as in \cref{lem:square_good}, we find that the standard
cubes $\square^{n}$ are all good. With minor modifications, \cref{lem:good_colim_lfib}
remains true for the definition of good cubical sets given above. 
This shows every cubical set is good, as desired.
\end{proof}

\section{\label{sec:applications}Applications of categorical invariance}

In this section, we prove several structural results of left fibrations
and cocartesian fibrations, using \cref{thm:cat_inv,thm:cat_inv_cc}. Some of these results are cubical analogs of
the corresponding results for simplicial sets \cite[$\S$3.3.1]{lurie:htt}.

One immediate consequence of \cref{thm:cat_inv,thm:cat_inv_cc} is that the \emph{alternative} adjunctions on slice categories
induced by triangulation are also Quillen equivalences. 
This allows us to compare the covariant and cocartesian model structures over arbitrary cubical sets with their simplicial versions.
\begin{theorem}\label{TUcomparison_for_all}
	The adjunctions 
	\[ \begin{tikzcd}[sep = small]
		\cSet_{/ B} \ar[rr, "T"{name=Upper}] & {} & \sSet_{/ TB} \ar[ld, "U"] &[2ex] \mcSet_{/ \maxm{B}} \ar[rr, "T^+"{name=UpperT}] & {} & \msSet_{/ \maxm{(TB)}} \ar[ld, "U^+"] \\
		{} & \cSet_{/ UTB} \ar[ul, "(\eta_{B})^*"] \ar[to=Upper, phantom, "\perp"] & {} & {} & \mcSet_{/ \maxm{(UTB)}} \ar[ul, "(\maxm{\eta_{B}})^*"] \ar[to=UpperT, phantom, "\perp"] & {}
	\end{tikzcd} \]
	are Quillen equivalences between the covariant and cocartesian model structures, respectively.
\end{theorem}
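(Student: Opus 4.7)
The plan is to prove both statements in parallel via a 2-out-of-3 argument for Quillen equivalences, reducing to the case of a fibrant base (which is exactly \cref{prop:TUcomparison}). Fix a cubical set $B$ and choose a fibrant replacement $\iota \from TB \acto \cat{B}$ in the Joyal model structure on $\sSet$. Let $\tilde{\eta}_B \from B \to U\cat{B}$ denote the composite $B \xrightarrow{\eta_B} UTB \xrightarrow{U\iota} U\cat{B}$. Since this is the derived unit of the Quillen equivalence $T \dashv U$ (\cref{triangulation-quillen-equiv}) at the cofibrant object $B$ with respect to the fibrant object $\cat{B}$, it is a weak categorical equivalence of cubical sets.

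For the covariant case, consider the square of left Quillen functors between slice categories equipped with the covariant model structures:
\[
\begin{tikzcd}
	\cSet_{/B} \ar[r, "T"] \ar[d, "(\tilde\eta_B)_!"'] & \sSet_{/TB} \ar[d, "\iota_!"] \\
	\cSet_{/U\cat{B}} \ar[r, "T"'] & \sSet_{/\cat{B}}.
\end{tikzcd}
\]
This square commutes up to natural isomorphism: both paths send an object $(X \to B)$ to $TX \to TB \xrightarrow{\iota} \cat{B}$, as one verifies using the triangle identity $\epsilon_{TB} \circ T\eta_B = \id_{TB}$ together with the naturality of the counit $\epsilon$. The bottom row is a Quillen equivalence by \cref{prop:TUcomparison} since $\cat{B}$ is a quasicategory; the left column is a Quillen equivalence by \cref{thm:cat_inv}; and the right column is a Quillen equivalence by the standard simplicial analog of \cref{thm:cat_inv} (cf.\ \cite[Prop.~3.3.1.1]{lurie:htt} for the closely related cocartesian statement). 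Two-out-of-three for Quillen equivalences then forces the top row to be a Quillen equivalence.

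The cocartesian case is handled by exactly the same argument applied to the analogous commutative square between the cocartesian model structures on slices of marked (cubical and simplicial) sets over $\maxm{B}$, $\maxm{U\cat{B}}$, $\maxm{TB}$, and $\maxm{\cat{B}}$, this time using the marked version of \cref{prop:TUcomparison}, \cref{thm:cat_inv_cc}, and the simplicial analog of \cref{thm:cat_inv_cc} (\cite[Prop.~3.3.1.1]{lurie:htt}). The only nontrivial verification is the commutativity of each square, but as in the unmarked case this is a direct consequence of the triangle identities for $T \dashv U$; there is no substantial obstacle beyond assembling these ingredients.
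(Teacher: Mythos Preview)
Your proof is correct and follows essentially the same strategy as the paper's: reduce to the fibrant-base case (\cref{prop:TUcomparison}) via a commuting square of base-change functors and the invariance theorems (\cref{thm:cat_inv,thm:cat_inv_cc}) together with their simplicial analogs, then apply 2-out-of-3. One point the paper makes explicit that you gloss over: before invoking 2-out-of-3 for Quillen \emph{equivalences}, one must know the top row $T$ is itself a Quillen \emph{adjunction}; the paper deduces this by first observing that the commutativity of the diagram forces $T$ to preserve (and reflect) covariant equivalences---since the other three legs do---and then combining this with the fact that $T$ preserves monomorphisms.
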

\begin{proof}
	We prove the claim for the left adjunction; the proof for the right adjunction is analogous. 

	Choose a weak categorical equivalence $i \from B \xrightarrow{\sim} \mathcal{B}$, where $\mathcal{B}$ is a cubical quasicategory. We further choose a weak categorical equivalence $j \from T \mathcal{B} \xrightarrow{\sim} \widetilde{T \mathcal{B}}$, where $\widetilde{T \mathcal{B}}$ is a quasicategory. 
	The diagram
\[\begin{tikzcd}
	{\cSet_{/B}} && {\sSet_{TB}} \\
	{\cSet_{/\mathcal{B}}} && {\sSet_{/T\mathcal{B}}} \\
	{\cSet_{U \widetilde{T\mathcal{B}}}} & {\sSet_{TU(\widetilde{T\mathcal{B}})}} & {\sSet_{/\widetilde{T\mathcal{B}}}.}
	\arrow["T", from=1-1, to=1-3]
	\arrow["{i_!}"', from=1-1, to=2-1]
	\arrow["{(Ti)_!}", from=1-3, to=2-3]
	\arrow["{\tilde\eta_!}"', from=2-1, to=3-1]
	\arrow["{j_!}", from=2-3, to=3-3]
	\arrow["T"', from=3-1, to=3-2]
	\arrow["{\varepsilon_!}"', from=3-2, to=3-3]
\end{tikzcd}\]
commutes up to natural isomorphism, where $\tilde{\eta}$ denotes the derived unit (associated with $j$), and $\varepsilon$ denotes the ordinary counit (which coincides with the derived counit at fibrant objects). 
\cref{thm:cat_inv} and its simplicial analog (cf. \cite[Thm.~5.2.14]{cisinski:higher-categories} and \cite[Prop. 3.3.1.1]{lurie:htt}) imply that the vertical arrows are left Quillen equivalences, and \cref{prop:TUcomparison} says that the bottom composite $\varepsilon \circ T$ is also a left Quillen equivalence. 
This implies that the right outer composite $j_! \circ (Ti)_! \circ T$ is a left Quillen equivalence.
Since left Quillen equivalences preserve and reflect weak equivalences of cofibrant objects, this implies $T \from \cSet_{/ B}\to \cSet_{/ TB}$ preserves and reflects covariant equivalences.
It also preserves monomorphisms, so we find that it is left Quillen. 
By the 2-out-of-3 property of left Quillen equivalences, we deduce that it is in fact a left Quillen equivalence.
\end{proof}

We can also use \cref{thm:cat_inv,thm:cat_inv_cc} to show that the $(Q \adj \int)$- and $(Q^+ \adj \int^+)-$adjunctions are Quillen equivalences with respect to the covariant and cocartesian model structures, respectively.
\begin{theorem} \label{Q-equiv-cc}
	For a simplicial set $K$, the functors
	\[ \begin{array}{c@{\ }c}
		Q & \from \Big( \sSet_{/ K} \Big)_{\mathrm{cov}} \to \Big( \cSet_{/ QK} \Big)_{\mathrm{cov}} \\
		Q^+ & \from \Big( \msSet_{/ \maxm{K}} \Big)_{\mathrm{cc}} \to \Big( \mcSet_{/ \maxm{(QK)}} \Big)_{\mathrm{cc}}
	\end{array} \]
	are left Quillen equivalences.
\end{theorem}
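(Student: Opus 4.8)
The plan is to reduce the statement to \cref{thm:int-equiv-cc-qcat} (which establishes the result when the base is a cubical quasicategory) by combining it with categorical invariance (\cref{thm:cat_inv,thm:cat_inv_cc}) and the fact that $Q$ is full and faithful (\cref{Q-is-full-faithful}). I will treat the cocartesian case in detail; the covariant case is identical with ``$\cc$'' replaced by ``$\cov$'', ``$\int^+$'' by ``$\int$'', and ``$\maxm{(-)}$'' suppressed. First I would note that $Q^+$ preserves monomorphisms (being a left adjoint between presheaf-ish categories whose action on representables is the monic family $\cube{n} \cong Q[n]$ realized via the epimorphisms $\cube{n} \to Q[n]$ — more precisely, $Q$ preserves monomorphisms since it is the left Kan extension of a cocubical object that is pointwise a monomorphism-preserving diagram in $\sSet$, cf.\ the analogous statement used implicitly in \cref{prop:TUcomparison}), so by \cref{rem:JT07_cc} it suffices to show that $Q^+$ carries marked left anodyne extensions to cocartesian equivalences in order to conclude it is left Quillen; but in fact we will get this for free from the Quillen equivalence argument below, so I would not belabor this point and instead establish the Quillen adjunction and the equivalence simultaneously.

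The key step is a comparison square. Pick a weak categorical equivalence $j \from K \weto \mathcal{Q}$ with $\mathcal{Q}$ a quasicategory. Then I would consider the diagram of left adjoints
\[\begin{tikzcd}
	{\Big( \msSet_{/ \maxm{K}} \Big)_{\cc}} & {\Big( \mcSet_{/ \maxm{(QK)}} \Big)_{\cc}} \\
	{\Big( \msSet_{/ \maxm{\mathcal{Q}}} \Big)_{\cc}} & {\Big( \mcSet_{/ \maxm{(Q\mathcal{Q})}} \Big)_{\cc}}
	\arrow["{Q^+}", from=1-1, to=1-2]
	\arrow["{j_!}"', from=1-1, to=2-1]
	\arrow["{(Qj)_!}", from=1-2, to=2-2]
	\arrow["{Q^+}"', from=2-1, to=2-2]
\end{tikzcd}\]
which commutes up to natural isomorphism because $Q^+$ is a strong morphism of slice categories (it sends $\overline{X} \to \maxm{K}$ to $Q^+\overline{X} \to Q^+\maxm{K} = \maxm{(QK)}$, and this is compatible with $j_!$ and $(Qj)_!$). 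Now $j$ is a weak categorical equivalence, so by \cref{thm:cat_inv_cc} (and its simplicial analog, cited in the proof of \cref{thm:int-equiv-cc-qcat} as \cite[Prop.~3.3.1.1]{lurie:htt}) the two vertical functors $j_!$ and $(Qj)_!$ are left Quillen equivalences; here for the right-hand vertical I also need that $Qj \from QK \to Q\mathcal{Q}$ is a weak categorical equivalence of cubical sets, which holds because $Q$ is left Quillen for the Joyal model structures (\cref{Q-int-equiv-marked-Joyal}) and $j$ is a weak equivalence between cofibrant objects. The bottom horizontal $Q^+$ is a left Quillen equivalence by \cref{thm:int-equiv-cc-qcat}, since $\mathcal{Q}$ — hence, I claim, $Q\mathcal{Q}$ — is a cubical quasicategory; this last claim is \cref{Q-int-equiv-marked-Joyal} again (the image of a fibrant object under the right Quillen functor\ldots\ wait, $Q$ is a \emph{left} adjoint, so I need instead that $Q$ preserves fibrant objects, which follows because $Q$ is full and faithful and sends $\sSet_{\Joyal}$-weak-equivalences to $\cSet_{\Joyal}$-weak-equivalences, so $\int Q\mathcal{Q} \simeq \mathcal{Q}$ forces $Q\mathcal{Q}$ to be a quasicategory by \cref{Q-is-full-faithful}; alternatively one cites \cite[Thm.~6.10, Thm.~6.23]{doherty-kapulkin-lindsey-sattler} directly).

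With three of the four edges of the square known to be left Quillen equivalences and the square commuting up to isomorphism, the 2-out-of-3 property of left Quillen equivalences (as used in the proofs of \cref{prop:T^+U^+,Q-equiv-cc,thm:int-equiv-cc-qcat}) shows that the top $Q^+ \from (\msSet_{/\maxm{K}})_{\cc} \to (\mcSet_{/\maxm{(QK)}})_{\cc}$ is a left Quillen equivalence — and in particular left Quillen, which retroactively justifies the preliminary remark. The main obstacle I anticipate is verifying that $(Qj)_!$ is a left Quillen \emph{equivalence} for the cocartesian model structures over \emph{non-fibrant} bases: this genuinely requires \cref{thm:cat_inv_cc} and is not formal, but it is exactly what that theorem provides, so the argument goes through once one is careful that $Qj$ is a weak categorical equivalence (not merely a map) and that $Q\mathcal{Q}$ is a cubical quasicategory. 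The covariant case is verbatim the same with \cref{thm:cat_inv} in place of \cref{thm:cat_inv_cc} and the first half of \cref{thm:int-equiv-cc-qcat} in place of the second.
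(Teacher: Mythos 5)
Your overall strategy — factor through a fibrant replacement $j\colon K\to\mathcal{Q}$, invoke categorical invariance (\cref{thm:cat_inv_cc} plus its simplicial analog) for the verticals, invoke \cref{thm:int-equiv-cc-qcat} for a base that is a cubical quasicategory, and finish by 2-out-of-3 — is the right one and matches the paper's intent, which is to argue exactly as in the proof of \cref{TUcomparison_for_all}. But there is a genuine gap at the step where you assert that $Q\mathcal{Q}$ is a cubical quasicategory. Your stated justification (``$\int Q\mathcal{Q}\cong\mathcal{Q}$ forces $Q\mathcal{Q}$ to be a quasicategory by \cref{Q-is-full-faithful}'') is a non sequitur: $\int$ is right Quillen, so it preserves fibrancy, but it certainly need not \emph{reflect} it — a non-fibrant cubical set can have a fibrant image under a right Quillen functor. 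Full faithfulness of $Q$ gives $\eta_{\mathcal{Q}}$ is an isomorphism, nothing more. The alternative citations you offer (\cite[Thm.~6.10, Thm.~6.23]{doherty-kapulkin-lindsey-sattler}) are the full-faithfulness of $Q$ and the Joyal-level Quillen equivalence, and neither asserts that $Q$ preserves fibrant objects. Since $Q$ is a \emph{left} Quillen functor, there is no formal reason for it to preserve quasicategories — compare with $T$, which is also left Quillen, is also a Quillen equivalence, and demonstrably does \emph{not} preserve fibrant objects (this is precisely why the proof of \cref{TUcomparison_for_all} introduces a \emph{second} fibrant replacement $T\mathcal{B}\to\widetilde{T\mathcal{B}}$).

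The fix is to mirror \cref{TUcomparison_for_all} exactly: after choosing $i\colon K\weto\mathcal{Q}$, take a further weak categorical equivalence $j\colon Q\mathcal{Q}\weto\mathcal{R}$ with $\mathcal{R}$ a cubical quasicategory, and build a three-row rectangle whose left column is $i_!$ followed by the derived unit $\tilde\eta_!\colon\sSet_{/\mathcal{Q}}\to\sSet_{/\int\mathcal{R}}$ (where $\tilde\eta$ is $\eta_{\mathcal{Q}}$ followed by $\int j$, a weak equivalence because $Q\dashv\int$ is a Quillen equivalence and $\mathcal{Q}$ is cofibrant), whose right column is $(Qi)_!$ followed by $j_!$, and whose bottom row is $\varepsilon_!\circ Q\colon\sSet_{/\int\mathcal{R}}\to\cSet_{/\mathcal{R}}$ — the left adjoint of the slice $\int$ from \cref{thm:int-equiv-cc-qcat}, now legitimately applicable since $\mathcal{R}$ is a cubical quasicategory. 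The verticals are left Quillen equivalences by categorical invariance, the bottom composite is a left Quillen equivalence by \cref{thm:int-equiv-cc-qcat}, and the square commutes up to natural isomorphism; hence the outer composite is a left Quillen equivalence, so the top $Q$ preserves and reflects covariant/cocartesian equivalences of cofibrant objects, is left Quillen (preserving monomorphisms being clear), and is a Quillen equivalence by 2-out-of-3. Everything else in your write-up is fine once this extra replacement is inserted; in particular the observation that $\varepsilon_{Q\mathcal{Q}}$ is invertible via the triangle identity is correct but unnecessary once $\mathcal{R}$ is introduced.
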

\begin{proof}
	The proof is entirely analogous to that of \cref{TUcomparison_for_all}, using \cref{thm:int-equiv-cc-qcat} instead of \cref{prop:TUcomparison}.
\end{proof}
As explained in \cref{rem:int-is-not-quillen-equiv-always}, the analog of \cref{Q-equiv-cc} for the alternate slice adjunction is false unless the base is a quasicategory.

One structural result is that the covariant model structure is a left Bousfield localization of the Joyal model structure.
We also provide an analog of item (2) of \cref{prop:cat_inv_cc} over an arbitrary base.
\begin{theorem}
\label{thm:cc_Joy}Let $B$ be a cubical set. 
\begin{enumerate}
\item The covariant model structure on $\CS_{/B}$ is a left Bousfield localization
of the Joyal model structure.
\item The forgetful functor
\[
R_{B} \from \pr{\CS_{/B^{\sharp}}^{+}}_{\cc}\to\pr{\CS_{/B}}_{\Joyal}
\]
is right Quillen, and its total right derived functor is faithful
and conservative.
\end{enumerate}
\end{theorem}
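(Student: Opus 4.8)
The plan is to deduce both statements from their counterparts over a cubical quasicategory base — part~(1) from the fact, used in the proof of \cref{rem:cat_inv}, that over a cubical quasicategory the covariant model structure is a left Bousfield localization of the cubical Joyal model structure, and part~(2) from \cref{prop:cat_inv_cc} — transporting along a weak categorical equivalence $i\colon B\to\mcal B$ with $\mcal B$ a cubical quasicategory, using categorical invariance (\cref{thm:cat_inv,thm:cat_inv_cc}). For part~(1): the slice Joyal model structure on $\CS_{/B}$ and $(\CS_{/B})_{\cov}$ have the monomorphisms as cofibrations, and the former is left proper and combinatorial (every object is cofibrant), so it suffices to show that $\id\colon(\CS_{/B})_{\Joyal}\to(\CS_{/B})_{\cov}$ is left Quillen — equivalently, that every weak equivalence of the slice Joyal model structure over $B$ is a covariant equivalence over $B$ — and the left Bousfield localization statement then follows by the recognition principle invoked in the proof of \cref{lem:left-fibrations}. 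To that end, given such a weak equivalence $f$ over $B$, its image $i_!f$, which has the same underlying cubical map, is a Joyal weak equivalence over $\mcal B$, hence a covariant equivalence over $\mcal B$ by the quasicategory case; and since $i_!\colon(\CS_{/B})_{\cov}\to(\CS_{/\mcal B})_{\cov}$ is a left Quillen equivalence (\cref{thm:cat_inv}) between model categories in which every object is cofibrant, it reflects covariant equivalences, so $f$ is a covariant equivalence over $B$.

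For part~(2), I would first show $R_B$ is right Quillen. Its left adjoint $L_B$ is the composite of the minimal marking functor $\CS_{/B}\to\CS^+_{/\minm B}$ with base change along $\minm B\to\maxm B$; it preserves monomorphisms, so it remains to see it preserves trivial cofibrations. Given a trivial cofibration $j$ of $(\CS_{/B})_{\Joyal}$ and $i\colon B\to\mcal B$ as above, a direct check of the definitions gives $i_!L_B(j)=L_{\mcal B}(i_!j)$ as maps over $\maxm{\mcal B}$ (the first $i_!$ on marked slices, the second on cubical slices). Now $i_!j$ is a trivial cofibration of $(\CS_{/\mcal B})_{\Joyal}$ (same underlying map), and $L_{\mcal B}$ is left Quillen by \cref{prop:cat_inv_cc}(1), so $L_{\mcal B}(i_!j)$ is a cocartesian trivial cofibration over $\maxm{\mcal B}$, in particular a cocartesian equivalence. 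Since $i_!\colon(\CS^+_{/\maxm B})_{\cc}\to(\CS^+_{/\maxm{\mcal B}})_{\cc}$ is a left Quillen equivalence (\cref{thm:cat_inv_cc}) and all objects are cofibrant, it reflects cocartesian equivalences; hence $L_B(j)$ is a cocartesian equivalence and, being also a cofibration, a cocartesian trivial cofibration. Thus $L_B$ is left Quillen and $R_B$ is right Quillen.

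It remains to show $\mbb RR_B$ is faithful and conservative; since every object is cofibrant it is enough to check this on the subcategories of fibrant objects. For $B=\mcal B$ a cubical quasicategory, faithfulness is \cref{prop:cat_inv_cc}(2), and conservativity I would argue as follows: if $f\colon\natm{\mcal X}\to\natm{\mcal Y}$ is a map of fibrant objects of $(\CS^+_{/\maxm{\mcal B}})_{\cc}$ with $\mbb RR_{\mcal B}(f)$ invertible — i.e.\ $f\colon\mcal X\to\mcal Y$ a categorical equivalence over $\mcal B$ — then by \cref{cor:cc_catfib} the cocartesian fibrations $\mcal X,\mcal Y\to\mcal B$ are categorical fibrations, so \cref{prop:cc_equiv}(1) shows $f$ preserves and reflects cocartesian edges; a homotopy inverse $g\colon\mcal Y\to\mcal X$ over $\mcal B$ (available, since $\mcal X,\mcal Y$ are fibrant and cofibrant in the Joyal slice) then has $fg(\beta)$ equivalent to $\beta$ in $\Fun(\square^1,\mcal Y)$, whence $fg(\beta)$ and therefore $g(\beta)$ is cocartesian by \cref{prop:cc_morph_stable_under_equiv}, so $g$ defines a map $\natm{\mcal Y}\to\natm{\mcal X}$ whose class is, by faithfulness, inverse to that of $f$ in $\ho((\CS^+_{/\maxm{\mcal B}})_{\cc})$; thus $f$ is a cocartesian equivalence. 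For general $B$, with $i\colon B\to\mcal B$ as above, the key point is that $U\colon\CS^+\to\CS$ preserves limits, so the square with vertical arrows $R_{\mcal B},R_B$ and horizontal arrows the base-change functors $i^*$ commutes strictly; all four functors are right Quillen (the $R$'s by the previous paragraph), and both $i^*$'s are Quillen equivalences — the cocartesian one by \cref{thm:cat_inv_cc}, the Joyal one by invariance of the cubical Joyal model structure under categorical equivalence of the base (a standard fact, which also follows by triangulation from the simplicial statement via \cref{triangulation-quillen-equiv}). Passing to total right derived functors yields $\mbb RR_B\cong\mbb Ri^*\circ\mbb RR_{\mcal B}\circ(\mbb Ri^*)^{-1}$, a composite of equivalences of homotopy categories with a faithful conservative functor, hence faithful and conservative.

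The genuinely hard content is contained in \cref{thm:cat_inv,thm:cat_inv_cc}, which we may assume; granting those, this theorem is essentially bookkeeping, but the bookkeeping has one real trap. A weak categorical equivalence $i\colon B\to\mcal B$ does not make the covariant or cocartesian model structures (or the base-change functors between them) exact in any naive way over the non-fibrant base $B$, so the reduction to $\mcal B$ must be organised so that only derived functors of honest Quillen functors appear; this is why the faithful-and-conservative argument goes through the strict square of \emph{right} Quillen forgetful and base-change functors rather than through fibrant replacements of the (generally non-fibrant) objects $i_!\natm{\mcal X}$. The one external input that must be secured is the categorical invariance of the cubical Joyal model structure over the base.
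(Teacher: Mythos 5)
Your proof of part (1), and your argument that $L_B$ is left Quillen, are correct and essentially in the spirit of the paper's (the paper explicitly leaves part (1) to the reader, and proves the left Quillen claim by the same transport-along-$i_!$ device you use).

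The gap is in the faithfulness/conservativity argument. You reduce to the quasicategory case via a strictly commuting square of right Quillen functors, and to pass the derived statements through it you invoke, as you say yourself, the ``categorical invariance of the cubical Joyal model structure over the base'' --- i.e.\ that $i^*\colon(\CS_{/\mcal B})_\Joyal\to(\CS_{/B})_\Joyal$ is a Quillen equivalence whenever $i\colon B\to\mcal B$ is a weak categorical equivalence. This is not a standard fact; it is \emph{false}, and is in fact equivalent to right properness of the (cubical) Joyal model structure, which the paper's own introduction notes fails (``showcasing that (co)cartesian fibrations correct the failure of categorical fibrations to guarantee right properness''). Concretely, in the simplicial setting that you propose to transport through triangulation: the edge inclusion $\Delta^{\{0,2\}}\hookrightarrow\Delta^2$ is a categorical fibration between quasicategories, and pulling it back along the inner anodyne spine inclusion $\Delta^{\{0,1\}}\cup_{\{1\}}\Delta^{\{1,2\}}\hookrightarrow\Delta^2$ gives $\partial\Delta^1\hookrightarrow\Delta^{\{0,2\}}$, which is not a categorical equivalence. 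Thus the derived counit of $i_!\dashv i^*$ fails at the fibrant object $\Delta^{\{0,2\}}\to\Delta^2$, so $i^*$ for the slice Joyal model structures is not even close to a Quillen equivalence, and the square you built gives no control over $\mathbb{R}R_B$.

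The paper's proof of part (2) deliberately stays over the non-fibrant base $B$ and avoids any such invariance claim: faithfulness is proved by lifting a Joyal homotopy $X\otimes E[1]\to Y$ over $B$ to a marked one via the marked-left-anodyne inclusion $X^\natural\otimes E[1]^\flat\subset X^\natural\otimes E[1]^\sharp$ (\cref{prop:mlan_pp}); conservativity is proved by observing that the composite of $R_B$ with the pullback to each vertex $\{b\}\subset B$ is right Quillen and has conservative total right derived functor by \cref{prop:fiberwise_eq_cc}. If you wish to salvage your route, you would need to argue that the restriction of $\mathbb{R}i^*_\Joyal$ to the essential image of $\mathbb{R}R_{\mcal B}$ (i.e.\ to cocartesian fibrations) is faithful and conservative, which in effect requires \cref{cor:cfib_hpb} --- a result proved later in the paper from the theorem you are trying to establish, so this reordering would need to be justified carefully.
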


\begin{proof}
We will prove (2) and leave  the proof of part (1) to the reader. 

We first show that $R_{B}$ is right Quillen. It suffices to show
that its left adjoint $L_{B} \from \pr{\CS_{/B}}_{\Joyal}\to\pr{\CS_{/B^{\sharp}}^{+}}_{\cc}$,
given by $\Phi\pr X=X^{\flat}$, is left Quillen. 
It is clear that $L_B$ preserves monomorphisms, so it suffices to show $L_{B}$ preserves weak equivalences of cofibrant objects.
To prove this, choose a weak categorical equivalence $\Psi \from B\to\mcal B$,
where $\mcal B$ is a cubical quasicategory, and consider the following
commutative diagram:
\[\begin{tikzcd}
	{(\mathsf{cSet}_{/B})_{\mathrm{Joyal}}} & {(\mathsf{cSet}^+_{/B^\sharp})_{\mathrm{cc}}} \\
	{(\mathsf{cSet}_{/\mathcal{B}})_{\mathrm{Joyal}}} & {(\mathsf{cSet}^+_{/\mathcal{B}^\sharp})_{\mathrm{cc}}}
	\arrow["{L_B}", from=1-1, to=1-2]
	\arrow["\Psi_*"', from=1-1, to=2-1]
	\arrow["{\Psi'_*}", from=1-2, to=2-2]
	\arrow["{L_{\mathcal{B}}}"', from=2-1, to=2-2]
\end{tikzcd}\]The functor $\Psi'$ is a left Quillen equivalence, so $\Psi'$ reflects
weak equivalences of cofibrant objects (i.e., all objects) by
\cref{thm:cat_inv_cc}. Thus, it suffices to show that $\Psi'\circ L_{B}$
preserves weak equivalences. But $\Psi$ is obviously left Quillen,
and \cref{prop:cat_inv_cc} says that $L_{\mcal B}$ is
left Quillen, so their composite $L_{\mcal B}\circ\Psi=\Psi'\circ L_{B}$
is also left Quillen, as required.

To prove that the total right derived functor $\mbb RR_{B}$ is faithful,
let $X\to B$ and $Y\to B$ be cocartesian fibrations, and let $f_{0},f_{1} \from X^{\natural}\to Y^{\natural}$
be maps over $B^{\sharp}$ such that $f$ and $g$ are homotopic in
$\CS_{/B}$. We wish to show that these maps are homotopic in the
cocartesian model structure. Since $X \otimes E[1] \to X \to B$ is a cylinder object for $X \to B$, there is a map $h \from X\otimes E[1] \to Y$
over $B$ such that $h\vert X\otimes\{\varepsilon\}=f_{\varepsilon}$
for $\varepsilon\in\{0,1\}$. 
Since the inclusion $X^{\natural}\otimes E[1]^{\flat}\subset X^{\natural}\otimes E[1]^{\sharp}$
is marked left anodyne (\cref{prop:mlan_pp}),
the map $h$ extends to a map $X^{\natural}\otimes E[1]^{\sharp}\to Y^{\natural}$
of marked cubical sets. This, in turn, means that $f_{0}$ and $f_{1}$
are homotopic in the cocartesian model structure, as desired.

Finally, we prove that $\mbb RR_{B}$ is conservative. For each vertex
$b\in B$, the pullback functor $\pr{\CS_{/B}}_{\Joyal}\to\pr{\CS_{/\{b\}}}_{\Joyal}$
is right Quillen, so the composite
\[
\CS_{/B^{\sharp}}^{+}\to\pr{\CS_{/B}}_{\Joyal}\to\prod_{b\in B}\pr{\CS_{/\{b\}}}_{\Joyal}
\]
is also right Quillen. Since the total right derived functor of this
composite is conservative (\cref{prop:fiberwise_eq_cc}),
the total right derived functor of $R_{B}$ is conservative as well.
\end{proof}
\begin{corollary}
Every cocartesian fibration of cubical sets is a categorical fibration.
\end{corollary}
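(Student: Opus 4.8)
The plan is to deduce this directly from \cref{thm:cc_Joy}, which supplies exactly the input needed. Given a cocartesian fibration $p\from X\to Y$ of cubical sets, I would first pass to the marked cubical set $\natm{X}$ obtained by marking precisely the $p$-cocartesian edges of $X$ (this is a well-formed marked cubical set, as in the discussion following \cref{prop:cc}), so that $p$ induces a map $\natm{X}\to\maxm{Y}$ of marked cubical sets --- well-defined since \emph{every} edge of $\maxm{Y}$ is marked. By \cref{prop:mlfib}, this map is a marked left fibration: condition~(1) holds because $p$ is an inner fibration; condition~(2) holds because $p$, being a cocartesian fibration, admits a cocartesian lift of every edge of $Y$ with prescribed source; and condition~(3) holds by the definition of $\natm{X}$ together with the fact that all edges of $\maxm{Y}$ are marked. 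Hence $\natm{X}\to\maxm{Y}$ is a fibrant object of $\pr{\CS_{/\maxm{Y}}^{+}}_{\cc}$.

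Next, \cref{thm:cc_Joy} tells us that the forgetful functor $R_{Y}\from\pr{\CS_{/\maxm{Y}}^{+}}_{\cc}\to\pr{\CS_{/Y}}_{\Joyal}$ is right Quillen. A right Quillen functor preserves the terminal object and all fibrations, hence all fibrant objects, so $R_{Y}\pr{\natm{X}\to\maxm{Y}}=\pr{X\xrightarrow{p}Y}$ is a fibrant object of the slice model structure $\pr{\CS_{/Y}}_{\Joyal}$. By the definition of the slice model structure, a fibrant object of $\pr{\CS_{/Y}}_{\Joyal}$ is exactly a map to $Y$ that is a fibration in the cubical Joyal model structure, i.e.\ a categorical fibration; thus $p$ is a categorical fibration, as claimed.

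I do not expect a genuine obstacle here: essentially all of the work has already been carried out in establishing \cref{thm:cc_Joy}, and this argument is the natural extension of \cref{cor:cc_catfib} to a non-fibrant base. The only point meriting a moment's care is the verification that $\natm{X}\to\maxm{Y}$ is indeed fibrant --- i.e.\ a marked left fibration --- but this is immediate from \cref{prop:mlfib} once one recalls that marking $X$ at its $p$-cocartesian edges produces a marked cubical set, which is part of the identification of fibrant objects given after \cref{prop:cc}.
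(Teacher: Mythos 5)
Your proof is correct and takes the same approach as the paper, which simply cites \cref{thm:cc_Joy}. Your write-up spells out the (routine) details that the paper leaves implicit: marking at cocartesian edges produces a fibrant object of $\pr{\CS^+_{/\maxm{Y}}}_{\cc}$ by \cref{prop:mlfib}, and the right Quillen functor $R_Y$ then carries it to a fibrant object of the Joyal slice.
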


\begin{proof}
This follows from \cref{thm:cc_Joy}.
\end{proof}
\begin{corollary}
\label{cor:cc_equiv_cat_equiv}
Given a commutative
diagram of cubical sets
\[\begin{tikzcd}
	X && Y \\
	& B,
	\arrow["f", from=1-1, to=1-3]
	\arrow["p"', from=1-1, to=2-2]
	\arrow["q", from=1-3, to=2-2]
\end{tikzcd}\]
where $p$ and $q$ are cocartesian fibrations and $f$ carries $p$-cocartesian
edges to $q$-cocartesian edges, the following conditions are equivalent:
\begin{enumerate}
\item The map $f$ is a weak categorical equivalence.
\item The map $\natm{f} \from X^{\natural}\to Y^{\natural}$ is a cocartesian equivalence
over $B^{\sharp}$.
\item For each $b\in B$, the map $X_{b}\to Y_{b}$ is a categorical equivalence
of cubical quasicategories.
\end{enumerate}
\end{corollary}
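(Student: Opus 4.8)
The plan is to prove the implications $(2)\Rightarrow(1)$, $(1)\Rightarrow(3)$, and $(3)\Rightarrow(2)$, leveraging \cref{thm:cat_inv_cc} to reduce every step to the case of a cubical quasicategory base, where the needed statements are already available.

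\begin{proof}
We show $(2)\Rightarrow(1)\Rightarrow(3)\Rightarrow(2)$.

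For $(2)\Rightarrow(1)$: the forgetful functor $R_B\from\pr{\CS_{/B^{\sharp}}^{+}}_{\cc}\to\pr{\CS_{/B}}_{\Joyal}$ is right Quillen and its total right derived functor $\mbb RR_B$ is faithful and conservative (\cref{thm:cc_Joy}). Since $X^{\natural}$ and $Y^{\natural}$ are fibrant-cofibrant in $\pr{\CS_{/B^{\sharp}}^{+}}_{\cc}$, a cocartesian equivalence $\natm f$ between them is in particular a weak equivalence between fibrant-cofibrant objects, so $\mbb RR_B$ carries it to an isomorphism in $\ho\pr{\pr{\CS_{/B}}_{\Joyal}}$. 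As $\mbb RR_B$ is conservative this does not immediately give what we want; instead, we use that $R_B$, being right Quillen, preserves weak equivalences between fibrant objects, and $X^{\natural}, Y^{\natural}$ are fibrant, so $f=R_B(\natm f)$ is a weak equivalence in $\pr{\CS_{/B}}_{\Joyal}$, i.e.\ a weak categorical equivalence. (The underlying map of a cocartesian equivalence between these marked objects is the map $f$.)

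For $(1)\Rightarrow(3)$: choose a weak categorical equivalence $\Psi\from B\to\mcal B$ with $\mcal B$ a cubical quasicategory. By \cref{thm:cat_inv_cc}, pushforward along $\Psi$ is a Quillen equivalence of cocartesian model structures, and since $X^{\natural},Y^{\natural}$ are fibrant we may replace them by fibrant replacements $\mcal X^{\natural}\to\mcal Y^{\natural}$ over $\mcal B^{\sharp}$ of cocartesian fibrations of cubical quasicategories; the induced map of cubical quasicategories $\mcal X\to\mcal Y$ is a weak categorical equivalence, preserves cocartesian edges (using \cref{prop:cc_morph_stable_under_equiv} and \cref{prop:eq_cc}), and for each vertex $b$ lying over $b'\in\mcal B$ the fibers $X_b, Y_b$ are categorically equivalent to $\mcal X_{b'}, \mcal Y_{b'}$. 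Thus it suffices to show that for $p,q$ cocartesian fibrations of cubical quasicategories and $f$ a fiberwise-cocartesian-edge-preserving weak categorical equivalence, the induced maps on fibers are categorical equivalences. But a weak categorical equivalence between cubical quasicategories over $\mcal B$ is, via \cref{prop:cat_inv_cc}(2) and \cref{rem:cc_slice}, a cocartesian equivalence $\mcal X^{\natural}\to\mcal Y^{\natural}$ over $\mcal B^{\sharp}$ once we know it preserves cocartesian edges, and then \cref{prop:fiberwise_eq_cc} gives the fiberwise categorical equivalences.

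For $(3)\Rightarrow(2)$: if all $X_b\to Y_b$ are categorical equivalences, then since $\natm f$ is a map of \emph{fibrant} objects of $\pr{\CS_{/B^{\sharp}}^{+}}_{\cc}$ (each of $X^{\natural}, Y^{\natural}$ is fibrant because $p,q$ are cocartesian fibrations and the codomain is maximally marked, by \cref{prop:mlfib}), \cref{prop:fiberwise_eq_cc} applies directly and tells us $\natm f$ is a cocartesian equivalence. This closes the cycle, and the main subtlety throughout is ensuring at each stage that the objects in play are genuinely fibrant in the relevant model structure so that the characterizations of weak equivalences of fibrant objects from \cref{prop:fiberwise_eq_cc} and the Quillen equivalence of \cref{thm:cat_inv_cc} may be invoked; this is the only place where real care is needed, the rest being bookkeeping.
\end{proof}
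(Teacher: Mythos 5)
Your implications $(2)\Rightarrow(1)$ and $(3)\Rightarrow(2)$ are correct and match the paper's ingredients: the first uses that $R_B$, being right Quillen (\cref{thm:cc_Joy}), preserves weak equivalences between fibrant objects; the second is a direct application of \cref{prop:fiberwise_eq_cc}. The paper's own proof is essentially this: \cref{thm:cc_Joy} gives $(1)\Leftrightarrow(2)$ (the direction $(1)\Rightarrow(2)$ being the conservativity of $\mbb R R_B$, which you remark on but never actually use), and \cref{prop:fiberwise_eq_cc} gives $(2)\Leftrightarrow(3)$, done.

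Your $(1)\Rightarrow(3)$, by contrast, has a genuine gap. You pass to fibrant replacements $\mcal X^\natural \to \mcal Y^\natural$ over $\mcal B^\sharp$ and assert without argument that the induced map $\mcal X \to \mcal Y$ is a weak categorical equivalence. But the fibrant replacement maps $X^\natural \hookrightarrow \mcal X^\natural$ and $Y^\natural \hookrightarrow \mcal Y^\natural$ are only marked left anodynes in $(\CS^+_{/\mcal B^\sharp})_\cc$ — and $X^\natural$, $Y^\natural$ are \emph{not} fibrant over $\mcal B^\sharp$, so one cannot invoke \cref{thm:cc_Joy}(2) to conclude their underlying maps are weak categorical equivalences. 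Filling this would require something like \cref{cor:cfib_hpb}, but that result comes \emph{after} this corollary and indeed uses it, so the argument would be circular. The fix is to drop the reduction to the quasicategory base altogether: since every cocartesian fibration of cubical sets is a categorical fibration (an already-established corollary of \cref{thm:cc_Joy}), both $X$ and $Y$ are fibrant in $(\CS_{/B})_\Joyal$, and the fiber functor $-\times_B \{b\}$ is right Quillen for the slice Joyal model structure, hence $X_b \to Y_b$ is a weak categorical equivalence of cubical quasicategories directly, with no base change and no need for cocartesian-edge bookkeeping. (As a further remark: even if you could establish $\mcal X \to \mcal Y$ is a weak categorical equivalence, the hypothesis that it preserves cocartesian edges is not needed to compare fibers — the right-Quillen-ness of the fiber functor in the Joyal model structure is already sufficient, so your invocation of \cref{prop:cat_inv_cc}(2) and \cref{rem:cc_slice}, which moreover would require \cref{prop:cc_equiv} to show the homotopy inverse preserves cocartesian edges, is a detour within a detour.)
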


\begin{proof}
The equivalence (1) $\iff$ (2) follows from \cref{thm:cc_Joy},
and the equivalence (1) $\iff$ (3) follows from \cref{prop:fiberwise_eq_cc}.
\end{proof}

Before we state our next result, we introduce a bit of terminology.

\begin{definition}
Let $f \from X\to Y$ be a map of cubical sets. We say that $f$ is\emph{
fully faithful} if there is a commutative diagram 
\[\begin{tikzcd}
	X & Y \\
	{\mathcal{X}} & {\mathcal{Y}}
	\arrow["f", from=1-1, to=1-2]
	\arrow["i"', from=1-1, to=2-1]
	\arrow["\sim", from=1-1, to=2-1]
	\arrow["j", from=1-2, to=2-2]
	\arrow["\sim"', from=1-2, to=2-2]
	\arrow["g"', from=2-1, to=2-2]
\end{tikzcd}\]such that $i,j$ are weak categorical equivalences, $\mcal X$ and
$\mcal Y$ are cubical quasicategories, and $g$ is fully faithful.
\end{definition}

\begin{theorem}
\label{thm:fully_faithful}Let $f \from X\to Y$ be a map of cubical sets.
The following conditions are equivalent:
\begin{enumerate}
\item The map $f$ is fully faithful.
\item The total left derived functor of $f_{!} \from \pr{\CS_{/X}}_{\cov}\to\pr{\CS_{/Y}}_{\cov}$ is full and faithful.
\item The total left derived functor of $f_{!} \from \pr{\CS_{/X^{\sharp}}^{+}}_{\cc}\to\pr{\CS_{/Y^{\sharp}}^{+}}_{\cc}$ is full and faithful.
\end{enumerate}
\end{theorem}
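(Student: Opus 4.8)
The plan is to reduce, via categorical invariance, to the case where $X$ and $Y$ are cubical quasicategories, and there to identify the derived functor of $f_!$ with a left Kan extension. First choose weak categorical equivalences $u\colon X\xrightarrow{\sim}\mathcal{X}$ and $v\colon Y\xrightarrow{\sim}\mathcal{Y}$ into cubical quasicategories together with a map $g\colon\mathcal{X}\to\mathcal{Y}$ satisfying $g\circ u=v\circ f$ (possible by functorial factorization in the cubical Joyal model structure). By definition $f$ is fully faithful if and only if $g$ is, and this is independent of the chosen square since full faithfulness of a map of cubical quasicategories is a categorical invariant. Passing to total left derived functors on underlying $\infty$-categories and using \cref{thm:cat_inv} (resp.\ \cref{thm:cat_inv_cc}), which makes base change along $u$ and $v$ a Quillen equivalence, the relation $g\circ u=v\circ f$ yields $\mbb Lg_!\circ\mbb Lu_!\simeq\mbb Lv_!\circ\mbb Lf_!$ with $\mbb Lu_!$ and $\mbb Lv_!$ equivalences; hence $\mbb Lf_!$ is fully faithful for the covariant (resp.\ cocartesian) model structure if and only if $\mbb Lg_!$ is. So all three conditions for $f$ are equivalent to the corresponding conditions for $g$, and we may assume $X=\mathcal{X}$, $Y=\mathcal{Y}$ are cubical quasicategories and $f=g$.

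Next, applying \cref{prop:TUcomparison} and \cref{rem:cat_inv} once more, we may assume $\mathcal{X}=U(\mathcal{X}_\Delta)$ and $\mathcal{Y}=U(\mathcal{Y}_\Delta)$ for quasicategories $\mathcal{X}_\Delta$, $\mathcal{Y}_\Delta$, with $f$ induced by a functor $j\colon\mathcal{X}_\Delta\to\mathcal{Y}_\Delta$ which is fully faithful exactly when $f$ is (triangulation being a Quillen equivalence for the Joyal model structures). Then \cref{prop:TUcomparison} together with Lurie's straightening--unstraightening \cite[\S2.2.1]{lurie:htt} identifies the underlying $\infty$-category of $(\CS_{/\mathcal{X}})_{\cov}$ with $\Fun(\mathcal{X}_\Delta,\mathcal{S})$ and that of $(\CS^+_{/\mathcal{X}^\sharp})_{\cc}$ with $\Fun(\mathcal{X}_\Delta,\curlyCat_\infty)$, carrying $\mbb Rf^*$ to restriction along $j$ and hence $\mbb Lf_!$ to left Kan extension $\Lan_j$. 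It then suffices to invoke the standard fact that, for a functor $j$ of $\infty$-categories and $\mathcal{E}\in\{\mathcal{S},\curlyCat_\infty\}$, the functor $\Lan_j\colon\Fun(\mathcal{X}_\Delta,\mathcal{E})\to\Fun(\mathcal{Y}_\Delta,\mathcal{E})$ is fully faithful if and only if $j$ is: indeed $\Lan_j$ is left adjoint to the colimit-preserving functor $j^*$, so fully faithfulness is equivalent to the unit $\id\to j^*\Lan_j$ being an equivalence, which --- both sides being colimit-preserving --- may be tested on the generators $\Map_{\mathcal{X}_\Delta}(x,-)\times C$, where it reads $\Map_{\mathcal{X}_\Delta}(x,x')\times C\to\Map_{\mathcal{Y}_\Delta}(jx,jx')\times C$, an equivalence for all $x,x',C$ precisely when $j$ is fully faithful. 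Unwinding the reductions gives (1) $\iff$ (2) and (1) $\iff$ (3).

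Alternatively one can avoid straightening over a general base: $\mbb Lf_!$ sends the generator $\{x\}\to\mathcal{X}$ (resp.\ $(\cube{0})^\sharp\to\mathcal{X}^\sharp$) to $\{f(x)\}\to\mathcal{Y}$; let $E_{f(x)}\to\mathcal{Y}$ be a fibrant replacement of the latter, whose fibre over a vertex $y'$ is the mapping space $\Map_{\mathcal{Y}}(f(x),y')$ (in the cocartesian case $E_{f(x)}\to\mathcal{Y}$ is a left fibration, marked at all edges). Then $\mbb Rf^*\mbb Lf_!$ of the generator is $\mathcal{X}\times_{\mathcal{Y}}E_{f(x)}$, and by \cref{prop:fiberwise_eq_cov} (resp.\ \cref{prop:fiberwise_eq_cc}) the derived unit is an equivalence if and only if it is so fibrewise; over a vertex $x'$ it is the canonical map $\Map_{\mathcal{X}}(x,x')\to\Map_{\mathcal{Y}}(f(x),f(x'))$ of Kan complexes, so again the condition is exactly full faithfulness of $f$. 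I expect the main obstacle, in either approach, to be the bookkeeping around these identifications: matching $\mbb Lf_!$ with $\Lan_j$ compatibly in the space-valued (covariant) and category-valued (cocartesian) cases, checking that the proposed generators --- including the copowers by $\curlyCat_\infty$ in the cocartesian case --- generate the relevant underlying $\infty$-categories under colimits and are respected by $\mbb Rf^*\mbb Lf_!$, and, in the direct approach, identifying the fibres of $E_{f(x)}\to\mathcal{Y}$ with mapping spaces in both model structures, i.e.\ the cubical counterparts of the corresponding simplicial facts.
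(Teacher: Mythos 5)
Your primary argument is correct and matches the paper's proof essentially step for step: reduce via \cref{thm:cat_inv}/\cref{thm:cat_inv_cc} and \cref{prop:TUcomparison} to the case $f = U(g)$ with $g$ a functor of simplicial quasicategories, pass through straightening--unstraightening to identify $\mathbb{L}f_!$ with left Kan extension on functor $\infty$-categories, and then test full faithfulness of $\Lan_j$ on the colimit-generators $\Map_{\mathcal{X}_\Delta}(x,-)\times C$ using that both $\Lan_j$ and $j^*$ preserve colimits. The alternative direct fiberwise argument you sketch is a legitimate variation, and you have correctly flagged the one genuine subtlety it faces --- that in the cocartesian case the objects $\{x\}^\sharp$ alone do not generate under homotopy colimits and the copowers by quasicategories are needed, as the paper also notes.
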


\begin{proof}
We will prove (1)$\iff$(3); the equivalence (2)$\iff$(3) can be
proved similarly. By \cref{thm:cat_inv_cc}, we may assume
that $f$ has the form $U(g) \from U\pr{\mcal X}\to U\pr{\mcal Y}$, where
$g \from \mcal X \to\mcal Y$ is a map of quasicategories. By \cref{prop:TUcomparison}, condition (3) is equivalent to the condition
that the functor $g_{!} \from \pr{\SS_{/\mcal X^{\sharp}}^{+}}_{\cc}\to\pr{\CS_{/\mcal Y^{\sharp}}^{+}}_{\cc}$
have fully faithful left derived functor. By the naturality of straightening and unstraightening
\cite[Appendix A]{gepner-haugseng-nikolaus}, we can rephrase this condition as saying
that the left Kan extension functor
\[
g_{!} \from \Fun\pr{\mcal X,\QC}\to\Fun\pr{\mcal Y,\QC},
\]
which is the right adjoint of the pullback functor $g^{*}$, is fully
faithful. 

Objects of the form $\mcal C \times \mcal X\pr{x,-}$, where $\mcal C$
is a quasicategory and $x\in\mcal X$ is an object, generate $\Fun\pr{\mcal X,\QC}$
under small colimits; this follows, for instance, from the description
of generating cofibrations of the projective model structure \cite[Prop.~A.3.3.2]{Hirschhorn},
or by using \cite[Prop.~4.8.1.17]{lurie:higher-algebra}. 
Since $g^{*}$ preserves
small colimits (being a left adjoint), we find that $g_{!}$ is
full and faithful if and only if the unit is an equivalence at objects of the form $\mathcal{C} \times \mathcal{X}(x, -)$.
In other words, $g_{!}$ is full and faithful precisely when the map
\[
\mcal C\times\mcal X\pr{x,-}\to\mcal C\times\mcal Y\pr{f\pr x,-}
\]
is an equivalence for every $x\in\mcal X$ and $\mcal C\in\QC$, which is equivalent to condition (1).
\end{proof}

We have now come to the final result of this section, which says that strict pullbacks involving cocartesian fibrations are homotopy pullbacks.
\begin{theorem}
\label{cor:cfib_hpb}Suppose we are given a pullback square of cubical
sets 
\[\begin{tikzcd}
	X & {X'} \\
	Y & {Y'}
	\arrow["f", from=1-1, to=1-2]
	\arrow["p"', from=1-1, to=2-1]
	\arrow["{p'}", from=1-2, to=2-2]
	\arrow["g"', from=2-1, to=2-2]
	\ar[from=1-1, to=2-2, phantom, "\pbtick" very near start]
\end{tikzcd}\]
If $p'$ is a cocartesian fibration, then this square is a homotopy pullback
in the cubical Joyal model structure.
\end{theorem}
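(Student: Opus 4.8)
The plan is to reduce the statement to the fibrant case using categorical invariance, exploiting that a strict pullback of a categorical fibration is automatically a homotopy pullback in a right proper model category. Since \cref{thm:cc_Joy} tells us every cocartesian fibration of cubical sets is a categorical fibration, the map $p' \from X' \to Y'$ is a fibration in the cubical Joyal model structure; however, the cubical Joyal model structure is \emph{not} right proper (categorical fibrations do not suffice to detect homotopy pullbacks), so we cannot conclude directly. The key point is that cocartesian fibrations \emph{do} correct this failure, exactly as in the simplicial setting, and the mechanism is categorical invariance over the base.

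First I would choose a weak categorical equivalence $g' \from Y' \to \mcal{Y}'$ with $\mcal{Y}'$ a cubical quasicategory, and then a weak categorical equivalence $Y \to \mcal{Y}$ over a factorization; more carefully, factor the composite $Y \to Y' \to \mcal{Y}'$ as a weak categorical equivalence $Y \to \mcal{Y}$ followed by a categorical fibration $\mcal{Y} \to \mcal{Y}'$, so that $\mcal{Y}$ is a cubical quasicategory. Using \cref{thm:cat_inv_cc}, the base change adjunction along $Y' \to \mcal{Y}'$ is a Quillen equivalence of cocartesian model structures, so $p'$ is cocartesian-equivalent (over $\mcal{Y}'^{\sharp}$, after marking) to a cocartesian fibration $\bar{p}' \from \mcal{X}' \to \mcal{Y}'$ of cubical quasicategories; explicitly, take a fibrant replacement $\natm{X'} \to \mcal{X}'^{\natural}$ in $(\CS^{+}_{/\mcal{Y}'^{\sharp}})_{\cc}$ of the pushforward, which by \cref{prop:mlfib} and \cref{cor:cc_catfib} is a cocartesian fibration of cubical quasicategories, with $X' \to \mcal{X}'$ a weak categorical equivalence over $Y' \to \mcal{Y}'$. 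Pulling back along $\mcal{Y} \to \mcal{Y}'$ and then along $Y \to \mcal{Y}$, and using that pullback of a categorical fibration along anything preserves the strict pullback, we obtain a cube of squares; the goal is to compare the original strict pullback $X$ with the strict pullback $\mcal{X} := \mcal{Y} \times_{\mcal{Y}'} \mcal{X}'$.

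The heart of the argument is then two claims. The first: the square
\[\begin{tikzcd}
	{\mcal{Y} \times_{\mcal{Y}'} \mcal{X}'} & {\mcal{X}'} \\
	{\mcal{Y}} & {\mcal{Y}'}
	\arrow[from=1-1, to=1-2]
	\arrow[from=1-1, to=2-1]
	\arrow["{\bar{p}'}", from=1-2, to=2-2]
	\arrow[from=2-1, to=2-2]
\end{tikzcd}\]
is a homotopy pullback in the cubical Joyal model structure. This is where cocartesian fibrations pull their weight: since $\bar{p}'$ is a cocartesian fibration of cubical quasicategories, its core $\operatorname{Core}$ reading — or more directly, \cref{thm:lfib_descent} applied with $K = \Delta^0$, i.e. the fact that pullback squares of cocartesian fibrations are detected fiberwise — shows that the map $\mcal{Y} \times_{\mcal{Y}'} \mcal{X}' \to (\mcal{Y} \times_{\mcal{Y}'} \mcal{X}')^{\fib}$ into any fibrant replacement of the homotopy pullback is a weak categorical equivalence; concretely, one checks that both the strict pullback and the homotopy pullback are cocartesian fibrations over $\mcal{Y}$ with equivalent fibers (the fiber of each over $y \in \mcal{Y}$ is $\mcal{X}'_{g(y)}$), hence equivalent by \cref{cor:cc_equiv_cat_equiv}. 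Here one must also note that $\mcal{Y} \times_{\mcal{Y}'} \mcal{X}' \to \mcal{Y}$ is genuinely a cocartesian fibration, which holds because cocartesian fibrations are stable under base change (immediate from \cref{def:cc}, or via \cref{prop:cc_equiv} after noting the square lies over cubical quasicategories). The second claim: the comparison map $X \to \mcal{X}$ is a weak categorical equivalence. Both $p \from X \to Y$ and $X' \to \mcal{X}' \to Y' \to \mcal{Y}'$ restrict to cocartesian fibrations over the relevant bases after marking, and the map $X \to \mcal{X}$ sits over the weak categorical equivalences $Y \to \mcal{Y}$ (and is induced by $X' \to \mcal{X}'$), so one applies categorical invariance once more, or \cref{cor:cc_equiv_cat_equiv}, to see it is a weak categorical equivalence by checking it fiberwise over vertices of $Y$, where it becomes an identity of fibers $X_y = X'_{g(y)} \to \mcal{X}'_{g'g(y)} = \mcal{X}_y$; wait — more precisely the fiber comparison is the weak categorical equivalence $X'_{g(y)} \to \mcal{X}'_{g'g(y)}$, which holds because $X' \to \mcal{X}'$ is a fiberwise categorical equivalence by construction.

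Assembling: the original square is a strict pullback of $p'$, which is a categorical fibration, along $g$; it maps to the square with $\mcal{X}$ in the corner via levelwise weak categorical equivalences (identity on $Y, Y'$ up to replacement, the equivalences $X \to \mcal{X}$, $X' \to \mcal{X}'$, $Y \to \mcal{Y}$, $Y' \to \mcal{Y}'$ — reorganized so that the relevant corner maps are equivalences and the bottom maps are categorical fibrations); since the latter square is a homotopy pullback by the first claim and a strict pullback along a categorical fibration is a homotopy pullback whenever the square it maps to by equivalences is, we conclude. The main obstacle I anticipate is the bookkeeping in the first claim — verifying that the strict pullback of a cocartesian fibration of cubical quasicategories along a functor of cubical quasicategories is already a homotopy pullback. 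This is the cubical analogue of \cite[Cor.~3.3.1.5]{lurie:htt} (invoked in the proof of \cref{thm:int-equiv-cc-qcat}), and the cleanest route is: the strict pullback is a cocartesian fibration over $\mcal{Y}$ with the correct fibers, any fibrant replacement (over $\mcal{Y}$, in the Joyal model structure) of the homotopy-pullback is also a categorical fibration with the same fibers, and a map between them over $\mcal{Y}$ inducing equivalences on all fibers is a weak categorical equivalence by \cref{cor:cc_equiv_cat_equiv} — provided the replacement can be chosen to still be a cocartesian fibration, which follows from \cref{prop:cc_equiv} since it is categorically equivalent, over a cubical quasicategory, to one. Everything else is a diagram chase using results already in the paper.
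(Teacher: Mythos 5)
Your overall reduction strategy (replace everything by a cubical quasicategory, compare via a cube, conclude because the quasicategorical back face is a strict pullback of a categorical fibration between fibrant objects) matches the paper's last step, but there is a real gap at the heart of the argument: the assertion that the map $X'\to\mcal X'$ (and similarly $X\to\mcal X$) is a weak \emph{categorical} equivalence. You obtain $\natm{X'}\to\natm{\mcal X'}$ as a fibrant replacement in $\pr{\CS^+_{/\maxm{\mcal Y'}}}_{\cc}$, which makes it a marked left anodyne extension, hence a cocartesian equivalence over $\maxm{\mcal Y'}$; but it is not automatic that its underlying map of cubical sets is a weak categorical equivalence. Indeed, the generating marked left anodynes of class (ML1) have underlying cubical maps $\sqcap^n_{i,1}\hookrightarrow\square^n$, which are \emph{not} weak categorical equivalences (for $n=2$, $\sqcap^2_{1,1}\hookrightarrow\square^2$ is not even a bijection on homotopy categories). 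Because $X'$ is the total space of a cocartesian fibration over the non-fibrant $Y'$, $X'$ is generally not a cubical quasicategory, so none of \cref{prop:cc_equiv}, \cref{cor:cc_equiv_cat_equiv}, etc.\ apply directly. Trying to repair this by factoring $X'\to\mcal X'$ through $Y'\times_{\mcal Y'}\mcal X'$ leads to circularity: the map $Y'\times_{\mcal Y'}\mcal X'\to\mcal X'$ is precisely an instance of the theorem you are proving (pullback of a cocartesian fibration of cubical quasicategories along a trivial cofibration from a non-fibrant base).

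The paper addresses exactly this point, and it is where the bulk of the work lies. It first proves the theorem when $g$ is inner anodyne, by showing that $f_!$ on cocartesian model structures has fully faithful derived functor (\cref{thm:fully_faithful}), via a careful argument with marked left fibrations, the gluing \cref{HMS22}, and an explicit pushout along (ML5) maps. From this case it derives the auxiliary statement $(\ast)$: over an inner anodyne $B\to B'$, any cocartesian fibration $E\to B$ admits an extension $E'\to B'$ together with a weak \emph{categorical} equivalence $E\to E'$ preserving cocartesian edges. Your Claim~2 is essentially $(\ast)$, and without that input the fiberwise check you propose cannot be lifted to a weak categorical equivalence of total spaces. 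The rest of your assembly (Claim~1, the cube, invariance of homotopy pullbacks under levelwise weak equivalences) is sound, but the proof is not complete until the fibrant-replacement-is-a-categorical-equivalence step is actually proved.
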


The proof of \cref{cor:cfib_hpb} requires the following ``gluing
lemma'' for cocartesian fibrations.
\begin{lemma}
\label{HMS22}Consider a commutative diagram of cubical sets
\[\begin{tikzcd}
	X && Y \\
	& Z.
	\arrow["f", from=1-1, to=1-3]
	\arrow["p"', from=1-1, to=2-2]
	\arrow["q", from=1-3, to=2-2]
\end{tikzcd}\]Suppose that:
\begin{enumerate}
\item $p$ and $q$ are cocartesian fibrations, and $f$ preserves cocartesian
edges.
\item The map $f \from X^{\natural}\to Y^{\natural}$ is a marked left fibration.
\item For each vertex $z\in Z$, the map $f_{z} \from X_{z}\to Y_{z}$ is a cocartesian
fibration.
\item For each edge $e \from z\to z'$ in $Z$, the induced map $e_{!} \from X_{z}\to X_{z'}$
carries $f_{z}$-cocartesian edges to $f_{z'}$-cocartesian edges.
\end{enumerate}
Then $f$ is a cocartesian fibration.
\end{lemma}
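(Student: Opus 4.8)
The plan is to verify that $f\from X\to Z$ satisfies the two defining conditions of a cocartesian fibration: that it is an inner fibration, and that it admits cocartesian lifts of edges in $Z$ with prescribed source. I would first establish the inner fibration claim. By hypothesis~(2), the map $f\from X^{\natural}\to Y^{\natural}$ is a marked left fibration, so in particular the underlying map $f\from X\to Y$ is an inner fibration (\cref{prop:mlfib}(1)). Since $q\from Y\to Z$ is a cocartesian fibration, it is an inner fibration, and the composite of inner fibrations is an inner fibration; hence $q\circ f = p$ is an inner fibration. (We do not actually need $f$ itself to be an inner fibration for the combinatorial lifting criterion against inner open boxes, but it follows in any case since $f$ is even a marked left fibration.)

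The substantive part is constructing cocartesian lifts. Fix a vertex $x_0\in X$ and an edge $e\from z_0 = p(x_0)\to z_1$ in $Z$. First use that $p=q\circ f$ is built from cocartesian fibrations $q$ and the fiberwise-cocartesian maps $f_z$: I would lift $e$ in two stages. Set $y_0 = f(x_0)$. Since $q$ is a cocartesian fibration, choose a $q$-cocartesian edge $\bar e\from y_0\to y_1$ lying over $e$. Now restrict to the fiber $X_{z_1}$: the map $e_!\from X_{z_0}\to X_{z_1}$ (cocartesian transport along $e$ for $p$, built via \cref{lem:cocart_natural_transformation} using the $p$-cocartesian edges supplied by the defining property of the cocartesian fibration $p$ — which we get once we know $p$ is a cocartesian fibration; alternatively, since $f$ is a marked left fibration, lift $\bar e$ through $f$ to an edge $x_0\to x_1$ in $S$, which by \cref{prop:mlfib}(3) is $f$-cocartesian). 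The point is to produce an edge $\tilde e\from x_0\to x_1$ over $e$ which is simultaneously $f$-cocartesian (this is automatic: lift $\bar e$ along the marked left fibration $f$ using the right lifting property against a map in (ML3), obtaining $\tilde e\in S$, hence $f$-cocartesian by \cref{prop:mlfib}(3)) \emph{and} whose image $f(\tilde e)=\bar e$ is $q$-cocartesian (automatic by construction). Then \cref{prop:cc_cancellation} applies: $p=q\circ f$ and $q$ are inner fibrations, $p$ is a categorical fibration (\cref{cor:cc_equiv_cat_equiv} or \cref{thm:cc_Joy}: cocartesian fibrations are categorical fibrations — but wait, we are trying to \emph{prove} $f$ is a cocartesian fibration, so I should instead argue $p$ is a cocartesian fibration first). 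Let me restructure: I would first show $p$ itself is a cocartesian fibration. Given $x_0$ and $e\from p(x_0)\to z_1$, lift $e$ to a $q$-cocartesian $\bar e\from y_0\to y_1$ (here $y_0=f(x_0)$), then lift $\bar e$ along the marked left fibration $f$ to a marked (hence $f$-cocartesian) edge $\tilde e\from x_0\to x_1$. I claim $\tilde e$ is $p$-cocartesian: this should follow by combining the lifting properties — given a lifting problem $\sqcap^n_{i,1}\to X$, $\square^n\to Z$ with critical edge $\tilde e$, push it through $q$ and $f$ and use that $\bar e$ is $q$-cocartesian to first lift against $q$, then use that $\tilde e\in S$ is $f$-cocartesian to lift against $f$. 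So $p$ is a cocartesian fibration, and $\tilde e$ is a $p$-cocartesian edge over $e$.

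Now apply \cref{prop:cc_cancellation} in reverse — actually that proposition goes the other direction (from $p$-cocartesian and $q$-cocartesian for $p\circ$ something... ). What I actually want is: $\tilde e$ is $p$-cocartesian, and I want $\tilde e$ to be $f$-cocartesian — but that I already know, since $\tilde e\in S$. So for each $x_0$ and each edge $e'\from f(x_0)\to y_1$ in $Y$, I need an $f$-cocartesian lift $x_0\to x_1$ over $e'$; this is exactly condition~(2) of \cref{prop:mlfib} for the marked left fibration $f\from X^\natural\to Y^\natural$, using that $f(x_0)\to y_1$ lies over $q(e')\from z_0\to z_1$... hmm, but $e'$ need not lie over a nondegenerate edge. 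If $q(e')$ is degenerate, $e'$ lies in a fiber $Y_z$, and by hypothesis~(3) $f_z$ is a cocartesian fibration so we get an $f_z$-cocartesian (hence $f$-cocartesian, by \cref{prop:cc_cancellation} applied to $f_z$ and the inclusion $X_z\hookrightarrow X$... or more directly since fiberwise-cocartesian over an inner fibration gives cocartesian) lift. If $q(e')$ is nondegenerate, lift $e'$ through the marked left fibration $f$ directly. In both cases we produce an $f$-cocartesian lift; together with the inner fibration property this shows $f$ is a cocartesian fibration.

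\textbf{Main obstacle.} The delicate point is verifying that the composite of a $q$-cocartesian edge with the lifting behaviour of a marked left fibration actually yields a $p$-cocartesian (and the produced edge $f$-cocartesian) lift, i.e. correctly chaining the lifting properties against $\sqcap^n_{i,1}\subset\square^n$ through the factorization $X\xrightarrow{f}Y\xrightarrow{q}Z$; one must be careful that the critical edge is sent to the right kind of edge at each stage, and invoke \cref{prop:cc_cancellation} and hypothesis~(4) (stability of fiberwise-cocartesian edges under transport) precisely when the target edge is degenerate versus nondegenerate. I expect the bookkeeping of which hypothesis covers which case — degenerate base edge (use (3), (4)) versus nondegenerate (use (2)) — to be the part requiring the most care, while each individual lift is routine given the earlier results.
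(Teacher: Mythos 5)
Your approach is genuinely different from the paper's, and it has a real gap. The paper first reduces to the case where $Z$ is a cubical quasicategory (via categorical invariance, using \cref{thm:cat_inv_cc} and \cref{cor:cc_equiv_cat_equiv}), then reduces the claim to showing $f$ is a categorical fibration (via \cref{lem:cc_fibrations}), and finally handles the remaining isofibration condition by transferring to simplicial sets and citing the known simplicial version of the lemma. You instead try to build $f$-cocartesian lifts directly by hand, without the reduction to a quasicategory base.

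The crux of the gap is your case distinction on $q(e')$. When $q(e')$ is nondegenerate, you propose to ``lift $e'$ through the marked left fibration $f$ directly.'' But the marked left fibration $f\colon X^{\natural}\to Y^{\natural}$ only lifts \emph{marked} edges of $Y^{\natural}$, and the marking on $Y^{\natural}$ is at $q$-cocartesian edges. An arbitrary edge $e'$ of $Y$ lying over a nondegenerate edge of $Z$ need not be $q$-cocartesian, so (ML2) does not apply and no lift is produced. The correct treatment of the general edge $e'$ must factor it — using that $q$ is a cocartesian fibration — as a $q$-cocartesian edge followed by a fiber edge, lift each piece separately (the marked left fibration handles the first, hypothesis~(3) the second), and then verify that the composite is $f$-cocartesian. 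That last verification is precisely where hypothesis~(4) enters in an essential way, and it is not a formality: it is a ``right cancellation'' type argument that you would need \cref{prop:four_three} or similar for, not \cref{prop:cc_cancellation}. In your write-up, hypothesis~(4) is only acknowledged in the final paragraph but never actually invoked, so the argument does not close.

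A secondary issue is the parenthetical claim that an $f_z$-cocartesian edge of $X_z$ is automatically $f$-cocartesian, ``by \cref{prop:cc_cancellation} applied to $f_z$ and the inclusion $X_z\hookrightarrow X$.'' That proposition concerns a composable pair of inner fibrations $X\to Y\to Z$ and an edge whose image is $q$-cocartesian; the inclusion $X_z\hookrightarrow X$ is not an inner fibration to which it can be applied. Showing that a fiberwise-cocartesian edge is globally cocartesian requires a separate argument (for instance, passing to a mapping-space or slice argument, or filtering $\square^n$ by its intersection with the fiber). Without this, both branches of your case analysis have unjustified steps, and the proof does not go through as written.
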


\begin{proof}
Choose a weak categorical equivalence $k \from Z\to\mcal Z$ with $\mcal Z$
a cubical quasicategory, and find a commutative diagram 
\[\begin{tikzcd}
	{X^\natural} & {\mathcal{X}^{\natural}} \\
	{Y^\natural} & {\mathcal{Y}^{\natural}} \\
	{Z^\sharp} & {\mathcal{Z}^{\sharp}}
	\arrow["i", from=1-1, to=1-2]
	\arrow["f"', from=1-1, to=2-1]
	\arrow["{f'}", from=1-2, to=2-2]
	\arrow["j", from=2-1, to=2-2]
	\arrow["q"', from=2-1, to=3-1]
	\arrow["{q'}", from=2-2, to=3-2]
	\arrow["k", from=3-1, to=3-2]
\end{tikzcd}\]in which $i$ and $j$ are marked left anodyne and $f'$ and $q'$
are marked left fibrations in $\mcSet_{ / \maxm{\mathcal{Z}}}$. 
Since $k$ is a monomorphism, the composite of functors $k^* \circ k_!$ is naturally isomorphic to the identity.
With this, \cref{thm:cat_inv_cc} shows the maps $X^{\natural}\to Z^{\sharp}\times_{\mcal Z^{\sharp}}\mcal X^{\natural}$
and $Y^{\natural}\to Z^{\sharp}\times_{\mcal Z^{\sharp}}\mcal Y^{\natural}$
are cocartesian equivalences of $\CS_{/Z^{\sharp}}^{+}$, since they are obtained by applying $k^*$ to the maps $i$ and $j$, which are weak equivalences between fibrant objects (\cref{cor:cc_equiv_cat_equiv}).
Combining this with \cref{thm:cc_Joy}, we find that in the commutative
diagram 
\[\begin{tikzcd}
	X & {Z\times_\mathcal{Z}\mathcal{X}} \\
	Y & {Z\times_\mathcal{Z}\mathcal{Y}}
	\arrow["\simeq", from=1-1, to=1-2]
	\arrow["f"', from=1-1, to=2-1]
	\arrow["{f'\vert Z}", from=1-2, to=2-2]
	\arrow["\simeq", from=2-1, to=2-2]
\end{tikzcd}\]in $\pr{\CS_{/Z}}_{\Joyal}$, all objects are fibrant, the vertical
arrows are fibrations, and that the horizontal arrows are weak equivalences.
Repeating the argument of \cref{prop:cc_equiv} relative to $Z$,
we find that $f$ is a cocartesian fibration if and only if $f'\vert_{Z}$
is one. Therefore, it suffices to show that $f'$ is a cocartesian
fibration. Since $f'$ satisfies the hypotheses of the proposition,
we are therefore reduced to the case where $Z$ is a cubical quasicategory.

By \cref{lem:cc_fibrations}, it suffices to show $f$ is a categorical fibration.
Since marked left fibrations are inner fibrations, it remains only to show $f$ is an isofibration.
Using \cref{prop:cc_edges_vs_cc_edges} and \cref{prop:TUcomparison},
we may reduce this to the analogous statement in simplicial sets, where the claim is well-known (see, e.g., \cite[Lem.~A.1.8]{HMS22}).
\end{proof}

\begin{proof}[Proof of \cref{cor:cfib_hpb}]
Suppose first that $g$ is inner anodyne. We must show that $f$ is a weak categorical equivalence.
Since $f$ is bijective on vertices, it suffices to show that it is
fully faithful. According to \cref{thm:fully_faithful}, it suffices 
to show that the functor $f_! \from ({\CS_{/X^{\sharp}}^{+}} )_{\cc} \to ({\CS_{/X^{\p\sharp}}^{+}})_{\cc}$
has full and faithful total left derived functor. 

Let $q \from E\to X$ be a cocartesian fibration. We will show that the
derived unit of the adjunction $\mathbb{L}f_{!}\dashv\mathbb{R}f^{*}$
at $q \from \pr{E,\cc\pr q}\to X^{\sharp}$ is an isomorphism, where $\cc\pr q$
denotes the set of $q$-cocartesian edges. To this end, find a commutative
diagram
\[\begin{tikzcd}[row sep = 2.7em]
	{(E,\mathrm{cc}(pq))} & {(E',\mathrm{cc}(p'q'))} \\
	{(X,\mathrm{cc}(p))} & {(X',\mathrm{cc}(p'))} \\
	{Y^\sharp} & {\maxm{(Y')},}
	\arrow["u", from=1-1, to=1-2]
	\arrow["q"', two heads, from=1-1, to=2-1]
	\arrow["{q'}", two heads, from=1-2, to=2-2]
	\arrow[from=2-1, to=2-2]
	\arrow["{p}"', two heads, from=2-1, to=3-1]
	\arrow["p'", two heads, from=2-2, to=3-2]
	\arrow[from=3-1, to=3-2]
\end{tikzcd}\]
such that $u$ is marked left anodyne and the two-headed arrows are marked
left fibrations. We observe that $q'$ is a cocartesian fibration.
Indeed, by \cref{HMS22}, this is implied by the following claims:

\begin{enumerate}[label=(\alph*)]

\item for each vertex $y\in Y$, the map $q'_{g\pr y} \from E'_{g\pr y}\to X'_{g\pr y}$
is a cocartesian fibration; and

\item for each edge $e \from y_{0}\to y_{1}$ in $Y$, the map $e_{!} \from E'_{g\pr{y_{0}}}\to E'_{g\pr{y_{1}}}$
carries $q'_{g\pr{y_{0}}}$-cocartesian edges to $q_{g\pr{y_{1}}}$-cocartesian
edges.

\end{enumerate}

These claims follow from \cref{prop:cc_equiv}, because the map $E_{y}\to E'_{g\pr y}$
is a categorical equivalence by \cref{thm:cat_inv_cc,prop:fiberwise_eq_cc}, and because the map $X_{y}\to X'_{g\pr y}$ is an isomorphism of cubical sets.

The above argument actually proves something stronger: every $q$-cocartesian
edge is mapped to a $q'$-cocartesian edge, and every $q'$-cocartesian
edge is a composite of a $p'q'$-cocartesian edge and the image of
a $q$-cocartesian edge. 
It follows that the map
\[
	\big( E',\cc(p'q') \big) \push\limits_{(E,\cc(pq))} \big( E,cc(q) \big) \to \big(E',  cc(q') \big)
\]
is well-defined and is marked left anodyne (as a composite of maps in (ML5)).
Therefore, the top face of the diagram
\[\begin{tikzcd}
	& {(E',\mathrm{cc}({p'q'}))} && {(E',\mathrm{cc}(q'))} \\
	{(E,\mathrm{cc}({pq}))} && {(E,\mathrm{cc}(q))} \\
	& {(X',\mathrm{cc}({p'}))} && {X^{\prime\sharp}} \\
	{(X,\mathrm{cc}(p))} && {X^\sharp} \\
	& {Y^{\prime\sharp},} \\
	{Y^\sharp}
	\arrow[from=1-2, to=1-4]
	\arrow[from=1-2, to=3-2]
	\arrow[from=1-4, to=3-4]
	\arrow[from=2-1, to=1-2]
	\arrow[from=2-1, to=2-3]
	\arrow[from=2-1, to=4-1]
	\arrow[from=2-3, to=1-4]
	\arrow[from=2-3, to=4-3]
	\arrow[from=3-2, to=3-4]
	\arrow[from=3-2, to=5-2]
	\arrow[from=4-1, to=3-2]
	\arrow[from=4-1, to=4-3]
	\arrow[from=4-1, to=6-1]
	\arrow[from=4-3, to=3-4]
	\arrow[from=6-1, to=5-2]
\end{tikzcd}\]is homotopy cocartesian in $\pr{\CS_{/Y^{\prime\sharp}}^{+}}_{\cc}$.
In particular, the map $u \from \big( {E,\cc\pr q} \big) \to \big( E',\cc\pr{q'} \big)$ is
a cocartesian equivalence over $Y'$, so the derived unit at $\big( {E,\cc\pr q} \big)$
is realized by the map $\big( {E,\cc\pr q} \big) \to \big( {E',\cc\pr{q'}} \big) \times_{X^{\sharp}} X^{\prime\sharp}$.
This is a cocartesian equivalence over $X$ by \cref{cor:cc_equiv_cat_equiv},
because its underlying map can be identified with that of $\big( {E,\cc\pr{pq}} \big) \to \big( {E',\cc\pr{q'}} \big) \times_{Y^{\sharp}} Y^{\prime\sharp}$.

Before we proceed, we record an important consequence of what we have
just proved:
\begin{itemize}
\item [($\ast$)]For any cocartesian fibration $\pi \from E\to B$ of cubical
sets and every inner anodyne extension $\iota \from B \xrightarrow{\sim}B'$, there is a commutative diagram 
\[\begin{tikzcd}
	E & {E'} \\
	B & {B'}
	\arrow["\alpha", from=1-1, to=1-2]
	\arrow["\sim"', from=1-1, to=1-2]
	\arrow["\pi"', from=1-1, to=2-1]
	\arrow["{\pi'}", from=1-2, to=2-2]
	\arrow["\iota"', from=2-1, to=2-2]
	\arrow["\sim", from=2-1, to=2-2]
\end{tikzcd}\]satisfying the following conditions:
\begin{itemize}
\item The map $\pi'$ is a cocartesian fibration.
\item The map $\alpha$ is a weak categorical equivalence carrying $\pi$-cocartesian
edges to $\pi'$-cocartesian edges.
\end{itemize}
\end{itemize}
To prove ($\ast$), just take $\pi'$ as a cocartesian fibration which
admits a cocartesian equivalence $E^{\natural}\xrightarrow{\sim}B^{\sharp}\times_{B^{\p\sharp}}E^{\p\natural}$,
which exists by \cref{thm:cat_inv_cc}. The map $E\to B\times_{B'}E'$
is then a weak categorical equivalence by \cref{cor:cc_equiv_cat_equiv},
while the projection $B\times_{B'}E'\to E'$ is a weak categorical
equivalence by what we have just shown.

We can now prove the corollary in the general case. Find a commutative
diagram 
\[\begin{tikzcd}
	& {\mathcal{X}} && {\mathcal{X}'} \\
	X && {X'} \\
	& {\mathcal{Y}} && {\mathcal{Y}'} \\
	Y && {Y'}
	\arrow[from=1-2, to=1-4]
	\arrow[from=1-2, to=3-2]
	\arrow[from=1-4, to=3-4]
	\arrow["l", from=2-1, to=1-2]
	\arrow[from=2-1, to=2-3]
	\arrow[from=2-1, to=4-1]
	\arrow["k", from=2-3, to=1-4]
	\arrow[from=2-3, to=4-3]
	\arrow[from=3-2, to=3-4]
	\arrow["i", from=4-1, to=3-2]
	\arrow[from=4-1, to=4-3]
	\arrow["j", from=4-3, to=3-4]
\end{tikzcd}\]where all vertical maps are cocartesian fibrations, $i,j$ are inner
anodyne extensions, $k$ is a weak categorical equivalence mapping
cocartesian edges over $Y'$ to those over $\mcal Y'$, and the back
square is a pullback (such a diagram exists by ($\ast$)). The map
$u \from X'\to j^{*}\mcal X'$ is a weak categorical equivalence, since
$j^{*}\mcal X'\to\mcal X'$ is a weak categorical equivalence by the
special case we treated above. By \cref{cor:cc_equiv_cat_equiv},
this means that $u$ is a fiberwise categorical equivalence. In particular,
its pullback $v \from X\to i^{*}\mcal X$ has the same property. Again
by \cref{cor:cc_equiv_cat_equiv}, $v$ is a weak categorical
equivalence. Another application of the special case we treated above
says that the map $i^{*}\mcal X\to\mcal X$ is a weak categorical
equivalence. In conclusion, all the maps $i,j,k,l$ are weak categorical
equivalences. Consequently, it will suffice to show that the back
face is a homotopy pullback. This is clear, because the square
consists of cubical quasicategories and the map $\mcal X'\to\mcal Y'$
is a categorical fibration.
\end{proof}

\section{\label{sec:formula}Cubical Bousfield--Kan formula}

As another (unexpected) application of the machinery we have developed
so far, we give a cubical version of the classical Bousfield--Kan
formula.

Computing homotopy colimits in general model categories can be
challenging. However, for simplicial model categories, there is a
clean formula for homotopy colimits, called the \emph{Bousfield--Kan
formula}. To recall, let $\mbf M$ be a simplicial model
category, and let $F \from \mcal C\to\mbf M$ be a small diagram in $\mbf M$.
The Bousfield--Kan formula for the homotopy colimit of $F$ is given by the weighted
colimit of $F$ with weight $N_{\Delta}\pr{\mcal C_{\bullet/}} \from \mcal C^{\op}\to\SS$;
that is, by the functor tensor product 
\[
N_{\Delta}\pr{\mcal C_{\bullet/}}\otimes_{\mcal C}F=\int^{c\in\mcal C}N_{\Delta}\pr{\mcal C_{c/}}\otimes F(c).
\]
Notice that the weighted colimit of $F$ with trivial weight (constant
at $\Delta^{0}$) computes the ordinary colimit. Since each $N_{\Delta}\pr{\mcal C_{C/}}$
is weakly contractible, we can think of this formula as obtained from
the ordinary colimit by ``fattening up'' the weight $N_{\Delta}\pr{\mcal C_{\bullet/}}$;
more precisely, it is a projective cofibrant replacement of the trivial
weight.

It is natural to ask to what extent the Bousfield--Kan formula is
valid in the cubical setting. We can answer this question by using
the machinery developed earlier:
\begin{theorem}
\label{thm:BK}Let $\mbf M$ be a cofibrantly generated model category
equipped with a left Quillen bifunctor 
\[ \otimes \from \CS_{\mathrm{Kan-Quillen}}\times\mbf M\to\mbf M, \]
and let $F \from \mcal C\to\mbf M$ be a small diagram. The functor tensor
product $N\pr{\mcal C_{\bullet/}}\otimes_{\mcal C}F$ computes the
homotopy colimit of $F$.
\end{theorem}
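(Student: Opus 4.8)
The plan is to reduce the statement to a statement about cofibrant replacements of the terminal weight, and then invoke the covariant model structure together with the unstraightening equivalence of \cref{thm:str}. Concretely, the functor tensor product $N(\mcal C_{\bullet/}) \otimes_{\mcal C} F$ is, by definition, the left Kan extension of $F$ weighted by the presheaf $N(\mcal C_{\bullet/}) \from \mcal C^{\op} \to \CS$. Since computing the homotopy colimit of $F$ amounts to tensoring $F$ with a projective-cofibrant replacement of the constant diagram at $\square^{0}$ (using the left Quillen bifunctor $\otimes$ and the standard weighted-homotopy-colimit machinery, e.g.\ in the manner of Shulman or Riehl), it suffices to prove two things: (i) the presheaf $W \from c \mapsto N(\mcal C_{c/})$ is projectively cofibrant in $\Fun(\mcal C^{\op}, \CS_{\mathrm{Kan-Quillen}})$, and (ii) the canonical map $W \to \underline{\square^{0}}$ to the terminal weight is a projective (hence objectwise) weak equivalence. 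Claim (ii) is immediate: each $N(\mcal C_{c/})$ has a terminal object, so the inclusion $\{c\} \to N(\mcal C_{c/})$ is a (left) deformation retract and in particular a weak homotopy equivalence, whence $N(\mcal C_{c/}) \to \square^{0}$ is a weak equivalence in the Kan--Quillen model structure.

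The main work is therefore claim (i). First I would observe that the presheaf $c \mapsto N(\mcal C_{c/})$ on $\mcal C^{\op}$ is precisely $\Rect$ applied to a suitable object of $\CS_{/N\mcal C}$ --- indeed, unwinding \cref{def:int}, $\Rect(N\mcal C) = N\mcal C \times_{N\mcal C} N(\mcal C_{/\bullet})$ recovers (the opposite-variance incarnation of) the slice, and the relevant variance bookkeeping identifies $N(\mcal C_{\bullet/})$ with the rectification of the terminal left fibration $N\mcal C \xrightarrow{\id} N\mcal C$ over $N\mcal C^{\op}$, i.e.\ the application of the left Quillen functor $\Rect \from (\CS_{/N\mcal C^{\op}})_{\cov} \to \Fun(\mcal C^{\op}, \CS)_{\mathrm{proj}}$ of \cref{thm:str} to the terminal object $\id_{N\mcal C^{\op}}$. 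The terminal object of a slice category is always cofibrant (its structure map is the identity, in particular a monomorphism, hence a cofibration out of the initial object $\varnothing \to N\mcal C^{\op}$ is... well, rather: every object of a presheaf slice is cofibrant in the covariant model structure since all objects are cofibrant there, cofibrations being monomorphisms). Since $\Rect$ is left Quillen (by \cref{thm:str}), it carries the cofibrant object $\id_{N\mcal C^{\op}}$ to a projectively cofibrant object, which is exactly $W$. This establishes (i).

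Having (i) and (ii), I would then assemble the conclusion. The homotopy colimit $\hocolim F$ is modeled by $Q \otimes_{\mcal C} F$ for any projective-cofibrant replacement $Q \xrightarrow{\sim} \underline{\square^{0}}$ of the terminal weight, because $\otimes$ is a left Quillen bifunctor out of $\CS_{\mathrm{Kan-Quillen}}$ in its first variable (the weighted colimit $- \otimes_{\mcal C} F$ is then left Quillen from the projective model structure on weights, and applying it to a weak equivalence between projective-cofibrant weights gives a weak equivalence, while applying it to $\underline{\square^{0}}$ gives the ordinary colimit up to the cofibrancy caveat). Since $W$ is itself such a projective-cofibrant replacement, $W \otimes_{\mcal C} F = N(\mcal C_{\bullet/}) \otimes_{\mcal C} F$ computes $\hocolim F$, as required. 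One technical point to handle carefully is the precise comparison between ``$- \otimes_{\mcal C} F$ applied to a cofibrant weight'' and the genuine homotopy colimit when $F$ is not objectwise cofibrant; the cleanest route is to note that $W \otimes_{\mcal C} -$ already agrees with $\hocolim$ on all of $\Fun(\mcal C, \mbf M)$ because $W$ is cofibrant in the projective structure on $\mcal C^{\op}$-weights --- this is the standard fact that any cofibrant weight together with a left Quillen bifunctor computes the (co)bar-type homotopy colimit --- so no cofibrancy hypothesis on $F$ is needed, matching the statement. The main obstacle is thus purely bookkeeping: pinning down the variance so that $N(\mcal C_{\bullet/})$ is literally $\Rect$ of the terminal object of $(\CS_{/N\mcal C^{\op}})_{\cov}$, after which left Quillen-ness of $\Rect$ does all the real work.
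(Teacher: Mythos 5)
Your proof takes essentially the same route as the paper: show the weight $W = N(\mcal C_{\bullet/})$ is projectively cofibrant by appealing to the rectification functor of \cref{thm:str}, observe that each value $W(c)$ is weakly contractible, and conclude that $W \otimes_{\mcal C} -$ is a model for the total left derived functor of $\colim$. The paper packs this into three sentences, citing ``the dual of \cref{thm:str}'' and \cite[Rem.~A.2.9.27]{lurie:htt}; your write-up spells out the same argument at greater length.

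Two caveats, though. First, the variance bookkeeping is not quite as automatic as you suggest. Applying \cref{thm:str} with $\mcal C^\op$ in place of $\mcal C$ gives the rectification $\Rect \from \CS_{/N(\mcal C^\op)} \to \Fun(\mcal C^\op, \CS)$ whose value at the terminal object $\id_{N(\mcal C^\op)}$ is $c \mapsto N(\mcal C^\op_{/c}) = N\!\pr{(\mcal C_{c/})^\op} = N(\mcal C_{c/})^\op$, which differs from $N(\mcal C_{c/})$ by the cubical $\op$-involution. What the paper actually invokes is the genuinely \emph{dual} theorem --- the version with the contravariant model structure (right open boxes), whose rectification $X \mapsto X \times_{N\mcal C} N(\mcal C_{\bullet/})$ produces $N(\mcal C_{\bullet/})$ on the nose. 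You can salvage your route by noting that the involution preserves both monomorphisms and the Kan--Quillen weak equivalences, hence preserves projective cofibrancy and objectwise weak contractibility, but the identification is not literal as you claim. (Also, $\mcal C_{c/}$ has an \emph{initial} object, not a terminal one; the conclusion of contractibility is unaffected.)

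Second, the closing claim that ``no cofibrancy hypothesis on $F$ is needed'' is too strong. The mechanism you describe --- and the one the paper cites from \cite[Rem.~A.2.9.27]{lurie:htt} --- gives that a projectively cofibrant weight $W$ makes $W \otimes_{\mcal C} -$ preserve weak equivalences of \emph{pointwise cofibrant} diagrams, and hence agrees with the derived colimit on such diagrams. For non--pointwise-cofibrant $F$, the strict functor tensor product $W \otimes_{\mcal C} F$ need not have the correct homotopy type (take $\mcal C = \ast$, where $W \otimes_{\mcal C} F = F$). The paper's \cref{cor:BK_mon} explicitly adds the hypothesis that $F$ take values in cofibrant objects, consistent with this reading of the theorem.
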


\begin{proof}
The dual of \cref{thm:str} shows that the weight $W=N\pr{\mcal C_{\bullet/}} \from \mcal C^{\op}\to\CS$
is projectively cofibrant. As explained in \cite[Rem.~A.2.9.27]{lurie:htt},
this implies that the functor $W \otimes_{\mcal C} - \from \Fun\pr{\mcal C,\mbf M}\to\mbf M$
preserves weak equivalences of pointwise cofibrant diagrams. Since
$W(c)$ is weakly contractible for all $c \in\mcal C$, its total
left derived functor is isomorphic to that of $\colim \from \Fun\pr{\mcal C,\mbf M}\to\mbf M$.
\end{proof}
\begin{remark}
\label{rem:BK}In the situation of \cref{thm:BK}, the functor
tensor product $N\pr{\mcal C_{\bullet/}}\otimes_{\mcal C}F$ can be
written as $\square^{\bullet}\otimes_{\square} B_{\bullet}\pr{\ast,\mcal C,F}=|B_\bullet\pr{\ast,\mcal{C},F}|$,
where $B_{\bullet}\pr{\ast,\mcal C,F}$ denotes the \emph{cubical
Bar construction}, defined by
\[
B_{n}\pr{\ast,\mcal C,F}=\coprod_{d:[1]^{n}\to\mcal C}F\pr{d\pr{0^{n}}}.
\]
(The rewriting of the simplicial case is explained in \cite[Thm.~6.6.1]{cathtpy},
and the argument there is valid in the cubical case as well.) We can
use this to give an alternative proof of \cref{thm:BK}, as
follows.

Let $\pi \from \int N\pr{\mcal C}\to\square^{\op}$ denote the category of elements of the functor $N\pr{\mcal C} \from \square^{\op}\to\msf{Set}$.
Its objects are diagrams $d \from [1]^{n}\to\mcal C$, with a morphism
$\pr{d \from [1]^{n}\to\mcal C}\to\pr{d' \from [1]^{m}\to\mcal C}$ given by a
map $u \from \square^{m}\to\square^{n}$ such that $du=d'$. There is an
``evaluation at $0$'' functor
\[ \begin{array}{r c c c}
	\ev_{0} \from & \int N\pr{\mcal C} & \to & \mcal C \\
 	{} & (d \from [1]^{n}\to\mcal C) & \mapsto & d(0^n)
\end{array} \]
and $B_{\bullet}\pr{\ast,\mcal C,F}$ is the left Kan extension of
$F\circ\ev_{0}$ along $\pi$:
\[\begin{tikzcd}
	{\int N(\mathcal{C})} & {\mathcal{C}} & {\mathbf{M}} \\
	{\square^{\mathrm{op}}}
	\arrow[""{name=0, anchor=center, inner sep=0}, "{\mathrm{ev}_0}", from=1-1, to=1-2]
	\arrow["\pi"', from=1-1, to=2-1]
	\arrow["F", from=1-2, to=1-3]
	\arrow[""{name=1, anchor=center, inner sep=0}, "{B_\bullet (\ast,\mathcal{C},F)}"', dotted, from=2-1, to=1-3]
\end{tikzcd}\]This left Kan extension is a homotopy left Kan extension because coproducts
of cofibrant objects are homotopy coproducts, and the object $B_{\bullet}\pr{\ast,\mcal C,F}$
is Reedy cofibrant for a similar reason. Thus, to prove \cref{thm:BK},
it suffices to show that $\ev_{0}$ is homotopy final, i.e., that
for each $c \in\mcal C$, the fiber product $N \left( \int \mcal{C} \times_{\mcal C}\mcal C_{c/} \right) \cong\int N\pr{\mcal C_{c/}}$
is weakly contractible. 
By the naturality of \cref{thm:str} 
in $\mcal C$, the right hand side computes the homotopy colimit of
the diagram $N\pr{\mcal C_{c/}} \from \square^{\op}\to\Set\hookrightarrow\CS$.
This homotopy colimit is weakly equivalent to $N\pr{\mcal C_{c/}}$ (\cite[Cor.~3.14]{arakawa-carranza-kapulkin:derived-mapping-space}), so $\int N\pr{\mcal C_{c/}}$ is indeed weakly contractible.
\end{remark}

\begin{remark}\label{rem:fatBK}
	In the situation of \cref{rem:BK}, it is tempting to compute the homotopy colimit of a cubical diagram $X$ in $\mathbf{M}$ by the ``fat realization'' 
\[
	\| X \| = \square^\bullet\otimes_{\square_{+}} X=\int^{\square^n\in \square_{+}}\square^{n}\otimes X_n,	
\]
where $\square_{+} \subset \square$ denotes the subcategory spanned by the face maps.
Unfortunately, this does \emph{not} compute the homotopy colimit of the cubical diagram $X$ in general. 
In contrast to the simplicial case, the inclusion $i \from (\square_{+})^{\op}\to \square^\op$ is not homotopy final. In fact, the classifying space $B\square_{+}$ is homotopy equivalent to $\Omega S^2$, so $i$ fails to induce a homotopy equivalence between classifying spaces.

To see this, for each $n \geq 0$, let $S(n)$ denote the set of morphisms $[1]^n \to [1]^m$ in $\boxcat$ which are surjective. The inclusion
\[
	S(n)\to \square_+\times_{\square}\square_{[1]^n/}
\]
is initial. Combining this with the fact that coproducts of cubical sets are already homotopy coproducts, we deduce that the left Kan extension functor $i_! \from \CS^{(\square_{+})^\op}\to \CS^{\square^\op}$ agrees with the homotopy left Kan extension functor (where $\cSet$ is equipped with the Grothendieck model structure). 
Thus, we have weak homotopy equivalences of cubical sets
\[
	B\square_+\simeq B((\square_+)^\op)\simeq \hocolim\limits_{(\square_+)^\op}\ast \simeq \hocolim\limits_{\square^\op}i_!(\ast) \simeq i_!(\ast),
\]
where the last equivalence is an instance of \cite[Prop.~3.14]{arakawa-carranza-kapulkin:derived-mapping-space}.

The cubical set $i_!(\ast)$ has a unique nondegenerate cube in each dimension, and nondegenerate cubes are stable under the face operations. So the topological geometric realization of $i_!(\ast)$ is the quotient of $\coprod_{n\geq0}[0,1]^n$ with respect to the relation $(t_1,\dots,t_n) \sim (t_1,\dots ,\hat{t_i} ,\dots, t_n)$ when $t_i\in\{0,1\}$ (the hat notation indicates that $t_i$ is discarded). 
This recovers the James construction $J(S^1)$, which is homotopy equivalent to $\Omega(\Sigma S^1)=\Omega S^2$ \cite[Thm.~4J.1]{hatcher}.

\end{remark}

\renewcommand{\plain}{\varnothing}
Let $\square_{\plain}$ denote the minimal cubical
category, i.e., the cubical category without connections. 
Our discussion so far remain valid if we replace $\square$ by
$\square_{\plain}$. 
In particular, \cref{thm:BK} goes through
if we replace $\CS$ by the category $\CS_{\plain}$ of presheaves over $\square_{\plain}$
(with the Kan--Quillen model structure), since we only use that the rectification functor of \cref{thm:str} preserves
cofibrant objects (or equivalently, that its right adjoint preserves
trivial fibrations). 
\begin{proposition} \label{rem:plain}
	Let $\mbf M$ be a cofibrantly generated model category
	equipped with a left Quillen bifunctor 
	\[ \otimes \from {\big( {\cSet}_{\plain} \big)}_{ \mathrm{Kan-Quillen}} \times {\mathbf{M}} \to {\mathbf{M}}, \]
	and let $F \from \mcal C\to\mbf M$ be a diagram. The functor tensor
	product $N\pr{\mcal C_{\bullet/}}\otimes_{\mcal C}F$ computes the
	homotopy colimit of $F$. \qed
\end{proposition}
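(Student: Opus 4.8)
The plan is to re-run the proof of \cref{thm:BK} verbatim, with $\square$ replaced by $\square_{\plain}$ throughout. The only non-formal input to that proof (and to its alternative in \cref{rem:BK}) is that the weight $W = N(\mcal C_{\bullet/}) \from \mcal C^{\op}\to\CS_{\plain}$ is projectively cofibrant; granting this, everything else is mechanical. Indeed, \cite[Rem.~A.2.9.27]{lurie:htt} then guarantees that $W\otimes_{\mcal C}-$ preserves weak equivalences of pointwise cofibrant diagrams, and since each $W(c)=N(\mcal C_{c/})$ is weakly contractible in $\CS_{\plain}$, the total left derived functor of $W\otimes_{\mcal C}-$ agrees with that of $\colim$. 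The hypothesis that $\otimes\from(\CS_{\plain})_{\mathrm{Kan-Quillen}}\times\mbf M\to\mbf M$ is a left Quillen bifunctor is used exactly as in \cref{thm:BK} to identify $N(\mcal C_{\bullet/})\otimes_{\mcal C}F$ with this derived functor tensor product.

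So the whole argument reduces to the plain analogue of one direction of \cref{thm:str}. The weight $W$ is the image under (the dual of) the plain rectification functor $\Rect$ of a terminal object, so it is enough that $\Rect$ preserves cofibrant objects; by adjunction, and since $\varnothing\to X$ is always a monomorphism, this is equivalent to the statement that the plain relative Grothendieck construction $\int$ sends pointwise trivial fibrations to trivial fibrations (i.e.\ maps with the right lifting property against the inclusions $\partial\square^{n}\hookrightarrow\square^{n}$). The proof of the corresponding fact in \cref{thm:str} is purely combinatorial: a lifting problem $\partial\square^{n}\to\int F$, $\square^{n}\to\int G$ over a fixed $\square^{n}\to N(\mcal C)$ unwinds to the lifting problem $\partial\square^{n}\to F(c)$, $\square^{n}\to G(c)$ with $c$ the image of the last vertex $1^{n}$, which is solved because $p_{c}$ is a trivial fibration. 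This unwinding uses only face and degeneracy maps, the poset $\Side(\square^{n})$ of nondegenerate faces, and the last-vertex functor $\LV$, all of which are available in $\square_{\plain}$ precisely as in $\square$; no connection ever enters, so the argument transfers unchanged. (The orthogonal-factorization-system discussion of \cref{lem:OFS} and \cref{exa:sides} is needed in \cref{thm:str} only for the adjunction \cref{prop:radj_L} and for the Quillen \emph{equivalence}, not here; should one want it, it also transfers, replacing ``degeneracies and connections'' by ``degeneracies'' and using the surjection--injection factorization of maps of $\square_{\plain}$.)

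It remains only to record, as in \cref{rem:BK}, that each $W(c)=N(\mcal C_{c/})$ is weakly contractible in $\CS_{\plain}$; this holds because $\mcal C_{c/}$ has an initial object, so that the inclusion $\square^{0}=\{c\}\to N(\mcal C_{c/})$ is anodyne (the plain version of the contractibility input used in \cref{rem:BK}, cf.\ \cite[Cor.~3.14]{arakawa-carranza-kapulkin:derived-mapping-space}). I expect the only genuine obstacle to be bookkeeping: one must confirm that the Kan--Quillen model structure on $\CS_{\plain}$ and the projective model structure on $\Fun(\mcal C^{\op},\CS_{\plain})$ are at our disposal and behave as in the connective case, and then check line by line that none of the invoked statements (trivial-fibration preservation in \cref{thm:str}, the contractibility claims, the Lurie remark on cofibrant weights) silently relies on a connection or a symmetry. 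These checks are routine, which is why the result is recorded as a proposition rather than given a full proof.
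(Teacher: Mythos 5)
Your argument is correct and matches the paper's own justification, which appears as a brief gloss in the paragraph preceding the proposition: \cref{thm:BK} uses only that $\Rect$ preserves cofibrant objects --- equivalently, via the adjunction \cref{prop:radj_L}, that $\int$ preserves pointwise trivial fibrations --- and the ingredients of that argument (the poset $\Side(\square^n)$, the last-vertex functor $\LV$, and the factorization system underlying \cref{exa:sides}) are all connection-free. Your parenthetical slightly understates the role of \cref{prop:radj_L} here (the adjunction \emph{is} used, precisely to convert the lifting property of $\int$ into cofibrancy of $\Rect$), but since you also observe that it transfers to $\square_{\plain}$ via the degeneracy--face (surjection--injection) factorization, the argument is complete.
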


\cref{rem:plain} is particularly useful in the context of monoidal
model categories, because the category of plain cubical sets acts
on essentially every monoidal category we meet in practice. Recall
that a \emph{monoidal model category} $\mbf M$ is a monoidal category
equipped with a monoidal structure, satisfying the pushout-product
axiom. We say that it satisfies \emph{Muro's unit axiom} if there
is a cofibrant replacement $q \from \widetilde{\mbf 1}\to\mbf 1$ of the
unit object, such that $q\otimes X$ and $X\otimes q$ are weak equivalences
for every object $X\in\mbf M$. (To our knowledge, all known monoidal
model categories satisfy Muro's axiom.)
\begin{theorem} \label{cor:BK_mon}
Let $\mbf M$ be a combinatorial monoidal model category satisfying Muro's unit axiom.
Let $\mbf 1\sqcup\mbf 1\to I\to\mbf 1$ be a cylinder of the unit object $\mbf 1\in\mbf M$ (i.e., a factorization of the codiagonal into a cofibration followed by a weak equivalence).
Given a small diagram $F \colon \mcal C\to\mbf M$ taking values in the full
subcategory of cofibrant objects, the homotopy colimit of $F$ may be computed by the formula
\[
	\int^{[1]^n\in\square_{\plain}}I^{\otimes n}\otimes B_{n}\pr{\ast,\mcal C,F}.
\]
\end{theorem}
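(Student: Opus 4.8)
The plan is to deduce \cref{cor:BK_mon} from \cref{rem:plain} by constructing, from the cylinder object $I$, a left Quillen bifunctor $\otimes \from (\cSet_{\plain})_{\mathrm{Kan\text{-}Quillen}} \times \mathbf{M} \to \mathbf{M}$ whose associated functor tensor product recovers the formula in the statement. First I would use the fact that $\cSet_{\plain}$ is a presheaf category on $\square_{\plain}$ together with Lawson's observation \cite{Law17} (see also the discussion in the introduction): since $\square_{\plain}$ is generated monoidally by the single object $[1]$, to define a monoidal left adjoint $\cSet_{\plain} \to \mathbf{M}$ it suffices to specify its value on $\cube{1}$, namely the cylinder $I$, and then extend by Day convolution / left Kan extension. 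This produces a colimit-preserving functor $(-) \otimes \mathbf{1} \from \cSet_{\plain} \to \mathbf{M}$ sending $\cube{n}$ to $I^{\otimes n}$, and more generally a bifunctor $(-)\otimes(-)$ which on representables is $\cube{n} \otimes X = I^{\otimes n} \otimes X$. The unit axiom is what guarantees this is sensibly normalized: $\cube{0} \otimes X = \widetilde{\mathbf{1}} \otimes X \to \mathbf{1} \otimes X \cong X$ is a weak equivalence for cofibrant $X$ (here is where Muro's axiom enters).

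Next I would verify that this bifunctor is a left Quillen bifunctor, i.e.\ check the pushout-product axiom. The generating cofibrations of $(\cSet_{\plain})_{\mathrm{Kan\text{-}Quillen}}$ are the boundary inclusions $\bd \cube{n} \ito \cube{n}$, and those of $\mathbf{M}$ are its generating cofibrations. Applying the bifunctor to $\bd \cube{m} \ito \cube{m}$ and an object of $\mathbf{M}$ produces, via the element-wise description of the geometric product (\cref{thm:gprod_cube}) transported through $(-)\otimes\mathbf{1}$, iterated pushout-products of the cofibration $\mathbf{1}\sqcup\mathbf{1} \to I$ with itself tensored into $\mathbf{M}$; the pushout-product axiom for $\mathbf{M}$ (it is a monoidal model category) plus the fact that $\mathbf{1}\sqcup\mathbf{1}\to I$ is a cofibration then gives that the pushout-product is a cofibration, acyclic if one of the inputs is. The acyclicity case also needs the unit axiom, since a generating acyclic cofibration of $(\cSet_{\plain})_{\mathrm{KQ}}$ is an open box inclusion $\obox{n}{i,\eps} \ito \cube{n}$, which under $(-)\otimes\mathbf{1}$ becomes a map built from $\mathbf{1}\sqcup\mathbf{1}\to I$ that is a weak equivalence of cofibrant objects (a deformation-retract-type map), and tensoring a weak equivalence of cofibrant objects with a cofibrant object preserves weak equivalences by Ken Brown's lemma combined with the pushout-product axiom and the unit axiom.

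Having established that $\otimes$ is a left Quillen bifunctor, I would invoke \cref{rem:plain} to conclude that $N(\mcal C_{\bullet/}) \otimes_{\mcal C} F$ computes the homotopy colimit of $F$, and then rewrite this functor tensor product. By the plain-cubical analog of \cref{rem:BK}, $N(\mcal C_{\bullet/})\otimes_{\mcal C} F \cong \cube{\bullet}\otimes_{\square_{\plain}} B_\bullet(\ast,\mcal C,F)$, where $B_n(\ast,\mcal C,F) = \coprod_{d\from [1]^n\to\mcal C} F(d(0^n))$ is the cubical bar construction. Applying the bifunctor and using that $\cube{n}\otimes X = I^{\otimes n}\otimes X$ converts $\cube{\bullet}\otimes_{\square_{\plain}}(-)$ into $\int^{[1]^n\in\square_{\plain}} I^{\otimes n}\otimes(-)$, yielding exactly the stated formula $\int^{[1]^n\in\square_{\plain}} I^{\otimes n}\otimes B_n(\ast,\mcal C,F)$.

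\textbf{Main obstacle.} The delicate point is the acyclicity half of the pushout-product axiom for the constructed bifunctor: one must show that applying $(-)\otimes\mathbf{1}$ to an open box inclusion $\obox{n}{i,\eps}\ito \cube{n}$ yields a weak equivalence in $\mathbf{M}$ after tensoring with a cofibrant object, and this is exactly where Muro's unit axiom is indispensable — without it, $\cube{0}\otimes X \to X$ need not be a weak equivalence and the whole normalization fails. Making the bookkeeping of iterated pushout-products of $\mathbf{1}\sqcup\mathbf{1}\to I$ precise (and matching it against the combinatorics of cubical boundary and open-box inclusions via \cref{thm:gprod_cube}) is the part requiring genuine care; everything else is formal given \cref{rem:plain} and the monoidal-presheaf argument.
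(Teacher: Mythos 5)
Your overall strategy matches the paper's: use the universal property of $\square_{\plain}$ to build a cocontinuous monoidal functor $\Phi \from \cSet_{\plain}\to\mathbf{M}$ carrying $\cube{n}$ to $I^{\otimes n}$, check that $\Phi$ is left Quillen, compose with the tensor product of $\mathbf{M}$ to obtain a left Quillen bifunctor, and then feed this into \cref{rem:plain} and \cref{rem:BK}. However, there is a genuine gap in how you locate (and use) Muro's axiom, and it masks the real obstruction.

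A strong monoidal functor out of $\cSet_{\plain}$ necessarily sends the monoidal unit $\cube{0}$ to the monoidal unit $\mathbf{1}$ of $\mathbf{M}$; your statement ``$\cube{0}\otimes X = \widetilde{\mathbf{1}}\otimes X$'' is inconsistent with this — there is no room for a cofibrant replacement $\widetilde{\mathbf{1}}$ in the cocubical object $[1]^n\mapsto I^{\otimes n}$, since $I^{\otimes 0}=\mathbf{1}$. As a result you have quietly skipped the case $n=0$ when verifying that $\Phi$ preserves cofibrations: $\Phi$ sends the generating cofibration $\varnothing\to\cube{0}$ to $\varnothing\to\mathbf{1}$, and this is a cofibration in $\mathbf{M}$ only if $\mathbf{1}$ is cofibrant, which is not assumed. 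This is precisely where Muro's hypothesis is needed — not, as you suggest, for the ``acyclicity half'' or for some normalization of $\cube{0}\otimes X$. The paper resolves the issue at the very start of the proof by invoking Muro's theorem \cite[Thm.~1]{Mur15}: one may enlarge the class of cofibrations of $\mathbf{M}$ without changing the weak equivalences so that $\mathbf{1}$ becomes cofibrant, and since homotopy colimits depend only on the weak equivalences this replacement is harmless. After this reduction, the cases $n=0$ and $n=1$ of cofibration-preservation are immediate, $i_n$ is an iterated pushout-product of $i_1$, and the pushout-product axiom for $\mathbf{M}$ then yields the acyclicity half for free — no separate appeal to Muro's axiom is needed there. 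You should restructure your argument to begin with this reduction; once that is in place, the rest of your proof is essentially the paper's.
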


\begin{proof}
By \cite[Thm.~1]{Mur15}, we can add cofibrations to $\mbf M$
without changing the class of weak equivalences and so that $\mbf 1$
becomes cofibrant. Since this procedure does not affect homotopy colimits,
we may assume that $\mbf 1$ is cofibrant to begin with.

Using the universal properties of $\square_{\plain}$ \cite[Thm.~4.2]{GM03}
and the Day convolution product, we can extend the cylinder
$I$ to a cocontinuous monoidal functor $\Phi \from \CS_{\plain}\to\mbf M$
carrying $\square^{n}$ to $I^{\otimes n}$. This functor is left
Quillen: to show that it preserves cofibrations, we only need to show
that it maps $i_{n} \from \partial\square^{n}\to\square^{n}$ to a cofibration
for all $n\geq0$. 
For $n=0$, this follows from the cofibrancy of
$\mbf 1$. 
For $n=1$, this exactly says that the map $\mbf 1\sqcup\mbf 1\to I$
is a cofibration. 
Since $i_{n}$ is an iterated pushout product of
$i_{1}$, we deduce that $\Phi\pr{i_{n}}$ is a cofibration in general.
An analogous argument shows that $\Phi$ preserves trivial cofibrations.

Since $\mbf M$ satisfies the pushout-product axiom, the functor 
\[ {\cSet}_{\plain}\times {\mathbf{M}} \xrightarrow{\Phi\times\id} {\mathbf{M}} \times {\mathbf{M}} \xrightarrow{\otimes} {\mathbf{M}} \]
is a left Quillen bifunctor.
The claim now follows by applying
\cref{thm:BK} (and \cref{rem:BK}) to
this bifunctor.
\end{proof}

The observation that $\Phi$ is left Quillen, although likely known to experts before, was first made by Lawson in \cite[Cor.~1.5]{Law17}.

\begin{remark}
To adopt the proof of \cref{cor:BK_mon} to the
simplicial Bousfield--Kan formula, one would need a \textit{cosimplicial
resolution} of the unit object. 
In practice, there is often no canonical choice for such a resolution, and so the resulting formula for homotopy colimits will likely not be as useful as that of \cref{cor:BK_mon}.
\end{remark}

 \bibliographystyle{R-amsalphaurlmod}
 \bibliography{R-all-refs.bib}

\end{document}